\documentclass[12pt]{amsart}

\usepackage{lineno,hyperref}
\usepackage{amsmath,amsfonts,amsthm,mathrsfs,mathtools}
\usepackage{amssymb,latexsym}
\usepackage{cases}
\usepackage[numbers]{natbib}
\usepackage{booktabs}
\usepackage{bm} 
\usepackage{cleveref}
\usepackage{paralist}
\usepackage{multirow}
\usepackage{enumerate}
\usepackage{adjustbox}
\usepackage[shortlabels]{enumitem}
\setlist[enumerate]{nosep}
\usepackage{array}

\usepackage[figuresright]{rotating}

\usepackage[section]{placeins}

\textheight22cm
\topmargin-4mm

\textwidth16.5cm
\evensidemargin-3mm
\oddsidemargin-3mm

\renewcommand\appendix{\setcounter{secnumdepth}{-2}}

\usepackage{url,cancel}
\usepackage{float}

\usepackage{caption,enumitem}

\newtheorem{thm}{Theorem}[section]

\newtheorem{lem}[thm]{Lemma}
\newtheorem{prop}[thm]{Proposition}
\newtheorem{cor}[thm]{Corollary}

\theoremstyle{remark}
\newtheorem{re}[thm]{Remark}

\theoremstyle{definition}
\newtheorem{ex}[thm]{Example}
\newtheorem{defn}[thm]{Definition}

\newcommand{\lhs}{\mbox{LHS}}
\newcommand{\rhs}{\mbox{RHS}}
\newcommand{\cls}{\mbox{cls}}
\newcommand{\spn}{\mbox{spn}}
\newcommand{\gen}{\mbox{gen}}
\newcommand{\ord}{\mathrm{ord}}

\newcommand{\co}{\mbox{Co}}
\newcommand{\exc}{\mbox{Ex}}

\newcommand{\rank}{\mbox{rank}}

\newcommand{\gk}{\mbox{GK}}
\newcommand{\egk}{\mbox{EGK}}
\newcommand{\jor}{\mbox{Jor}}
\newcommand{\tr}{\mbox{Tr}}

\newcommand{\chartic}{\mbox{char}}

\newcommand{\pr}{\mathrm{pr}}

\def\rep{{\to\!\!\! -}}

\makeatletter

\@addtoreset{equation}{section}
\makeatother
\numberwithin{equation}{section}

\begin{document}
\author{Zilong He}
\address{Department of Mathematics,	Dongguan University of Technology, Dongguan 523808, China}
\email{zilonghe@connect.hku.hk}
	
\title[]{Local information of ADC quadratic lattices over algebraic number fields}
\thanks{ }
\subjclass[2020]{11E08, 11E12, 11E95}
\date{\today}
\keywords{ADC quadratic forms, local density, mass formula}
\begin{abstract}
 	In the paper, we mainly determine the structures, counting formulas, and density sets of representations for binary and ternary ADC quadratic lattices over arbitrary non-archimedean local fields.
 		
 	In the binary case, we show that under certain conditions, there are finitely many primitive positive definite ADC lattices and infinitely many non-primitive ones. We also provide concise formulas for local densities and masses using invariants from BONGs theory, and show that these invariants completely determine the local densities over arbitrary non-archimedean local fields. Moreover, we compute the corresponding local quantities for ADC lattices over algebraic number fields.
 		 		
 	In the ternary case, we characterize the codeterminant set of spinor exceptions and integral spinor norm groups for ADC lattices over arbitrary non-archimedean local fields. Based on these results, we further establish some sufficient conditions on indefinite ADC lattices over algebraic number fields.
\end{abstract}
\maketitle
\section{Introduction}	
	Motivated by Clark’s concept of ADC quadratic forms \cite{clark_ADC-I-2012}, we introduced the $n$-ADC property for higher-dimensional representations by quadratic forms, and studied its classification over non-archimedean local fields and algebraic number fields for $n\ge 2$ in \cite{He25}. When $n=1$, this notion agrees with Clark's definition. Precisely, let $Q$ be an $m$-ary positive definite quadratic form over $\mathbb{Z}$. Then $Q$ is called \textit{ADC} if it represents every positive integer $a$ over $\mathbb{Z}$ for which $Q$ represents $a$ over $\mathbb{Q}$. This concept originally stems from a result obtained independently by Aubry, Davenport, and Cassels, which can be restated as follows: if the quadratic form $Q$ with an integral Gram matrix satisfies the Euclidean property, meaning that for all $x\in \mathbb{Q}^{m}$, there exists $y\in \mathbb{Z}^{m}$ such that $Q(x-y)<1$, then it is ADC. The ADC property is closely related to two other well-known and widely studied topics: universal forms and regular forms, coined by Dickson \cite{dickson_ternary_1927,dickson_universal_1929}. Recall that a positive definite quadratic form over $\mathbb{Z}$ is called \textit{universal} if it represents all positive integers, and \textit{regular} if it represents every positive integer that is locally represented. In fact, a universal form is ADC, and an ADC form is regular (cf. \eqref{equiv:uni-adc-regular}). Thus, the ADC property can be viewed as a transitional concept between universality and regularity. In a later paper, Clark and Jagy \cite{clark_ADC-II-2014} further explored classification problems on ADC quadratic forms over various fields, particularly non-dyadic and $2$-adic local fields, as well as the field of rational numbers. More historical background and details can be found in \cite{clark_ADC-I-2012,clark_ADC-II-2014,He25}.
	
	In this paper, we mainly study the classification problem of ADC quadratic lattices over non-archimedean local fields and algebraic number fields that have not been covered by previous work. To be precise, we characterize ADC lattices over arbitrary non-archimedean local fields,  especially on the unary, binary and ternary cases. In  non-dyadic cases, we present a necessary and sufficient condition, different from that in \cite{clark_ADC-II-2014}, to emphasize the connection between ADC lattices and maximal lattices (Theorems \ref{thm:locallyn-ADC-binary}(i), (ii) and \ref{thm:locallyn-ADC-ternary}(i)). In fact, all  maximal lattices have been enumerated in \cite{HeHu2,hhx_indefinite_2021}; see Section \ref{sec:maximal-lattices} for further discussion. In dyadic cases, we formulate the classification using classical Jordan splittings (Theorems \ref{thm:locallyn-ADC-binary}(i), (iii) and \ref{thm:locallyn-ADC-ternary}(ii)), while adopting Beli's BONGs theory that have been effectively applied to representation problems of quadratic lattices in \cite{beli_universal_2020,He22,He25,HeHu2}. However, unlike the higher-dimensional representation cases studied in \cite{He25}, there appear to be more types of non-maximal ADC lattices. To describe this, we also provide formulas for counting these ADC lattices (Theorem \ref{thm:count-sol-23}). In  \cite{clark-density-2024}, Clark, Pollack, Rouse, and Thompson studied the densities of representation sets of quadratic forms. Using their results, we provide explicit formulas of such representation densities for ADC lattices over arbitrary non-archimedean local fields (Theorem \ref{thm:density}). 
	
	In the global situation, we establish the equivalent conditions for unary and binary ADC lattices over algebraic number fields (Theorem \ref{thm:globally ADC-rank2}), and some finiteness and infiniteness results for binary ADC lattices (Theorem \ref{thm:scale-ADC}, Corollaries \ref{cor:primitive-finite} and \ref{cor:imprimitive-infinite}). 
	
	In Section \ref{sec:mass}, we study the local density and mass formula to analyze the class number one condition for binary ADC lattices, given in Theorem \ref{thm:globally ADC-rank2}(iii). In non-dyadic fields, Ikeda and Katsurada \cite{ikeda-katsurada_formual-Siegel_2022} showed that the local density is completely determined by the GK-invariant introduced by Gross and Keating \cite{gk_inter_1993}. In \cite{cho_localdensity-2020}, Cho proved that for unramified dyadic local fields, the local density can be determined by the extended GK data. However, these invariants fail for binary quadratic forms over ramified dyadic local fields, as pointed out by Ikeda and Katsurada in \cite[Example A.1]{cho_localdensity-2020}.
	
	Motivated by their work and building on Pfeuffer's results in \cite{pfeuffer_Einklassige_1971,pfeuffer_binarer_1978}, we derive a compact formula for the local density of binary lattices over arbitrary non-archimedean local fields, expressed in terms of the $a$-invariant and $R_{i}$-invariant from BONGs theory 
	 (Theorem \ref{thm:localdensity}). This indicates that these invariants can completely determine the local density for binary lattices over non-dyadic or dyadic local fields (cf. Remark \ref{re:local-density}). We also investigate the relationships between BONG's invariants and the invariants mentioned in \cite[Appendix A]{cho_localdensity-2020}, e.g. GK-invariant and EGK-invariant (cf. Proposition \ref{prop:fB-dB} and Remark \ref{re:fB-dB}). Using the invariants \eqref{a-Ri} and \eqref{R-alpha}, we reinterpret the local parts of  K\"{o}rner's proper class number formula  \cite{korner_classnum_1981} and Pfeuffer's mass formula \cite{pfeuffer_binarer_1978}, thereby deriving BONG's versions of the formulas for the mass $m(L)$ and its reciprocal $1/m(L)$ (Theorems \ref{thm:mass-1} and \ref{thm:mass-2}). As an application, we determine all the corresponding local quantities, in particular the local densities, for binary ADC lattices; see Tables \ref{tab:1}-\ref{tab:5} in Theorem \ref{thm:localinfor-ADC}. Based on these data, for every ramified dyadic local field we construct a series of explicit counterexamples, analogous to  \cite[Example A.1]{cho_localdensity-2020}, which have the same extended GK data but different local densities. We also explain this phenomenon using our local density formula \eqref{gamma}. See Example \ref{ex:GK-EGK} for details.
	 
	To study representations by ternary quadratic lattices, Beli \cite{beli_talk_2025} introduced the concept of the codeterminant set of spinor exceptions and gave equivalent conditions accordingly. Based on his work, we characterize the codeterminant set of spinor exceptions and spinor norm groups for ternary ADC lattices over non-archimedean local fields (cf. Theorems \ref{thm:coM-nondyadic} and \ref{thm:coM-dyadic}, and Lemmas \ref{lem:thetaOM-NE-non-dyadic}, \ref{lem:R-theta-M-nur} and \ref{lem:R-theta-H-1h}), thereby deriving the corresponding global information over algebraic number fields (Theorems \ref{thm:coM-global} and \ref{thm:thetaO+M}). By virtue of these results, we establish two sufficient conditions for an indefinite ternary lattice to be globally ADC (Theorem \ref{thm:coM-global-empty} and Corollary \ref{cor:L-G-ADC}). 
 
	Before proceeding with our discussion, we introduce the notation and definitions used throughout the paper, along with a brief overview of the arithmetic theory of quadratic forms. Any unexplained notation or definition can be found in \cite{He25,omeara_quadratic_1963}. 
	 
\noindent \textbf{General settings.} 

Let $ F $ be an algebraic number field or a non-archimedean local field with $  \chartic\, F\not=2 $, $F^{\times}$ the set of non-zero elements of $F$, $ \mathcal{O}_{F} $ the ring of integers of $ F $ and $ \mathcal{O}_{F}^{\times}$ the group of units. Let $ V $ be a non-degenerate quadratic space over $ F $ associated with the quadratic form $Q:V\to F$. We denote by $\det V$ the determinant, and $dV$ the discriminant of $V$. For an $\mathcal{O}_{F}$-lattice $L$ in $V$, we write $FL$ for the subspace of $V$ spanned by $L$ over $F$. We also denote by $\mathfrak{s}(L)$ the scale, $ \mathfrak{n}(L) $ the norm,  $\mathfrak{v}(L)$ the volume, and $dL$ the discriminant of $ L $, as usual. We call $ L $ \textit{integral} if $ \mathfrak{n}(L)\subseteq \mathcal{O}_{F}$ and \textit{primitive} if $\mathfrak{n}(L)=\mathcal{O}_{F}$. We also call $ L $ \textit{$  \mathcal{O}_{F} $-maximal} if   there is no integral $\mathcal{O}_{F}$-lattice $L^{\prime}$ on $FL$  such that $L\subsetneq L^{\prime}$. 

When $V=FL$, we denote by $O(V)$ the orthogonal group and $O^{+}(V)$ the special orthogonal group of $V$. Put  $O(L):=\{\sigma\in O(V)\mid \sigma(L)=L\} $ and $O^{+}(L):=O^{+}(V)\cap O(L)$. Without confusion, we abuse $\theta $ for local/global spinor norm map, and $N(E/F)$ for the norm group of the extension field $E/F$.

\noindent	\textbf{Global settings.}

When $F$ is an algebraic number field, let $L$ be an $\mathcal{O}_{F}$-lattice. We denote by $\gen(L)$ the genus, $\spn(L)$ the spinor genus, and $\cls(L)$ the class of $L$. Write $h(F)$ and $h(L)$ for the class number of $F$ and $L$, respectively. Also, write  $g(L)$ for the number of spinor genus in the genus $L$.

 We denote by $F^{+}$ the set of all totally positive elements in $F$; that is the set of all elements $c\in F$ such that $\sigma(c)>0$ for all the real embeddings $\sigma$ from $F$ to $\mathbb{R}$. Put $\mathcal{O}_{F}^{+}:=F^{+}\cap \mathcal{O}_{F}$.

\noindent	\textbf{Local settings.}
 
 When $ F $ is a non-archimedean local field, write $ \mathfrak{p} $ for the maximal ideal of $\mathcal{O}_F$, $ \pi \in\mathfrak{p}$ for a uniformizer and $N\mathfrak{p}$ for the number of elements in the residue class field of $F$. For $c\in F^{\times}$, we denote by $\ord(c) $ the order of $c$ and put $ e:=\ord(2)$. For a fractional or zero ideal $\mathfrak{a}$ of $F$, we put $\ord(\mathfrak{a})=\min\{\ord(c)\,|\,c\in \mathfrak{a}\}$. Let  $\Delta$ be a fixed unit such that $F(\sqrt{\Delta})/F$ is an unramified quadratic extension. If $F$ is non-dyadic, then $\Delta$ is an arbitrary non-square unit; if $F$ is dyadic, then $\Delta$ is a non-square unit of the form $\Delta=1-4\rho$, with $\rho\in\mathcal{O}_{F}^{\times}$.
 
 If $F$ is dyadic, we define the \textit{defect order} of $c$ by the map $ d $ from $ F^{\times}/F^{\times 2} $ to $ \mathbb{N}\cup \{\infty\} $: $ d(c):=\ord (c^{-1}\mathfrak{d}(c)) $, where  $ \mathfrak{d}(c)$ denotes the quadratic defect of $c$. Also, we denote by  $ \mathcal{U} $ a complete system of representatives of $\mathcal{O}_{F}^{\times}/\mathcal{O}_{F}^{\times 2}$ such that $ d(\delta)=\ord(\delta-1) $ for all $\delta\in \mathcal{U} $, and by $ \mathcal{V}:=\mathcal{U} \cup \pi\mathcal{U}  $ a set of representatives of $ F^{\times}/F^{\times 2} $. If $F$ is non-dyadic, we put $\mathcal{U}=\{1,\Delta\}$ and $\mathcal{V}=\{1,\Delta,\pi,\Delta\pi\}$. Clearly, $|\mathcal{U}|=2(N\mathfrak{p})^{e}$ and $|\mathcal{V}|=2|\mathcal{U}|=4(N\mathfrak{p})^{e}$.
 For $c\in \mathcal{V}$ and a subset $S\subseteq \mathcal{V}$, we also write $c=S$ (resp. $c\not=S$) for $c\in S$ (resp. $c\not\in S$) for convenience.
 
 Let $x_{1},\ldots,x_{n}\in V$ with $Q(x_{i})=a_{i}$. We write $ V\cong [a_{1},\ldots,a_{n}] $ if $ V=Fx_{1}\perp \ldots \perp Fx_{n} $ and $ L\cong \langle a_{1},\ldots,a_{n}\rangle $ if $ L=\mathcal{O}_{F}x_{1}\perp \ldots\perp \mathcal{O}_{F}x_{n} $. For $ c\in F^{\times}$ and $a,b \in  F$, we denote by $ c A(a,b) $ the binary $ \mathcal{O}_{F} $-lattice associated with the Gram matrix $ c\begin{pmatrix}
 	a  & 1\\
 	1  & b
 \end{pmatrix} $. As usual, denote by $\mathbb{H}$ the hyperbolic plane, and write $\mathbf{H}=2^{-1}A(0,0)$.
 \begin{defn}\label{defn:ADC}
		Let $ M $ be an integral $ \mathcal{O}_{F} $-lattice over a field $ F $. 
		
		 When $F$ is a non-archimedean local field,
		\begin{enumerate}[itemindent=-0.5em,label=\rm (\roman*)]
			\item[(i)] $ M$ is called  universal if it represents all elements $a$ in $ \mathcal{O}_{F}$.
			
			\item[(ii)]  $ M $ is called ADC if it represents every element $a$ in $\mathcal{O}_{F}$ for which $ FM $  represents $ a $. 
		\end{enumerate} 
		
		When $F$ is an algebraic numebr field,
	  \begin{enumerate}[itemindent=-0.5em,label=\rm (\roman*)]
		\item[(iii)] $ M$ is called globally universal, or simply universal, if it represents all elements $a$  in $ \mathcal{O}_{F}$ with $a\rep M_{\mathfrak{p}}$ at all real primes $\mathfrak{p}\in \infty_{F}$.
		
		\item[(iv)]   $ M $ is called globally ADC, or simply  ADC,  if it represents every element $a$ in $ \mathcal{O}_{F} $ for which $FM$ represents $ a $.  
	\end{enumerate} 
	\end{defn}
 In the rest of this section, we assume that $ F $ is an algebraic number field, $V$ is a non-degenerate quadratic space over $F$, and $M$ is an integral $ \mathcal{O}_{F} $-lattice on $V$. Let $ \Omega_{F}$ be the set of all primes of $ F $ and $ \infty_{F} $ be the set of all archimedean primes. For $ \mathfrak{p}\in \Omega_{F} $, let $ F_{\mathfrak{p}} $ be the completion of $ F $ at $ \mathfrak{p} $. Then we write $ M_{\mathfrak{p}}:=\mathcal{O}_{F_{\mathfrak{p}}}\otimes M $ if $ \mathfrak{p}\in \Omega_{F}\backslash \infty_{F} $, and  $ M_{\mathfrak{p}}:=F_{\mathfrak{p}}\otimes M=V_{\mathfrak{p}}$ otherwise. Clearly, $M_{\mathfrak{p}}$ is ADC (resp. universal) for all $ \mathfrak{p}\in \infty_{F} $, so we say that $ M $ is \textit{locally ADC} (resp. \textit{locally universal}) if $ M_{\mathfrak{p}} $ is ADC (resp. universal) for all $ \mathfrak{p}\in \Omega_{F}\backslash \infty_{F} $. 
 
In \cite[Theorem 25]{clark_ADC-I-2012}, Clark proved the relationship among universality, ADC-ness, and regularity:
\begin{align}\label{equiv:uni-adc-regular}
	\text{$M$ is universal}\;\Longrightarrow \; \text{$M$ is globally ADC} \;\Longleftrightarrow\;
	\begin{cases}
		 \text{$M$ is locally ADC},\\
		 \text{$M$ is regular}.
	\end{cases}
\end{align}
 (See also \cite[Theorem 1.3]{He25} for higher-dimensional representations.) For $\mathfrak{p}\in \Omega_{F}\backslash \infty_{F}$, we also have the connection between universality, ADC-ness, and maximality:
 \begin{align*} 
 	\text{$M_{\mathfrak{p}}$ is universal}\;\Longrightarrow \; \text{$M_{\mathfrak{p}}$ is ADC} \;\Longleftarrow\; \text{$M_{\mathfrak{p}}$ is $\mathcal{O}_{F_\mathfrak{p}}$-maximal}
 \end{align*}
 (cf. \cite[Theorems 8 and 19]{clark_ADC-I-2012} or Lemma \ref{lem:ADC-sufficient-equiv}), but not vice verse for each implication. For the first implication, if $\rank\,M_{\mathfrak{p}}=2$ or $3$, the converse holds under additional conditions (cf. Corollary \ref{cor:ADC-universal}). If $\rank\,M_{\mathfrak{p}}\ge 4$, the converse holds unconditionally. Indeed, from \cite[Theorem 1.4]{He25} with $n=1$, we have
\begin{align}\label{equiv:uni-adc}
	\text{$M_{\mathfrak{p}}$ is ADC}\;\Longleftrightarrow \; \text{$M_{\mathfrak{p}}$ is  universal};
\intertext{similarly, if $\rank\, M\ge 4$, then}	
		\text{$M$ is ADC}\;\Longleftrightarrow \; \text{$M $ is  universal}.
\end{align}
The equivalent conditions on universal $\mathcal{O}_{F_\mathfrak{p}}$-lattices have been given by Xu and Zhang for non-dyadic cases and partially for dyadic cases using classical invariants \cite[Propositions 2.3, 2.16 and 2.17]{xu_indefinite_2020}, and by Beli for general dyadic cases using BONGs' invariants \cite[Theorem 2.1]{beli_universal_2020}. So it remains to determine ADC lattices $M_{\mathfrak{p}}$ for $\rank\, M_{\mathfrak{p}}\le 3$.
\begin{thm}\label{thm:locallyn-ADC-binary}
	Suppose $\rank\, M \in \{1,2\} $. Let $ \mathfrak{p}\in \Omega_{F}\backslash \infty_{F} $.
	\begin{enumerate}[itemindent=-0.5em,label=\rm (\roman*)]
		\item If $\rank\, M=1$, then $ M_{\mathfrak{p}} $ is  ADC if and only if it is $ \mathcal{O}_{F_{\mathfrak{p}}} $-maximal.
		
		\item If $\rank\, M=2$ and $ \mathfrak{p}$ is non-dyadic, then $ M_{\mathfrak{p}} $ is  ADC if and only if it is $ \mathcal{O}_{F_{\mathfrak{p}}} $-maximal.
		
		\item If  $\rank\, M=2$ and $ \mathfrak{p}$ is dyadic, then $ M_{\mathfrak{p}} $ is  ADC if and only if it is either $ \mathcal{O}_{F_{\mathfrak{p}}} $-maximal, or isometric to the non $\mathcal{O}_{F_{\mathfrak{p}}}$-maximal lattice 
		\begin{align*}
		    2^{-1}\pi_{\mathfrak{p}}^{\nu_{\mathfrak{p}}} A(2\pi_{\mathfrak{p}}^{-1},2\rho_{\mathfrak{p}}\pi_{\mathfrak{p}}),  
		\end{align*}
		with $\nu_{\mathfrak{p}}\in \{0,1\}$.
	\end{enumerate} 
\end{thm}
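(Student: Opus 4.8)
The plan is to work over the completion $F_{\mathfrak p}$ and reduce the ADC property to a comparison of represented integers with a maximal overlattice. The key starting observation is that, since every maximal lattice is ADC (Lemma~\ref{lem:ADC-sufficient-equiv}) and is integral, any maximal overlattice $M^{\max}\supseteq M_{\mathfrak p}$ satisfies $Q(M^{\max})=Q(V_{\mathfrak p})\cap\mathcal O_{F_{\mathfrak p}}$, the inclusion $Q(M^{\max})\subseteq Q(V_{\mathfrak p})\cap\mathcal O_{F_{\mathfrak p}}$ being automatic from integrality. Because $M_{\mathfrak p}\subseteq M^{\max}$ always forces $Q(M_{\mathfrak p})\subseteq Q(M^{\max})$, Definition~\ref{defn:ADC}(ii) translates into the single clean criterion
\[
M_{\mathfrak p}\text{ is ADC}\iff Q(M_{\mathfrak p})=Q(M^{\max}).
\]
Thus the whole problem becomes: decide exactly when a proper inclusion $M_{\mathfrak p}\subsetneq M^{\max}$ nevertheless produces no new representable integer. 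All three parts turn on this point.

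For part (i) and the non-dyadic part (ii) the claim is that a proper inclusion always loses an integer, so ADC forces maximality. In the unary case $M_{\mathfrak p}\cong\langle c\rangle$, a short valuation count shows maximality is equivalent to $\ord c\in\{0,1\}$, and when $\ord c\ge 2$ the element $\pi_{\mathfrak p}^{-2}c$ lies in $Q(M^{\max})\setminus Q(M_{\mathfrak p})$, giving the equivalence at once. For a binary non-dyadic $M_{\mathfrak p}$ I diagonalize $M_{\mathfrak p}\cong\langle a_1,a_2\rangle$ with $\ord a_1\le\ord a_2$; since $\mathfrak n=\mathfrak s$ away from the prime $2$, maximality can be read off from the enumeration of maximal lattices in \cite{HeHu2,hhx_indefinite_2021} as $\ord a_1,\ord a_2\in\{0,1\}$. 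When this fails I enlarge the offending basis vector and exhibit an element of $Q(M^{\max})\setminus Q(M_{\mathfrak p})$ by matching valuations and square classes; separating the isotropic (hyperbolic, hence universal) from the anisotropic case keeps this bookkeeping short.

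The substance is the dyadic binary case (iii), where a genuinely non-maximal ADC lattice survives. Here I pass to a good BONG $M_{\mathfrak p}\cong\langle a_1,a_2\rangle$ in the sense of \cite{beli_universal_2020}, recording the invariants $\ord a_1\le\ord a_2$ together with the accompanying square-class datum. Using the dyadic representation criteria of Beli together with the earlier lemmas of the paper, I compute the full set $Q(M_{\mathfrak p})$ of represented integers and compare it with $Q(V_{\mathfrak p})\cap\mathcal O_{F_{\mathfrak p}}$ via the criterion above. The geometric reason the stated lattice is special is that its discriminant lies in the class of $-\Delta$, so $V_{\mathfrak p}$ is the anisotropic norm form of the unramified quadratic extension $F_{\mathfrak p}(\sqrt\Delta)/F_{\mathfrak p}$; every unit is a norm, so $Q(V_{\mathfrak p})\cap\mathcal O_{F_{\mathfrak p}}$ consists precisely of the integers of even order, and one checks that this one non-maximal lattice already represents all of them, whereas every other non-maximal binary lattice omits at least one.

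The main obstacle is exactly this final enumeration. Over a ramified dyadic field the set of integers represented by a binary lattice is governed by delicate square-class and defect-order conditions, and one must verify both that the exceptional lattice $2^{-1}\pi_{\mathfrak p}^{\nu_{\mathfrak p}}A(2\pi_{\mathfrak p}^{-1},2\rho_{\mathfrak p}\pi_{\mathfrak p})$ with $\nu_{\mathfrak p}\in\{0,1\}$ attains equality in the criterion and that no other non-maximal lattice does. I expect this to require the full strength of the BONG representation machinery rather than a direct diagonal computation, precisely because $\mathfrak n(M_{\mathfrak p})\neq\mathfrak s(M_{\mathfrak p})$ for these lattices and the naive orthogonal enlargement arguments of the non-dyadic case are unavailable.
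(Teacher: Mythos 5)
Your global criterion --- $M_{\mathfrak p}$ is ADC iff $Q(M_{\mathfrak p})=Q(M^{\max})$ for a maximal overlattice $M^{\max}$ --- is correct (it is essentially Lemma \ref{lem:ADC-sufficient-equiv}), and your unary case is complete. The first genuine gap is in part (ii): your claim that a non-dyadic binary lattice $\langle a_1,a_2\rangle$ is maximal if and only if $\ord a_1,\ord a_2\in\{0,1\}$ is false. The lattice $M_{\mathfrak p}\cong\langle\pi_{\mathfrak p},-\pi_{\mathfrak p}\rangle$ has both orders equal to $1$, yet it lies on the hyperbolic plane, whose unique maximal lattice $\mathbf H$ has unit volume while $\mathfrak v(M_{\mathfrak p})=\mathfrak p^2$; so $M_{\mathfrak p}$ is not maximal, and it is not ADC either (the plane represents $1$, but $\mathfrak n(M_{\mathfrak p})=\mathfrak p$). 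Your procedure produces a missed integer only when some order is $\ge 2$, so this family is never examined, and your argument would certify $\langle\pi_{\mathfrak p},-\pi_{\mathfrak p}\rangle$ as maximal, hence ADC --- both conclusions false. Having orders in $\{0,1\}$ is necessary but not sufficient for maximality; sufficiency depends on the space ($\pi$-modular binary lattices are maximal on $W_2^2(\Delta)$ but not on the hyperbolic plane). The paper avoids this by treating the isotropic space separately --- ADC plus isotropy forces universality, hence $M\cong\mathbf H$ (Corollary \ref{cor:ADC-universal}) --- and by identifying anisotropic ADC lattices with the unique maximal lattice on their space via \cite[92:2]{omeara_quadratic_1963}.

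The second, larger gap is part (iii), which is the heart of the theorem. You correctly locate the exceptional lattices on the spaces $W_\nu^2(\Delta)$ and correctly describe $Q(V_{\mathfrak p})\cap\mathcal O_{F_{\mathfrak p}}$ as the integers of even (resp.\ odd) order, but the two assertions carrying the statement --- that $2^{-1}\pi^{\nu}A(2\pi^{-1},2\rho\pi)$ represents all of these, and that no other non-maximal binary lattice attains equality --- are asserted (``one checks''), not proved, and your final paragraph concedes that this verification is the main obstacle. This is exactly where the paper's work lies: Lemma \ref{lem:Mnu-ADC} verifies the conditions of Theorem \ref{thm:beligeneral-1} for $M_\nu^2(\Delta)$; Lemma \ref{lem:dyadic-adc-R} shows that an ADC lattice representing suitable unary lattices has $R_1$ pinned down and $\alpha_1\in\{0,1\}$; and a case analysis over the four space types, combined with the uniqueness statement of Lemma \ref{lem:binary-exceptions}, shows that on $W_\nu^2(\Delta)$ the only possibilities are the maximal $N_\nu^2(\Delta)$ (when $\alpha_1=0$, $R_2=\nu-1-2e$) and the exceptional $M_\nu^2(\Delta)$ (when $\alpha_1=1$, $R_2=\nu+1-2e$), while on $W_\nu^2(\varepsilon\pi)$ and $W_\nu^2(\delta)$ ADC forces maximality. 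Without carrying out this analysis (or an equivalent computation of $Q(M_{\mathfrak p})$ for an arbitrary binary dyadic lattice), part (iii) remains unproved.
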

\begin{re}
	All $\mathcal{O}_{F_{\mathfrak{p}}}$-maximal lattices of rank $1$ or $2$ are listed in Proposition \ref{prop:maximallattices-binary-ternary}.
\end{re}
We also use the notation $\mathbf{H}$ for the binary  $\mathcal{O}_{F}$-lattice $
 \mathcal{O}_{F}x+\mathcal{O}_{F}y $,
with $Q(x)=Q(y)=0$ and $B(x,y)=1/2$.
\begin{thm}\label{thm:globally ADC-rank2}
	Suppose $\rank \; M\in \{1,2\}$. Then $M$ is globally ADC if and only if  the following conditions hold:
	
		\begin{enumerate}[itemindent=-0.5em,label=\rm (\roman*)]
		\item If $\rank\,M=1$, then $M$ is $\mathcal{O}_{F}$-maximal.
		
		\item If $\rank\,M=2$ and $FM$ is isotropic, then $M$ is $\mathcal{O}_{F}$-maximal and $M\cong \mathbf{H} $.
		
		\item  If $\rank\,M=2$ and $FM$ is anisotropic, then $M$ is locally ADC and $h(M)=1$.
	\end{enumerate}  
\end{thm}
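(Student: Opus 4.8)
The plan is to reduce everything to the characterization \eqref{equiv:uni-adc-regular}, namely that $M$ is globally ADC if and only if $M$ is locally ADC and regular, and then to identify the local and the regularity conditions separately in each of the three cases. The local ADC condition is completely settled by Theorem \ref{thm:locallyn-ADC-binary}, so the real work is to translate ``regular'' into the stated global invariants. Throughout I use the standard genus representation theorem: an element $a\in\mathcal{O}_F$ is represented by $\gen(M)$ if and only if it is represented by $M_{\mathfrak{p}}$ at every finite $\mathfrak{p}$ and by $V_{\mathfrak{p}}$ at every real $\mathfrak{p}$. Combined with local ADC, this says that $a$ is represented by $\gen(M)$ as soon as $FM$ represents $a$ over $F$. \textbf{Rank one (i).} By Theorem \ref{thm:locallyn-ADC-binary}(i) locally ADC is the same as $M_{\mathfrak{p}}$ being $\mathcal{O}_{F_{\mathfrak{p}}}$-maximal at every finite $\mathfrak{p}$, which is equivalent to $M$ being $\mathcal{O}_F$-maximal; this already yields the ``only if'' direction. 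For ``if'' I would argue directly: writing $M=\mathcal{O}_F x$ with $Q(x)=a$, maximality means $\ord_{\mathfrak{p}}(a)\le 1$ for all $\mathfrak{p}$, and if $FM$ represents $c=as^{2}$ with $s\in F^{\times}$, then $\ord_{\mathfrak{p}}(a)+2\ord_{\mathfrak{p}}(s)\ge 0$ together with $\ord_{\mathfrak{p}}(a)\le 1$ forces $\ord_{\mathfrak{p}}(s)\ge 0$ everywhere, so $s\in\mathcal{O}_F$ and $M$ represents $c$.

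\textbf{Isotropic binary (ii).} First I would note that the non-maximal dyadic lattice $2^{-1}\pi_{\mathfrak{p}}^{\nu_{\mathfrak{p}}}A(2\pi_{\mathfrak{p}}^{-1},2\rho_{\mathfrak{p}}\pi_{\mathfrak{p}})$ of Theorem \ref{thm:locallyn-ADC-binary}(iii) sits on an \emph{anisotropic} space: its Gram determinant is $\equiv-\Delta$ modulo squares, so $-\det$ lies in the nontrivial square class and the space is anisotropic. Since $FM$ is isotropic, this option is excluded at every dyadic prime, so locally ADC forces $M_{\mathfrak{p}}$ to be $\mathcal{O}_{F_{\mathfrak{p}}}$-maximal everywhere, i.e. $M$ is $\mathcal{O}_F$-maximal. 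Being globally ADC, $M$ represents the unit $1$ (which $FM\cong\mathbb{H}$ represents), and a maximal binary lattice on the hyperbolic plane that represents a unit is isometric to $\mathbf{H}$ (compare the list in Proposition \ref{prop:maximallattices-binary-ternary}); this gives ``only if''. Conversely $\mathbf{H}$ represents $\alpha\beta$ for all $\alpha,\beta\in\mathcal{O}_F$, hence is universal, and is therefore globally ADC by \eqref{equiv:uni-adc-regular}.

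\textbf{Anisotropic binary (iii), easy direction.} Here the inclusion of local ADC is immediate from \eqref{equiv:uni-adc-regular}, so the substance is the equivalence, for anisotropic binary $M$, between regularity and $h(M)=1$. If $h(M)=1$ then $\gen(M)=\cls(M)$, and by the genus representation theorem above $M$ represents every element that $FM$ represents; thus $M$ is regular, and together with local ADC it is globally ADC. This settles the ``if'' direction of (iii).

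\textbf{Anisotropic binary (iii), hard direction and main obstacle.} It remains to show $h(M)>1$ violates regularity. I would realize $V=FM$ as the scaled norm form of the quadratic field extension $E=F(\sqrt{-\det V})$, so that $M$ becomes a fractional ideal $I$ for an order $\mathcal{O}\subseteq E$ with $Q=a\,N_{E/F}$; representation of $c$ by $M$ then becomes the condition that $c/a$ be a relative norm from the ideal class $[I]$. Inequivalent ideal classes in $\gen(M)$ represent genuinely different sets of elements (two classes represent the same elements only when conjugate, i.e. isometric), so if $h(M)>1$ some class $N\not\cong M$ represents an element $c$ that $M$ does not, while $c$ remains represented by $\gen(M)$ and hence locally represented, contradicting regularity. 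The hard part will be making this last step rigorous over an arbitrary number field and for the relevant, possibly non-maximal, orders: the precise statement that inequivalent classes in a binary genus have distinct representation behaviour. I expect to obtain it from the ideal-theoretic dictionary for binary lattices together with the spinor-genus analysis underlying Körner's class-number formula and Pfeuffer's mass formula cited in the paper.
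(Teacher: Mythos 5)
Your reduction via \eqref{equiv:uni-adc-regular} and your treatment of (i) and of the ``if'' directions are sound, but the proposal has a genuine gap exactly at the hard direction of (iii). To show that regularity forces $h(M)=1$ for an anisotropic binary lattice, you assert that ``inequivalent ideal classes in $\gen(M)$ represent genuinely different sets of elements (two classes represent the same elements only when conjugate, i.e.\ isometric).'' This is not a routine consequence of the ideal-theoretic dictionary; it \emph{is} the theorem being proved, and it is precisely what the paper outsources to \cite[Theorem A3]{chan-icaza}, which states that an anisotropic binary $\mathcal{O}_{F}$-lattice is regular if and only if $h(M)=1$. The statement is genuinely delicate: the analogous claim fails for ternary lattices (regular ternary lattices with class number $>1$ exist), so the binary case requires the fine analysis of which ideal classes represent a given element, not just the dictionary itself. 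Your own text concedes that making this step rigorous is the ``hard part'' you have not done; as written, the proposal therefore proves only the easy direction of (iii).

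There is also a smaller but real gap in (ii). Your key claim --- that an $\mathcal{O}_{F}$-maximal lattice on the hyperbolic plane which represents a unit is isometric to $\mathbf{H}$ --- cannot be justified by ``comparing with the list in Proposition \ref{prop:maximallattices-binary-ternary}'': that proposition is purely local, and agreement at all completions only places $M$ in $\gen(\mathbf{H})$, not in $\cls(\mathbf{H})$. The claim is true (every maximal lattice on $\mathbb{H}=Fx+Fy$ has the form $\mathfrak{a}x+\mathfrak{a}^{-1}y$, and such a lattice represents a unit exactly when $\mathfrak{a}$ is principal), but you would have to prove this global classification, which appears nowhere in the paper. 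The paper sidesteps it: local ADC-ness plus isotropy gives local universality, regularity then gives global universality, and $M\cong\mathbf{H}$ follows from \cite[Corollary 3.6]{xu_indefinite_2020}. By contrast, your rank-one argument in (i) is correct and is actually more self-contained than the paper's appeal to \cite[65:15 and \S 82K]{omeara_quadratic_1963}.
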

\begin{re}
	Note that $M$ is primitive if and only if $M_{\mathfrak{p}}$ is primitive for every $\mathfrak{p}\in \Omega_{F}\backslash \infty_{F}$. By Theorems \ref{thm:primitive-nondyadic}(i) and \ref{thm:primitive-dyadic}(i), one can easily obtain equivalent conditions for primitive or non-primitive binary ADC $\mathcal{O}_{F}$-lattices.
\end{re}
In what follows, we discuss some results on the finiteness of binary ADC lattices. From definition, the regularity  of an $\mathcal{O}_{F}$-lattice is preserved by scaling it by an element in $F^{\times}$. As in \cite{chan-icaza}, let $L$ be another $\mathcal{O}_{F}$-lattice on $V$. We say that $M$ is \textit{similar}  to $L$ if $M$ is isometric to $ L^{(c)}$ for some $c\in F^{+}$, where $L^{(c)}$ denotes the lattice $L$ scaled by $c$. 

In \cite{peter_class-num-one_1980}, Peters proved that there are finitely many positive definite binary $\mathcal{O}_{F}$-lattices with class number one under certain conditions, by which we have the following corollary.
\begin{cor}\label{cor:primitive-finite}
	
	 Up to similarity,
	 \begin{enumerate}[itemindent=-0.5em,label=\rm (\roman*)]
	\item There are only finitely many primitive positive definite binary ADC $\mathcal{O}_{F}$-lattices, when $F$ is fixed.
	
	\item Fixed a positive integer $n$, there are only finitely many primitive positive definite binary ADC $\mathcal{O}_{F}$-lattices, when $F$ varies through all totally real number fields with $[F:\mathbb{Q}]=n$.
	
	\item
	Under generalized Riemann hypothesis, there are only finitely many primitive positive definite binary ADC $\mathcal{O}_{F}$-lattices, when $F$ varies through all totally real number fields.
\end{enumerate}
\end{cor}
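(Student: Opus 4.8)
The plan is to reduce the ADC condition to a class-number-one condition and then invoke Peters's finiteness theorems directly. Since $M$ is positive definite, the space $FM$ is anisotropic, so case (iii) of Theorem \ref{thm:globally ADC-rank2} governs the situation: a primitive positive definite binary $\mathcal{O}_{F}$-lattice $M$ is globally ADC if and only if it is locally ADC and $h(M)=1$. In particular, every primitive positive definite binary ADC lattice has class number one. Thus the family we must count is contained in the family of primitive positive definite binary $\mathcal{O}_{F}$-lattices with $h=1$, and it suffices to prove finiteness of the latter up to similarity.

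Before counting, I would record that both properties in play are invariant under similarity: the class number $h(L)$ is unchanged when $L$ is scaled, and the ADC property (equivalently, by Theorem \ref{thm:globally ADC-rank2}(iii), the conjunction of local ADC-ness and $h(L)=1$) is preserved under scaling by $c\in F^{+}$, since regularity is. Hence the similarity classes of primitive positive definite binary ADC lattices form a subset of the similarity classes of positive definite binary $\mathcal{O}_{F}$-lattices with $h=1$, and it is enough to bound the size of the latter set in each of the three regimes. This lets us transcribe the problem verbatim into the framework of \cite{peter_class-num-one_1980}.

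For part (i), with $F$ fixed, this is precisely Peters's finiteness of positive definite binary $\mathcal{O}_{F}$-lattices of class number one, and finiteness up to similarity is immediate. For parts (ii) and (iii) the field is allowed to vary, and here the mechanism is that the existence of a class-number-one positive definite binary lattice forces an upper bound on the absolute discriminant $d_{F}$. Part (ii) then follows by combining Peters's discriminant bound for totally real fields of fixed degree $n$ with the Hermite--Minkowski theorem: only finitely many such $F$ survive, and each contributes finitely many lattices by part (i). Part (iii) is the same argument, with the degree-restricted bound replaced by the effective, GRH-dependent bound valid across all totally real fields, which again confines the admissible fields to a finite set.

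The main obstacle is not the reduction, which is essentially formal, but the careful matching of conventions with \cite{peter_class-num-one_1980}. One must verify that Peters's notion of class number (rather than a proper class number count) and his normalization of binary lattices are compatible with ours under the similarity equivalence, and that in parts (ii)--(iii) the quoted discriminant bounds apply to \emph{primitive} positive definite lattices in exactly the form needed; this is where primitivity is genuinely used, since the non-primitive family is infinite by Corollary \ref{cor:imprimitive-infinite}. Once these compatibilities are confirmed, the three finiteness statements follow directly from the corresponding cases of Peters's theorem.
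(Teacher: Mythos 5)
Your core argument coincides with the paper's: positive definiteness makes $FM$ anisotropic, so Theorem \ref{thm:globally ADC-rank2}(iii) forces $h(M)=1$ for every primitive positive definite binary ADC lattice, and finiteness up to similarity is then read off from Peters \cite{peter_class-num-one_1980} in each of the three regimes (fixed field; fixed degree plus Hermite--Minkowski; GRH). The one ingredient the paper adds, and which is exactly the convention-matching you flag but leave open, is the inequality $m(M)\le h(M)/2$: since $|O(L_{i})|\ge 2$ for every class, $h(M)=1$ gives $m(M)\le 1/2$, and Peters's proof is at bottom a mass bound, so his argument applies verbatim to the paper's notion of class number. With that bridge supplied, your proof is the paper's proof.

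Two of your side remarks are false, though neither is load-bearing. First, the ADC property is \emph{not} preserved under scaling by arbitrary $c\in F^{+}$: by Theorem \ref{thm:scale-ADC}, $M^{(a)}$ is ADC only when $\ord_{\mathfrak{p}}(a)=1$ and $M_{\mathfrak{p}}$ has a very particular local structure at every $\mathfrak{p}\mid a\mathcal{O}_{F}$. What scaling preserves is regularity; local ADC-ness is not preserved, and your inference ``since regularity is'' conflates the two. Your containment of similarity classes survives anyway, because it only needs the similarity invariance of $h(L)$, which you state separately: every similarity class containing a primitive ADC lattice consists of class-number-one lattices, and Peters bounds the number of such classes. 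Second, Corollary \ref{cor:imprimitive-infinite} gives infinitely many non-primitive ADC lattices only \emph{up to isometry}; the lattices constructed there are scalings $M^{(a_{i})}$ of a single lattice and thus all lie in one similarity class, so that corollary does not show that dropping primitivity destroys finiteness up to similarity. Primitivity enters instead in matching Peters's normalization, as you correctly anticipate elsewhere in your last paragraph.
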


The following theorem is a generalization of \cite[Theorem 3.4]{clark_ADC-II-2014}, which provides a method to constructing non-primitive binary ADC $\mathcal{O}_{F}$-lattices from a given primitive one. Therefore, there may exist infinitely many non-primitive positive definite or indefinite binary ADC lattices (Corollary \ref{cor:imprimitive-infinite}).
\begin{thm}\label{thm:scale-ADC}
	Let $a\in \mathcal{O}_{F}^{+}$. If $\rank\, M=2$, then $M^{(a)}$ is ADC if and only if $M$ is ADC and for each $\mathfrak{p}\mid a\mathcal{O}_{F}$, $\ord_{\mathfrak{p}}(a)=1$, $FM_{\mathfrak{p}}\cong [1,-\Delta_{\mathfrak{p}}]$,
		\begin{align*}
		 \mathfrak{n}(M_{\mathfrak{p}})=\mathcal{O}_{F_{\mathfrak{p}}} \quad\text{and}\quad	\mathfrak{v}(M_{\mathfrak{p}})=
			\begin{cases}
				\mathcal{O}_{F_{\mathfrak{p}}}  &\text{if $\mathfrak{p}$ is non-dyadic}, \\
				\mathfrak{p}^{-2e_{\mathfrak{p}}}\;\text{or}\; \mathfrak{p}^{2-2e_{\mathfrak{p}}}  &\text{if $\mathfrak{p}$ is dyadic}.
			\end{cases}
		\end{align*} 
\end{thm}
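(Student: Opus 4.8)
The plan is to localize the problem and reduce it to a statement at the primes dividing $a$. By the equivalence~\eqref{equiv:uni-adc-regular}, an integral $\mathcal{O}_{F}$-lattice is globally ADC precisely when it is regular and locally ADC; and since regularity is preserved under scaling by elements of $F^{\times}$, $M^{(a)}$ is regular if and only if $M$ is. For $\mathfrak{p}\nmid a\mathcal{O}_{F}$ the scalar $a$ is a unit in $\mathcal{O}_{F_{\mathfrak{p}}}$, and scaling a local lattice by a unit preserves the ADC property (for a unit $\varepsilon$, $L^{(\varepsilon)}$ represents $b$ iff $L$ represents $\varepsilon^{-1}b$, and $\varepsilon^{-1}b\in\mathcal{O}_{F_{\mathfrak{p}}}\Leftrightarrow b\in\mathcal{O}_{F_{\mathfrak{p}}}$, with the same equivalence for the ambient space). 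Hence, after cancelling the common factors ``regular'' and ``$M_{\mathfrak{p}}$ ADC for $\mathfrak{p}\nmid a$'' from both sides, the theorem is equivalent to the following local assertion at each $\mathfrak{p}\mid a\mathcal{O}_{F}$: with $k:=\ord_{\mathfrak{p}}(a)\ge 1$, the lattice $M_{\mathfrak{p}}^{(a)}$ is ADC if and only if $M_{\mathfrak{p}}$ is ADC, $k=1$, $FM_{\mathfrak{p}}\cong[1,-\Delta_{\mathfrak{p}}]$, $\mathfrak{n}(M_{\mathfrak{p}})=\mathcal{O}_{F_{\mathfrak{p}}}$, and $\mathfrak{v}(M_{\mathfrak{p}})$ equals the listed ideal.

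To prove this local lemma I would first write $a=\pi_{\mathfrak{p}}^{k}\varepsilon$ with $\varepsilon$ a unit and, using unit-invariance again, reduce to scaling by $\pi_{\mathfrak{p}}^{k}$. Theorem~\ref{thm:locallyn-ADC-binary}(ii),(iii) converts ``ADC'' into the explicit shapes ``$\mathcal{O}_{F_{\mathfrak{p}}}$-maximal'' (non-dyadic) and ``$\mathcal{O}_{F_{\mathfrak{p}}}$-maximal, or one of the exceptional lattices $2^{-1}\pi_{\mathfrak{p}}^{\nu}A(2\pi_{\mathfrak{p}}^{-1},2\rho_{\mathfrak{p}}\pi_{\mathfrak{p}})$'' (dyadic), the relevant maximal lattices being those enumerated in Proposition~\ref{prop:maximallattices-binary-ternary}. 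I would then record how $\pi_{\mathfrak{p}}^{k}$-scaling acts on the invariants, namely $\mathfrak{s}(M_{\mathfrak{p}}^{(\pi_{\mathfrak{p}}^{k})})=\pi_{\mathfrak{p}}^{k}\mathfrak{s}(M_{\mathfrak{p}})$, $\mathfrak{n}(M_{\mathfrak{p}}^{(\pi_{\mathfrak{p}}^{k})})=\pi_{\mathfrak{p}}^{k}\mathfrak{n}(M_{\mathfrak{p}})$ and $\mathfrak{v}(M_{\mathfrak{p}}^{(\pi_{\mathfrak{p}}^{k})})=\pi_{\mathfrak{p}}^{2k}\mathfrak{v}(M_{\mathfrak{p}})$, and on the space $FM_{\mathfrak{p}}$: its discriminant is unchanged modulo squares, but for odd $k$ its Hasse invariant flips, so the unramified anisotropic plane $[1,-\Delta_{\mathfrak{p}}]$ is carried to its conjugate $\pi_{\mathfrak{p}}[1,-\Delta_{\mathfrak{p}}]$. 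The task is then to decide, for each type of binary space and each $k$, when both $M_{\mathfrak{p}}$ and $M_{\mathfrak{p}}^{(\pi_{\mathfrak{p}}^{k})}$ lie on this ADC list.

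Running through the three types of binary space, I expect the following. Over the isotropic plane $\mathbb{H}$ the only maximal lattice is unimodular, and any $k\ge 1$ produces a lattice of strictly larger volume that is properly contained in a maximal lattice, hence not ADC; over a ramified anisotropic plane the scaled maximal lattice again admits an integral enlargement (obtained by absorbing a factor of $\pi_{\mathfrak{p}}$ into a basis vector), so it is not maximal and not ADC. Only the unramified anisotropic plane survives, because there the integral closure $\{v:Q(v)\in\mathcal{O}_{F_{\mathfrak{p}}}\}$ controls both the maximal lattice and its $\pi_{\mathfrak{p}}$-scaled companion on the conjugate plane; comparing the scaled volume $\pi_{\mathfrak{p}}^{2k}\mathfrak{v}(M_{\mathfrak{p}})$ with the admissible volumes on $\pi_{\mathfrak{p}}[1,-\Delta_{\mathfrak{p}}]$ then forces $k=1$. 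This isolates $FM_{\mathfrak{p}}\cong[1,-\Delta_{\mathfrak{p}}]$ together with primitivity $\mathfrak{n}(M_{\mathfrak{p}})=\mathcal{O}_{F_{\mathfrak{p}}}$, and identifies the admissible $M_{\mathfrak{p}}$ with the primitive ADC lattices on $[1,-\Delta_{\mathfrak{p}}]$: the maximal lattice $\cong 2^{-1}A(2,2\rho_{\mathfrak{p}})$, of volume $\mathcal{O}_{F_{\mathfrak{p}}}$ in the non-dyadic case and $\mathfrak{p}^{-2e_{\mathfrak{p}}}$ in the dyadic case, and, in the dyadic case, the exceptional ADC lattice of volume $\mathfrak{p}^{2-2e_{\mathfrak{p}}}$. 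Both directions of the equivalence then follow because $\pi_{\mathfrak{p}}$-scaling is invertible, and reassembling over all $\mathfrak{p}\mid a$ recovers the theorem. The main obstacle is precisely the dyadic bookkeeping: one must handle the non-maximal exceptional lattices of Theorem~\ref{thm:locallyn-ADC-binary}(iii) explicitly, determine exactly which primitive lattices on $[1,-\Delta_{\mathfrak{p}}]$ are carried by $\pi_{\mathfrak{p}}$-scaling onto ADC lattices of the conjugate plane, and thereby confirm that the two volumes $\mathfrak{p}^{-2e_{\mathfrak{p}}}$ and $\mathfrak{p}^{2-2e_{\mathfrak{p}}}$—and no others—occur, all while distinguishing scale from norm, which diverge over dyadic fields.
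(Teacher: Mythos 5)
Your proposal is correct and follows essentially the same route as the paper's proof: reduce to the primes dividing $a$ via the equivalence \eqref{equiv:uni-adc-regular} and the invariance of regularity under scaling, then settle the resulting local question using the classification of binary ADC lattices over local fields. The only difference is packaging: the paper cites Clark--Jagy's scaling lemma for the implication ``$M^{(a)}$ ADC $\Rightarrow M$ ADC'' and then invokes its ready-made characterization of non-primitive binary ADC lattices (Theorems \ref{thm:primitive-nondyadic}(i) and \ref{thm:primitive-dyadic}(i)), whereas you re-derive those same facts by running through the three types of binary quadratic spaces.
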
 

\begin{cor}\label{cor:imprimitive-infinite}
Suppose that $h(F)=1$ and there exists a primitive anisotropic binary ADC $\mathcal{O}_{F}$-lattice. Then there are infinitely many non-primitive binary ADC $\mathcal{O}_{F}$-lattices up to isometry.
\end{cor}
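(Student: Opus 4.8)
The plan is to produce the required lattices as scalings $M^{(a)}$ of the given primitive anisotropic binary ADC lattice $M$, using Theorem \ref{thm:scale-ADC} to guarantee that each $M^{(a)}$ is ADC, and to choose the totally positive scalars $a$ so that the resulting lattices are pairwise non-isometric and non-primitive.

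First I would fix notation: write $V=FM$, an anisotropic binary space, and let $d=dV$, which is a non-square in $F^{\times}$ because an anisotropic binary space has non-square discriminant. Set $E=F(\sqrt{d})$, a quadratic extension of $F$. The relevant local input is that Theorem \ref{thm:scale-ADC} requires, for every prime $\mathfrak{p}\mid a\mathcal{O}_F$, the conditions $\ord_{\mathfrak{p}}(a)=1$, $FM_{\mathfrak{p}}\cong[1,-\Delta_{\mathfrak{p}}]$, $\mathfrak{n}(M_{\mathfrak{p}})=\mathcal{O}_{F_{\mathfrak{p}}}$, and (in the non-dyadic case) $\mathfrak{v}(M_{\mathfrak{p}})=\mathcal{O}_{F_{\mathfrak{p}}}$. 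I would isolate the set $S$ of primes $\mathfrak{p}$ of $F$ that are simultaneously non-dyadic, inert in $E/F$, and such that $M_{\mathfrak{p}}$ is unimodular. Only finitely many primes are dyadic or fail unimodularity, whereas the primes inert in the quadratic extension $E/F$ have Chebotarev density $1/2$; hence $S$ is infinite. At each $\mathfrak{p}\in S$ the localization $M_{\mathfrak{p}}$ is unimodular, so $\mathfrak{n}(M_{\mathfrak{p}})=\mathfrak{v}(M_{\mathfrak{p}})=\mathcal{O}_{F_{\mathfrak{p}}}$; and since $\mathfrak{p}$ is inert in $E/F$, the space $V_{\mathfrak{p}}$ is the anisotropic unramified binary space. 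Writing a unimodular $M_{\mathfrak{p}}$ as $\langle u_1,u_2\rangle$ with units $u_i$, anisotropy forces $u_1u_2\equiv-\Delta_{\mathfrak{p}}$ modulo squares, and one checks $[u_1,u_2]\cong u_1[1,-\Delta_{\mathfrak{p}}]\cong[1,-\Delta_{\mathfrak{p}}]$, the last isometry because the unit $u_1$ is a norm from the unramified extension. Thus $FM_{\mathfrak{p}}\cong[1,-\Delta_{\mathfrak{p}}]$ as required.

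Next I would pass from ideals to totally positive scalars, where the hypothesis $h(F)=1$ enters. Each $\mathfrak{p}\in S$ is principal, say $\mathfrak{p}=(b_{\mathfrak{p}})$, but $b_{\mathfrak{p}}$ need not be totally positive. Since the sign vectors of the $b_{\mathfrak{p}}$ lie in the finite group $\{\pm1\}^{r_1}$, with $r_1$ the number of real places, a pigeonhole argument yields an infinite subset $\mathfrak{q}_1,\mathfrak{q}_2,\dots$ of $S$ whose generators $b_1,b_2,\dots$ all share a common sign vector. Then for each $k\ge2$ the element $a_k:=b_1b_k$ is totally positive, generates the squarefree ideal $\mathfrak{q}_1\mathfrak{q}_k$, and satisfies $\ord_{\mathfrak{q}_1}(a_k)=\ord_{\mathfrak{q}_k}(a_k)=1$ with support in $S$. (When $F$ is totally imaginary the sign issue is vacuous and one may take any principal generator.) By construction each $a_k\in\mathcal{O}_F^{+}$ fulfils all hypotheses of Theorem \ref{thm:scale-ADC} at the two primes dividing it, so $M^{(a_k)}$ is ADC.

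Finally I would verify infinitude and non-primitivity. Since $M$ is primitive, $\mathfrak{n}(M^{(a_k)})=a_k\mathcal{O}_F=\mathfrak{q}_1\mathfrak{q}_k\neq\mathcal{O}_F$, so each $M^{(a_k)}$ is non-primitive. Scaling multiplies the volume of a binary lattice by $a_k^2$, whence $\mathfrak{v}(M^{(a_k)})=(\mathfrak{q}_1\mathfrak{q}_k)^2\,\mathfrak{v}(M)$; as the ideals $\mathfrak{q}_1\mathfrak{q}_k$ are pairwise distinct and ideal-squaring is injective in the Dedekind domain $\mathcal{O}_F$, the volumes are pairwise distinct and the lattices $M^{(a_k)}$ ($k\ge2$) are pairwise non-isometric. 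This yields infinitely many non-primitive binary ADC lattices. I expect the main obstacle to be the control of signs in the preceding step: leveraging $h(F)=1$ through the pigeonhole on real-place signs to furnish totally positive generators of squarefree ideals supported on the density-$1/2$ set of inert primes is precisely the point where the gap between the wide and narrow class groups must be bridged.
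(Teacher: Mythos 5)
Your proposal is correct and follows essentially the same route as the paper's proof: scale the given primitive lattice $M$ by totally positive elements supported on non-dyadic primes where $M_{\mathfrak{p}}$ is unimodular and $FM_{\mathfrak{p}}\cong[1,-\Delta_{\mathfrak{p}}]$, then invoke Theorem \ref{thm:scale-ADC}. The paper produces these primes via the global square theorem applied to $c=-\det FM$ (infinitely many non-dyadic $\mathfrak{q}\nmid c\mathcal{O}_F$ with $c_{\mathfrak{q}}$ a non-square unit), which is the same set your Chebotarev argument yields, since for such $\mathfrak{q}$ being inert in $F(\sqrt{dV})/F$ is exactly the condition $c_{\mathfrak{q}}\in\Delta_{\mathfrak{q}}F_{\mathfrak{q}}^{\times 2}$; and it scales by generators of the nested products $\mathfrak{q}_1\cdots\mathfrak{q}_i$ rather than your pairs $\mathfrak{q}_1\mathfrak{q}_k$ — cosmetic differences. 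Where you genuinely improve on the paper: it asserts that ``since $\mathcal{O}_F$ is a PID we may take $a_i\in\mathcal{O}_F^{+}$ with $a_i\mathcal{O}_F=\mathfrak{q}_1\cdots\mathfrak{q}_i$'', but $h(F)=1$ alone does not guarantee totally positive generators when the narrow class number exceeds $h(F)$ (e.g.\ $F=\mathbb{Q}(\sqrt{3})$ has $h(F)=1$ and narrow class number $2$, so $(\sqrt{3})$ has no totally positive generator). Your pigeonhole on sign vectors, producing the totally positive products $b_1b_k$ from generators with matching signs, is precisely the repair this step needs (equivalently, the paper could restrict to even-length products of primes sharing a sign vector), so your version is the more careful one.
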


\newpage
\begin{thm}\label{thm:locallyn-ADC-ternary}
	Suppose $ \rank\,M=3 $. Let $ \mathfrak{p}\in \Omega_{F}\backslash \infty_{F} $.
	\begin{enumerate}[itemindent=-0.5em,label=\rm (\roman*)]
		\item If $ \mathfrak{p}$ is non-dyadic, then $ M_{\mathfrak{p}} $ is  ADC if and only if  it is either $\mathcal{O}_{F_{\mathfrak{p}}}$-maximal, or isometric to the non $\mathcal{O}_{F_{\mathfrak{p}}}$-maximal lattice 
			\begin{align*}
				\mathbf{H}\perp \langle \varepsilon_{\mathfrak{p}}\pi_{\mathfrak{p}}^{h_{\mathfrak{p}}}\rangle,
			\end{align*}
			with $\varepsilon_{\mathfrak{p}}\in \mathcal{U}_{\mathfrak{p}}$ and $h_{\mathfrak{p}}\ge 2$.

		\item If $ \mathfrak{p}$ is dyadic, then $ M_{\mathfrak{p}} $ is ADC if and only if  one of the following cases happens:

			\begin{enumerate}[itemindent=-0.5em,label=\rm (\roman*)]
				
				\item[(a)] it is $\mathcal{O}_{F_{\mathfrak{p}}}$-maximal;
				
				\item[(b)] it is isometric to the non $\mathcal{O}_{F_{\mathfrak{p}}}$-maximal lattice 
				\begin{align*}
					\mathbf{H}\perp \langle \varepsilon_{\mathfrak{p}}\pi_{\mathfrak{p}}^{h_{\mathfrak{p}}}\rangle,
				\end{align*}
				with $\varepsilon_{\mathfrak{p}}\in \mathcal{U}_{\mathfrak{p}}$ and $h_{\mathfrak{p}}\ge 2$; 
		 
				 \item[(c)] it is isometric to the non $\mathcal{O}_{F_{\mathfrak{p}}}$-maximal lattice 
				\begin{align*}
					\langle \Delta^{\nu_{\mathfrak{p}}-1},-\Delta^{\nu_{\mathfrak{p}}-1}\delta_{\mathfrak{p}}\pi_{\mathfrak{p}},\varepsilon_{\mathfrak{p}}\pi_{\mathfrak{p}}^{k_{\mathfrak{p}}} \rangle,
				\end{align*}
				with $\nu_{\mathfrak{p}},k_{\mathfrak{p}}\in \{1,2\}$ and $ \delta_{\mathfrak{p}},\varepsilon_{\mathfrak{p}}\in \mathcal{U}_{\mathfrak{p}}$;
				
				\item[(d)] it is isometric to 
				\begin{align*}
					(\delta_{\mathfrak{p}}^{\#})^{\nu_{\mathfrak{p}}-1} \pi_{\mathfrak{p}}^{ -l_{\mathfrak{p}}}A(\pi_{\mathfrak{p}}^{ l_{\mathfrak{p}}},-(\delta_{\mathfrak{p}}-1)\pi_{\mathfrak{p}}^{ -l_{\mathfrak{p}}}) \perp \langle \varepsilon_{\mathfrak{p}}\pi_{\mathfrak{p}}^{k_{\mathfrak{p}}}\rangle,
				\end{align*}
				with $\nu_{\mathfrak{p}}\in \{1,2\}$, $2l_{\mathfrak{p}}=d_{\mathfrak{p}}(\delta_{\mathfrak{p}})-1\le 2e_{\mathfrak{p}}-2$, $\delta_{\mathfrak{p}}\in  \mathcal{U}_{\mathfrak{p}}\backslash\{1,\Delta_{\mathfrak{p}}\}$,  $\varepsilon_{\mathfrak{p}}\in \mathcal{U}_{\mathfrak{p}}$  and  $k_{\mathfrak{p}}\in \{0,1\}$, where $\delta_{\mathfrak{p}}^{\#}=1+4\rho_{\mathfrak{p}}(\delta_{\mathfrak{p}}-1)^{-1}$. 
				
				Also, it is $\mathcal{O}_{F_{\mathfrak{p}}}$-maximal if and only if it is isometric to
				\begin{align*}  2^{-1}\pi_{\mathfrak{p}}A(2,2\rho_{\mathfrak{p}})\perp \langle \Delta_{\mathfrak{p}}\varepsilon_{\mathfrak{p}} \rangle,
				\end{align*}
				with $\varepsilon_{\mathfrak{p}}\in \mathcal{U}_{\mathfrak{p}}$.   
			\end{enumerate}  
 
	\end{enumerate} 
\end{thm}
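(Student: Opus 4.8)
The plan is to restate the ADC property as an equality of represented-value sets and then run the classification through the structure of the ternary space. Writing $N:=M_{\mathfrak{p}}$ and $V:=FM_{\mathfrak{p}}$, since $N$ is integral we have $Q(N)\subseteq\mathcal{O}_{F_{\mathfrak{p}}}$ and $Q(N)\subseteq Q(V)$, so by Definition \ref{defn:ADC}(ii) the lattice $N$ is ADC if and only if $Q(N)=Q(V)\cap\mathcal{O}_{F_{\mathfrak{p}}}$. The first step is to describe $Q(V)$ from local arithmetic: a non-degenerate ternary space over $F_{\mathfrak{p}}$ is either isotropic, hence represents every element of $F_{\mathfrak{p}}$, or anisotropic, in which case $V\perp\langle -a\rangle$ is the unique anisotropic quaternary space for exactly one square class, so that $V$ omits precisely one class $cF_{\mathfrak{p}}^{\times 2}$. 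Because $\mathcal{O}_{F_{\mathfrak{p}}}$-maximal lattices are always ADC by Lemma \ref{lem:ADC-sufficient-equiv} and are enumerated in Proposition \ref{prop:maximallattices-binary-ternary}, the real content is to pin down the non-maximal ADC lattices, and I would organize the argument according to whether $V$ is isotropic or anisotropic.

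When $V$ is isotropic, $Q(V)=F_{\mathfrak{p}}$ forces $Q(N)=\mathcal{O}_{F_{\mathfrak{p}}}$, so ADC is equivalent to universality. I would then apply Beli's universality criterion \cite{beli_universal_2020} to the BONG of $N$: the hyperbolic plane $\mathbf{H}$ is already universal, so any lattice of the form $\mathbf{H}\perp\langle\varepsilon_{\mathfrak{p}}\pi_{\mathfrak{p}}^{h_{\mathfrak{p}}}\rangle$ is universal, and these are non-maximal exactly when $h_{\mathfrak{p}}\ge 2$. Conversely, the criterion shows that no other non-maximal ternary lattice on an isotropic space can represent all of $\mathcal{O}_{F_{\mathfrak{p}}}$. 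This produces the isotropic non-maximal family $\mathbf{H}\perp\langle\varepsilon_{\mathfrak{p}}\pi_{\mathfrak{p}}^{h_{\mathfrak{p}}}\rangle$ with $h_{\mathfrak{p}}\ge 2$, which appears in both part (i) and case (ii)(b), the remaining ADC lattices on isotropic spaces being maximal.

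When $V$ is anisotropic, ADC becomes the requirement that $N$ represent every integer outside the single omitted class $cF_{\mathfrak{p}}^{\times 2}$. In the non-dyadic part (i) I would use a diagonal Jordan splitting and the classical description of represented units and $\pi_{\mathfrak{p}}$-multiples to show that any non-maximal lattice necessarily drops a further square class that $V$ still represents; hence on anisotropic non-dyadic spaces only maximal lattices are ADC, and combined with the isotropic analysis this yields (i). For the dyadic part (ii) I would parametrize $N$ by a good BONG with the invariants \eqref{a-Ri} and \eqref{R-alpha}, use Beli's necessary and sufficient criterion for a unary representation $\langle a\rangle\rep N$ to write $Q(N)$ as an explicit union of square classes, and then impose $Q(N)=Q(V)\cap\mathcal{O}_{F_{\mathfrak{p}}}$. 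A case analysis on the BONG/Jordan type isolates the binary-plus-unary shapes (c) and (d); for (d) I would verify the stated maximality criterion by computing its norm and volume and matching against the ternary maximal lattices of Proposition \ref{prop:maximallattices-binary-ternary}, rewriting the binary block via $\delta_{\mathfrak{p}}^{\#}=1+4\rho_{\mathfrak{p}}(\delta_{\mathfrak{p}}-1)^{-1}$.

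The hard part will be the dyadic anisotropic step, namely proving that families (c) and (d) represent \emph{every} admissible integer while no other non-maximal BONG type does. This demands delicate control of quadratic defects and of Beli's norm-group conditions, precisely because the interaction between the binary block $A(\pi_{\mathfrak{p}}^{l_{\mathfrak{p}}},-(\delta_{\mathfrak{p}}-1)\pi_{\mathfrak{p}}^{-l_{\mathfrak{p}}})$ constrained by $2l_{\mathfrak{p}}=d_{\mathfrak{p}}(\delta_{\mathfrak{p}})-1$ and the orthogonal summand $\langle\varepsilon_{\mathfrak{p}}\pi_{\mathfrak{p}}^{k_{\mathfrak{p}}}\rangle$ is exactly where the extended GK data of \cite{cho_localdensity-2020} fails to determine the arithmetic and the BONGs invariants become essential. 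I expect the binary ADC classification in Theorem \ref{thm:locallyn-ADC-binary}(iii) to govern the behaviour of the binary block and to keep the case analysis finite.
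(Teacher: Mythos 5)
Your overall skeleton---recasting the ADC property as $Q(M_{\mathfrak{p}})=Q(FM_{\mathfrak{p}})\cap\mathcal{O}_{F_{\mathfrak{p}}}$, observing that maximal lattices are automatically ADC (Lemma \ref{lem:ADC-sufficient-equiv}), splitting by isotropy of the ambient space, reducing the isotropic case to universality (Corollary \ref{cor:ADC-universal}), and handling the anisotropic dyadic case by Beli's unary representation criterion on good BONGs---is essentially the strategy the paper follows through Theorems \ref{thm:nondyadic-ADC-ternary} and \ref{thm:dyadic-ternary}. The non-dyadic part of your plan is sound.

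However, there is a genuine gap in your dyadic isotropic step. You assert that Beli's universality criterion shows the only non-maximal ternary lattices on an isotropic dyadic space representing all of $\mathcal{O}_{F_{\mathfrak{p}}}$ are $\mathbf{H}\perp\langle\varepsilon_{\mathfrak{p}}\pi_{\mathfrak{p}}^{h_{\mathfrak{p}}}\rangle$ with $h_{\mathfrak{p}}\ge 2$, ``the remaining ADC lattices on isotropic spaces being maximal.'' This is false for dyadic $\mathfrak{p}$: the criterion (as recorded in Lemma \ref{lem:W13k}) says a lattice on $W_{1}^{3}(\varepsilon\pi^{k})$ is universal iff $R_{1}=0$ and \emph{either} $\alpha_{1}=0$ (which yields the family $\mathbf{H}\perp\langle c\pi^{2h}\rangle$) \emph{or} $\alpha_{1}=1$, $R_{2}\in[2-2e,0]^{E}$ and $R_{3}=k$. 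The second branch is non-empty: by Lemmas \ref{lem:case-iii} and \ref{lem:alpha1-ternary}(iii), for any unit $\delta_{\mathfrak{p}}$ of odd defect $d_{\mathfrak{p}}(\delta_{\mathfrak{p}})\le 2e_{\mathfrak{p}}-1$ the lattice $N_{\nu}^{2}(\delta_{\mathfrak{p}})\perp\langle\varepsilon_{\mathfrak{p}}\pi_{\mathfrak{p}}^{k_{\mathfrak{p}}}\rangle$---your family (d)---can lie on an \emph{isotropic} space, is universal and hence ADC, is not maximal, and is not isometric to any $\mathbf{H}\perp\langle\varepsilon\pi^{h}\rangle$ (its invariant $R_{2}=1-d_{\mathfrak{p}}(\delta_{\mathfrak{p}})>-2e_{\mathfrak{p}}$ separates it, by Lemma \ref{lem:ADC-isometry-odd-R2}). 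This is precisely case (i)(c) of Theorem \ref{thm:dyadic-ternary}. Since your plan produces families (c) and (d) only inside the anisotropic case analysis, executing it as written would omit the isotropic members of family (d)---or, put differently, your intermediate claim contradicts the very criterion you invoke. Universality forces the shape $\mathbf{H}\perp\langle c\pi^{2h}\rangle$ only in the non-dyadic case \cite[Proposition 2.3(1)]{xu_indefinite_2020}; in the dyadic case the isotropic/anisotropic dichotomy does not align with the list (a)--(d), and family (d) straddles both sides.
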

 \begin{re}
 	\begin{enumerate}[itemindent=-0.5em,label=\rm (\roman*)]
 		\item All $\mathcal{O}_{F_{\mathfrak{p}}}$-maximal lattices of rank $3$ are listed in Proposition \ref{prop:maximallattices-binary-ternary}.
 		
 		\item  According to \cite[Theorem 19]{clark_ADC-I-2012}, $M_{\mathfrak{p}}$ is $\mathcal{O}_{F_{\mathfrak{p}}}$-maximal if and only if it is Euclidean in the sense of \cite[\S 4.2]{clark_ADC-I-2012}. So the term ``$\mathcal{O}_{F_{\mathfrak{p}}}$-maximal" in Theorems \ref{thm:locallyn-ADC-binary} and \ref{thm:locallyn-ADC-ternary} can be replaced with ``Euclidean" without any barrier.
 	\end{enumerate} 
 \end{re}
Different from the binary case, only primitive cases need to be considered for ternary ADC $\mathcal{O}_{F}$-lattices, as indicated by Theorems \ref{thm:primitive-nondyadic}(ii) and \ref{thm:primitive-dyadic}(ii). In the positive definite case, Clark and Jagy have determined all ternary ADC quadratic forms over $\mathbb{Z}$ (cf. \cite[Table 2]{clark_ADC-II-2014}), based on the list of candidates of ternary regular quadratic forms by Jagy, Kaplansky and  Schiemann \cite{JKS_regular913_1997}. But when $F\not=\mathbb{Q}$ is totally real, classifying ADC $\mathcal{O}_{F}$-lattices is challenging because little is known on classification of regular $\mathcal{O}_{F}$-lattices except for the finiteness result proved by Chan and Icaza \cite{chan-icaza}, which states that there are only finitely many ternary regular $\mathcal{O}_{F}$-lattices up to similarity. Therefore, a similar finiteness result for ternary ADC lattices  (Proposition \ref{prop:definite-ADC}(i)) follows from \eqref{equiv:uni-adc-regular} immediately. Note that any ternary $2$-ADC $\mathcal{O}_{F}$-lattice must be ADC. In fact, all such lattices are $\mathcal{O}_{F}$-maximal of class number one by \cite[Theorem 1.7]{He25}, and have been determined by Hanke \cite{Hanke_maximal} and Kirschmer \cite{kirschmer_one-class_2014}, which partially resolves the existence problem of ternary ADC $\mathcal{O}_{F}$-lattices over totally real number fields. In summary, we see that 
\newpage
\begin{prop}\label{prop:definite-ADC}
	Up to similarity, 
	\begin{enumerate}[itemindent=-0.5em,label=\rm (\roman*)]
		\item Given a totally real number field $F$, there are only finitely many positive definite ternary ADC $\mathcal{O}_{F}$-lattices. 
		
		\item There are at least $468$ positive definite ternary 2-ADC (and thus ADC) $\mathcal{O}_{F}$-lattices, when $F$ varies through all totally real number fields.
	\end{enumerate}
\end{prop}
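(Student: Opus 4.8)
The plan is to derive both parts from the implication chain \eqref{equiv:uni-adc-regular} together with the finiteness and classification results already cited in the introduction, so that essentially no new computation is required.

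For part (i), I would start from the observation that a globally ADC $\mathcal{O}_{F}$-lattice is in particular regular, which is exactly the content of the equivalence in \eqref{equiv:uni-adc-regular}. Since both regularity and the ADC property are preserved under scaling by an element of $F^{+}$, the counting may be carried out up to similarity. Chan and Icaza \cite{chan-icaza} proved that, for a fixed totally real number field $F$, there are only finitely many positive definite ternary regular $\mathcal{O}_{F}$-lattices up to similarity. Every positive definite ternary ADC $\mathcal{O}_{F}$-lattice lies in this finite list, so the finiteness in (i) follows at once; the only thing to record is the implication ``ADC $\Rightarrow$ regular'' supplied by \eqref{equiv:uni-adc-regular}.

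For part (ii), the main step is to match the relevant lattices with a family that has already been enumerated. By \cite[Theorem 1.7]{He25}, a positive definite ternary $\mathcal{O}_{F}$-lattice is $2$-ADC precisely when it is $\mathcal{O}_{F}$-maximal and has class number one. Such a lattice is then locally ADC (maximality forces local ADC-ness, by the implications recalled before Theorem \ref{thm:locallyn-ADC-binary}) and regular (class number one forces global representation of everything that is locally represented), so \eqref{equiv:uni-adc-regular} shows that it is also globally ADC; this justifies the parenthetical ``and thus ADC.'' It therefore suffices to count positive definite $\mathcal{O}_{F}$-maximal ternary lattices of class number one, as $F$ ranges over all totally real number fields. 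This family has been completely classified: Hanke \cite{Hanke_maximal} treated the maximal lattices and Kirschmer \cite{kirschmer_one-class_2014} the one-class lattices, and reading off the entries that are simultaneously maximal and of class number one produces at least $468$ pairwise non-similar examples, which is the desired lower bound.

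The genuine obstacle lies entirely in part (ii) and is organizational rather than conceptual: one must correctly extract, from Hanke's and Kirschmer's tables, exactly those ternary genera that are both maximal and of class number one, verify that each indeed satisfies the $2$-ADC hypothesis so that the characterization of \cite[Theorem 1.7]{He25} applies in both directions, and then count the resulting lattices across the finitely many totally real fields that support them, without double-counting entries related by similarity. Part (i), by contrast, is immediate once the two cited inputs are combined.
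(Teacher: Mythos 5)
Your proposal is correct and follows essentially the same route as the paper: part (i) is exactly the paper's argument that ADC implies regular via \eqref{equiv:uni-adc-regular} combined with the Chan--Icaza finiteness theorem \cite{chan-icaza}, and part (ii) matches the paper's use of \cite[Theorem 1.7]{He25} to identify ternary $2$-ADC lattices with $\mathcal{O}_{F}$-maximal lattices of class number one, enumerated by Hanke \cite{Hanke_maximal} and Kirschmer \cite{kirschmer_one-class_2014}. Your explicit justification of the parenthetical ``and thus ADC'' (maximality gives local ADC-ness, class number one gives regularity, then \eqref{equiv:uni-adc-regular}) is precisely the reasoning the paper leaves implicit, so no gap remains.
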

One may conjecture a stronger version of Proposition \ref{prop:definite-ADC}(i): there are finitely many positive definite ternary ADC $\mathcal{O}_{F}$-lattices, when $F$ runs over all totally real number fields. However, according to \eqref{equiv:uni-adc-regular}, this would imply Kitaoka's conjecture, which remains open so far; see \cite{kala_survey_2023} for historical background, and \cite{kalajdyz_Kitaoka's conjecture-quad_2025,kala-yatsyna_Kitaoka's conjecture_2023} for recent progress. Therefore, establishing this stronger statement seems beyond reach at present. 

In the indefinite case, it is somewhat easier to determine  sufficient conditions for regularity, and thus for globally ADC property. We first review the behavior of spinor genera of positive definite or indefinite ternary ADC lattices. Suppose that $\rank\,M=3$ and $b\in \mathcal{O}_{F}$ is represented by $ \gen(M)$. Let $a=-b\det FM$ and $b=Q(x)$ for some $x\in M$. Following the work of Kneser \cite{kneser_darstellungsmasse-1961} and Schulze-Pillot \cite{schulze-pillot-darstellung-1980}, we say that $a$ is a \textit{spinor exception} of $\gen(M)$ if $b$ is represented by exactly half of the spinor genera in $\gen(M)$. It is known that  $b$ is a spinor exception of $\gen(M)$ if and only if $b\not\in F^{\times 2}$ and
\begin{align}\label{spinor-ex}
 \theta(O_{\mathbb{A}}^{+}(M))\subseteq N(F(\sqrt{a})/F)=\theta(X_{\mathbb{A}}(M,b)),
\end{align}
 where we denote $X_{\mathbb{A}}(M,b):=\{\sigma\in O_{\mathbb{A}}^{+}(V)\mid x\in \sigma(M)\}$, because the considered group $\theta(X_{\mathbb{A}}(M,b))$ is independent of the choice of the vector $x$; see \cite[Definition 1]{schulze-pillot-darstellung-1980}. Recall from \cite[Satz 2]{schulze-pillot-darstellung-1980} that the conditions \eqref{spinor-ex} can be reduced to the corresponding conditions over non-archimedean local fields, which were determined by Schulze-Pillot  \cite[\S 3]{schulze-pillot-darstellung-1980} for the non-dyadic and $2$-adic cases, and by Xu \cite[\S2]{xu_ternary_2000} for the general dyadic case. Similar equivalent conditions including higher-dimensional representation cases were recently given by Beli \cite{beli_talk_2025}, using the new terminology $\exc(M,N)$ and $\co(M)$. Following his terminology, for a unary $\mathcal{O}_{F}$-lattice $N\subseteq M$, we define the set
 \begin{align*}
 	X_{\mathbb{A}}(M/N):=\{\sigma \in O_{\mathbb{A}}^{+}(V)\mid N\subseteq \sigma(M)\}.
 \end{align*}
For every $a\in F^{\times}/F^{\times 2}$ with $a\not=1$, we further define the sets
\begin{align*}
	\exc(M,a):=&\{N\mid   N\rep  \gen(M),\;-\det FM\det FN=a,\; \theta(X_{\mathbb{A}}(M/N))=N(F(\sqrt{a})/F)\}, \\
	\co(M):=&\{a\in F^{\times}/F^{\times 2}\mid a\not=1,\; \exc(M,a)\not=\emptyset\}.
\end{align*}
Note that for every $b=Q(x)$ with $x\in M$, one can take $N_{b}:=\mathcal{O}_{F}x$. Then $N_{b}$ is a unary lattice contained in $M$, and thus is represented by $   \gen(M) $. Also, $X(M,b)=X(M/N_{b})$. Hence if $\co(M)=\emptyset$, then there is no unary lattice $N$ such that $\theta(X_{\mathbb{A}}(M/N))=N(F(\sqrt{a})/F)$. So
\begin{align*}
	\theta(X_{\mathbb{A}}(M,b))\not=N(F(\sqrt{a})/F)
\end{align*}
for all such $b$. This means that there is no spinor exception in $\gen(M)$ provided that $\co(M)=\emptyset$.
 
For each non-archimedean prime $\mathfrak{p}$, we characterize the codeterminant set $\co(M_{\mathfrak{p}})$ of ternary ADC lattices $M$ (Theorems \ref{thm:coM-nondyadic} and \ref{thm:coM-dyadic}), thereby determining the equivalent conditions for $\co(M)$. 
\begin{thm}\label{thm:coM-global}
	Let $a\in F^{\times}/F^{\times 2}$ and $a\not=1$. Suppose that $M$ is ternary locally ADC. Then $a\in \co(M)$ if and only if the following conditions hold:
 
	\begin{enumerate}[itemindent=-0.5em,label=\rm (\roman*)]
		\item When $\mathfrak{p}$ is archimedean and real,  
		\begin{enumerate}[itemindent=-0.5em,label=\rm (\roman*)]
			\item[\rm (a)] if $FM_{\mathfrak{p}}$ is isotropic, then $a_{\mathfrak{p}}\in   F_{\mathfrak{p}}^{\times 2}$;
			
			\item[\rm (b)] if $FM_{\mathfrak{p}}$ is anisotropic, then  $a_{\mathfrak{p}}\in  -F_{\mathfrak{p}}^{\times 2}$.			
		\end{enumerate}
		
		\item When $\mathfrak{p}$ is non-dyadic, if $FM_{\mathfrak{p}}$ is isotropic and $\mathfrak{v}(M_{\mathfrak{p}})=\mathfrak{p}^{k_{\mathfrak{p}}}\subseteq \mathcal{O}_{F_{\mathfrak{p}}}$, then  $a_{\mathfrak{p}}\in F_{\mathfrak{p}}^{\times 2}\cup \Delta_{\mathfrak{p}}^{k_{\mathfrak{p}}+1}F_{\mathfrak{p}}^{\times 2} $.

		\item When $\mathfrak{p}$ is dyadic,

		\begin{enumerate}[itemindent=-0.5em,label=\rm (\roman*)]
		 	\item[\rm (a)]  if $FM_{\mathfrak{p}}$ is isotropic and $\mathfrak{v}(M_{\mathfrak{p}})=\mathfrak{p}^{k_{\mathfrak{p}}}\subseteq \mathfrak{p}^{2}$, then  $a_{\mathfrak{p}}\in F_{\mathfrak{p}}^{\times 2}\cup \Delta_{\mathfrak{p}}^{k_{\mathfrak{p}}+1}F_{\mathfrak{p}}^{\times 2} $;
		 	
		 	\item[\rm (b)]  if $FM_{\mathfrak{p}}$ is isotropic and $  2\mathcal{O}_{F_{\mathfrak{p}}}\subseteq 2\mathfrak{s}(M_{\mathfrak{p}})\subseteq\mathcal{O}_{F_{\mathfrak{p}}} $, then $a_{\mathfrak{p}}\in F_{\mathfrak{p}}^{\times 2} $.		 	
			 \end{enumerate}
	\end{enumerate}
\end{thm}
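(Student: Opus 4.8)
The plan is to reduce the global membership $a\in\co(M)$ to a family of purely local conditions, one at each place of $F$, and then to read those conditions off from the local ADC classification together with the local codeterminant computations already in hand. First I would unwind the definition: $a\in\co(M)$ means that some unary lattice $N$ with $N\rep\gen(M)$ satisfies $-\det FM\det FN=a$ in $F^{\times}/F^{\times2}$ and
\[
\theta(X_{\mathbb{A}}(M/N))=N(F(\sqrt{a})/F).
\]
The set $X_{\mathbb{A}}(M/N)=\{\sigma\in O_{\mathbb{A}}^{+}(V)\mid N\subseteq\sigma(M)\}$ is a restricted direct product $\prod_{\mathfrak{p}}X_{\mathfrak{p}}(M_{\mathfrak{p}}/N_{\mathfrak{p}})$, so its spinor image is $\prod_{\mathfrak{p}}\theta(X_{\mathfrak{p}}(M_{\mathfrak{p}}/N_{\mathfrak{p}}))$; likewise $N(F(\sqrt{a})/F)=\prod_{\mathfrak{p}}N(F_{\mathfrak{p}}(\sqrt{a})/F_{\mathfrak{p}})$ inside the idele group. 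Since the inclusion $\theta(X_{\mathfrak{p}}(M_{\mathfrak{p}}/N_{\mathfrak{p}}))\subseteq N(F_{\mathfrak{p}}(\sqrt{a})/F_{\mathfrak{p}})$ holds at every $\mathfrak{p}$, the global equality is equivalent to the local equalities $\theta(X_{\mathfrak{p}}(M_{\mathfrak{p}}/N_{\mathfrak{p}}))=N(F_{\mathfrak{p}}(\sqrt{a})/F_{\mathfrak{p}})$ for all $\mathfrak{p}$, i.e.\ to $a_{\mathfrak{p}}\in\co(M_{\mathfrak{p}})$ at every finite $\mathfrak{p}$ together with the corresponding archimedean conditions. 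This is exactly the reduction underlying \eqref{spinor-ex} (Schulze-Pillot's Satz~2 \cite{schulze-pillot-darstellung-1980} and Beli's framework \cite{beli_talk_2025}), and the local-global principle for representation by a genus is what supplies a global $N$ realizing compatible local choices $N_{\mathfrak{p}}$, so both directions pass through it.

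Next I would settle the archimedean places. At a real place $\mathfrak{p}$ the spinor norm $\theta(O^{+}(V_{\mathfrak{p}}))$, and with it $\theta(X_{\mathfrak{p}}(M_{\mathfrak{p}}/N_{\mathfrak{p}}))$, equals $F_{\mathfrak{p}}^{\times}=\mathbb{R}^{\times}$ when $V_{\mathfrak{p}}$ is indefinite (isotropic) and equals $\mathbb{R}_{>0}$ when $V_{\mathfrak{p}}$ is definite (anisotropic). Matching these against $N(F_{\mathfrak{p}}(\sqrt{a})/F_{\mathfrak{p}})$, which is $\mathbb{R}^{\times}$ when $a_{\mathfrak{p}}\in F_{\mathfrak{p}}^{\times2}$ and $\mathbb{R}_{>0}$ when $a_{\mathfrak{p}}\in-F_{\mathfrak{p}}^{\times2}$, yields precisely conditions (i)(a) and (i)(b); complex places impose nothing, since there $F_{\mathfrak{p}}^{\times}$ is its own norm group.

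For the finite places I would invoke local ADC-ness. By Theorem~\ref{thm:locallyn-ADC-ternary} each $M_{\mathfrak{p}}$ has a known isometry type; substituting these normal forms into the local codeterminant computations of Theorems~\ref{thm:coM-nondyadic} and~\ref{thm:coM-dyadic} evaluates $\co(M_{\mathfrak{p}})$ and hence the admissible classes for $a_{\mathfrak{p}}$. In the isotropic cases this produces the square classes recorded in (ii) for non-dyadic $\mathfrak{p}$ and in (iii)(a), (iii)(b) for dyadic $\mathfrak{p}$, the dichotomy in (iii) tracking whether $\mathfrak{v}(M_{\mathfrak{p}})\subseteq\mathfrak{p}^{2}$ or the scale satisfies $2\mathcal{O}_{F_{\mathfrak{p}}}\subseteq 2\mathfrak{s}(M_{\mathfrak{p}})\subseteq\mathcal{O}_{F_{\mathfrak{p}}}$. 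At finite places where $FM_{\mathfrak{p}}$ is anisotropic, the same local theorems show that $\co(M_{\mathfrak{p}})$ imposes no restriction separate from the standing hypothesis $a\neq1$, which is why (ii) and (iii) are phrased only under local isotropy. Conjoining the finite-place and archimedean conditions then gives the stated characterization.

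The main obstacle, I expect, is the sufficiency direction at the dyadic places. Two points must be verified. First, the local relative spinor norm groups attached to the dyadic ADC normal forms of Theorem~\ref{thm:locallyn-ADC-ternary} depend sensitively on their BONGs/Jordan invariants, so Theorem~\ref{thm:coM-dyadic} must be applied with care to confirm that the admissible local classes are simultaneously compatible with one global square class $a\in F^{\times}/F^{\times2}$, with no residual reciprocity obstruction coming from the product formula. Second, one must produce a genuine global unary lattice $N$ with $N\rep\gen(M)$, $-\det FM\det FN=a$, and the prescribed local relative spinor norms, by gluing compatible local choices $N_{\mathfrak{p}}$ via the local-global principle for representation by a genus. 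The non-dyadic and archimedean inputs are comparatively routine; it is the dyadic bookkeeping, together with the global consistency check, where the real work concentrates.
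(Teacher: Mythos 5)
Your overall strategy---reduce $a\in \co(M)$ to the local memberships $a_{\mathfrak{p}}\in \co(M_{\mathfrak{p}})$ at every place, then evaluate the local sets via the ADC classification---is the same as the paper's. The difference is that the paper takes the reduction as a black box: it invokes Beli's result (Theorem \ref{thm:coM-Beli}), namely that $a\in \co(M)$ if and only if $a_{\mathfrak{p}}\in \co(M_{\mathfrak{p}})$ for all $\mathfrak{p}\in \Omega_{F}$, and then only needs to identify, via Remarks \ref{re:nondyadicACD-3}, \ref{re:dyadicACD-3} and \ref{re:r-sM-relation}, which normal form $M_{\mathfrak{p}}$ takes under each hypothesis on $\mathfrak{v}(M_{\mathfrak{p}})$ or $\mathfrak{s}(M_{\mathfrak{p}})$, reading off $\co(M_{\mathfrak{p}})$ from Theorems \ref{thm:coM-nondyadic} and \ref{thm:coM-dyadic} (and Remark \ref{re:co-1-Delta}(ii) at real places). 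Your plan to re-derive the reduction from restricted products is not wrong in spirit, but as written it has a gap: the inclusion $\theta(X(M_{\mathfrak{p}}/N_{\mathfrak{p}}))\subseteq N(F_{\mathfrak{p}}(\sqrt{a})/F_{\mathfrak{p}})$ does \emph{not} hold automatically at every $\mathfrak{p}$; what is needed is the classical dichotomy (Schulze-Pillot, Xu) that this relative spinor norm group either equals the local norm group or is not contained in it. Moreover, for sufficiency one must actually produce a global unary lattice $N$ with $N\rep \gen(M)$, prescribed discriminant class $a$, and the prescribed local behaviour at every place; your proposal defers this to an unproved ``gluing'' step, which is exactly the content of the theorem the paper cites instead of proving.

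The concrete error is your treatment of finite places where $FM_{\mathfrak{p}}$ is anisotropic. You assert that there ``$\co(M_{\mathfrak{p}})$ imposes no restriction separate from the standing hypothesis $a\not=1$.'' The paper's local theorems say the opposite: for ternary ADC lattices, anisotropy at a non-archimedean place forces $\co(M_{\mathfrak{p}})=\emptyset$ (the $\nu=2$ cases of Theorems \ref{thm:coM-nondyadic}(i) and \ref{thm:coM-dyadic}(i)), so $a_{\mathfrak{p}}\in \co(M_{\mathfrak{p}})$ is then unsatisfiable and in fact $\co(M)=\emptyset$; this is precisely Theorem \ref{thm:coM-global-empty}. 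Anisotropic finite places are therefore not harmless but fatal to membership, and under your reading---where clauses (ii)--(iii) are vacuously true at such places---your ``if'' direction would produce $a\in \co(M)$ in situations where the local set is empty. Any correct proof must make the local characterization exhaustive (empty set in the anisotropic finite cases), rather than treating the isotropy hypotheses in (ii)--(iii) as signalling the absence of a condition.
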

Based on Theorem \ref{thm:coM-global}, we derive a sufficient condition for $\co(M)=\emptyset$, and hence for indefinite ternary ADC lattices.
\begin{thm}\label{thm:coM-global-empty}
	Suppose that $M$ is ternary locally ADC. If there exists some $\mathfrak{p}\in \Omega_{F}\backslash \infty_{F}$ such that $FM_{\mathfrak{p}}$ is anisotropic, then $\co(M)=\emptyset$; thus there is no spinor exception in $\gen(M)$. 
	
	If moreover, $M$ is indefinite, then $M$ is globally ADC.
\end{thm}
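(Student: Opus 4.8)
The plan is to establish the two assertions in turn. Since the discussion preceding \eqref{spinor-ex} already shows that $\co(M)=\emptyset$ forces the absence of spinor exceptions in $\gen(M)$, for the first claim it suffices to prove $\co(M)=\emptyset$. Fix $\mathfrak{p}_{0}\in\Omega_{F}\backslash\infty_{F}$ with $FM_{\mathfrak{p}_{0}}$ anisotropic, and suppose for contradiction that some $a\neq 1$ lies in $\co(M)$; choose a witnessing unary lattice $N$ with $N\rep\gen(M)$, $-\det FM\det FN=a$ and $\theta(X_{\mathbb{A}}(M/N))=N(F(\sqrt{a})/F)$. Writing each side as a restricted product over all primes, $\theta(X_{\mathbb{A}}(M/N))=\prod_{\mathfrak{p}}\theta(X_{\mathfrak{p}}(M_{\mathfrak{p}}/N_{\mathfrak{p}}))$ and $N(F(\sqrt{a})/F)=\prod_{\mathfrak{p}}N(F_{\mathfrak{p}}(\sqrt{a})/F_{\mathfrak{p}})$, the adelic equality forces the local equality $\theta(X_{\mathfrak{p}}(M_{\mathfrak{p}}/N_{\mathfrak{p}}))=N(F_{\mathfrak{p}}(\sqrt{a})/F_{\mathfrak{p}})$ at every $\mathfrak{p}$, in particular at $\mathfrak{p}_{0}$. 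This is the local--global reduction behind \cite[Satz 2]{schulze-pillot-darstellung-1980}, and behind the assembly of Theorem \ref{thm:coM-global} from Theorems \ref{thm:coM-nondyadic} and \ref{thm:coM-dyadic}.

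The heart of the argument is the prime $\mathfrak{p}_{0}$, where I would play two facts against each other. First, $N\rep\gen(M)$ gives $N_{\mathfrak{p}_{0}}\rep M_{\mathfrak{p}_{0}}$, so the anisotropic ternary space $FM_{\mathfrak{p}_{0}}$ represents the nonzero value $b:=\det FN$. An anisotropic ternary space over a non-archimedean local field represents every nonzero square class except $-\det FM_{\mathfrak{p}_{0}}$, because $FM_{\mathfrak{p}_{0}}\perp\langle -b\rangle$ is isotropic unless its discriminant $-b\det FM_{\mathfrak{p}_{0}}$ is a square (i.e. $b\equiv -\det FM_{\mathfrak{p}_{0}}$) and it is the anisotropic quaternary norm form. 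Hence $b\not\equiv -\det FM_{\mathfrak{p}_{0}}$, which rearranges to $a_{\mathfrak{p}_{0}}=-\det FM_{\mathfrak{p}_{0}}\cdot b\not\equiv 1$; thus $F_{\mathfrak{p}_{0}}(\sqrt{a})/F_{\mathfrak{p}_{0}}$ is a genuine quadratic extension and $N(F_{\mathfrak{p}_{0}}(\sqrt{a})/F_{\mathfrak{p}_{0}})$ has index $2$ in $F_{\mathfrak{p}_{0}}^{\times}$. Second, Theorems \ref{thm:coM-nondyadic} and \ref{thm:coM-dyadic} show that for an ADC ternary lattice with anisotropic underlying space the relative spinor norm $\theta(X_{\mathfrak{p}_{0}}(M_{\mathfrak{p}_{0}}/N_{\mathfrak{p}_{0}}))$ is all of $F_{\mathfrak{p}_{0}}^{\times}$, equivalently $\co(M_{\mathfrak{p}_{0}})=\emptyset$. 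These two facts contradict the local equality from the previous step, so no such $a$ exists and $\co(M)=\emptyset$; the absence of spinor exceptions in $\gen(M)$ then follows.

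For the last assertion, assume in addition that $M$ is indefinite. Having no spinor exception means precisely that every $b\in\mathcal{O}_{F}$ represented by $\gen(M)$ is represented by every spinor genus of $\gen(M)$, in particular by $\spn(M)$. Because $M$ is indefinite of rank $3$, the theory of spinor genera for indefinite forms (strong approximation for the spin group, cf. \cite{kneser_darstellungsmasse-1961}) shows that representation by $\spn(M)$ already implies representation by $\cls(M)=M$. Consequently every $b$ represented by $\gen(M)$ --- equivalently every $b\in\mathcal{O}_{F}$ with $b\rep M_{\mathfrak{p}}$ for all $\mathfrak{p}$ --- is represented by $M$, so $M$ is regular. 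Since $M$ is locally ADC by hypothesis, \eqref{equiv:uni-adc-regular} gives that $M$ is globally ADC.

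I expect the main obstacle to be the local analysis at $\mathfrak{p}_{0}$: one must show simultaneously that the represented value $b$ forces $a_{\mathfrak{p}_{0}}$ into a nontrivial square class and that the relative spinor norm there is nevertheless the full group, so that the index-$2$ norm group $N(F_{\mathfrak{p}_{0}}(\sqrt{a})/F_{\mathfrak{p}_{0}})$ can never be matched. Reconciling the representation-obstruction computation for anisotropic ternary spaces with the spinor-norm data supplied by Theorems \ref{thm:coM-nondyadic} and \ref{thm:coM-dyadic} is the delicate point; by contrast, the passage from the vanishing of spinor exceptions to regularity in the indefinite case is a routine appeal to strong approximation.
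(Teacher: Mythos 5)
Your proposal is correct and follows essentially the same path as the paper: reduce to $\co(M_{\mathfrak{p}_{0}})=\emptyset$ at the anisotropic prime (the content of Theorems \ref{thm:coM-nondyadic} and \ref{thm:coM-dyadic}), globalize via the local--global principle of Theorem \ref{thm:coM-Beli}, and then obtain regularity for indefinite $M$ from \cite[104:5]{omeara_quadratic_1963} together with \eqref{equiv:uni-adc-regular}. Your extra analysis at $\mathfrak{p}_{0}$ (the representation obstruction forcing $a_{\mathfrak{p}_{0}}\neq 1$, and the full relative spinor norm, which strictly speaking comes from Lemmas \ref{lem:thetaOM-NE-non-dyadic} and \ref{lem:R-theta-M-nur} plus $\theta(X(M_{\mathfrak{p}_{0}}/N_{\mathfrak{p}_{0}}))\supseteq\theta(O^{+}(M_{\mathfrak{p}_{0}}))$, not from the $\co$-theorems themselves) only re-derives what the single statement $\co(M_{\mathfrak{p}_{0}})=\emptyset$ already encodes, so the difference is presentational rather than substantive.
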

For each non-archimedean prime $\mathfrak{p}$, we also characterize the integral spinor norm groups $\theta(O^{+}(M_{\mathfrak{p}}))$ of ternary ADC lattices $M$ (Lemmas \ref{lem:thetaOM-NE-non-dyadic}, \ref{lem:R-theta-M-nur}, and \ref{lem:R-theta-H-1h}), thereby obtaining the following theorem. 
\begin{thm}\label{thm:thetaO+M}
	Suppose that $M$ is ternary locally ADC. Then $\theta(O^{+}(M_{\mathfrak{p}}))\supseteq \mathcal{O}_{F_{\mathfrak{p}}}^{\times}F_{\mathfrak{p}}^{\times 2}$ for all $\mathfrak{p}\in \Omega_{F}\backslash \infty_{F}$.
\end{thm}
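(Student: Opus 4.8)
The plan is to prove the inclusion place by place, so I would fix $\mathfrak{p}\in\Omega_{F}\backslash\infty_{F}$ and show $\theta(O^{+}(M_{\mathfrak{p}}))\supseteq\mathcal{O}_{F_{\mathfrak{p}}}^{\times}F_{\mathfrak{p}}^{\times2}$. Because $M$ is locally ADC, $M_{\mathfrak{p}}$ is a ternary ADC $\mathcal{O}_{F_{\mathfrak{p}}}$-lattice, so by Theorem \ref{thm:locallyn-ADC-ternary} it is isometric to one of the explicit normal forms listed there: an $\mathcal{O}_{F_{\mathfrak{p}}}$-maximal lattice, the split lattice $\mathbf{H}\perp\langle\varepsilon_{\mathfrak{p}}\pi_{\mathfrak{p}}^{h_{\mathfrak{p}}}\rangle$, or (in the dyadic case) one of the diagonal or $A(\cdot,\cdot)$-type lattices of (ii)(c)--(d). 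I would then compute $\theta(O^{+}(M_{\mathfrak{p}}))$ for each normal form; this is precisely the case analysis carried out in Lemmas \ref{lem:thetaOM-NE-non-dyadic}, \ref{lem:R-theta-M-nur} and \ref{lem:R-theta-H-1h}, and the theorem follows by assembling their conclusions.

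The mechanism I would use throughout is an observation about unit-norm vectors. Since $M_{\mathfrak{p}}$ is integral, $2\mathfrak{s}(M_{\mathfrak{p}})\subseteq\mathfrak{n}(M_{\mathfrak{p}})\subseteq\mathcal{O}_{F_{\mathfrak{p}}}$, so $2B(v,M_{\mathfrak{p}})\subseteq2\mathfrak{s}(M_{\mathfrak{p}})\subseteq\mathcal{O}_{F_{\mathfrak{p}}}$ for every $v\in M_{\mathfrak{p}}$. Hence whenever $Q(v)\in\mathcal{O}_{F_{\mathfrak{p}}}^{\times}$ the symmetry $\tau_{v}$ preserves $M_{\mathfrak{p}}$, and any product $\tau_{v}\tau_{w}$ of two such symmetries lies in $O^{+}(M_{\mathfrak{p}})$ with spinor norm $Q(v)Q(w)F_{\mathfrak{p}}^{\times2}$. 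Writing $S$ for the set of unit square-classes represented as norms by $M_{\mathfrak{p}}$, these products generate a subgroup of index at most two in $\langle S\rangle$, so in each case the task is either to show this subgroup already exhausts $\mathcal{O}_{F_{\mathfrak{p}}}^{\times}/\mathcal{O}_{F_{\mathfrak{p}}}^{\times2}$ or to exhibit extra proper isometries whose spinor norms fill the remaining unit classes.

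For the split lattices $\mathbf{H}\perp\langle\varepsilon_{\mathfrak{p}}\pi_{\mathfrak{p}}^{h_{\mathfrak{p}}}\rangle$, which cover the non-dyadic non-maximal case (i) and the dyadic case (ii)(b), the inclusion is immediate and avoids the index-two subtlety altogether. Writing $\mathbf{H}=\mathcal{O}_{F_{\mathfrak{p}}}x+\mathcal{O}_{F_{\mathfrak{p}}}y$, the hyperbolic rotations $\sigma_{u}\colon x\mapsto ux,\ y\mapsto u^{-1}y$, extended by the identity on $\langle\varepsilon_{\mathfrak{p}}\pi_{\mathfrak{p}}^{h_{\mathfrak{p}}}\rangle$, already lie in $O^{+}(M_{\mathfrak{p}})$ and satisfy $\theta(\sigma_{u})=uF_{\mathfrak{p}}^{\times2}$; letting $u$ run over $\mathcal{O}_{F_{\mathfrak{p}}}^{\times}$ then gives the whole group $\mathcal{O}_{F_{\mathfrak{p}}}^{\times}F_{\mathfrak{p}}^{\times2}$. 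The isotropic maximal lattices split off such a hyperbolic plane and are treated identically, while for the anisotropic maximal lattices I would read off the represented unit classes from the Jordan splittings recorded in Proposition \ref{prop:maximallattices-binary-ternary}; this is the substance of Lemma \ref{lem:thetaOM-NE-non-dyadic}.

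The genuinely hard cases are the dyadic non-split lattices of types (ii)(c) and (ii)(d), where no hyperbolic summand is available and one must work with products of unit-norm symmetries together with proper rotations of the binary block. The difficulty is twofold. First, one must rule out the possible index-two loss, either by producing enough represented unit classes or by supplying a proper isometry of the binary block directly. Second, and more seriously, for type (ii)(d) the block $(\delta_{\mathfrak{p}}^{\#})^{\nu_{\mathfrak{p}}-1}\pi_{\mathfrak{p}}^{-l_{\mathfrak{p}}}A(\pi_{\mathfrak{p}}^{l_{\mathfrak{p}}},-(\delta_{\mathfrak{p}}-1)\pi_{\mathfrak{p}}^{-l_{\mathfrak{p}}})$ is pinned to the quadratic defect of $\delta_{\mathfrak{p}}$ through $2l_{\mathfrak{p}}=d_{\mathfrak{p}}(\delta_{\mathfrak{p}})-1$, so that its norm strictly exceeds its scale; over a ramified dyadic field $\mathcal{O}_{F_{\mathfrak{p}}}^{\times}/\mathcal{O}_{F_{\mathfrak{p}}}^{\times2}$ has order $2(N\mathfrak{p})^{e_{\mathfrak{p}}}$, and checking that every unit square-class is actually attained --- with the defect constraint creating no obstruction --- is the main obstacle. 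I expect this to be exactly where the BONG-invariant computation of $\theta(O^{+}(M_{\mathfrak{p}}))$ through the $a$- and $R_{i}$-invariants, carried out in Lemmas \ref{lem:R-theta-M-nur} and \ref{lem:R-theta-H-1h}, does the essential work.
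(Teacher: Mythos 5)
Your proposal is correct and is essentially the paper's own proof: the paper deduces Theorem \ref{thm:thetaO+M} directly from Lemma \ref{lem:thetaOM-NE-non-dyadic}(iii)(a) and Lemma \ref{lem:thetaOM-NE-dyadic}(i), and the latter is in turn exactly the assembly of Remark \ref{re:dyadicACD-3}(iii) with Lemmas \ref{lem:R-theta-M-nur} and \ref{lem:R-theta-H-1h} that you invoke. Your auxiliary mechanism (unit-norm symmetries, and hyperbolic rotations $\sigma_{u}$ with $\theta(\sigma_{u})=uF_{\mathfrak{p}}^{\times 2}$ handling every lattice split by $\mathbf{H}$) is a sound elementary substitute for the paper's treatment of the split and isotropic-maximal cases, and you correctly identify that the non-split dyadic cases (and the anisotropic maximal ones, where represented unit classes alone would lose an index two) are precisely where the BONG-invariant computations of Lemmas \ref{lem:R-theta-M-nur} and \ref{lem:R-theta-H-1h} must carry the argument.
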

\begin{re}
	Since an ADC $\mathcal{O}_{F_{\mathfrak{p}}}$-lattice $M_{\mathfrak{p}}$ of rank $\ge 4$ is universal by \eqref{equiv:uni-adc}, it follows that $\theta(O^{+}(M_{\mathfrak{p}}))\supseteq \mathcal{O}_{F_{\mathfrak{p}}}^{\times}F_{\mathfrak{p}}^{\times 2}$. Therefore, Theorem \ref{thm:thetaO+M} holds for $\rank\,M\ge 4$, which generalizes \cite[91:8]{omeara_quadratic_1963} to ADC lattices.
\end{re}
By Theorem \ref{thm:thetaO+M}, we provide a sufficient condition for regularity.
\begin{thm}\label{thm:gM-hM=1}
 	Suppose that $M$ is ternary locally ADC. If $h(F)$ is odd, then $g(M)=1$; thus there is only one spinor genus in $\gen(M)$.
 	 
 	 If moreover, $M$ is indefinite, then $h(M)=1$; thus $M$ is regular.
 \end{thm}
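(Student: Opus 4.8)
The plan is to combine the local spinor-norm information furnished by Theorem \ref{thm:thetaO+M} with the classical adelic description of the number $g(M)$ of spinor genera in a genus, as given in \cite[102:7]{omeara_quadratic_1963}. Recall that
\begin{align*}
	g(M)=[J_{F}:F^{\times}\,\theta(O_{\mathbb{A}}^{+}(M))],
\end{align*}
where $J_{F}$ denotes the idele group of $F$ and $\theta(O_{\mathbb{A}}^{+}(M))=\prod_{\mathfrak{p}}\theta(O^{+}(M_{\mathfrak{p}}))$ (with $\theta(O^{+}(M_{\mathfrak{p}}))=F_{\mathfrak{p}}^{\times}$ at archimedean $\mathfrak{p}$). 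The first step is to bound this index from above. By Theorem \ref{thm:thetaO+M}, at every non-archimedean prime we have $\theta(O^{+}(M_{\mathfrak{p}}))\supseteq \mathcal{O}_{F_{\mathfrak{p}}}^{\times}F_{\mathfrak{p}}^{\times 2}$, and trivially $\theta(O^{+}(M_{\mathfrak{p}}))\supseteq F_{\mathfrak{p}}^{\times 2}$ at the archimedean places. Hence $\theta(O_{\mathbb{A}}^{+}(M))$ contains the subgroup $J_{F}^{2}\prod_{\mathfrak{p}\nmid\infty}\mathcal{O}_{F_{\mathfrak{p}}}^{\times}$, so that $F^{\times}\theta(O_{\mathbb{A}}^{+}(M))$ contains $F^{\times}J_{F}^{2}\prod_{\mathfrak{p}\nmid\infty}\mathcal{O}_{F_{\mathfrak{p}}}^{\times}$.

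The second step is to identify the quotient of $J_{F}$ by this last subgroup with a quotient of the narrow (or wide) ideal class group. Indeed, the idele classes modulo squares, local units, and the archimedean factors compute a $2$-torsion quotient of the ideal class group of $F$; more precisely $J_{F}/(F^{\times}J_{F}^{2}\prod_{\mathfrak{p}\nmid\infty}\mathcal{O}_{F_{\mathfrak{p}}}^{\times})$ is isomorphic to the class group modulo its squares, whose order divides a power of $2$. Since this order divides $h(F)$ up to the contribution of the $2$-part, and $h(F)$ is assumed odd, the $2$-primary quotient is trivial; therefore $g(M)$, which divides this order, equals $1$. This is essentially the observation that an odd class number forces every spinor genus in the genus to coincide, as in \cite[Theorem 102:9]{omeara_quadratic_1963} and the surrounding discussion. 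I expect the only delicate point here to be the bookkeeping of the archimedean places and the distinction between narrow and wide class groups; but because $g(M)$ is a $2$-group quotient of a class group whose order divides $h(F)$, oddness of $h(F)$ suffices regardless of these refinements.

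For the final assertion, suppose in addition that $M$ is indefinite. Then $FM$ is isotropic at some archimedean prime, and by the Eichler--Kneser strong approximation theorem for the spin group \cite[104:4]{omeara_quadratic_1963} every spinor genus of an indefinite lattice of rank $\ge 3$ consists of a single class, i.e. $h(M)=g(M)\cdot(\text{classes per spinor genus})=g(M)$. Combining with $g(M)=1$ yields $h(M)=1$. Finally, a lattice with class number one in its genus represents every element represented locally everywhere, so $M$ is regular; together with the hypothesis that $M$ is locally ADC, \eqref{equiv:uni-adc-regular} then gives that $M$ is globally ADC, which is consistent with the stronger conclusion already recorded in Theorem \ref{thm:coM-global-empty} under a different hypothesis. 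The main obstacle is thus not conceptual but lies in correctly invoking the adelic index formula for $g(M)$ and in verifying that the local lower bounds of Theorem \ref{thm:thetaO+M} collapse the relevant class-group quotient precisely when $h(F)$ is odd.
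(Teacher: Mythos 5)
Your route is, at bottom, the same as the paper's: feed the local lower bound of Theorem \ref{thm:thetaO+M} into the idelic count of spinor genera, then use strong approximation for the indefinite assertion (your citation of \cite[104:4]{omeara_quadratic_1963} versus the paper's \cite[104:5]{omeara_quadratic_1963} is immaterial). The difference is that the paper compresses the middle step into Lemma \ref{lem:odd-class-number}, justified by the argument of \cite[Proposition 3.2]{xu_indefinite_2020}, whereas you redo that computation by hand --- and your version of it has a genuine gap, located exactly at the point you flag as ``delicate'' and then dismiss.

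The index formula of \cite[102:7]{omeara_quadratic_1963} is not $[J_{F}:F^{\times}\theta(O_{\mathbb{A}}^{+}(M))]$ with $\theta(O^{+}(M_{\mathfrak{p}}))=F_{\mathfrak{p}}^{\times}$ at all archimedean $\mathfrak{p}$. It is $[J_{F}:P_{D}J^{M}]$, where $P_{D}$ is the group of principal ideles of $D=\theta(O^{+}(FM))$, and by Kneser's theorem \cite[101:8]{omeara_quadratic_1963} $D$ consists only of those $\alpha\in F^{\times}$ that are positive at every real place where $FM$ is anisotropic; at those same places the components of $J^{M}$ lie in $\theta(O^{+}(V_{\mathfrak{p}}))=F_{\mathfrak{p}}^{\times 2}$, not in $F_{\mathfrak{p}}^{\times}$. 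When $M$ is definite at $s\ge 1$ real places these sign conditions cannot be dropped, and the group that must be shown trivial is then a quotient of $\mathrm{Cl}^{+}_{S}(F)/\mathrm{Cl}^{+}_{S}(F)^{2}$, where $\mathrm{Cl}^{+}_{S}(F)$ is the ray class group with modulus the product of those real places --- not a quotient of $\mathrm{Cl}(F)/\mathrm{Cl}(F)^{2}$. Its order is $h(F)\cdot 2^{s}/[\mathcal{O}_{F}^{\times}:U_{S}]$, with $U_{S}$ the units positive at those places; this does \emph{not} divide $h(F)$ and can be even while $h(F)$ is odd (already $F=\mathbb{Q}(\sqrt{3})$ has $h(F)=1$ but narrow class number $2$). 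So your closing assertion that $g(M)$ is ``a $2$-group quotient of a class group whose order divides $h(F)$'' is false, and oddness of $h(F)$ alone does not collapse the quotient along the route you take: the obstruction is the failure of global units to realize all sign patterns at the definite real places, and removing it requires precisely the sign-versus-unit analysis that the paper delegates to Lemma \ref{lem:odd-class-number} via \cite[Proposition 3.2]{xu_indefinite_2020}. Since that analysis is absent from your write-up, the first assertion $g(M)=1$ is not established for lattices definite at two or more real places; the indefinite half of your argument is fine, but only once $g(M)=1$ has actually been proved.
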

Combining Theorem \ref{thm:gM-hM=1} with the equivalent condition \eqref{equiv:uni-adc-regular}, we derive another sufficient condition for indefinite ternary ADC lattices.
\begin{cor}\label{cor:L-G-ADC}
	 Let $M$ be an indefinite ternary $\mathcal{O}_{F}$-lattice. Suppose that $h(F)$ is odd. If $M$ is locally ADC, then $M$ is globally ADC.
\end{cor}
From the equivalence \eqref{equiv:uni-adc}, \cite[Proposition 2.3(1)]{xu_indefinite_2020}, and \cite[Theorem 2.1 II]{beli_universal_2020}, one can see that there are infinitely many quaternary or higher-dimensional ADC lattices over non-archimedean local fields up to isometry. For the remaining cases, we provide explicit counting formulas.
 \begin{thm}\label{thm:count-sol-23}
 	 Denote by $B(m)$ the number of ADC  $\mathcal{O}_{F}$-lattices $M$ with rank $m\in \{1,2,3\}$ over a non-archimedean local field $F$.  
 	 	\begin{enumerate}[itemindent=-0.5em,label=\rm (\roman*)]
 	 	\item If $m=1$, then $B(1)=4(N\mathfrak{p})^{e}$.
 	 		
 	 	\item If $m=2$, then
 	   \begin{align*}
 	 	B(2)=\begin{cases}
 	 		8(N\mathfrak{p})^{e}-1+0  &\text{if $e=0$}, \\
 	 	  8(N\mathfrak{p})^{e}-1+2  &\text{if $e \ge 1$}. 
 	 	\end{cases}
 	 \end{align*}
 	 
 	 \item If $m=3$, then  $B(3)=\infty$. Also, the number can be refined as follows, 
 	\begin{align*}
 	 \begin{cases}	 
 				4(N\mathfrak{p})^{e}+\infty    &\text{if  $e=0$ and $FM$ is isotropic}, \\
 					4(N\mathfrak{p})^{e}+0     &\text{if  $e=0$ and $FM$ is anisotropic}, \\
 			4(N\mathfrak{p})^{e}  + \infty &\text{if  $e \ge 1$ and $FM$ is isotropic},\\
 				4(N\mathfrak{p})^{e}   +(4e+2)(N\mathfrak{p})^{e}   &\text{if   $e \ge 1$ and $FM$ is anisotropic}.
 		\end{cases}
 	\end{align*}
  \end{enumerate}
  Here the second addend counts the number of those lattices that are  ADC, but not $\mathcal{O}_{F}$-maximal.
 \end{thm}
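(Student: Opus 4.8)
The plan is to separate, in each rank, the $\mathcal{O}_{F}$-maximal ADC lattices from the non-maximal ones, which is precisely the split recorded in the statement (the first and second addends). For the first addend I would invoke two facts already available: every $\mathcal{O}_{F}$-maximal lattice is ADC (the implication recalled just before Theorem \ref{thm:locallyn-ADC-binary}), and on a fixed regular quadratic space the $\mathcal{O}_{F}$-maximal lattices form a single isometry class \cite[91:2]{omeara_quadratic_1963}. Consequently the number of maximal ADC lattices of rank $m$ equals the number of isometry classes of $m$-dimensional quadratic spaces over $F$, refined by isotropy type when $m=3$. For the second addend I would read the explicit non-maximal families directly off Theorems \ref{thm:locallyn-ADC-binary} and \ref{thm:locallyn-ADC-ternary} and count their isometry classes.

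First I would carry out the space counts via the local classification by discriminant and Hasse invariant. This gives $|F^{\times}/F^{\times 2}| = 4(N\mathfrak{p})^{e}$ unary spaces, $2|F^{\times}/F^{\times 2}|-1 = 8(N\mathfrak{p})^{e}-1$ binary spaces (the deficiency of $1$ coming from the unique binary discriminant whose Hasse invariant is forced, realized by the hyperbolic plane $\mathbb{H}$), and $2|F^{\times}/F^{\times 2}| = 8(N\mathfrak{p})^{e}$ ternary spaces. For $m=3$, exactly one of the two Hasse invariants attached to each discriminant produces an anisotropic space, so the isotropic and anisotropic classes number $4(N\mathfrak{p})^{e}$ each; these yield the first addends. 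The remaining work for $m\le 2$ is light: by Theorem \ref{thm:locallyn-ADC-binary}(i) an ADC unary lattice is maximal, so $B(1)=4(N\mathfrak{p})^{e}$ with no second addend; by Theorem \ref{thm:locallyn-ADC-binary}(ii),(iii) there is no non-maximal ADC binary lattice when $e=0$, whereas for $e\ge 1$ the only ones are $2^{-1}\pi^{\nu}A(2\pi^{-1},2\rho\pi)$ with $\nu\in\{0,1\}$, and a one-line scale computation showing $\ord\,\mathfrak{s}=\nu-e$ distinguishes them gives the second addend $2$.

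The genuine obstacle is the ternary second addend. The isotropic totals are immediate: the family $\mathbf{H}\perp\langle\varepsilon\pi^{h}\rangle$ with $h\ge 2$ supplies infinitely many non-isometric non-maximal ADC lattices in both the non-dyadic and dyadic cases, giving $\infty$, while the non-dyadic anisotropic case produces none, giving $0$. What remains is the finite dyadic anisotropic count $(4e+2)(N\mathfrak{p})^{e}$, which must be extracted from the families (c) and (d) of Theorem \ref{thm:locallyn-ADC-ternary}(ii). Here the difficulty is threefold, and is where I expect to spend the effort: I must (i) discard the parameter choices giving isotropic or $\mathcal{O}_{F}$-maximal lattices, in particular the maximal representative $2^{-1}\pi A(2,2\rho)\perp\langle\Delta\varepsilon\rangle$ singled out inside family (d), using the discriminant and Hasse invariant; (ii) resolve the substantial redundancy of the parametrizations, deciding exactly when two members of (c), or of (d), or one of each, are isometric, which forces a computation of complete invariants (the classical Jordan data, or equivalently the $a$- and $R_{i}$-invariants from BONGs theory used elsewhere in the paper); and (iii) reassemble the surviving classes. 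The linear growth in $e$ should trace back to family (d), where the constraint $2l=d(\delta)-1\le 2e-2$ lets the defect $d(\delta)$ range over the odd values $1,3,\dots,2e-1$; summing the number of $\delta\in\mathcal{U}$ realizing each admissible defect against the free unit parameter $\varepsilon\in\mathcal{U}$ and the binary choices of $\nu$ and $k$, after the isometry identifications of step (ii), is what I anticipate will collapse to $(4e+2)(N\mathfrak{p})^{e}$.
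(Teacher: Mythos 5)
Your plan follows the paper's own route: split each count into maximal and non-maximal classes, count the maximal ones through the bijection with quadratic spaces (every $\mathcal{O}_F$-maximal lattice is ADC, and the maximal lattices on a fixed space form a single isometry class), and read the non-maximal families off Theorems \ref{thm:locallyn-ADC-binary} and \ref{thm:locallyn-ADC-ternary}. Your space counts, your treatment of ranks $1$ and $2$ (including the scale computation separating the two non-maximal dyadic binary lattices), and all of the first addends are correct and complete, and agree with what the paper does via Corollaries \ref{cor:count-nondyadic} and \ref{cor:count-dyadic}.

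The gap sits exactly where you place the effort: the dyadic anisotropic ternary count. Your step (ii) proposes to resolve the redundancy of families (c) and (d) by computing ``complete invariants (the classical Jordan data, or equivalently the $a$- and $R_i$-invariants from BONGs theory)''. Over a dyadic field neither of these is a complete isometry invariant: Beli's classification \cite[Theorem 3.1]{beli_Anew_2010} requires, beyond equality of all $R_i$ and $\alpha_i$, further conditions involving determinants and representations of sublattices, and naive Jordan data is likewise insufficient. Worse, the identifications you must establish are not component-wise: for instance, if $d_{\mathfrak{p}}(\delta)=d_{\mathfrak{p}}(\delta')=1$ with $\delta\not\equiv\delta'$ modulo squares, then $N_{1}^{2}(\delta)\perp\langle \varepsilon\pi^{k}\rangle$ and $N_{\nu'}^{2}(\delta')\perp\langle \varepsilon'\pi^{k}\rangle$ are isometric whenever $\varepsilon'\equiv\varepsilon\delta\delta'^{-1}$ and $\nu'$ is chosen to match Hasse invariants, even though $N_{1}^{2}(\delta)\not\cong N_{\nu'}^{2}(\delta')$; nothing in your outline produces these isometries or excludes spurious further ones. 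The paper closes precisely this hole with Lemma \ref{lem:ADC-isometry-odd-R2}: two ternary ADC lattices are isometric if and only if their underlying spaces are isometric and their $R_{2}$-invariants agree, proved by verifying all four conditions of Beli's theorem. Combined with Definition \ref{defn:alpha1-ternary} and Lemma \ref{lem:alpha1-ternary}, this turns the anisotropic classes into the bijective parametrization $M_{2,r}^{3}(c)$ with $r\in\{-1/2,0,\ldots,e\}$ and $c\in\mathcal{V}$, minus the single undefined case $(r,c)=(e,\mathcal{U})$, whence $(4e+6)(N\mathfrak{p})^{e}$ anisotropic classes of which $4(N\mathfrak{p})^{e}$ are maximal (Corollary \ref{cor:count-dyadic}), i.e.\ the second addend $(4e+2)(N\mathfrak{p})^{e}$. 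Until you supply this lemma or an equivalent de-duplication device, your final count remains, as you yourself write, an anticipation rather than a proof.
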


The rest of the paper is organized as follows. In Section  \ref{sec:BONGs}, we briefly review Beli's BONGs theory related to representations and spinor norms of quadratic lattices in dyadic local fields. In Section \ref{sec:maximal-lattices}, we discuss some results on quadratic spaces and lattices over non-archimedean local fields, particularly for the binary and ternary cases. We then characterize binary and ternary ADC lattices, and determine the integral spinor norm groups and codeterminant sets for ternary ADC lattices over non-dyadic local fields in Section \ref{sec:non-dyadic}, and over dyadic local fields in Section \ref{sec:dyadic}. In Section \ref{sec:proof-main-results}, we will show our main results mentioned above (Theorems \ref{thm:locallyn-ADC-binary}, \ref{thm:globally ADC-rank2}, \ref{thm:scale-ADC}, \ref{thm:locallyn-ADC-ternary}, \ref{thm:coM-global}, \ref{thm:coM-global-empty}, \ref{thm:thetaO+M}, \ref{thm:gM-hM=1}, and \ref{thm:count-sol-23}, and Corollaries \ref{cor:primitive-finite} and \ref{cor:imprimitive-infinite}). Finally, we will discuss the local density and mass for binary lattices over algebraic number fields.
	
All lattices considered in the following discussion are assumed to be integral.
 
\section{A brief overview of BONGs theory}\label{sec:BONGs}
Throughout this section, we assume that $F$ is dyadic, and  thus $ e\ge 1 $. Write $ [h,k]^{E} $ for the set of all even integers $ i $ such that $ h\le i\le k$. For $ c_{i}\in F^{\times} $, we also write $ c_{i,j}=c_{i}\cdots c_{j} $ for short and put $ c_{i,i-1}=1 $. 
	
The vectors $x_{1},\ldots, x_{m}$ of $FM$ is called a \textit{BONG} for an $\mathcal{O}_{F} $-lattice $M$ if $\mathfrak{n}(M)=Q(x_{1})\mathcal{O}_{F}$ and $x_{2},\ldots,x_{m}$ is a BONG for $\pr_{x_{1}^{\perp}}M$, and it is said to be \textit{good} if $\ord(Q(x_{i}))\le \ord(Q(x_{i+2}))$ for $1\le i\le m-2$. We denote by $ M\cong\prec a_{1},\ldots,a_{m}\succ $ if $ x_{1},\ldots, x_{m} $ is a BONG for $M$ with $ Q(x_{i})=a_{i} $.
\begin{lem}[{\cite[Lemma 2.2]{HeHu2}}]\label{lem:goodBONGequivcon}
	Let $ x_{1},\ldots, x_{m}  $ be pairwise orthogonal vectors in a quadratic space with $ Q(x_{i})=a_{i} $ and $ R_{i}=\ord (a_{i}) $.  Then $ x_{1},\ldots,x_{m} $ forms a good BONG for some lattice is equivalent to the conditions
	\begin{align}\label{eq:GoodBONGs}
		R_{i}\le R_{i+2} \quad \text{for all $ 1\le i\le m-2$}
	\end{align}
	and  
	\begin{align}\label{eq:BONGs}
		R_{i+1}-R_{i}+d(-a_{i}a_{i+1})\ge 0 \quad\text{and}\quad  R_{i+1}-R_{i}+2e\ge 0\quad \text{for all $ 1\le i\le m-1 $}. 
	\end{align}
	\end{lem}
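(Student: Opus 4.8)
The plan is to induct on $m$ using the recursive definition of a BONG, according to which $x_1,\ldots,x_m$ is a BONG for $M$ exactly when $\mathfrak{n}(M)=Q(x_1)\mathcal{O}_F$ and $x_2,\ldots,x_m$ is a BONG for $M_1:=\pr_{x_1^{\perp}}M$. The ``good'' requirement $R_i\le R_{i+2}$ is, by definition, the condition $\ord Q(x_i)\le\ord Q(x_{i+2})$, so \eqref{eq:GoodBONGs} is automatic in the forward direction and is simply imposed in the backward direction; the whole content is therefore that the remaining constraints collapse to the consecutive-pair inequalities \eqref{eq:BONGs}. The case $m=1$ is trivial (take $M=\mathcal{O}_F x_1$), so the argument rests on a binary analysis together with a decoupling statement for the inductive step.

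For the binary heart of the matter I would construct $M$ by hand. Put $e_1=x_1$ and seek $e_2=x_2+(\beta/a_1)x_1$ with $\beta\in F$, so that $M:=\mathcal{O}_F e_1+\mathcal{O}_F e_2$ has $\pr_{x_1^{\perp}}M=\mathcal{O}_F x_2$ with $Q(x_2)=a_2$ built in, while $B(e_1,e_2)=\beta$ and $Q(e_2)=\gamma:=a_2+\beta^2/a_1$. Then $\mathfrak{n}(M)$ is generated by $a_1$, $\gamma$ and $2\beta$, so the requirement $\mathfrak{n}(M)=a_1\mathcal{O}_F$ is equivalent to $\ord\gamma\ge R_1$ and $\ord(2\beta)=e+\ord\beta\ge R_1$. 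If $R_2\ge R_1$ one takes $\beta=0$ and both inequalities in \eqref{eq:BONGs} hold automatically. If $R_2<R_1$, the condition $\ord\gamma\ge R_1$ forces $\beta^2\equiv-a_1a_2\pmod{\mathfrak{p}^{2R_1}}$: matching orders gives $\ord\beta=(R_1+R_2)/2$ and then $-a_1a_2\in F^{\times 2}(1+\mathfrak{p}^{R_1-R_2})$, which amounts to $d(-a_1a_2)\ge R_1-R_2$, i.e. $R_2-R_1+d(-a_1a_2)\ge 0$; meanwhile $e+\ord\beta\ge R_1$ becomes $(R_1+R_2)/2\ge R_1-e$, i.e. $R_2-R_1+2e\ge 0$. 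Thus both inequalities in \eqref{eq:BONGs} surface as precisely the obstructions to choosing $\beta$, and conversely every binary lattice takes this shape once $x_1$ is chosen as a norm generator and completed to a basis, giving necessity.

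For general $m$ I would peel off $x_1$ and induct: the inductive hypothesis supplies a lattice $M_1$ on $x_1^{\perp}$ for which $x_2,\ldots,x_m$ is a good BONG, and the task is to produce $M$ with $\pr_{x_1^{\perp}}M=M_1$ and $\mathfrak{n}(M)=a_1\mathcal{O}_F$ so that $x_1,\ldots,x_m$ becomes a good BONG. Here one must remember that $x_2,\ldots,x_m$ is \emph{not} a basis of $M_1$: only $x_2$ is a genuine norm generator lying in $M_1$, while $x_3,x_4,\ldots$ are norm generators of the successive projections $\pr_{x_2^{\perp}}M_1,\ldots$ and are shielded by these projections. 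Consequently the only real interaction of $x_1$ with $M_1$ is the binary gluing with $x_2$, governed by \eqref{eq:BONGs} at $i=1$, whereas \eqref{eq:GoodBONGs} at $i=1$, namely $R_1\le R_3$, is exactly the compatibility needed one step further, after $x_1$ and $x_2$ have both been projected away and $x_3$ reappears.

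The main obstacle is precisely this decoupling. Because BONG vectors are defined through nested projections rather than as a basis, one cannot simply lift the generators of $M_1$ into $M$ and read off the norm; a naive lift of a later vector of small order would spuriously couple it to $x_1$ and would falsely suggest a long-range constraint. Verifying that the norm generator $x_1$ can be installed, that the projection is exactly $M_1$, and that \emph{no} condition arises beyond \eqref{eq:BONGs} at $i=1$ and \eqref{eq:GoodBONGs} at $i=1$ is the delicate point, and it is here that one leans on the structural locality of Beli's BONG formalism \cite{beli_universal_2020} rather than on the short congruence computation that settles the binary case.
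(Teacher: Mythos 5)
Your binary computation is correct: writing $M=\mathcal{O}_Fx_1+\mathcal{O}_F\bigl(x_2+(\beta/a_1)x_1\bigr)$, the requirement $\mathfrak{n}(M)=a_1\mathcal{O}_F$ is exactly $\ord(a_2+\beta^2/a_1)\ge R_1$ and $e+\ord(\beta)\ge R_1$, and your quadratic-defect analysis of when such $\beta$ exists recovers precisely the two inequalities of \eqref{eq:BONGs}; the converse (every binary lattice with BONG $x_1,x_2$ has this shape once a norm generator is completed to a basis) is also sound. For calibration: the paper offers no proof of this lemma --- it is imported from \cite{HeHu2}, and ultimately rests on Beli's foundational results in \cite{beli_integral_2003} --- so your attempt stands or falls on its own completeness.

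It falls at the inductive step, and this is a genuine gap, not a compressible detail. For $m\ge3$ you construct nothing: you assert that ``the only real interaction of $x_1$ with $M_1$ is the binary gluing with $x_2$'' and then concede that verifying this is ``the delicate point'', to be settled by ``the structural locality of Beli's BONG formalism''. That locality is the very content of the lemma being proved, so the appeal is circular. Your framing also makes the step needlessly hard: by fixing the lattice $M_1$ from the induction hypothesis and demanding $\pr_{x_1^{\perp}}M=M_1$, you would need a linear functional $\varphi$ on all of $M_1$ with $Q(y)+\varphi(y)^2a_1\in a_1\mathcal{O}_F$ and $\varphi(y)\in\mathfrak{p}^{-e}$ for every $y\in M_1$ --- a condition involving far more than the pair $(x_1,x_2)$. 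The lemma only asks for \emph{some} lattice, and that freedom, together with the hypothesis \eqref{eq:GoodBONGs} which you dismiss as ``simply imposed'', is what closes the argument: in a good sequence drops are isolated, i.e.\ $R_2<R_1$ forces $R_3\ge R_1$. So induct in two cases. If $R_2\ge R_1$, put $M:=\mathcal{O}_Fx_1\perp M_1$; then $\mathfrak{n}(M)=\mathfrak{p}^{R_1}+\mathfrak{p}^{R_2}=\mathfrak{p}^{R_1}$ and $\pr_{x_1^{\perp}}M=M_1$, so $x_1,\ldots,x_m$ is a good BONG for $M$. If $R_2<R_1$, let $L\cong\prec a_1,a_2\succ$ be your binary lattice and $M_2$ the lattice with good BONG $x_3,\ldots,x_m$ supplied by induction, and put $M:=L\perp M_2$; since $R_3\ge R_1>R_2$ one gets $\mathfrak{n}(M)=\mathfrak{p}^{R_1}$, then $\pr_{x_1^{\perp}}M=\mathcal{O}_Fx_2\perp M_2$ has norm $\mathfrak{p}^{R_2}$ with norm generator $x_2$, and $\pr_{x_2^{\perp}}(\mathcal{O}_Fx_2\perp M_2)=M_2$, so again $x_1,\ldots,x_m$ is a good BONG for $M$. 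Only two elementary facts are used: norms add over orthogonal sums, and $\pr_{x^{\perp}}(A\perp B)=\pr_{x^{\perp}}(A)\perp B$ when $x\in FA$. Finally, necessity for $m\ge3$ also needs a sentence you omitted: if $x_1,\ldots,x_m$ is a BONG for $M$, then $x_i,x_{i+1}$ is a BONG for the binary sublattice $\mathcal{O}_Fx_i+\mathcal{O}_Fy$ of the $i$-th successive projection of $M$, where $y$ is a lift of $x_{i+1}$ (its norm ideal is trapped between $Q(x_i)\mathcal{O}_F$ and the norm of that projection), after which your binary argument applies pairwise.
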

	\begin{re}\label{re:mathscrA}
		By \cite[Lemma 3.5]{beli_integral_2003}, $a_{i+1}/a_{i}\in \mathscr{A}$ if and only if $a_{i+1}/a_{i}$ satisfies \eqref{eq:BONGs}. So if $M\cong \prec a_{1},\ldots,a_{m}\succ$ relative to a good BONG,  $a_{i+1}/a_{i}\in \mathscr{A}$ holds for all $1\le i\le m-1$.
	\end{re}
	
Let $M\cong \prec a_{1},\ldots, a_{m} \succ $ be an $ \mathcal{O}_{F} $-lattice  relative to some good BONG. Define the \textit{$ R_{i} $-invariants} by $  R_{i}(M):=\ord (a_{i})$ for $ 1\le i\le m $ and the \textit{$ \alpha_{i} $-invariants} by $  \alpha_{i}(M)\coloneqq\min\limits_{1\le j\le m-1}\{T_{j}^{(i)}\} $ for $1\le i\le m-1$, where
	\begin{align*}
		T_{j}^{(i)}:=\begin{cases}
			(R_{i+1}-R_{i})/2+e       &\text{if $j=0 $}, \\
			R_{i+1}-R_{j}+d(-a_{j}a_{j+1}) &\text{if $ 1\le j\le i $}, \\
			R_{j+1}-R_{i}+d(-a_{j}a_{j+1})  &\text{if $ i\le j\le m-1 $}.
		\end{cases}
	\end{align*}
For $0\le i-1\le j\le m$ and $c\in F^{\times}$, we define
 \begin{align*}
 	 d[ca_{i,j}]:=\min\{d(ca_{i,j}),\alpha_{i-1},\alpha_{j}\}.
 \end{align*}
 Here we ignore the terms $\alpha_{0}$ and $\alpha_{m}$ in the minimum, if present, since they are undefined. In particular, if $m=2$, we have $d[-a_{1,2}]=d(-a_{1}a_{2})$.
 
We have the following properties for these invariants (cf. \cite[\S 2]{HeHu2}).
\begin{prop}\label{prop:Rproperty}
	Suppose $ 1\le i\le m-1 $. 
		\begin{enumerate}[itemindent=-0.5em,label=\rm (\roman*)]
			\item $ R_{i+1}-R_{i}>2e$ (resp. $ =2e $, $ <2e $) if and only if $ \alpha_{i}>2e $ (resp. $ =2e $, $ <2e $).
			
			\item If $ R_{i+1}-R_{i}\ge 2e$ or $ R_{i+1}-R_{i}\in \{-2e,2-2e,2e-2\} $, then $ \alpha_{i}=(R_{i+1}-R_{i})/2+e $.
			
			\item If $ R_{i+1}-R_{i}\le 2e $, then $ \alpha_{i}\ge R_{i+1}-R_{i} $. Also, the equality holds if and only if $ R_{i+1}-R_{i}=2e $ or $ R_{i+1}-R_{i} $ is odd.
			
			\item If $ R_{i+1}-R_{i} $ is odd, then $ R_{i+1}-R_{i}>0 $.
		\end{enumerate} 
\end{prop}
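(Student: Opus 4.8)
The plan is to analyse the defining minimum $\alpha_i(M)=\min_{0\le j\le m-1}T_j^{(i)}$ by isolating the distinguished term $T_0^{(i)}=(R_{i+1}-R_i)/2+e$ and bounding everything else against it. Throughout I write $\delta:=R_{i+1}-R_i$, so that $T_0^{(i)}=\delta/2+e$. The point of this term is that it compares with $2e$ exactly as $\delta$ does, namely $T_0^{(i)}>2e$ (resp. $=2e$, $<2e$) iff $\delta>2e$ (resp. $=2e$, $<2e$); and since $T_0^{(i)}$ is one of the terms in the minimum, we always have $\alpha_i(M)\le T_0^{(i)}$.

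The first and main step is the \emph{unconditional} lower bound
\[
  \min_{1\le j\le m-1}T_j^{(i)}\ge \delta .
\]
For the diagonal term this is immediate: $T_i^{(i)}=\delta+d(-a_ia_{i+1})\ge\delta$ since the defect order is nonnegative. For an off-diagonal index $i+1\le j\le m-1$ I would write $T_j^{(i)}=R_{j+1}-R_i+d(-a_ja_{j+1})$ and combine $d(-a_ja_{j+1})\ge 0$ with the good BONG inequality $R_{j+1}-R_j+d(-a_ja_{j+1})\ge 0$ from \eqref{eq:BONGs} to get $R_{j+1}+d(-a_ja_{j+1})\ge\max(R_j,R_{j+1})$; the step-$2$ monotonicity \eqref{eq:GoodBONGs} then forces $\max(R_j,R_{j+1})\ge R_{i+1}$, whence $T_j^{(i)}\ge\delta$. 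The symmetric computation (using $\min(R_j,R_{j+1})\le R_i$) handles $1\le j\le i-1$. Granting this bound, all the \emph{inequalities} in (i)--(iii) follow at once, since $\alpha_i=\min\bigl(T_0^{(i)},\min_{j\ge1}T_j^{(i)}\bigr)$ with $T_0^{(i)}=\delta/2+e$ and $\min_{j\ge1}T_j^{(i)}\ge\delta$. Indeed $\alpha_i$ is $<2e$, $=2e$, or $>2e$ according as $\delta$ is (part (i)); if $\delta>2e$ then $T_0^{(i)}<\delta\le\min_{j\ge1}T_j^{(i)}$, so $\alpha_i=\delta/2+e$, and the case $\delta=2e$ is similar (part (ii) in the range $\delta\ge 2e$); and if $\delta\le 2e$ then $T_0^{(i)}\ge\delta$, giving $\alpha_i\ge\delta$ (the inequality of part (iii)).

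Part (iv) and the easy equality directions rest on the dyadic defect dichotomy: an element of odd order is a non-square with defect order $0$, while an element of even order has defect order in $\{1,3,\dots,2e-1\}\cup\{2e\}\cup\{\infty\}$, in particular $\ge 1$. If $\delta$ is odd then $\ord(-a_ia_{i+1})=R_i+R_{i+1}$ is odd, so $d(-a_ia_{i+1})=0$ and the good BONG inequality $\delta+d(-a_ia_{i+1})\ge 0$ reads $\delta\ge 0$; being odd, $\delta\ge 1>0$, which is (iv). The same dichotomy gives the ``if'' part of the equality in (iii): $\delta=2e$ yields $T_0^{(i)}=\delta$, and $\delta$ odd yields $T_i^{(i)}=\delta+0=\delta$, so $\alpha_i=\delta$ in both cases. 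For the special even values in (ii), the good BONG inequality together with the admissible defect values bounds $d(-a_ia_{i+1})$ from below---$\delta=-2e$ forces $d(-a_ia_{i+1})\in\{2e,\infty\}$, while $\delta=2-2e$ and $\delta=2e-2$ force $d(-a_ia_{i+1})\ge 2e-1$ and $d(-a_ia_{i+1})\ge 1$ respectively---so that in every case $T_i^{(i)}\ge\delta/2+e$.

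The remaining and hardest part is to upgrade the off-diagonal bound $T_j^{(i)}\ge\delta$ to the sharper inequalities needed for the converse directions: for the ``only if'' of (iii) I must show that when $\delta<2e$ is even \emph{every} $T_j^{(i)}$ strictly exceeds $\delta$, and for the special values in (ii) I must show $\min_{j\ge1}T_j^{(i)}\ge\delta/2+e$. Here the clean telescoping no longer suffices, and I expect to re-examine exactly when the chain of inequalities in the second step is tight, tracking the parity of each gap $R_{k+1}-R_k$ against the admissible defect values through \eqref{eq:GoodBONGs} and \eqref{eq:BONGs}. This bookkeeping over the off-diagonal indices is the main obstacle; the decisive leverage is again that an even-order element has defect order $\ge 1$ while an odd-order element has defect order $0$, which forces the tight cases to coincide precisely with $\delta$ odd or with the boundary even values singled out in the statement.
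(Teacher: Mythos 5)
Your overall strategy is workable and a good deal of it is correct: the unconditional bound $\min_{1\le j\le m-1}T_j^{(i)}\ge\delta$ is proved correctly (telescoping $R_{j+1}+d(-a_ja_{j+1})\ge\max(R_j,R_{j+1})$ against the step-two monotonicity \eqref{eq:GoodBONGs} is exactly the right mechanism), and from it you correctly obtain part (i), part (iv), the inequality in part (iii), the ``if'' half of the equality in (iii), and part (ii) in the range $R_{i+1}-R_i\ge 2e$; your defect dichotomy (odd order gives defect order $0$, even order gives defect order in $\{1,3,\dots,2e-1\}\cup\{2e,\infty\}$) is also right, and your reading of the definition as a minimum over $0\le j\le m-1$, i.e.\ including $T_0^{(i)}$, is the correct one despite the paper's typo. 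But the proposal is not a proof, and you say so yourself: the two assertions that carry real content beyond the easy bound --- part (ii) for $\delta\in\{-2e,2-2e,2e-2\}$, which needs $T_j^{(i)}\ge\delta/2+e$ for \emph{every} $j\ge1$, and the ``only if'' half of (iii), which needs $T_j^{(i)}>\delta$ for every $j\ge1$ whenever $\delta<2e$ is even --- are deferred to an ``expectation'' about bookkeeping. You verify these sharper bounds only for $T_0^{(i)}$ and the diagonal term $T_i^{(i)}$; for all off-diagonal $j$ your Step 1 yields merely $T_j^{(i)}\ge\delta$, which is strictly weaker than what is required.

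This missing step is genuinely where the difficulty sits, and it is not a routine tightness check. For example, with $\delta=2e-2$ and $j=i+1$ one has $T_{i+1}^{(i)}=(R_{i+2}-R_{i+1})+\delta+d(-a_{i+1}a_{i+2})$; the configuration $R_{i+1}-R_{i+2}=2e$, $d(-a_{i+1}a_{i+2})=2e$ satisfies the adjacent-pair conditions \eqref{eq:BONGs} and would give $T_{i+1}^{(i)}=2e-2<\delta/2+e$, contradicting (ii). The only thing excluding it is \eqref{eq:GoodBONGs}, since $R_{i+2}=R_{i+1}-2e<R_i$; so the sharper off-diagonal bounds are \emph{not} formal consequences of the inequalities you use pairwise, and an explicit case analysis (parity of each gap, admissible defect values, and the step-two monotonicity, for $j<i$ as well as $j>i$) must actually be written out. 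As it stands your argument is complete only when $m=2$, where no off-diagonal terms exist. For comparison, the paper itself gives no proof of this proposition --- it quotes the properties from \cite{HeHu2}, where they trace back to Beli's BONG papers --- so the relevant standard is whether your argument is self-contained, and it is not yet.
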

\begin{prop}\label{prop:alphaproperty}
	Suppose $ 1\le i\le m-1 $.
		\begin{enumerate}[itemindent=-0.5em,label=\rm (\roman*)]
			\item Either $ 0\le \alpha_{i}\le 2e $ and $ \alpha_{i}\in \mathbb{Z} $, or $ 2e<\alpha_{i}<\infty $ and $ 2\alpha_{i}\in \mathbb{Z} $; thus $ \alpha_{i}\ge 0 $.
			
			\item $\alpha_{i}=\min\{(R_{i+1}-R_{i})/2+e,R_{i+1}-R_{i}+d[-a_{i,i+1}]\}$.
			
			\item $ \alpha_{i}=0 $ if and only if $ R_{i+1}-R_{i}=-2e $.
			
			\item $ \alpha_{i}=1 $ if and only if either $ R_{i+1}-R_{i}\in \{2-2e,1\} $, or $ R_{i+1}-R_{i}\in [4-2e,0]^{E} $ and $ d[-a_{i,i+1}]=R_{i}-R_{i+1}+1 $.
			
			\item If $ \alpha_{i}=0 $, i.e., $ R_{i+1}-R_{i}=-2e $, then $ d[-a_{i,i+1}]\ge 2e $.
	 
			\item If $ \alpha_{i}=1 $, then $ d[-a_{i,i+1}]\ge R_{i}-R_{i+1}+1 $. Also, the equality holds if $ R_{i+1}-R_{i}\not=2-2e $.
		\end{enumerate}  
	\end{prop}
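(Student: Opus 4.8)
The plan is to derive every part of Proposition \ref{prop:alphaproperty} from the defining formula $\alpha_i = \min_j T_j^{(i)}$, feeding in Proposition \ref{prop:Rproperty} (already available) together with the elementary facts about the defect order $d$ (integrality and nonnegativity on nonsquares, boundedness by $2e$) and the good-BONG inequalities \eqref{eq:BONGs} and ordering \eqref{eq:GoodBONGs} from Lemma \ref{lem:goodBONGequivcon}, namely $R_{i+1}-R_i + d(-a_i a_{i+1}) \ge 0$, $R_{i+1}-R_i + 2e \ge 0$, and $R_i \le R_{i+2}$.

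First I would prove (i). Among the terms $T_j^{(i)}$, only $T_0^{(i)} = (R_{i+1}-R_i)/2 + e$ can fail to be an integer, and it is a genuine half-integer precisely when $R_{i+1}-R_i$ is odd; every other $T_j^{(i)}$ is a sum of orders and defect orders, hence an integer. Nonnegativity $\alpha_i \ge 0$ I would read off from \eqref{eq:BONGs} and \eqref{eq:GoodBONGs}. For the dichotomy, Proposition \ref{prop:Rproperty}(i) gives $\alpha_i > 2e \iff R_{i+1}-R_i > 2e$, and in that regime Proposition \ref{prop:Rproperty}(ii) yields $\alpha_i = T_0^{(i)}$, so $2\alpha_i = R_{i+1}-R_i + 2e \in \Z$. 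When $\alpha_i \le 2e$ I would rule out a half-integer value: if $R_{i+1}-R_i$ were odd, Proposition \ref{prop:Rproperty}(iii) forces $\alpha_i = R_{i+1}-R_i \in \Z$, so the half-integer $T_0^{(i)}$ is never the minimum in this range.

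The technical heart is (ii), the reduction of the full minimum to the three-term expression $\min\{T_0^{(i)},\, (R_{i+1}-R_i) + d[-a_{i,i+1}]\}$. Here I would use the shift identities $T_j^{(i)} = (R_{i+1}-R_i) + T_j^{(i-1)}$ for $1 \le j \le i-1$ and $T_j^{(i)} = (R_{i+1}-R_i) + T_j^{(i+1)}$ for $i+1 \le j \le m-1$, both immediate from the piecewise definition of $T_j^{(i)}$. Taking minima gives $\min_{1\le j\le m-1} T_j^{(i)} \ge (R_{i+1}-R_i) + \min\{\alpha_{i-1}, d(-a_i a_{i+1}), \alpha_{i+1}\} = (R_{i+1}-R_i) + d[-a_{i,i+1}]$, which together with $T_0^{(i)}$ establishes one inequality at once. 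The reverse inequality is the delicate point: it is immediate when the minima defining $\alpha_{i-1}$ and $\alpha_{i+1}$ are attained on the blocks $1\le j\le i-1$ and $i+1\le j\le m-1$ (so they correspond to honest $T_j^{(i)}$), but when those minima are attained at $j=0$ or on the far side I must bound the stray contributions using Proposition \ref{prop:Rproperty} and the ordering \eqref{eq:GoodBONGs}. I expect this bookkeeping, controlling a neighbouring $\alpha$ when its optimal index lies outside the relevant block, to be the main obstacle.

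With (ii) in hand, parts (iii)--(vi) follow by substitution and short case analysis. For (iii): $R_{i+1}-R_i = -2e$ makes $T_0^{(i)} = 0$, so $\alpha_i = \min\{0,\, -2e + d[-a_{i,i+1}]\} = 0$ by nonnegativity; conversely, if $-2e < R_{i+1}-R_i \le 0$ then $T_0^{(i)} > 0$ and, via \eqref{eq:BONGs} and the parity constraints on $d(-a_i a_{i+1})$, the second term is also positive, forcing $\alpha_i \ge 1$. Part (v) is then immediate from (ii): with $R_{i+1}-R_i = -2e$ the equality $\alpha_i = 0$ forces $-2e + d[-a_{i,i+1}] \ge 0$. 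For (iv) I would substitute each of $R_{i+1}-R_i \in \{2-2e, 1\}$ and $R_{i+1}-R_i \in [4-2e,0]^{E}$ with the stated defect condition into (ii) and check each gives $\alpha_i = 1$, while all remaining values of $R_{i+1}-R_i$ are excluded by Proposition \ref{prop:Rproperty}(iii)--(iv). Finally (vi) follows by reading off the second term of (ii) under the characterization (iv), the equality clause arising in the case $R_{i+1}-R_i \ne 2-2e$, where the minimum is necessarily realized by the defect term rather than by $T_0^{(i)}$.
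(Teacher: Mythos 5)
A preliminary remark: this paper contains no proof of Proposition \ref{prop:alphaproperty}; it is quoted, together with Proposition \ref{prop:Rproperty}, from \cite[\S 2]{HeHu2}, which in turn rests on Beli's theory \cite{beli_integral_2003,beli_Anew_2010}. So your argument can only be judged against the definition, not against a proof in this paper; and within the paper's ordering your free use of Proposition \ref{prop:Rproperty} is legitimate. Your overall route is the standard one, and most of it checks out: the shift identities in (ii) are correct, and the ``delicate'' reverse inequality does close with exactly the tool you name, since when the minimum defining $\alpha_{i-1}$ is attained at $j=0$ one has $(R_{i+1}-R_{i})+\alpha_{i-1}=(R_{i+1}-R_{i})/2+(R_{i+1}-R_{i-1})/2+e\ge T_{0}^{(i)}\ge \alpha_{i}$ by \eqref{eq:GoodBONGs}, and when it is attained on the far block one has $(R_{i+1}-R_{i})+T_{j}^{(i-1)}=T_{j}^{(i)}+(R_{i+1}-R_{i-1})\ge \alpha_{i}$, symmetrically for $\alpha_{i+1}$. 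Parts (i), (iv), (v), (vi) then reduce correctly to (ii), Proposition \ref{prop:Rproperty}, and parity of the defect order.

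The genuine gap is the converse direction of (iii). You claim that for $-2e<R_{i+1}-R_{i}\le 0$ the term $(R_{i+1}-R_{i})+d[-a_{i,i+1}]$ is positive ``via \eqref{eq:BONGs} and the parity constraints on $d(-a_{i}a_{i+1})$''. But $d[-a_{i,i+1}]=\min\{d(-a_{i}a_{i+1}),\alpha_{i-1},\alpha_{i+1}\}$, and parity controls only the first entry: since $R_{i+1}-R_{i}$ is even and negative by Proposition \ref{prop:Rproperty}(iv), the defect order $d(-a_{i}a_{i+1})$ is odd, $2e$, or $\infty$, hence never equals the even number $R_{i}-R_{i+1}\in(0,2e)$. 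When the minimum is a neighbouring $\alpha$, the equation $R_{i+1}-R_{i}+\alpha_{i\pm 1}=0$ is untouched by this observation; positive even values of $\alpha_{i\pm 1}$ certainly occur, and nothing in your sketch excludes this configuration.

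It can be excluded, but it needs a further idea. For instance, if $R_{i+1}-R_{i}+\alpha_{i-1}=0$ with $R_{i+1}-R_{i}>-2e$, apply your formula (ii) at index $i-1$ and observe that $d[-a_{i-1,i}]\le\alpha_{i}=0$, whence $\alpha_{i-1}=\min\{(R_{i}-R_{i-1})/2+e,\,R_{i}-R_{i-1}\}$. If $R_{i}-R_{i-1}>2e$ this exceeds $2e$, whereas $\alpha_{i-1}=R_{i}-R_{i+1}<2e$; if $R_{i}-R_{i-1}\le 2e$ it equals $R_{i}-R_{i-1}$, and the equality clause of Proposition \ref{prop:Rproperty}(iii) forces $R_{i}-R_{i-1}=2e$ (giving $R_{i+1}-R_{i}=-2e$, excluded) or $R_{i}-R_{i-1}$ odd (contradicting that $\alpha_{i-1}=R_{i}-R_{i+1}$ is even). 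The case $d[-a_{i,i+1}]=\alpha_{i+1}$ is symmetric. Without some such argument (equivalently, a direct proof that $T_{j}^{(i)}=0$ for $j\ne 0$ already forces $R_{i+1}-R_{i}=-2e$, which requires \eqref{eq:GoodBONGs} and Proposition \ref{prop:Rproperty}(iv) in addition to parity), the biconditional in (iii) is unproved; note also that your case analyses for (iv) and (vi) invoke (iii) to exclude the value $\alpha_{i}=0$, so they inherit the gap.
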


By using property A or B, in the sense of \cite[Definitions 7 and 10]{beli_integral_2003}, one can compute the integral spinor norm group $\theta(O^{+}(M))$ effectively. Here we collect some results related to these properties (cf. \cite[Corollary 4.2, Lemma 4.11, Remark 4.12, \S 7]{beli_integral_2003}).
\begin{prop}\label{prop:property-A}
	The following statements hold for $M$:
	\begin{enumerate}[itemindent=-0.5em,label=\rm (\roman*)]
	 	 \item 	 $M$ has property A if and only if $R_{i}<R_{i+2}$ for all $1\le i\le m-2$.
	 		
	 	\item If $M$ has property B, then $R_{i}+2<R_{i+2}$ for all $1\le i\le m-2$.
	 		
		\item  If $M$ does not have property A, then $\theta(O^{+}(M))=\mathcal{O}_{F}^{\times }F^{\times 2}$ or $F^{\times}$.
		
		\item If $M$ has property A but does not have property B, then $\theta(O^{+}(M))=\mathcal{O}_{F}^{\times}F^{\times 2}$ or $F^{\times}$.
	\end{enumerate}
\end{prop}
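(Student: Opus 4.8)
The plan is to read each of the four assertions off the corresponding structural results in Beli's study of integral spinor norm groups, after recalling that property A and property B (\cite[Definitions 7 and 10]{beli_integral_2003}) are combinatorial conditions imposed on the invariants $R_{i}(M)$ and $\alpha_{i}(M)$ of a good BONG. Since $M$ is given relative to a good BONG, the inequalities \eqref{eq:GoodBONGs} and \eqref{eq:BONGs} are available throughout, and the arithmetic of the $R_{i}$- and $\alpha_{i}$-invariants recorded in Propositions \ref{prop:Rproperty} and \ref{prop:alphaproperty} will be the main bookkeeping tool.

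For part (i), I would compare the definition of property A with the good BONG inequality $R_{i}\le R_{i+2}$ supplied by \eqref{eq:GoodBONGs}. The content is that property A promotes this weak monotonicity to the strict chain $R_{i}<R_{i+2}$, which is precisely \cite[Corollary 4.2]{beli_integral_2003}; so this part reduces to quoting that equivalence. For part (ii), I would unwind the definition of property B, which is stronger than property A in Beli's hierarchy, so that $R_{i}<R_{i+2}$ already holds; the remaining task is to exclude the two borderline gaps $R_{i+2}-R_{i}\in\{1,2\}$, which is forced by the additional $\alpha$-invariant constraints built into property B. Concretely this is the content of \cite[Lemma 4.11 and Remark 4.12]{beli_integral_2003}, and the exclusion can be checked against Proposition \ref{prop:Rproperty}(i)--(ii). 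Note also that combining (i) and (ii) confirms a posteriori that property B implies property A, which makes the case split in (iii)--(iv) exhaustive.

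For parts (iii) and (iv), I would appeal to Beli's determination of $\theta(O^{+}(M))$ in \cite[\S 7]{beli_integral_2003}. The organizing principle is that the exotic, smaller spinor norm groups can only occur when $M$ is rigid enough to carry both property A and property B; whenever at least one of these fails, the computation collapses and the group is forced to be one of the two standard large groups $\mathcal{O}_{F}^{\times}F^{\times 2}$ or $F^{\times}$. Thus (iii) is the case where property A fails and (iv) is the case where property A holds but property B fails, and in both situations \cite[\S 7]{beli_integral_2003} yields the stated dichotomy.

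The main obstacle I anticipate is not any single computation but the faithful translation of Beli's definitions of property A and property B into the $R_{i}$/$\alpha_{i}$ language used here, together with the case analysis in \cite[\S 7]{beli_integral_2003}; once the definitions are aligned, parts (i)--(iv) are essentially direct citations, with the only genuine verification being the exclusion of the borderline gaps in part (ii).
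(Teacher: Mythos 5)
Your proposal is correct and matches the paper's treatment exactly: the paper states this proposition without proof, presenting it as a collection of results quoted from Beli, namely \cite[Corollary 4.2]{beli_integral_2003} for (i), \cite[Lemma 4.11, Remark 4.12]{beli_integral_2003} for (ii), and \cite[\S 7]{beli_integral_2003} for (iii) and (iv), which are precisely the citations you identify for each part. Your additional remarks (the a posteriori check that property B implies property A, and the caution about translating Beli's definitions into the $R_{i}$/$\alpha_{i}$ language) are sensible but not needed beyond what the paper itself does.
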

	
Let $N\cong \prec b_{1}, \ldots,b_{n}\succ$ be another $\mathcal{O}_{F}$-lattice relative to some good BONG. Write $S_{i}=R_{i}(N)$ and $\beta_{i}=\alpha_{i}(N)$. Assume that $m\in \{2,3\}$ and $n=1$. Put
\begin{align*}
	d[a_{1}b_{1}]&:=\min\{d(a_{1}b_{1}),\alpha_{1}\},\\
			A_{1}=A_{1}(M,N)&:=\min\{(R_{2}-S_{1})/2+e,R_{2}-S_{1}+d[-a_{1,2}]\}.
\end{align*}
Then the terms $\alpha_{3}$ and $\beta_{1}$ are ignored in $d[-a_{1,3}b_{1}]$, so $d[-a_{1,3}b_{1}]=d(-a_{1,3}b_{1})$. Thus, from \cite[Remarks 1]{beli_universal_2020}, \cite[Theorem 1.2]{beli_universal_2020} can be reformulated as follows.  
\begin{thm}\label{thm:beligeneral-1}
 		 $ N\rep M $ if and only if  $ FN\rep FM $ and the following conditions hold:
 	\begin{enumerate}[itemindent=-0.5em,label=\rm (\roman*)]
 		\item $R_{1}\le S_{1}$. 
 		
 		\item  $ d[a_{1}b_{1}]\ge A_{1} $.
 		
 		\item  If $m=3$,
 			$R_{3}>S_{1}$ and $d[-a_{1,2}]+d(-a_{1,3}b_{1})>2e+S_{1}-R_{3}$, 
 		then $ [b_{1}]\rep [a_{1},a_{2}] $.
 	\end{enumerate}  
 \end{thm}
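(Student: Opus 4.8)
The plan is to obtain this statement as a direct specialization of Beli's general representation criterion \cite[Theorem 1.2]{beli_universal_2020} to the case where the represented lattice $N$ is unary ($n=1$) and the representing lattice $M$ has rank $m\in\{2,3\}$. Since all the heavy machinery already resides in Beli's theorem, which I take as given, the task reduces to transcribing its general conditions and collapsing the index set $\{1,\ldots,n\}$ to the single value $i=1$. The lead-in remark, citing \cite[Remarks 1]{beli_universal_2020}, signals that the proof is essentially a faithful reformulation rather than a fresh argument.

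First I would write out Beli's conditions in full generality: that $N\rep M$ holds if and only if $FN\rep FM$ together with, for each $1\le i\le n$, the inequality $R_i\le S_i$, the bound $d[a_{1,i}b_{1,i}]\ge A_i$ on the bracket invariants, and---for those indices where the relevant gap inequalities are strict---an auxiliary sub-representation condition. Setting $n=1$ immediately kills every index except $i=1$: the inequality $R_1\le S_1$ is exactly condition (i), and $d[a_1b_1]\ge A_1$ is exactly condition (ii), with $A_1$ the two-term minimum already recorded before the statement. Here I would invoke the convention from the paragraph preceding the theorem: because $N$ is unary it carries no $\beta$-invariants, and because the bracket $d[-a_{1,3}b_1]$ reaches the undefined endpoints $\alpha_3$ and $\beta_1$, those terms are dropped from the defining minimum, so that $d[-a_{1,3}b_1]=d(-a_{1,3}b_1)$. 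This is precisely the simplification that lets Beli's general bracket expressions degenerate into the concrete quadratic-defect quantities appearing in the statement. The hypothesis $FN\rep FM$ is a space-level condition, checked classically by comparison of dimension, discriminant, and Hasse invariant, and it is carried through unchanged from Beli's formulation.

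Next I would treat the auxiliary clause separately for $m=2$ and $m=3$. When $m=2$ there is no index $i+2$, so the sub-representation clause never triggers, and conditions (i)--(ii) exhaust the criterion. When $m=3$, the clause attached to $i=1$ activates exactly when $R_3>S_1$ and $d[-a_{1,2}]+d(-a_{1,3}b_1)>2e+S_1-R_3$, and in that case Beli's theorem demands the representation of $[b_1]$ by the leading binary section $[a_1,a_2]$ of $M$; this yields condition (iii). I would verify that the strict gap inequalities triggering Beli's clause match the displayed ones verbatim after the $n=1$ substitution, in particular that the combined defect $d[-a_{1,2}]+d(-a_{1,3}b_1)$ is the correct reduction of the general mixed-bracket sum.

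The main obstacle is not mathematical depth but careful bookkeeping: one must verify that every bracket $d[\cdot]$ and every $A_i$ in Beli's general formulation degenerates correctly once the boundary term $\alpha_3$ and the absent $\beta$-invariants of the unary lattice $N$ are suppressed---precisely the reduction giving $d[-a_{1,3}b_1]=d(-a_{1,3}b_1)$---and that no genuine condition is silently lost in passing from arbitrary $n$ to $n=1$. I would double-check that the strictness of the gap inequality in (iii) is preserved under specialization and that the unary case introduces no degenerate edge phenomena (for instance, the vanishing of all $\beta_i$ must not weaken any inequality). Once these verifications are in place, the equivalence follows directly from \cite[Theorem 1.2]{beli_universal_2020} and the remarks cited there.
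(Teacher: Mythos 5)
Your proposal is correct and takes essentially the same route as the paper: the paper gives no separate proof of this theorem, but presents it exactly as you do, namely as the specialization of Beli's general criterion \cite[Theorem 1.2]{beli_universal_2020} to $n=1$, $m\in\{2,3\}$, invoking \cite[Remarks 1]{beli_universal_2020} to drop the undefined invariants $\alpha_{3}$ and $\beta_{1}$ so that $d[-a_{1,3}b_{1}]=d(-a_{1,3}b_{1})$ and the auxiliary clause reduces to condition (iii) (vacuous when $m=2$). Your bookkeeping verification is, if anything, more explicit than what the paper records.
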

 
\section{Representations of quadratic lattices over non-archimedean local fields}\label{sec:maximal-lattices}
In this section, we briefly review the setting of quadratic spaces and lattices over non-archimedean local fields, as discussed in \cite[\S 4]{He25}, and formulate the results needed for binary or ternary cases. Any unexplained notation or result can be found therein. Unless otherwise stated, we always assume that $F$ is a non-archimedean local field and $M$ is an $\mathcal{O}_{F}$-lattice of rank $m$ associated with the quadratic form $Q$.
\begin{defn}\label{defn:space-maximallattice}
	Let $n\ge 1$. For $c\in \mathcal{V}$, define the $ n $-dimensional quadratic space over $F$:
	\begin{align*}
			W_{1}^{n}(c):=
			\begin{cases}
				\mathbb{H}^{\frac{n-2}{2}}\perp [1,-c] &\text{if $ n $ is even}, \\
				\mathbb{H}^{\frac{n-1}{2}}\perp [c]    &\text{if $ n $ is odd},  			\end{cases}
		\end{align*}
		and  if $n\not=1$ and $(n,c)\not=(2,1)$, define the $ n $-dimensional quadratic space $ W_{2}^{n}(c) $ with $ \det  W_{2}^{n}(c)=\det  W_{1}^{n}(c) $ and $ W_{2}^{n}(c)\not\cong W_{1}^{n}(c) $. We further define the $ \mathcal{O}_{F} $-maximal lattice on $ W_{\nu}^{n}(c) $ by $ N_{\nu}^{n}(c) $ provided that $W_{\nu}^{n}(c)$ is defined.
	\end{defn}		
\begin{re}
 When a quadratic space $W_{\nu}^{n}(c)$ or an $\mathcal{O}_{F}$-maximal lattice $N_{\nu}^{n}(c)$ is discussed, we always assume that $(n,\nu)\not=(1,2)$ and $(n,\nu,c)\not=(2,2,1)$.
\end{re}
	
When $F$ is dyadic, let $ c\in \mathcal{V}\backslash \{1,\Delta\}$, we write $c\pi^{-\ord(c)}=a^{2}(1+b\pi^{d(c)})$ with $a,b\in \mathcal{O}_{F}^{\times}$ when $\ord(c)$ is even, and then put
	\begin{align*}
		c^{\#}:=
		\begin{cases}
			\Delta    &\text{if $\ord(c)$ is odd}, \\
			1+4\rho b^{-1}\pi^{-d(c)}  &\text{if $\ord(c)$ is even}.
		\end{cases}
	\end{align*}
From \cite[Proposition 3.2]{HeHu2}, we see that 
\begin{align}\label{csharp}
	c^{\#}\in \mathcal{O}_{F}^{\times}, \quad d(c^{\#})=2e-d(c)\quad\text{and}\quad(c^{\#},c)_{\mathfrak{p}}=-1.
\end{align}
  The quadratic spaces $W_{\nu}^{n}(c)$ and lattices $N_{\nu}^{n}(c)$ have been determined in \cite{HeHu2,hhx_indefinite_2021} (cf. \cite[Proposition 4.2(i) and Lemmas 4.7 and 4.9 for details]{He25}). We will enumerate $W_{\nu}^{n}(c)$ and $N_{\nu}^{n}(c)$ for $n\in \{1,2,3\}$, which, in fact, exhausts all dimension $n$ quadratic spaces and rank $n$ $\mathcal{O}_{F}$-maximal lattices from \cite[Remark 4.3]{He25}.
	\begin{prop}\label{prop:space}
		Let $ n\in \{1,2,3\} $, $\nu\in \{1,2\}$ and $c\in \mathcal{V}$. The quadratic space $ W_{\nu}^{n}(c) $  is given by the following table, 
			\begin{center}
 				\renewcommand\arraystretch{1.5}
				\hskip -0.8cm \begin{tabular}{c|c|c|c}
					\toprule[1.2pt]
					& $ c $ & $ W_{1}^{n}(c) $  & $ W_{2}^{n}(c) $  \\
					\hline
					\multirow{2}*{\text{$n=1$}}	 & $ \delta,\delta\in\mathcal{U}  $ & $    [\delta]  $  &    \\
					\cline{2-4}
					& $ \delta\pi,\delta\in\mathcal{U}  $ & $  [\delta\pi] $  &    \\
					\hline
					\multirow{4}*{\text{$n=2$}}	 & $ 1 $ & $ \mathbb{H}  $ &  \\
					\cline{2-4}
					& $ \Delta $  & $     [1,-\Delta]  $ & $   [ \pi,-\Delta\pi ]  $  \\
					\cline{2-4}
					& $ \delta,\delta\in \mathcal{U}\backslash \{1,\Delta\} $  & $  [1,-\delta  ] $ & $    [\delta^{\#},-\delta^{\#}\delta  ] $  \\
					\cline{2-4}
					& $ \delta\pi, \delta\in \mathcal{U} $  & $    [1,-\delta\pi] $ & $   [\Delta,-\Delta\delta\pi]  $  \\
					\hline
					\multirow{2}*{\text{$n=3$}}	 & $ \delta,\delta\in\mathcal{U}  $ & $ \mathbb{H} \perp   [\delta] $  &   $    [\pi,-\Delta\pi,\Delta\delta]
					$  \\
					\cline{2-4}
					& $ \delta\pi,\delta\in\mathcal{U}  $ & $ \mathbb{H} \perp  [\delta\pi]  $  &   $     [1,-\Delta,\Delta\delta\pi]  $  \\
					\bottomrule[1.2pt]
				\end{tabular}
			\end{center}	
			where the third case is ignored when $e=0$ and $n=2$.  
	\end{prop}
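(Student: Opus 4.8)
The plan is to read the table off from the general classification of quadratic spaces over a non-archimedean local field, according to which a non-degenerate space is determined up to isometry by its dimension, its determinant in $F^{\times}/F^{\times 2}$, and its Hasse symbol; see \cite[\S 63]{omeara_quadratic_1963}. For each dimension $n$ and each determinant class there are exactly two isometry classes, distinguished by the Hasse symbol, with the single exception of $n=2$ and determinant $-1$, where only the hyperbolic plane occurs. By definition $W_{1}^{n}(c)$ is the prescribed split representative and $W_{2}^{n}(c)$ is the unique remaining class of the same determinant; hence the task reduces, for each listed entry, to checking that (a) it has the same determinant as $W_{1}^{n}(c)$, and (b) it is not isometric to $W_{1}^{n}(c)$, while accounting for the excluded cases.

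First I would record the determinants. Using $\det\mathbb{H}=-1$ in $F^{\times}/F^{\times 2}$, one gets $\det W_{1}^{1}(c)=c$ and $\det W_{1}^{2}(c)=\det W_{1}^{3}(c)=-c$, and a direct multiplication shows that every space in the $W_{2}$ column shares the determinant of the corresponding $W_{1}^{n}(c)$ (for instance $\det[\delta^{\#},-\delta^{\#}\delta]=-(\delta^{\#})^{2}\delta\equiv-\delta$, and $\det[\pi,-\Delta\pi,\Delta\delta]=-\Delta^{2}\pi^{2}\delta\equiv-\delta$). Since $c$ runs over the representatives $\mathcal{V}$ of $F^{\times}/F^{\times 2}$, these determinants exhaust all square classes, which also yields the exhaustion statement of \cite[Remark 4.3]{He25}. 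The excluded cases are disposed of here: there is no $W_{2}^{1}(c)$ because a unary space is fixed by its determinant alone, and $W_{2}^{2}(1)$ is omitted because $W_{1}^{2}(1)=[1,-1]=\mathbb{H}$ has determinant $-1$, the one class for which no second space exists. The $n=2$, $c=\delta\in\mathcal{U}\setminus\{1,\Delta\}$ row occurs only in the dyadic case, since $\mathcal{U}=\{1,\Delta\}$ when $e=0$, matching the caveat in the statement.

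The remaining point (b) I would verify separately in the two ranks. For $n=2$, each listed $W_{2}^{2}(c)$ is obtained from $W_{1}^{2}(c)=[1,-c]$ by scaling by a factor $t\in\{\pi,\delta^{\#},\Delta\}$, and scaling a binary space by $t$ multiplies its Hasse symbol by $(t,-\det)_{\mathfrak{p}}$; since $-\det W_{1}^{2}(c)=c$, it suffices to check that the relevant Hilbert symbol $(t,c)_{\mathfrak{p}}$ equals $-1$. This is $(\pi,\Delta)_{\mathfrak{p}}=-1$ for $c=\Delta$; $(\delta^{\#},\delta)_{\mathfrak{p}}=-1$ from \eqref{csharp} for $c=\delta$; and $(\Delta,\delta\pi)_{\mathfrak{p}}=(-1)^{\ord(\delta\pi)}=-1$ for $c=\delta\pi$, using that $F(\sqrt{\Delta})/F$ is unramified so that $(\Delta,x)_{\mathfrak{p}}=(-1)^{\ord(x)}$. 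For $n=3$, $W_{1}^{3}(c)=\mathbb{H}\perp[c]$ is isotropic, so it suffices to show the listed $W_{2}^{3}(c)$ is anisotropic. Here I would use that the norm form $[1,-\Delta]$ of $F(\sqrt{\Delta})/F$ represents precisely the elements of even order, whence $[\pi,-\Delta\pi]=\pi[1,-\Delta]$ represents precisely those of odd order: since $[\pi,-\Delta\pi,\Delta\delta]=[\pi,-\Delta\pi]\perp[\Delta\delta]$ and $-\Delta\delta$ has even (namely $0$) order, $[\pi,-\Delta\pi]$ does not represent $-\Delta\delta$, so the ternary space is anisotropic; the case $[1,-\Delta,\Delta\delta\pi]$ is symmetric, as $[1,-\Delta]$ represents only even-order elements while $-\Delta\delta\pi$ has odd order.

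The main obstacle is the dyadic bookkeeping concentrated in point (b): unlike the non-dyadic setting, the Hilbert symbols of units are not governed by a simple Legendre character, so the key inputs $(\delta^{\#},\delta)_{\mathfrak{p}}=-1$ and $d(c^{\#})=2e-d(c)$ must be taken from the computation of $c^{\#}$ in \eqref{csharp} and \cite[Proposition 3.2]{HeHu2}. For this reason, rather than re-deriving the dyadic Hilbert symbols from scratch, I would invoke the explicit determination of the spaces $W_{\nu}^{n}(c)$ and their maximal lattices in \cite{HeHu2,hhx_indefinite_2021} (cf. \cite[Proposition 4.2(i), Lemmas 4.7 and 4.9]{He25}) and specialize to $n\in\{1,2,3\}$, using the determinant and isotropy/Hasse checks above to pin down the identification of the $W_{1}$ and $W_{2}$ columns.
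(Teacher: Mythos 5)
Your proposal is correct, but it follows a genuinely different route from the paper: in the paper, Proposition \ref{prop:space} is not proved at all within the text --- it is presented as an enumeration read off from the prior determinations in \cite{HeHu2,hhx_indefinite_2021} (cf.\ \cite[Proposition 4.2(i), Lemmas 4.7 and 4.9]{He25}), i.e.\ the justification is purely by citation. You instead give a self-contained verification: you reduce the table to the classification of quadratic spaces over a local field by dimension, determinant and Hasse symbol, check that every entry in the $W_{2}$ column has the same determinant as $W_{1}^{n}(c)$, and then establish non-isometry --- for $n=2$ by the scaling identity (scaling by $t$ multiplies the Hasse symbol by $(t,-\det)_{\mathfrak{p}}$, so the checks reduce to $(\pi,\Delta)_{\mathfrak{p}}=-1$, $(\delta^{\#},\delta)_{\mathfrak{p}}=-1$ from \eqref{csharp}, and $(\Delta,\delta\pi)_{\mathfrak{p}}=-1$), and for $n=3$ by showing the listed spaces are anisotropic via the fact that $[1,-\Delta]$ represents exactly the even-order elements. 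All of these computations are sound, including in the dyadic case, and together with the existence/uniqueness count they do pin down the table; your treatment of the excluded cases $(n,\nu)=(1,2)$, $(n,\nu,c)=(2,2,1)$ and of the empty row when $e=0$ is also right. What your approach buys is transparency (the reader sees exactly which Hilbert-symbol facts force each entry), at the cost of re-deriving material the paper simply imports; what the paper's approach buys is brevity and consistency with the lattice-level statements (Proposition \ref{prop:maximallattices-binary-ternary}), which genuinely do require the cited machinery. One small slip worth fixing: your opening claim that ``for each dimension $n$ and each determinant class there are exactly two isometry classes, with the single exception of $n=2$ and determinant $-1$'' is false for $n=1$, where the determinant alone determines the space; you correct this yourself two sentences later when explaining why $W_{2}^{1}(c)$ does not exist, but the blanket statement should be restricted to $n\in\{2,3\}$.
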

	 \begin{prop}\label{prop:maximallattices-binary-ternary}
	 	Let $n\in \{1,2,3\}$, $\nu\in\{1,2\}$ and $c\in \mathcal{V}$.
	 	
	 	\begin{enumerate}[itemindent=-0.5em,label=\rm (\roman*)]
	 		
	 		\item If $F$ is non-dyadic, then the $ \mathcal{O}_{F} $-maximal lattice $ N_{\nu}^{n}(c) $ is given by the following table, 
	 		\begin{center}
	 			\renewcommand\arraystretch{1.5}
	 			\begin{tabular}{c|c|c|c}
	 				\toprule[1.2pt]
	 				& $ c $ & $ N_{1}^{n}(c) $  & $ N_{2}^{n}(c) $  \\
	 				\hline
	 				\multirow{2}*{\text{$n=1$}} & $ \delta,\delta\in \mathcal{U} =\{1,\Delta\}$ & $  \langle \delta\rangle  $  &   \\
	 				\cline{2-4}
	 				& $ \delta\pi,\delta\in \mathcal{U} =\{1,\Delta\} $ & $   \langle \delta\pi\rangle $  &   \\
	 				\hline
	 				\multirow{3}*{\text{$n=2$}}	  & $ 1 $ & $ \mathbf{H} $ &   \\
	 				\cline{2-4}
	 				& $ \Delta $  & $   \langle 1,-\Delta\rangle  $ & $  \langle \pi,-\Delta\pi\rangle $  \\
	 				\cline{2-4}
	 				& $ \delta\pi, \delta\in \mathcal{U} =\{1,\Delta\} $  & $  \langle 1,-\delta\pi\rangle $ & $   \langle \Delta,-\Delta\delta\pi\rangle$  \\
	 				\hline
	 				\multirow{2}*{\text{$n=3$}} & $ \delta,\delta\in \mathcal{U} =\{1,\Delta\}$ & $ \mathbf{H} \perp\langle \delta\rangle  $  &   $  \langle \pi,-\Delta\pi,\Delta\delta\rangle $  \\
	 				\cline{2-4}
	 				& $ \delta\pi,\delta\in \mathcal{U} =\{1,\Delta\} $ & $ \mathbf{H} \perp \langle \delta\pi\rangle $  &   $   \langle 1,-\Delta ,\Delta\delta\pi\rangle$  \\
	 				\bottomrule[1.2pt]
	 			\end{tabular}
	 		\end{center}   
	 		\item If $F$ is dyadic, then the $ \mathcal{O}_{F} $-maximal lattice $ N_{\nu}^{n}(c) $ is given by the following table, 
	 		\begin{center}
	 			\renewcommand\arraystretch{1.5}
	 			\hskip -0.8cm \begin{tabular}{c|c|c|c}
	 				\toprule[1.2pt]
	 				& $ c $ & $ N_{1}^{n}(c) $  & $ N_{2}^{n}(c) $  \\
	 				\hline
	 				\multirow{2}*{\text{$n=1$}}	 & $ \delta,\delta\in\mathcal{U}  $ & $  \prec \delta\succ $  &    \\
	 				\cline{2-4}
	 				& $ \delta\pi,\delta\in\mathcal{U}  $ & $  \prec\delta\pi\succ $  &    \\
	 				\hline
	 				\multirow{4}*{\text{$n=2$}}	 & $ 1 $ & $ \mathbf{H}  $ &  \\
	 				\cline{2-4}
	 				& $ \Delta $  & $    \prec 1,-\Delta\pi^{-2e}\succ $ & $  \prec \pi,-\Delta\pi^{1-2e}\succ $  \\
	 				\cline{2-4}
	 				& $ \delta,\delta\in \mathcal{U}\backslash \{1,\Delta\} $  & $  \prec 1,-\delta\pi^{1-d(\delta)}\succ$ & $   \prec\delta^{\#},-\delta^{\#}\delta\pi^{1-d(\delta)}\succ$  \\
	 				\cline{2-4}
	 				& $ \delta\pi, \delta\in \mathcal{U} $  & $   \prec  1,-\delta\pi\succ$ & $   \prec\Delta,-\Delta\delta\pi\succ $  \\
	 				\hline
	 				\multirow{2}*{\text{$n=3$}}	 & $ \delta,\delta\in\mathcal{U}  $ & $ \mathbf{H} \perp \prec \delta\succ $  &   $   \prec \delta\kappa^{\#}, -\delta\kappa^{\#}\kappa\pi^{2-2e}, \delta \kappa\succ 
	 				$  \\
	 				\cline{2-4}
	 				& $ \delta\pi,\delta\in\mathcal{U}  $ & $ \mathbf{H} \perp \prec\delta\pi\succ $  &   $   \prec 1,-\Delta\pi^{-2e},\Delta\delta\pi\succ $  \\
	 				\bottomrule[1.2pt]
	 			\end{tabular}
	 		\end{center}
	 		where $ \kappa $ is a fixed unit with $ d(\kappa)=2e-1 $.  
	 	\end{enumerate}
	 \end{prop}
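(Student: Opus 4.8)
The plan is to prove the proposition by exhibiting, for each space $W_{\nu}^{n}(c)$ listed in Proposition \ref{prop:space}, an explicit lattice and verifying that it is \emph{the} $\mathcal{O}_{F}$-maximal lattice on that space. The underlying fact, which I would cite from \cite[\S\S 82, 91]{omeara_quadratic_1963}, is that over a non-archimedean local field the $\mathcal{O}_{F}$-maximal lattice on a regular quadratic space exists and is unique up to isometry; hence $N_{\nu}^{n}(c)$ is well defined and it suffices to match it against the tabulated lattice. Two reduction principles collapse the bulk of the work. First, by the structure theory of maximal lattices one has $N_{\nu}^{n}(c)\cong L_{0}\perp \mathbf{H}^{r}$, where $\mathbf{H}$ is the maximal lattice of $\mathbb{H}$ and $L_{0}$ is the maximal lattice on the anisotropic kernel of $W_{\nu}^{n}(c)$. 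Since every space in Proposition \ref{prop:space} is already written as hyperbolic planes orthogonal to an anisotropic kernel of dimension at most three, this reduces each entry to its kernel. Second, the unary rows are immediate: on $[\delta]$ and $[\delta\pi]$ the maximal lattice is $\langle \delta\rangle$ and $\langle \delta\pi\rangle$, the minimal-order representatives of the square class with norm in $\mathcal{O}_{F}$.

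For the non-dyadic table (i) I would verify maximality through classical Jordan theory: every lattice splits orthogonally into rank-one pieces, and maximality is equivalent to minimizing $\ord(\mathfrak{v})$ among integral lattices on the fixed space. For each row I compute $\det$ and the Hasse symbol to confirm the candidate lies on $W_{\nu}^{n}(c)$, check $\mathfrak{n}\subseteq \mathcal{O}_{F}$, and then confirm that no proper integral overlattice exists; the anisotropic kernels $\langle 1,-\Delta\rangle$, $\langle \pi,-\Delta\pi\rangle$, $\langle \pi,-\Delta\pi,\Delta\delta\rangle$ and their companions are each settled directly, since their Gram determinants already saturate the volume bound.

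The dyadic table (ii) carries the real content, and here I would argue entirely in BONGs. For each listed lattice $\prec a_{1},\ldots,a_{m}\succ$ the steps are: (1) verify it is a good BONG by checking the inequalities \eqref{eq:GoodBONGs}--\eqref{eq:BONGs} of Lemma \ref{lem:goodBONGequivcon}; (2) confirm it lies on the prescribed space $W_{\nu}^{n}(c)$ by matching determinant and Hasse invariant, using the identities $d(c^{\#})=2e-d(c)$ and $(c^{\#},c)_{\mathfrak{p}}=-1$ from \eqref{csharp} (together with the unit $\kappa$, $d(\kappa)=2e-1$, for the ternary second class); and (3) establish maximality via the BONGs maximality criterion of \cite{HeHu2,hhx_indefinite_2021}, namely that a good-BONG lattice is $\mathcal{O}_{F}$-maximal exactly when its $R_{i}$-invariants are pushed to the extreme permitted by \eqref{eq:BONGs} and the norm condition. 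A representative check is the row $c=\Delta$, where $N_{1}^{2}(\Delta)=\prec 1,-\Delta\pi^{-2e}\succ$ forces $R_{2}-R_{1}=-2e$, hence $\alpha_{1}=0$ by Proposition \ref{prop:alphaproperty}(iii) and $\ord(\mathfrak{v})=-2e$ is minimal. The main obstacle is precisely step (3) for the delicate rows $c=\delta\in \mathcal{U}\setminus\{1,\Delta\}$ and the ternary second-class entries, where the exponents $1-d(\delta)$, $-2e$, and $2-2e$ are exactly those making the defect orders $d(-a_{i}a_{i+1})$ saturate \eqref{eq:BONGs}: one must show that no admissible enlargement of any $a_{i}$ keeps the BONG good while preserving the space, which reduces to a careful bookkeeping of the $\alpha_{i}$-invariants through Propositions \ref{prop:Rproperty} and \ref{prop:alphaproperty}.
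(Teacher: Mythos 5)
Your overall framework — uniqueness of maximal lattices over local fields, splitting off copies of $\mathbf{H}$ to reduce to the anisotropic kernel, classical verification in the non-dyadic case, BONG computations in the dyadic case — is sound, and it is essentially the route the paper itself relies on: the paper offers no proof of Proposition \ref{prop:maximallattices-binary-ternary}, presenting it as a compilation of results already determined in \cite{HeHu2,hhx_indefinite_2021} (cf.\ Lemmas 4.7, 4.9, 4.11 and 4.12 of \cite{He25}), and those sources argue along exactly these lines. The unary and non-dyadic parts of your sketch, and your steps (1)--(2) for the dyadic table, are fine.

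The genuine gap is in your step (3). The ``maximality criterion'' as you state it — a good-BONG lattice is $\mathcal{O}_{F}$-maximal \emph{exactly when} its $R_{i}$-invariants are pushed to the extreme permitted by \eqref{eq:BONGs} and the norm condition — is false as a biconditional, and it fails on precisely the rows you identify as the main content. Two counterexamples from the table itself: (a) $N_{2}^{2}(\Delta)=\prec \pi,-\Delta\pi^{1-2e}\succ$ has $(R_{1},R_{2})=(1,1-2e)$, whereas \eqref{eq:BONGs} together with $R_{1}\ge 0$ would permit $(0,-2e)$; the lattice is nevertheless maximal, because the space $[\pi,-\Delta\pi]$ represents only elements of odd order, so no lattice on it can have $R_{1}=0$. (b) $N_{2}^{3}(\delta)$, $\delta\in\mathcal{U}$, has $(R_{1},R_{2},R_{3})=(0,2-2e,0)$, whereas the unconstrained extreme is $(0,-2e,0)$; but $R_{2}-R_{1}=-2e$ forces $d(-a_{1}a_{2})\ge 2e$ by \eqref{eq:BONGs}, hence $[a_{1},a_{2}]\cong \mathbb{H}$ or $[1,-\Delta]$, and either one makes a ternary space whose determinant has even order isotropic — impossible on the anisotropic $W_{2}^{3}(\delta)$. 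So the extremum must be computed \emph{per space}, with the constraints ``which orders the space represents'', ``$\sum_{i}R_{i}\equiv \ord(\det)\pmod 2$'', and anisotropy built in; none of these follow from \eqref{eq:BONGs} plus integrality. The formulation that makes your plan go through is: $L$ is $\mathcal{O}_{F}$-maximal if and only if $\ord\,\mathfrak{v}(L)=\sum_{i}R_{i}(L)$ is minimal among all integral lattices on the fixed space $FL$ (this follows from existence and uniqueness of maximal lattices together with the fact that $L\subseteq M$ with $\mathfrak{v}(L)=\mathfrak{v}(M)$ forces $L=M$), and the actual work is computing that per-space minimum. Relatedly, your closing suggestion to rule out ``admissible enlargements of any $a_{i}$'' is not a valid substitute: an integral overlattice of $L$ is not obtained by modifying an entry of a fixed BONG, so one must argue by the volume comparison above (or, on anisotropic kernels, by O'Meara's characterization of the maximal lattice as the set of all vectors of integral norm, \cite[91:1]{omeara_quadratic_1963}).
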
	
 \begin{prop}\label{prop:ternary-space-rep}
 	Let $a,b,c\in F^{\times}$. The following statements are equivalent:
 		\begin{enumerate}[itemindent=-0.5em,label=\rm (\roman*)]
 	\item   $[a,b]$ represents $[c]$.
 	
 	\item  $[a,b,-c]$ is isotropic.
 	
 	\item $(-ab,ac)_{\mathfrak{p}}=1$.
 	
 	\item $(-ba,bc)_{\mathfrak{p}}=1$.
 \end{enumerate}
 \end{prop}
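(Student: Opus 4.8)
The plan is to treat the four statements as a small cycle anchored at the representation property (i). First I would establish the equivalence (i) $\Leftrightarrow$ (ii) from the classical fact that a regular quadratic space $U$ over $F$ represents a scalar $c\in F^{\times}$ if and only if $U\perp[-c]$ is isotropic (cf.\ \cite[\S 42]{omeara_quadratic_1963}); applied to $U=[a,b]$ this is exactly the equivalence of representing $[c]$ by $[a,b]$ with the isotropy of $[a,b,-c]$. For completeness I would spell out both directions: if $ax^{2}+by^{2}=c$ has a solution, then $(x,y,1)$ is an isotropic vector of $[a,b,-c]$; conversely, an isotropic vector $(x,y,z)$ with $z\not=0$ rescales to a representation, while $z=0$ forces $[a,b]$ itself to be isotropic, hence universal, so it still represents $c$.

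The core computation is the equivalence (i) $\Leftrightarrow$ (iii). Since $a\not=0$, the equation $ax^{2}+by^{2}=c$ is solvable if and only if $u^{2}+ab\,y^{2}=ac$ is solvable (put $u=ax$ and multiply through by $a$, a reversible step); that is, if and only if $[1,ab]$ represents $ac$. Writing $ac=u^{2}-(-ab)y^{2}$, this says precisely that $ac$ lies in the norm group $N(F(\sqrt{-ab})/F)$, which by the defining property of the Hilbert symbol is equivalent to $(-ab,ac)_{\mathfrak{p}}=1$. For (i) $\Leftrightarrow$ (iv) I would simply invoke the symmetry of (i) in $a$ and $b$: since $[a,b]$ represents $c$ if and only if $[b,a]$ represents $c$, the same argument with the roles of $a$ and $b$ interchanged yields $(-ba,bc)_{\mathfrak{p}}=1$.

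The argument is essentially routine once the norm interpretation of the Hilbert symbol is in hand; the only point demanding care, and the place I expect the sole genuine subtlety, is the degenerate case $-ab\in F^{\times 2}$, where $F(\sqrt{-ab})=F$. Here the norm group is all of $F^{\times}$ and the symbol $(-ab,\,\cdot\,)_{\mathfrak{p}}$ is identically $1$; correspondingly $[1,ab]\cong\mathbb{H}$ is isotropic and hence universal, so (iii) and (i) hold simultaneously and trivially. I would dispose of this case first, so that the norm-form reasoning in the main case proceeds with $F(\sqrt{-ab})/F$ a genuine quadratic extension.
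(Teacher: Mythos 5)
Your proof is correct, but it takes a different route from the paper: the paper disposes of this proposition in one line by citing \cite[58:6]{omeara_quadratic_1963}, where these equivalences (representation by a binary space, isotropy of the associated ternary space, and the Hilbert-symbol criterion) are already recorded for local fields, whereas you reprove the cited fact from scratch. Your chain of reasoning is sound: (i)~$\Leftrightarrow$~(ii) is the standard representation--isotropy equivalence, and your case analysis on the isotropic vector (including the $z=0$ case, where isotropy of $[a,b]$ gives universality) is complete; (i)~$\Leftrightarrow$~(iii) via the reversible substitution $u=ax$, reducing to whether $ac$ is a norm from $F(\sqrt{-ab})$, is exactly the norm-group definition of the Hilbert symbol, and you correctly isolate the degenerate case $-ab\in F^{\times 2}$ where both sides hold trivially; (i)~$\Leftrightarrow$~(iv) by the symmetry $[a,b]\cong[b,a]$ is immediate. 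What your approach buys is self-containedness---the proposition is verified directly from the definition of the Hilbert symbol rather than by locating the right remark in O'Meara---at the cost of length; what the paper's citation buys is brevity, since the statement is a textbook fact in the local theory.
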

 \begin{proof}
 	This is clear from  \cite[58:6]{omeara_quadratic_1963}.
 \end{proof}
 \begin{lem}\label{lem:lattice-rep-binary}
 	Let $\nu\in \{1,2\}$, $\varepsilon\in \mathcal{U}$, $\delta\in \mathcal{U}\backslash \{1,\Delta\}$ and $c\in \mathcal{V}$. Suppose that $M$ is ADC. 
 	\begin{enumerate}[itemindent=-0.5em,label=\rm (\roman*)]
 		\item  If $FM\cong W_{1}^{2}(1)$, then  $M$ is  universal.		
 	 		
 		\item  If $FM\cong W_{1}^{2}(\Delta)$, then $M$ represents $\langle\varepsilon\rangle$, but does not reprsent $\langle\varepsilon\pi\rangle$
 		
 		\item   If $ FM\cong W_{2}^{2}(\Delta)$, then $M$ represents $\langle\varepsilon\pi\rangle$, but does not represent $\langle\varepsilon\rangle$.
 		
 		\item  If $FM\cong W_{\nu}^{2}(\varepsilon\pi)$, then $M$ represents $\langle \Delta^{\nu-1}\rangle$ and $\langle-\Delta^{\nu-1}\varepsilon\pi\rangle$, but does not represent  $\langle\Delta^{\nu}\rangle$ and $\langle-\Delta^{\nu}\varepsilon\pi\rangle$.
 		
 		\item  If $F$ is dyadic and $FM\cong W_{\nu}^{2}(\delta)$, then $M$ represents $\prec (\delta^{\#})^{\nu-1}\succ$ and $\prec -(\delta^{\#})^{\nu-1}\delta\succ$,  but does not represent $\prec (\delta^{\#})^{\nu}\succ$ and $\prec -(\delta^{\#})^{\nu}\delta\succ$.
 		
 		\item If $F$ is dyadic and $FM\cong W_{\nu}^{2}(\delta)$, then $M$ represents precisely one of $\prec-\delta\pi\succ$ and $\prec-\delta^{\#}\delta\pi\succ$.
 		
 		\item  If $FM\cong W_{1}^{3}(c)$, then $M$ is universal.
 		
 		\item If $FM\cong W_{2}^{3}(c)$, then $M$ represents $\langle c^{\prime}\rangle$ for all $c^{\prime}\in \mathcal{V}$ with $c^{\prime}\not=c$.
 	\end{enumerate}
 \end{lem}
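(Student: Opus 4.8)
The plan is to reduce every assertion to a statement about which square classes the ambient space $FM$ represents over $F$, and then let the ADC hypothesis do the rest. Concretely, each target is an element $a$ lying in $\mathcal{O}_F$ (a unit, a uniformizer times a unit, or such an element times $\delta^{\#}$, all integral by \eqref{csharp}). If $FM$ represents $a$, then Definition \ref{defn:ADC}(ii) forces $M$ to represent $a$, which yields all the ``represents'' conclusions; conversely, since $M\subseteq FM$, if $FM$ fails to represent $a$ over $F$ then $M$ cannot represent $a$ over $\mathcal{O}_F$, which yields all the ``does not represent'' conclusions. Passing between the element $a$ and the unary lattice $\prec a\succ$ (or $\langle a\rangle$) is harmless, because representing $\prec a\succ$ amounts to representing some element of $a\mathcal{O}_F^{\times2}$, and both the Hilbert symbol and the ADC property are insensitive to unit-square factors. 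Thus the whole lemma becomes a bookkeeping exercise on the explicit spaces $W_\nu^n(c)$ tabulated in Proposition \ref{prop:space}.

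For the binary items (i)--(vi) I would read off the diagonal form $[a,b]$ of $W_\nu^2(c)$ from Proposition \ref{prop:space} and apply the representation criterion of Proposition \ref{prop:ternary-space-rep}: $[a,b]$ represents $c'$ if and only if $(-ab,ac')_{\mathfrak{p}}=1$. Item (i) is immediate since $W_1^2(1)=\mathbb{H}$ is isotropic, hence represents every element of $F^{\times}$, so ADC makes $M$ universal. For (ii) and (iii), $[1,-\Delta]$ (resp.\ $[\pi,-\Delta\pi]$) represents $c'$ iff $(\Delta,c')_{\mathfrak{p}}=1$ (resp.\ $=-1$), and since $(\Delta,\cdot)_{\mathfrak{p}}$ detects the parity of the order, the represented unit/uniformizer classes are exactly as claimed. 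Items (iv)--(vi) run the same way, now using \eqref{csharp}, namely $(\delta^{\#},\delta)_{\mathfrak{p}}=-1$, to separate the two square classes $(\delta^{\#})^{\nu-1}$ and $(\delta^{\#})^{\nu}$. The one genuinely structural point is (vi): since $(\delta,-\delta\pi)_{\mathfrak{p}}$ and $(\delta,-\delta^{\#}\delta\pi)_{\mathfrak{p}}=(\delta,\delta^{\#})_{\mathfrak{p}}(\delta,-\delta\pi)_{\mathfrak{p}}$ differ exactly by the factor $(\delta,\delta^{\#})_{\mathfrak{p}}=-1$, the space $W_\nu^2(\delta)$ represents precisely one of $-\delta\pi$ and $-\delta^{\#}\delta\pi$, and ADC transfers this exclusive alternative to $M$.

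For the ternary items, (vii) is again immediate: $W_1^3(c)=\mathbb{H}\perp[c]$ contains a hyperbolic plane, hence represents all of $F^{\times}$, so ADC makes $M$ universal. Item (viii) is the substantive one. Here $W_2^3(c)$ is anisotropic, being the ternary space with $\det W_2^3(c)=\det W_1^3(c)=-c$ distinct from the isotropic $W_1^3(c)$. For $c'\in\mathcal{V}$ with $c'\neq c$, the quaternary space $W_2^3(c)\perp\langle-c'\rangle$ has determinant $\det W_2^3(c)\cdot(-c')=cc'\notin F^{\times2}$; since the unique anisotropic quaternary space over a local field has square determinant, this space must be isotropic, i.e.\ $W_2^3(c)$ represents $c'$, whence $\langle c'\rangle\rep M$ by ADC. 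One checks separately that $W_2^3(c)$ does not represent $c$: otherwise $W_2^3(c)\cong\langle c\rangle\perp U$ with $\det U=-1$, forcing $U\cong\mathbb{H}$ and hence $W_2^3(c)\cong W_1^3(c)$, a contradiction; this last point is not needed for the stated conclusion but confirms the list is sharp.

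The main obstacle is exactly item (viii): unlike the binary cases it cannot be settled by a single Hilbert-symbol evaluation, and instead relies on the local classification of quaternary spaces (a single anisotropic class, distinguished by having square determinant). Everything else is a routine, if lengthy, tabulation of Hilbert symbols against Proposition \ref{prop:space}, the only recurring subtlety being the consistent use of \eqref{csharp} to keep track of the non-split unit $\delta^{\#}$.
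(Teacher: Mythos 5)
Your proposal is correct and follows essentially the same route as the paper: the ADC hypothesis reduces every assertion to whether the space $FM\cong W_{\nu}^{m}(c)$ represents the relevant square class, and the rest is Hilbert-symbol bookkeeping against Proposition \ref{prop:space}. The only cosmetic difference is that the paper outsources those symbol computations (including case (viii)) to \cite[Lemma 4.4(ii) and (iii)]{He25}, whereas you carry them out directly via Proposition \ref{prop:ternary-space-rep} and, for (viii), the classification of the unique anisotropic quaternary space.
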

 \begin{proof}
 	Since $M$ is ADC, $M$ represents $N_{1}^{1}(c)$ if and only if $FM$ represents $W_{1}^{1}(c)$. Hence, it suffices to verify that $FM\cong W_{\nu^{\prime}}^{m}(c^{\prime})$ ($m=2,3$) represents $W_{1}^{1}(c)$ or not. This is a direct calculation of Hilbert symbols by applying   \cite[Lemma 4.4(ii) and (iii)]{He25}.
 \end{proof}
 \begin{re}
 The statements (i)-(iv) and (vii)-(viii) also hold when $F$ is dyadic. In this case, the notation $\langle c\rangle$ denotes the unary lattice $\prec c\succ$ relative to some good BONG.
 \end{re}
 
 By Propositions \ref{prop:space} and \ref{prop:ternary-space-rep}, and Lemma \ref{lem:lattice-rep-binary}, we have the following proposition.
 \begin{prop}\label{prop:nu-isotropic}
 	Let $c\in \mathcal{V}$. Suppose that $M$ is ADC.
 	\begin{enumerate}[itemindent=-0.5em,label=\rm (\roman*)]
 		\item  If $FM\cong   W_{1}^{3}(c)$ or $W_{1}^{2}(1)$, then $FM$ is isotropic and $M$ is universal.
 		
 		\item If $FM\cong  W_{2}^{3}(c)$ or $W_{1}^{2}(c)$ with $c\not=1$, then $FM$ is anisotropic, but $M$ is not universal.
 		
 	\end{enumerate}
 \end{prop}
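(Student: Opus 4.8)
The plan is to separate the two dichotomies---isotropic versus anisotropic, and universal versus non-universal---and in each case reduce everything to the explicit diagonalizations tabulated in Proposition~\ref{prop:space} together with the representation data of Lemma~\ref{lem:lattice-rep-binary}. For part (i): both $W_{1}^{3}(c)=\mathbb{H}\perp[c]$ and $W_{1}^{2}(1)=\mathbb{H}$ contain a hyperbolic plane by Definition~\ref{defn:space-maximallattice} (see also Proposition~\ref{prop:space}), hence are isotropic; universality of $M$ is then read off directly from Lemma~\ref{lem:lattice-rep-binary}(vii) when $FM\cong W_{1}^{3}(c)$ and from Lemma~\ref{lem:lattice-rep-binary}(i) when $FM\cong W_{1}^{2}(1)$. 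Thus part (i) reduces to bookkeeping.

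For the anisotropy in part (ii): when $FM\cong W_{1}^{2}(c)=[1,-c]$ with $c\neq 1$ in $\mathcal{V}$, the binary space is isotropic iff $-(1)(-c)=c\in F^{\times 2}$, and $c\neq 1$ in $\mathcal{V}$ means $c\notin F^{\times 2}$, so $[1,-c]$ is anisotropic. When $FM\cong W_{2}^{3}(c)$, its defining property (Definition~\ref{defn:space-maximallattice}) is that it has the same determinant as $W_{1}^{3}(c)$ but is not isometric to it; since $W_{1}^{3}(c)$ is isotropic and over a non-archimedean local field each ternary determinant admits exactly one isotropic and one anisotropic space, $W_{2}^{3}(c)$ must be the anisotropic one. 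Alternatively, one verifies this directly from Propositions~\ref{prop:space} and~\ref{prop:ternary-space-rep}: e.g.\ for $W_{2}^{3}(\delta)=[\pi,-\Delta\pi,\Delta\delta]$ the binary part $[\pi,-\Delta\pi]$ is anisotropic and, by Proposition~\ref{prop:ternary-space-rep}, does not represent $-\Delta\delta$ because the relevant Hilbert symbol equals $-1$.

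For the non-universality in part (ii): since $M$ is ADC, $M$ represents $a\in\mathcal{O}_{F}$ iff $FM$ represents $a$, so it suffices to exhibit one unrepresented square-class representative. In the binary case I split according to $c\in\mathcal{V}\setminus\{1\}$: if $c=\Delta$, Lemma~\ref{lem:lattice-rep-binary}(ii) leaves $\langle\varepsilon\pi\rangle$ unrepresented; if $c=\delta\pi$, Lemma~\ref{lem:lattice-rep-binary}(iv) with $\nu=1$ leaves $\langle\Delta\rangle$ unrepresented; and if $c=\delta$ with $\delta\in\mathcal{U}\setminus\{1,\Delta\}$ (the dyadic subcase), Lemma~\ref{lem:lattice-rep-binary}(v) with $\nu=1$ leaves $\langle\delta^{\#}\rangle$ unrepresented; these three cases exhaust $\mathcal{V}\setminus\{1\}$. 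In the ternary case $FM\cong W_{2}^{3}(c)$, Lemma~\ref{lem:lattice-rep-binary}(viii) shows $M$ represents $\langle c'\rangle$ for every $c'\in\mathcal{V}$ with $c'\neq c$, leaving $c$ as the only candidate; I then claim $FM$ does not represent $c$. Indeed $\det W_{1}^{3}(c)\equiv -c$ modulo $F^{\times 2}$, so the unique square class omitted by the anisotropic space $W_{2}^{3}(c)$ is $-\det W_{1}^{3}(c)\equiv c$: concretely, $W_{2}^{3}(c)\perp[-c]$ has square determinant and is forced to be the unique anisotropic quaternary space $\mathbb{D}$ over $F$, since its split companion $W_{1}^{3}(c)\perp[-c]$ is hyperbolic. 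Hence $M$ does not represent $c$ and is not universal.

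The main obstacle is precisely this ternary non-universality step: every other piece follows immediately from the tables of Proposition~\ref{prop:space} and the representation lists of Lemma~\ref{lem:lattice-rep-binary}, but to close the argument one must pin down exactly which square class the anisotropic space $W_{2}^{3}(c)$ omits. I would carry this out either through the determinant/Hasse-invariant comparison of $W_{2}^{3}(c)\perp[-c]$ with its split companion sketched above, or---staying strictly within the cited tools---by direct Hilbert-symbol computations via Proposition~\ref{prop:ternary-space-rep} applied to the explicit forms $W_{2}^{3}(\delta)=[\pi,-\Delta\pi,\Delta\delta]$ and $W_{2}^{3}(\delta\pi)=[1,-\Delta,\Delta\delta\pi]$ of Proposition~\ref{prop:space}.
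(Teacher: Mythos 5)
Your proposal is correct and follows essentially the same route as the paper: the paper deduces the proposition directly from Propositions \ref{prop:space} and \ref{prop:ternary-space-rep} together with Lemma \ref{lem:lattice-rep-binary}, which are exactly the ingredients you use. The one step you work out by hand---that the anisotropic space $W_{2}^{3}(c)$ omits precisely the square class $c$, via $W_{2}^{3}(c)\perp[-c]$ having square determinant and being forced to be the anisotropic quaternary space---is the content the paper delegates to the Hilbert-symbol computations of \cite[Lemma 4.4(iii)]{He25} underlying Lemma \ref{lem:lattice-rep-binary}(viii), and your argument for it is sound.
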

    Recall from \cite[Proposition 2.3 and Corollary 2.9]{xu_indefinite_2020} that a binary $\mathcal{O}_{F}$-lattice $M$ is universal if and only if $M\cong\mathbf{H}$. Hence, by Propositions \ref{prop:space} and \ref{prop:nu-isotropic}, one can show the following corollary.
 	\begin{cor}\label{cor:ADC-universal}
 	 Let $m\in \{2,3\}$. Suppose that $M$ is ADC. Then $M$ is universal if and only if $FM$ is isotropic. In this case, if $m=2$, then $M\cong \mathbf{H}$. 
 \end{cor}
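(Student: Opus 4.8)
The plan is to reduce the statement to the classification of $m$-dimensional quadratic spaces in Proposition \ref{prop:space} together with the representation behavior recorded in Proposition \ref{prop:nu-isotropic}. Since the spaces $W_{\nu}^{m}(c)$ with $c\in \mathcal{V}$ and $\nu\in\{1,2\}$ exhaust all $m$-dimensional spaces, $FM$ must be isometric to one of them. First I would identify which of these are isotropic: for $m=2$ the only isotropic space is $W_{1}^{2}(1)=\mathbb{H}$, and for $m=3$ the isotropic ones are exactly the $W_{1}^{3}(c)$, each containing a hyperbolic plane; all the remaining spaces $W_{1}^{2}(c)$ with $c\neq 1$, $W_{2}^{2}(c)$, and $W_{2}^{3}(c)$ are anisotropic.

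For the implication ``$FM$ isotropic $\Rightarrow M$ universal'', I would invoke Proposition \ref{prop:nu-isotropic}(i) directly: if $FM\cong W_{1}^{3}(c)$ or $FM\cong W_{1}^{2}(1)$, then $M$ is universal. For the converse I would argue by contraposition, showing that $FM$ anisotropic forces $M$ to omit some element of $\mathcal{O}_{F}$. When $FM\cong W_{2}^{3}(c)$ (for $m=3$) or $FM\cong W_{1}^{2}(c)$ with $c\neq 1$ (for $m=2$), Proposition \ref{prop:nu-isotropic}(ii) yields non-universality at once. The cases $FM\cong W_{2}^{2}(c)$ are handled by the explicit non-representation statements of Lemma \ref{lem:lattice-rep-binary}: part (iii) covers $W_{2}^{2}(\Delta)$ (no unit is represented), part (iv) with $\nu=2$ covers $W_{2}^{2}(\delta\pi)$, and part (v) covers the dyadic spaces $W_{2}^{2}(\delta)$ (in each case $M$ fails to represent some unit or uniformizer-times-unit). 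Thus in every anisotropic case $M$ is not universal, which gives the equivalence for both $m=2$ and $m=3$.

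Finally, for the last clause, if $m=2$ and $M$ is universal, I would appeal to the Xu--Zhang criterion recalled immediately before the corollary (\cite[Proposition 2.3 and Corollary 2.9]{xu_indefinite_2020}), according to which a universal binary $\mathcal{O}_{F}$-lattice is necessarily isometric to $\mathbf{H}$. The only genuine obstacle here is bookkeeping: one must ensure the case analysis over anisotropic binary spaces is exhaustive, and in particular account for the spaces $W_{2}^{2}(c)$, which are not of the form $W_{1}^{2}(\cdot)$ and so are not literally listed in Proposition \ref{prop:nu-isotropic}(ii). This is precisely why Lemma \ref{lem:lattice-rep-binary} is brought in to supplement the two propositions, and once the list of anisotropic binary and ternary spaces is matched against these non-representation results, the proof is routine.
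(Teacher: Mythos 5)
Your proof is correct, and its skeleton — exhausting the possibilities for $FM$ via Proposition \ref{prop:space}, invoking Proposition \ref{prop:nu-isotropic} for the ternary case and for the isotropic direction, and citing the Xu--Zhang criterion for the final clause — is the same as the paper's (the paper gives no detailed proof, only the remark that the corollary follows from these ingredients). The one place where you genuinely diverge is the binary direction ``universal $\Rightarrow$ isotropic.'' You correctly observe that Proposition \ref{prop:nu-isotropic}(ii) does not literally cover the spaces $W_{2}^{2}(c)$, and you close this gap by a case analysis with Lemma \ref{lem:lattice-rep-binary}(iii)--(v), extracting in each case an explicit square class that $M$ omits (for instance $(\delta^{\#})^{2}\in F^{\times 2}$ for $W_{2}^{2}(\delta)$, and $\Delta^{2}\in F^{\times 2}$ for $W_{2}^{2}(\delta\pi)$). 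The paper instead disposes of the entire binary converse in one stroke with the Xu--Zhang result recalled just before the corollary: if $M$ is binary and universal, then $M\cong\mathbf{H}$, hence $FM\cong\mathbb{H}$ is isotropic; this requires no enumeration of anisotropic binary spaces and no appeal to Lemma \ref{lem:lattice-rep-binary} at all, with \cite{xu_indefinite_2020} simultaneously yielding the last clause. Your route is longer, but it has the merit of exhibiting concrete non-represented elements for every anisotropic binary space and of making visible the coverage issue in Proposition \ref{prop:nu-isotropic}(ii) that the paper's one-line argument silently absorbs into the citation; both arguments are complete and correct.
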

 \begin{lem}\label{lem:ADC-sufficient-equiv}
    	Let $m\ge 1$.
 	 \begin{enumerate}[itemindent=-0.5em,label=\rm (\roman*)]
 	 \item If $M$ is $\mathcal{O}_{F}$-maximal, then $M$ is ADC.
 	 
      \item $ M $ is ADC if and only if $M$ represents every unary $\mathcal{O}_{F}$-maximal lattice $N$  for which $FM$ represents $FN$.
 \end{enumerate}
 \end{lem}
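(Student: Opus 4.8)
The plan is to prove part (ii) first by an elementary scaling reduction, and then to deduce part (i) from the existence of maximal lattices together with the uniqueness (up to isometry) of $\mathcal{O}_{F}$-maximal lattices on a fixed quadratic space over a local field. The only non-elementary input will be this uniqueness, which I would cite from \cite[\S 82]{omeara_quadratic_1963}; everything else is bookkeeping with square classes.

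For (ii), I first note that a unary $\mathcal{O}_{F}$-maximal lattice is exactly one of the form $N=\mathcal{O}_{F}z$ with $Q(z)=c$ and $\ord(c)\in\{0,1\}$, since $N=\mathcal{O}_{F}z$ cannot be properly enlarged while staying integral precisely when $\ord(c)\le 1$; moreover $FM$ representing $FN$ means exactly that $FM$ represents the scalar $c\in\mathcal{O}_{F}$. The forward implication is then immediate: if $M$ is ADC, then $M$ represents $c$, so there is $w\in M$ with $Q(w)=c$, whence $\mathcal{O}_{F}w\cong N$ lies in $M$ and $M$ represents $N$. For the converse I would reduce an arbitrary represented scalar to its square-free part. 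Given a nonzero $a\in\mathcal{O}_{F}$ with $FM$ representing $a$, write $\ord(a)=2s+r$ with $s\ge 0$ and $r\in\{0,1\}$, and put $c=a\pi^{-2s}$, so $\ord(c)=r\in\{0,1\}$ and $a=\pi^{2s}c$. Then $N:=\langle c\rangle$ is a unary $\mathcal{O}_{F}$-maximal lattice and, since $a/c=\pi^{2s}\in F^{\times 2}$, we have $[a]\cong[c]=FN$, so $FM$ represents $FN$. By hypothesis $M$ represents $N$, giving $w\in M$ with $Q(w)=c$; then $\pi^{s}w\in M$ and $Q(\pi^{s}w)=\pi^{2s}c=a$, so $M$ represents $a$. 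The case $a=0$ (which forces $FM$ isotropic) is handled by scaling an isotropic vector of $FM$ into $M$. Note this direction uses nothing about $M$ being maximal.

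For (i), let $M$ be $\mathcal{O}_{F}$-maximal on the regular space $V=FM$ and let $a\in\mathcal{O}_{F}$ with $V$ representing $a$; I must show $M$ represents $a$ (the case $a=0$ is as above, so assume $a\neq 0$). Choose $v\in V$ with $Q(v)=a$. Since $Q(v)=a\neq 0$, the line $Fv$ is non-degenerate and $V=Fv\perp(Fv)^{\perp}$; picking any integral lattice $K$ on $(Fv)^{\perp}$, the orthogonal sum $\mathcal{O}_{F}v\perp K$ is an integral lattice on $V$ containing $v$. Enlarging it to an $\mathcal{O}_{F}$-maximal lattice $M'$ on $V$, we still have $v\in M'$, so $M'$ represents $a$; and since all $\mathcal{O}_{F}$-maximal lattices on $V$ are isometric, $M\cong M'$ represents $a$ as well. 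Hence $M$ is ADC. (Equivalently, one could route this through (ii), checking only that a maximal $M$ represents every unary $\mathcal{O}_{F}$-maximal sublattice whose space embeds in $V$.)

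The genuinely routine steps are the scaling reduction in (ii) and the observation that every integral lattice sits inside an $\mathcal{O}_{F}$-maximal one. The one place where I expect to lean on a nontrivial classical fact is the final step of (i): the statement that any two $\mathcal{O}_{F}$-maximal lattices on the same regular space over a local field are isometric. If a fully self-contained argument were required, re-deriving this uniqueness—rather than quoting it—would be the main obstacle, and I would prove it by the standard Jordan/BONG-splitting analysis already recalled in Section~\ref{sec:BONGs} and in Proposition~\ref{prop:maximallattices-binary-ternary}.
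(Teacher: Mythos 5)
Your proposal is correct. The paper itself disposes of this lemma in one line, citing \cite[Lemmas 4.14 and 2.1]{He25}; what you have written is, in effect, a self-contained reconstruction of the content of those two citations, and both halves of your argument are sound. For (ii), your characterization of unary $\mathcal{O}_{F}$-maximal lattices as $\langle c\rangle$ with $\ord(c)\in\{0,1\}$, and the reduction of an arbitrary represented scalar $a$ to its square-class representative $c=a\pi^{-2s}$ followed by rescaling the representing vector by $\pi^{s}$, is exactly the standard mechanism behind \cite[Lemma 2.1]{He25} (and behind Clark's treatment of ADC forms); as you note, maximality of $M$ plays no role in that half. For (i), your route --- embed a vector $v$ with $Q(v)=a$ into the integral lattice $\mathcal{O}_{F}v\perp K$, enlarge it to a maximal lattice $M'$, and invoke the fact that all $\mathcal{O}_{F}$-maximal lattices on a fixed regular quadratic space over a local field are isometric --- is the classical argument by which one proves that maximal lattices represent every integral scalar their space represents, so it recovers \cite[Lemma 4.14]{He25} rather than merely quoting it. Two small points of accuracy, neither affecting validity: the uniqueness-up-to-isometry you rely on is O'Meara's theorem on local maximal lattices, \cite[91:2]{omeara_quadratic_1963}, not \S 82 (that section supplies the Dedekind-domain generalities, including the fact that every integral lattice is contained in a maximal one, which you also use); and the case $a=0$ is vacuous under the paper's conventions, since every lattice represents $0$ by the zero vector, so no scaling of an isotropic vector is actually needed.
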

 \begin{proof}
 	These two assertions follow from \cite[Lemmas 4.14 and 2.1]{He25} directly.
 \end{proof}
  \begin{cor}\label{cor:counting-maximal}
 	There are $4(N\mathfrak{p})^{e}$ ternary isotropic (resp. anisotropic) ADC lattices that are $\mathcal{O}_{F}$-maximal, up to isometry.
 \end{cor}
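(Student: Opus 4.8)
The plan is to count ternary $\mathcal{O}_{F}$-maximal lattices directly through their ambient quadratic spaces, using the standard fact that over a non-archimedean local field an $\mathcal{O}_{F}$-maximal lattice on a fixed quadratic space is unique up to isometry (cf. \cite[\S 91]{omeara_quadratic_1963}), which is already implicit in the notation $N_{\nu}^{n}(c)$. By Lemma \ref{lem:ADC-sufficient-equiv}(i) every $\mathcal{O}_{F}$-maximal lattice is ADC, so the quantity to be computed is the number of isometry classes of ternary $\mathcal{O}_{F}$-maximal lattices whose ambient space is isotropic (resp. anisotropic). The assignment $L\mapsto FL$ then gives a bijection between these isometry classes and the isometry classes of ternary quadratic spaces over $F$ of the corresponding isotropy type: distinct spaces yield non-isometric lattices, since isometric lattices span isometric spaces, and every such space $V$ is realized by the maximal lattice $N_{\nu}^{3}(c)$ of Proposition \ref{prop:maximallattices-binary-ternary}. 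Thus it suffices to count ternary quadratic spaces, separated by isotropy.

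By the classification in Proposition \ref{prop:space} (together with the exhaustiveness in \cite[Remark 4.3]{He25}), every ternary space over $F$ is isometric to $W_{\nu}^{3}(c)$ for some $\nu\in\{1,2\}$ and some $c\in\mathcal{V}$, and by Proposition \ref{prop:nu-isotropic} the spaces $W_{1}^{3}(c)$ are exactly the isotropic ternary spaces while the $W_{2}^{3}(c)$ are exactly the anisotropic ones. I would then show that within each family the parameter $c$ is an isometry invariant by comparing determinants: one has $\det W_{1}^{3}(c)=\det(\mathbb{H}\perp[c])=-c$ in $F^{\times}/F^{\times 2}$, and $\det W_{2}^{3}(c)=\det W_{1}^{3}(c)=-c$ by Definition \ref{defn:space-maximallattice}. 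Since $\mathcal{V}$ is a complete set of representatives of $F^{\times}/F^{\times 2}$ and negation is a bijection of $F^{\times}/F^{\times 2}$, distinct $c\in\mathcal{V}$ give distinct determinant classes, so the spaces $W_{1}^{3}(c)$ (resp. $W_{2}^{3}(c)$) are pairwise non-isometric; and an isotropic space can never be isometric to an anisotropic one, so the two families do not overlap.

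Consequently each family contributes exactly $|\mathcal{V}|=4(N\mathfrak{p})^{e}$ pairwise non-isometric spaces, yielding $4(N\mathfrak{p})^{e}$ isotropic and $4(N\mathfrak{p})^{e}$ anisotropic ternary spaces, and by the bijection above the same counts hold for the corresponding $\mathcal{O}_{F}$-maximal (hence ADC) lattices. The only genuine input beyond bookkeeping is the uniqueness of the maximal lattice on a fixed space; the remaining steps are the determinant computation and the disjointness of the isotropic and anisotropic types. I expect the only mild care to lie in confirming that the two families $W_{1}^{3}$ and $W_{2}^{3}$ together exhaust all ternary spaces with no repetitions, which follows from \cite[Remark 4.3]{He25} combined with the determinant argument just described.
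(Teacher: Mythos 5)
Your proposal is correct and follows essentially the same route as the paper's proof: both reduce the count to the classification of ternary quadratic spaces via Lemma \ref{lem:ADC-sufficient-equiv}(i), the exhaustive list $W_{\nu}^{3}(c)$ from Proposition \ref{prop:space} together with \cite[Remark 4.3]{He25}, and the isotropy dichotomy of Proposition \ref{prop:nu-isotropic}, giving $|\mathcal{V}|=4(N\mathfrak{p})^{e}$ lattices of each type. The only difference is one of explicitness: you spell out the uniqueness of the maximal lattice on a fixed space and the determinant computation showing the $W_{\nu}^{3}(c)$ are pairwise non-isometric, details which the paper delegates to the cited remark.
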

 \begin{proof}
 	By Lemma \ref{lem:ADC-sufficient-equiv}(i), an $\mathcal{O}_{F}$-maximal lattice must be ADC. Hence, from \cite[Remark 4.3]{He25} and Proposition \ref{prop:nu-isotropic}, there are $|\mathcal{V}|=4(N\mathfrak{p})^{e}$ $\mathcal{O}_{F}$-maximal lattices $M$ with $FM\cong W_{1}^{3}(c)$ or $W_{2}^{3}(c)$, according as $FM$ is isotropic or anisotropic.
 \end{proof}
 
 Following \cite{beli_talk_2025}, for $c\in F^{\times}$ we define the sets
 \begin{align*}
 	\exc(M,a):=&\{N\mid N\rep M,\;-\det FM\det FN=a,\; \theta(X(M/N))=N(F(\sqrt{a})/F)\}, \\
 	\co(M):=&\{a\in F^{\times}/F^{\times 2}\mid  \exc(M,a)\not=\emptyset\},
 \end{align*}
 and then reformulate some lemmas on $\co(M)$ from \cite{beli_talk_2025} with $m=3$.
 \begin{lem}\label{lem:co-nondyadic}
 	 Suppose that $F$ is non-dyadic and $M$ has a Jordan splitting $M=M_{1}\perp \cdots \perp M_{t}$. Assume that $\theta(O^{+}(M))\subseteq N(F(\sqrt{a})/F)$.
 	\begin{enumerate}[itemindent=-0.5em,label=\rm (\roman*)]
 		\item If $a=\Delta$, then $a\in \co(M)$.
 		
 		\item If $a\in \pi\mathcal{U}$, then $a\in \co(M)$ if and only if $\ord(\mathfrak{s}(M_{i}))\not\equiv \ord(\mathfrak{s}(M_{j})) \pmod{2}$ for all $1\le i,j\le t$.
 	\end{enumerate}
 \end{lem}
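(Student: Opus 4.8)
The plan is to read $a\in\co(M)$ through the relative spinor norm and then reduce to the explicit non-dyadic computations that Beli's framework in \cite{beli_talk_2025} reorganizes. Recall that $a\in\co(M)$ means exactly that there is a unary lattice $N=\mathcal{O}_{F}x\subseteq M$ with $N\rep M$, $-\det FM\det FN=a$ in $F^{\times}/F^{\times2}$, and $\theta(X(M/N))=N(F(\sqrt{a})/F)$. Writing $b=Q(x)$, one has $X(M,b)=X(M/N)$, so the task is to decide, for the prescribed square class $a$, whether some admissible $b$ (represented by $M$ with $-b\det FM\equiv a$) realizes the \emph{exact} equality $\theta(X(M,b))=N(F(\sqrt{a})/F)$. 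Since $X(M,b)$ contains a full coset of the stabiliser of $M$ in $O^{+}(V)$, the group $\theta(X(M,b))$ is a union of cosets of $\theta(O^{+}(M))$; thus the standing hypothesis $\theta(O^{+}(M))\subseteq N(F(\sqrt{a})/F)$ is precisely the containment that makes such an equality conceivable, and the whole question is whether the extra generators forced by the choice of $x$ stay inside $N(F(\sqrt{a})/F)$ or escape to all of $F^{\times}$.

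For (i) take $a=\Delta$, so $F(\sqrt{\Delta})/F$ is unramified and $N(F(\sqrt{\Delta})/F)=\{c\in F^{\times}:\ord c\text{ even}\}=\mathcal{O}_{F}^{\times}F^{\times2}$. The hypothesis then reads $\theta(O^{+}(M))\subseteq\mathcal{O}_{F}^{\times}F^{\times2}$, which over a non-dyadic field forces the scale orders $\ord\mathfrak{s}(M_{i})$ to share a common parity, since an odd-order spinor norm arises only by crossing two components of differing parity. I would pick $b$ in the class $-\Delta\det FM$, represented by $M$ via the local representation criteria, and compute $\theta(X(M,b))$ from Schulze-Pillot's non-dyadic tables in \cite[\S3]{schulze-pillot-darstellung-1980}: as no odd-parity crossing occurs, the relative spinor norm takes its default value $\mathcal{O}_{F}^{\times}F^{\times2}=N(F(\sqrt{\Delta})/F)$, whence $\Delta\in\co(M)$.

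For (ii) fix $a\in\pi\mathcal{U}$; now $F(\sqrt{a})/F$ is ramified, $N(F(\sqrt{a})/F)$ is an index-two subgroup of $F^{\times}$ meeting the odd-order classes, and the hypothesis forbids both $\theta(O^{+}(M))=F^{\times}$ and $\theta(O^{+}(M))\subseteq\mathcal{O}_{F}^{\times}F^{\times2}$ (the latter being incompatible with a ramified norm group unless $\theta(O^{+}(M))\subseteq F^{\times2}$, as the two index-two groups meet in $F^{\times2}$). I would expand $\theta(X(M,b))$ over the Jordan splitting: each pair $M_{i},M_{j}$ contributes classes governed by the parity of $\ord\mathfrak{s}(M_{i})+\ord\mathfrak{s}(M_{j})$. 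If some pair shares parity, its contribution is even-order, and together with an odd-order contribution from a differing pair it forces $\theta(X(M,b))$ to exhaust a set strictly larger than the index-two group $N(F(\sqrt{a})/F)$; hence no admissible $b$ yields the exact equality and $a\notin\co(M)$. Conversely, when the scale orders are pairwise of distinct parity, I would exhibit an explicit $x$ in the suitable component for which $\theta(X(M,b))$ collapses to exactly $N(F(\sqrt{a})/F)$, giving $a\in\co(M)$. This is the stated criterion $\ord(\mathfrak{s}(M_{i}))\not\equiv\ord(\mathfrak{s}(M_{j}))\pmod 2$ for all $i\ne j$, which for a ternary $M$ forces $t\le2$.

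The main obstacle will be the exact bookkeeping in (ii): establishing \emph{equality} $\theta(X(M,b))=N(F(\sqrt{a})/F)$ rather than mere containment, which requires pinning down precisely which odd-order square classes the relative spinor norm generates from the Jordan data, and checking that the admissible choices of $x$ within a fixed square class do not enlarge the group. Beli's reformulation of $\exc(M,N)$ and $\co(M)$ in \cite{beli_talk_2025} keeps this uniform across $t=1,2,3$, while the non-dyadic spinor-norm formulas of \cite[\S3]{schulze-pillot-darstellung-1980} supply the component-wise contributions; assembling these into the clean parity statement is the crux of the argument.
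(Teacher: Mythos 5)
First, be aware that the paper contains no proof of this lemma to compare against: it is explicitly introduced as a reformulation, ``with $m=3$'', of results from Beli's cited talk \cite{beli_talk_2025}. So your proposal has to stand entirely on its own. Its framing is correct: since $X(M,b)$ is stable under $O^{+}(M)$ on one side and under the stabilizer of $x$ on the other, and since $\theta(O^{+}(V)_{x})=\theta(O^{+}(x^{\perp}))=N(F(\sqrt{a})/F)$ for ternary $V$, the set $\theta(X(M,b))$ is a union of cosets of $N(F(\sqrt{a})/F)\,\theta(O^{+}(M))$, hence under the standing hypothesis it equals either $N(F(\sqrt{a})/F)$ or $F^{\times}$, and $a\in\co(M)$ amounts to finding one admissible $b$ avoiding the second alternative. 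But both of the steps that carry the actual content are asserted rather than proved, and as stated they are not correct.

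In (i), the claim ``as no odd-parity crossing occurs, the relative spinor norm takes its default value'' is false for an arbitrary $b$ in the prescribed square class: $\theta(X(M,b))$ depends on $\ord(b)$, not only on the parities of the Jordan scales. Take $M=\mathbf{H}\perp\langle\pi^{2}\rangle$, so the hypothesis for $a=\Delta$ holds and $b$ must lie in $\Delta F^{\times 2}$. For $b=\Delta$ one does get $\theta(X(M,b))=\mathcal{O}_{F}^{\times}F^{\times 2}$, but for $b=\Delta\pi^{2}$ one gets $\theta(X(M,b))=F^{\times}$: with hyperbolic basis $e,f$ and third basis vector $x_{3}$, both $y=\pi(\Delta e+f)$ and $y'=(\Delta-1)\pi^{2}e+f+x_{3}$ represent $\Delta\pi^{2}$, $Q(y-y')$ has order $1$, and composing $\tau_{y-y'}$ with the symmetry in a vector orthogonal to $y$ of norm $-\Delta$ produces an element of $X(M,b)$ with odd-order spinor norm. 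So a proof must select $b$ of the correct order (and in cases such as $M=\langle 1,\pi^{2},\pi^{4}\rangle$ with $-1\in F^{\times 2}$ no unit of the required class is represented at all, so even existence of an admissible $b$ needs an argument), and must then verify evenness of all products $Q(y-y')Q(v)$ over all representations; this verification works because $Q(y)-B(y,y')$ is governed by the norm form of $F(\sqrt{\Delta})/F$, and it is exactly the computation your sketch omits. In (ii) your mechanism cannot work: the ``pair contributions'' indexed by $\ord\mathfrak{s}(M_{i})+\ord\mathfrak{s}(M_{j})$ are precisely the generators of $\theta(O^{+}(M))$, which the hypothesis confines to $N(F(\sqrt{a})/F)$, so they can never push $\theta(X(M,b))$ outside it. What actually happens for ternary $M$ is different: a Jordan component of rank $\ge 2$ forces $\Delta\in\theta(O^{+}(M))$, which is incompatible with $\theta(O^{+}(M))\subseteq N(F(\sqrt{a})/F)$ for ramified $a$; hence the hypothesis forces three rank-one components, two of whose scales necessarily share parity, the converse direction you promise to argue is vacuous for $m=3$, and the entire content of (ii) is to show that every admissible $b$ then satisfies $\theta(X(M,b))=F^{\times}$ --- a representation-by-representation computation that you defer to ``Schulze-Pillot's tables'' without extracting their statements. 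Since the lemma \emph{is} such a table entry, and you yourself flag the ``exact bookkeeping'' as unresolved, the proposal is a plan with its crux missing rather than a proof.
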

 \begin{re}\label{re:co-1-Delta}
 	We also have
 	\begin{enumerate}[itemindent=-0.5em,label=\rm (\roman*)]
 		\item If $F$ is non-archimedean, then $1\in \co(M)$ if and only if $FM$ is isotropic.		
 	 
 		\item If $F$ is archimedean, then
 		\begin{align*}
 			\co(M)=
 			\begin{cases}
 				\{1\}   &\text{if $F$ is complex}, \\
 				\{1\}  &\text{if $F$ is real and $FM$ is isotropic}, \\
 				\{-1\}    &\text{if $F$ is real and $FM$ is anisotropic}.
 			\end{cases}
 		\end{align*}
 	\end{enumerate}
 \end{re}
\begin{lem}\label{lem:co-dyadic}
 	Let $a\in \mathcal{V}\backslash \{1\}$. Suppose that $F$ is dyadic and $M\cong \prec a_{1},a_{2},a_{3}\succ$ relative to a good BONG. Put $R_{i}=R_{i}(M)$.  Assume that $\theta(O^{+}(M))\subseteq N(F(\sqrt{a})/F)$.
 	\begin{enumerate}[itemindent=-0.5em,label=\rm (\roman*)]
 		\item  If $R_{1}<R_{3}$, then $a\in \co(M)$ if and only if  $d(-a_{1}a_{2}a)+d(-a_{2}a_{3}a)+d(a)>4e$
 		
 		\item  If $R_{1}=R_{3}$, then $\co(M)\subseteq \{1,\Delta\}$. Also, $\Delta\in \co(M)$ if and only if $\theta(O^{+}(M))=\mathcal{O}_{F}^{\times}F^{\times 2}$ and $R_{2}-R_{1}=2e$.
 	\end{enumerate}
 \end{lem}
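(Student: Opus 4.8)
The plan is to specialize Beli's general criteria for $\co(M)$ from \cite{beli_talk_2025} to the ternary case and rewrite the outcome in terms of the good-BONG invariants $R_{i}=R_{i}(M)$ and the defect function $d$. First I would fix $a\in\mathcal{V}\setminus\{1\}$ and unwind the definition. Since $\det FM\equiv a_{1}a_{2}a_{3}\pmod{F^{\times 2}}$, the codeterminant constraint $-\det FM\det FN=a$ forces any witnessing $N\in\exc(M,a)$ to be a unary lattice $N\cong\prec b\succ$ with $b\equiv -a_{1}a_{2}a_{3}a\pmod{F^{\times 2}}$; conversely $a\in\co(M)$ holds exactly when some admissible such $N$ (of the prescribed square class, with $\ord(b)$ still free modulo $2$) satisfies both $N\rep M$ and $\theta(X(M/N))=N(F(\sqrt{a})/F)$. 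Because $O^{+}(M)\subseteq X(M/N)$ whenever $N\subseteq M$, the containment $\theta(O^{+}(M))\subseteq N(F(\sqrt{a})/F)$ is automatic, matching the standing hypothesis; so the lemma really characterizes all of $\co(M)$.

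For part (i) I would work in the regime $R_{1}<R_{3}$, where by Proposition \ref{prop:property-A}(i) the lattice $M$ has property A. Here I would invoke Beli's explicit description of the relative spinor norm $\theta(X(M/N))$ for $m=3$, $n=1$, and combine it with the representation criterion Theorem \ref{thm:beligeneral-1} to decide when a unary $N$ of the prescribed determinant is represented by $M$. The conditions (i)--(iii) of Theorem \ref{thm:beligeneral-1} are governed by the relative orders $R_{i+1}-R_{i}$ and the truncated defects $d[-a_{i,i+1}]$; after optimizing over the admissible valuation $\ord(b)$, the equality $\theta(X(M/N))=N(F(\sqrt{a})/F)$ reduces to the single inequality $d(-a_{1}a_{2}a)+d(-a_{2}a_{3}a)+d(a)>4e$. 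The three defect terms correspond to the two consecutive binary codeterminants $-a_{1}a_{2}$, $-a_{2}a_{3}$ twisted by $a$, together with the norm contribution of $a$; the threshold $4e=2\cdot 2e$ reflects that the two independent defect budgets, each of maximal size $2e$ by Propositions \ref{prop:Rproperty} and \ref{prop:alphaproperty}, must be jointly exceeded for the index-two norm group to be realized. I would check both directions: when the strict inequality holds the spinor-norm group is exactly $N(F(\sqrt{a})/F)$, while its failure lets the group grow to $F^{\times}$, so that $a$ is then represented by every spinor genus and $a\notin\co(M)$.

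For part (ii), $R_{1}=R_{3}$, the lattice $M$ fails property A, so Proposition \ref{prop:property-A}(iii) gives $\theta(O^{+}(M))\in\{\cO_{F}^{\times}F^{\times 2},\,F^{\times}\}$. Using the automatic containment $\theta(O^{+}(M))\subseteq N(F(\sqrt{a})/F)$ together with $a\neq 1$: if $\theta(O^{+}(M))=F^{\times}$ then $N(F(\sqrt{a})/F)=F^{\times}$ forces $a\in F^{\times 2}$, contradicting $a\neq 1$; if $\theta(O^{+}(M))=\cO_{F}^{\times}F^{\times 2}$ then $\cO_{F}^{\times}\subseteq N(F(\sqrt{a})/F)$ forces $F(\sqrt{a})/F$ to be unramified or trivial, i.e.\ $a\in\{1,\Delta\}$. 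Either way $\co(M)\subseteq\{1,\Delta\}$, with $1\in\co(M)$ tracked separately via Remark \ref{re:co-1-Delta}. For the refinement I would then characterize $\Delta\in\co(M)$: necessity of $\theta(O^{+}(M))=\cO_{F}^{\times}F^{\times 2}$ follows from the above, since $N(F(\sqrt{\Delta})/F)=\cO_{F}^{\times}F^{\times 2}\neq F^{\times}$; and the existence of a unary $N\rep M$ with $\theta(X(M/N))=\cO_{F}^{\times}F^{\times 2}$ is analyzed through Theorem \ref{thm:beligeneral-1} under the constraint $R_{1}=R_{3}$, which by Proposition \ref{prop:Rproperty} pins the middle order to $R_{2}-R_{1}=2e$, and conversely this balanced value suffices.

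The main obstacle will be the defect bookkeeping in part (i): extracting the precise combination $d(-a_{1}a_{2}a)+d(-a_{2}a_{3}a)+d(a)$ and the exact threshold $4e$ from Beli's relative spinor-norm formula, and verifying that condition (iii) of Theorem \ref{thm:beligeneral-1} imposes no further restriction in the property-A regime. This will require systematic use of the $d[\,\cdot\,]$ identities and the case analysis of Propositions \ref{prop:Rproperty} and \ref{prop:alphaproperty}, together with the delicate point that only the \emph{strict} inequality yields equality of spinor-norm groups rather than mere containment.
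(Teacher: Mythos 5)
The paper itself gives no proof of this lemma: it is introduced by the sentence that the author will ``reformulate some lemmas on $\co(M)$ from \cite{beli_talk_2025} with $m=3$,'' so the statement is imported from Beli's (unpublished) work, exactly as Lemma \ref{lem:co-nondyadic} is. Your proposal takes the same ultimate route --- everything is to be extracted from Beli's criteria --- so to that extent it matches the paper. The problem is that your proposal claims to actually re-derive the two explicit conditions, and those derivations do not hold up. In part (i) you propose to obtain the inequality $d(-a_{1}a_{2}a)+d(-a_{2}a_{3}a)+d(a)>4e$ by combining Theorem \ref{thm:beligeneral-1} with ``Beli's explicit description of the relative spinor norm $\theta(X(M/N))$.'' But Theorem \ref{thm:beligeneral-1} only decides whether $N\rep M$; it carries no information about the group $\theta(X(M/N))$, which is the object that must equal $N(F(\sqrt{a})/F)$. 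The formula for $\theta(X(M/N))$ (due to Schulze-Pillot in the $2$-adic case, Xu for general dyadic fields, and in BONG language to Beli) is stated neither in the paper nor in your proposal, so the reduction to the displayed inequality is asserted rather than proved; the remark that the threshold $4e$ reflects ``two independent defect budgets of maximal size $2e$'' is a heuristic, not an argument.

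The same issue is sharper in part (ii). Your derivation of $\co(M)\subseteq\{1,\Delta\}$ is correct and complete: property A fails, so $\theta(O^{+}(M))\in\{\mathcal{O}_{F}^{\times}F^{\times 2},F^{\times}\}$ by Proposition \ref{prop:property-A}(iii); the case $F^{\times}$ forces $a=1$, and in the case $\mathcal{O}_{F}^{\times}F^{\times 2}$ the index-$2$ count forces $N(F(\sqrt{a})/F)=\mathcal{O}_{F}^{\times}F^{\times 2}$, hence $a=\Delta$. But for the equivalence ``$\Delta\in\co(M)$ if and only if $\theta(O^{+}(M))=\mathcal{O}_{F}^{\times}F^{\times 2}$ and $R_{2}-R_{1}=2e$,'' you claim the constraint $R_{1}=R_{3}$ ``by Proposition \ref{prop:Rproperty} pins the middle order to $R_{2}-R_{1}=2e$.'' That is a non sequitur: Proposition \ref{prop:Rproperty} relates the differences $R_{i+1}-R_{i}$ to the $\alpha_{i}$-invariants and implies nothing of the sort; with $R_{1}=R_{3}$ one only gets $R_{2}-R_{1}\in[-2e,2e]$. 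The value $2e$ is exactly where the relative spinor norm computation must enter --- one has to show that $\theta(X(M/N))=\mathcal{O}_{F}^{\times}F^{\times 2}$ is achieved by some unary $N\rep M$ of the prescribed square class precisely when $R_{2}-R_{1}=2e$ --- and neither direction of this is carried out. So the proposal fills the paper's citation with steps that are either circular (deferring to an unstated formula) or invalid, rather than with a proof; the honest blind proof here is either the citation itself or a genuine computation of $\theta(X(M/N))$ from the Schulze-Pillot/Xu/Beli formulas.
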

 
 In the remainder of this section, we discuss the representation densities of ADC lattices. Following \cite[\S 2]{clark-density-2024}, for $c\in \mathcal{V}$, we denote by $Q_{c}(M):=Q(M)\bigcap cF^{\times 2}$, and define the \textit{local representation measure} $\delta(M)$ of $M$ by
 \begin{align}\label{deltaM}
 	\delta(M):=\mu\left(\bigcup_{c\in \mathcal{V}}Q_{c}(M)\right),
 \end{align}
 where $\mu$ denotes the Haar measure on $\mathcal{O}_{F}$ with unit mass. From \cite[Theorem 2.2]{clark-density-2024}, we have the following formula for $\delta(M)$:     
\begin{align}\label{lambda-formula}
\delta(M)=\sum_{c\in \mathcal{V}} \dfrac{1}{|\mathcal{U}|(N\mathfrak{p})^{v_{c}-1}(N\mathfrak{p}+1)},
\end{align}
 where $v_{c}:=\min\{\ord(a)\mid a\in  Q_{c}(M)\}$, or $\infty$ if $ Q_{c}(M)=\emptyset$. 
 
 When $M$ is ADC, the values of $\delta(M)$ have been determined under certain conditions in \cite{clark-density-2024}. Here we provide a unified and explicit formula for $\delta(M)$ over arbitrary non-archimedean local fields, which depends only on the structure of the quadratic space $W_{\nu}^{m}(c)$.

\begin{thm}\label{thm:density}
Let $FM\cong W_{\nu}^{m}(c)$ with $\nu\in \{1,2\}$ and $c\in \mathcal{V}$.  Suppose that $M$ is ADC.
\begin{enumerate}[itemindent=-0.5em,label=\rm (\roman*)]
	\item If $m=1$, then 
		\begin{align*}
		\delta(M)=\begin{cases}
			 \dfrac{N\mathfrak{p}}{2(N\mathfrak{p})^{e}(N\mathfrak{p}+1)} &\text{if $\ord(c)=0$},  \\
			 \dfrac{1}{2(N\mathfrak{p})^{e}(N\mathfrak{p}+1)} &\text{if $\ord(c)=1$}.  
		\end{cases}
	\end{align*}
	\item If $m=2$, then
	\begin{align*}
	   \delta(M)=\begin{cases}
	     	1 &\text{if $c=1$},  \\
	     	\dfrac{1}{(N\mathfrak{p})^{\nu-2}(N\mathfrak{p}+1)}  &\text{if  $c=\Delta$}, \\
	     	1/2 &\text{if $c\in \mathcal{V}\backslash \{1,\Delta\}$}.
	     \end{cases}
	\end{align*}
\item 	If $m=3$, then
		\begin{align*}
		\delta(M)&=\begin{cases}
			 1  & \text{if $\nu=1$},  \\
			\dfrac{(2(N\mathfrak{p})^{e}-1)N\mathfrak{p}+2(N\mathfrak{p})^{e}}{2(N\mathfrak{p})^{e}(N\mathfrak{p}+1)} &\text{if $\nu=2$ and $\ord(c)=0$},  \\
			\dfrac{2(N\mathfrak{p})^{e}N\mathfrak{p}+2(N\mathfrak{p})^{e}-1}{2(N\mathfrak{p})^{e}(N\mathfrak{p}+1)} &\text{if $\nu=2$ and $\ord(c)=1$}.
		\end{cases}
	\end{align*}
	\item If $m\ge 4$, then $\delta(M)=1$.
\end{enumerate}
\end{thm}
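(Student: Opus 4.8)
The plan is to reduce the computation of $\delta(M)$ entirely to the determination of the set of square classes represented by the space $FM$, and then to substitute into the formula \eqref{lambda-formula}. First I would note that since $M$ is integral we have $Q(M)\subseteq\mathcal{O}_{F}$, while the ADC property gives that $M$ represents every $a\in\mathcal{O}_{F}$ represented by $FM$; hence $Q(M)=Q(FM)\cap\mathcal{O}_{F}$. Because $Q(FM)$ is stable under multiplication by $F^{\times2}$, it is a union of full square classes, so for each $c\in\mathcal{V}$ the set $Q_{c}(M)=Q(M)\cap cF^{\times2}$ equals $cF^{\times2}\cap\mathcal{O}_{F}$ when $FM$ represents the class of $c$, and is empty otherwise. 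Since $\ord(c)\in\{0,1\}$ for $c\in\mathcal{V}$, the minimal order attained is $v_{c}=\ord(c)$ whenever $FM$ represents $c$, and $v_{c}=\infty$ otherwise. Writing $S\subseteq\mathcal{V}$ for the set of classes represented by $FM$ and splitting $\mathcal{V}=\mathcal{U}\sqcup\pi\mathcal{U}$ (unit classes versus uniformizer classes), the formula \eqref{lambda-formula} collapses to
\begin{align*}
	\delta(M)=\frac{N\mathfrak{p}\,|S\cap\mathcal{U}|+|S\cap\pi\mathcal{U}|}{|\mathcal{U}|(N\mathfrak{p}+1)},
\end{align*}
using $|\mathcal{U}|=2(N\mathfrak{p})^{e}$. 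Thus everything reduces to identifying $S$ case by case.

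For $m=1$ the space $[c]$ represents only its own class, so $S=\{c\}$, giving the two values in (i) according as $\ord(c)=0$ or $1$. For the isotropic situations—$m=2$ with $c=1$, $m=3$ with $\nu=1$, and all $m\ge4$—the lattice $M$ is universal by Proposition \ref{prop:nu-isotropic}(i) and the equivalence \eqref{equiv:uni-adc}, so $S=\mathcal{V}$ and the numerator becomes $(N\mathfrak{p}+1)|\mathcal{U}|$, yielding $\delta(M)=1$. For $m=3$ and $\nu=2$, Lemma \ref{lem:lattice-rep-binary}(viii) shows that $FM\cong W_{2}^{3}(c)$ represents every class except $c$ itself, so $S=\mathcal{V}\setminus\{c\}$; removing one unit class (if $\ord(c)=0$) or one uniformizer class (if $\ord(c)=1$) and substituting gives the two displayed expressions in (iii). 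The case $m=2$, $c=\Delta$ is handled similarly: here $F(\sqrt{\Delta})/F$ is unramified, so via Proposition \ref{prop:ternary-space-rep} the anisotropic space $W_{\nu}^{2}(\Delta)$ represents exactly the even-valuation classes when $\nu=1$ and the odd-valuation ones when $\nu=2$, i.e. $S=\mathcal{U}$ or $S=\pi\mathcal{U}$; substitution produces $1/(N\mathfrak{p})^{\nu-2}(N\mathfrak{p}+1)$.

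The main obstacle is the anisotropic binary case $c\in\mathcal{V}\setminus\{1,\Delta\}$, where I must show that $S$ splits evenly between $\mathcal{U}$ and $\pi\mathcal{U}$, forcing $\delta(M)=1/2$ independently of $\nu$. Proposition \ref{prop:ternary-space-rep} identifies $S$ as a coset of the index-$2$ subgroup $H:=N(F(\sqrt{c})/F)/F^{\times2}$ of $G:=F^{\times}/F^{\times2}$, with $|H|=|\mathcal{U}|$. Since $\Delta$ is the unique class yielding an unramified extension, for $c\ne\Delta$ the extension $F(\sqrt{c})/F$ is ramified and hence $H$ differs from the unit subgroup $U:=\mathcal{O}_{F}^{\times}F^{\times2}/F^{\times2}$. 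I would then invoke the elementary fact that two distinct index-$2$ subgroups $H,U$ of a finite abelian group satisfy $HU=G$ and $|H\cap U|=\tfrac14|G|$; together with $HU=G$ this forces every coset of $H$ to meet each of the two cosets $U$ and $\pi U$ in exactly $|H\cap U|=\tfrac12|\mathcal{U}|=(N\mathfrak{p})^{e}$ classes. Hence $|S\cap\mathcal{U}|=|S\cap\pi\mathcal{U}|=(N\mathfrak{p})^{e}$, and the numerator equals $(N\mathfrak{p})^{e}(N\mathfrak{p}+1)=\tfrac12|\mathcal{U}|(N\mathfrak{p}+1)$, giving $\delta(M)=1/2$.

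Assembling the cases then yields all the stated formulas. The only genuinely nontrivial input beyond the bookkeeping is the coset-splitting argument of the last paragraph; everything else is a direct substitution of the known set $S$ into the collapsed formula, so I would present those substitutions briefly rather than in full detail.
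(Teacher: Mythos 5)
Your proposal is correct, and its overall architecture is the same as the paper's: use the ADC property to identify $Q_{c}(M)$ with $cF^{\times 2}\cap\mathcal{O}_{F}$ exactly when $FM$ represents the class of $c$ (so $v_{c}=\ord(c)$ or $\infty$), then substitute the represented square classes into \eqref{lambda-formula}, handling the universal cases, the unary case, $W_{\nu}^{2}(\Delta)$, and $W_{2}^{3}(c)$ by direct counting. The one genuine divergence is the anisotropic binary case $c\in\mathcal{V}\setminus\{1,\Delta\}$: the paper uses the explicit unit $c^{\#}$ from \eqref{csharp}, whose properties $\ord(c^{\#}a)=\ord(a)$ and $(c,c^{\#}a)_{\mathfrak{p}}=-(c,a)_{\mathfrak{p}}$ give a valuation-preserving involution on $\mathcal{V}$ swapping represented and non-represented classes, whence $\delta(M_{1})=\delta(M_{2})$ and $\delta(M_{1})+\delta(M_{2})=1$; you instead observe that $S$ is a coset of the index-$2$ subgroup $N(F(\sqrt{c})/F)/F^{\times 2}$, which is distinct from the unit subgroup $\mathcal{O}_{F}^{\times}F^{\times 2}/F^{\times 2}$ because $F(\sqrt{c})/F$ is ramified, and then count coset intersections. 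The two arguments carry the same content — your requirement that the two index-$2$ subgroups differ is equivalent to the existence of a unit $u$ with $(c,u)_{\mathfrak{p}}=-1$, which is precisely what $c^{\#}$ supplies — so yours is an abstracted, slightly more self-contained version (it avoids invoking \eqref{csharp}), at the cost of importing the elementary group-theoretic coset lemma; your unified collapsed formula $\delta(M)=\bigl(N\mathfrak{p}\,|S\cap\mathcal{U}|+|S\cap\pi\mathcal{U}|\bigr)/\bigl(|\mathcal{U}|(N\mathfrak{p}+1)\bigr)$ is also a cleaner bookkeeping device than the paper's case-by-case substitutions.
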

\begin{proof}
    Since $M$ is ADC, $M$ represents $N_{1}^{1}(a)$ if and only if $FM$ represents $W_{1}^{1}(a)=[a]$. This can be verified by \cite[Lemma 4.4(i)-(iii)]{He25}. Also, by Proposition \ref{prop:space}, $v_{a}\in \{0,1\}$ for $a\in \mathcal{V}$.
	
	If $FM\cong W_{1}^{2}(1)$ or $ W_{1}^{3}(c)$ or $m\ge 4$, by Proposition \ref{prop:nu-isotropic} and \eqref{equiv:uni-adc}, $M$ is universal and so $\delta(M)=1$.
	
	If $m=1$, then by \cite[Lemma 4.4(i)]{He25}, $FM$ represents $a$ if and only if $FM\cong [a]$. So $c$ is the only one element in $\mathcal{V}$ that is represented by $FM$. Hence, by \eqref{lambda-formula},
	\begin{align*}
		\delta(M)=\dfrac{1}{|\mathcal{U}|(N\mathfrak{p})^{-1}(N\mathfrak{p}+1)}\quad\text{or}\quad \dfrac{1}{|\mathcal{U}|(N\mathfrak{p})^{0}(N\mathfrak{p}+1)},
	\end{align*}
	according as $\ord(c)=0$ or $1$.
	
	Let $FM\cong W_{\nu}^{2}(\Delta) $. For $a\in \mathcal{V}$, since $(\Delta,a)_{\mathfrak{p}}=(-1)^{\ord(a)}$, by \cite[Lemma 4.4(ii)]{He25}, exactly half of the elements $a$ in $\mathcal{V}$ with $v_{a}=0$ (resp. $v_{a}=1$) are represented by $W_{1}^{2}(\Delta)$ (resp. $W_{2}^{2}(\Delta)$). Thus, $v_{a}=\nu-1$. Hence, by \eqref{lambda-formula}, we have
	\begin{align*}
		\delta(M)=\dfrac{|\mathcal{V}|}{2}\dfrac{1}{|\mathcal{U}|(N\mathfrak{p})^{v_{a}-1}(N\mathfrak{p}+1)}=	\dfrac{|\mathcal{V}|}{2}\dfrac{1}{|\mathcal{U}|(N\mathfrak{p})^{\nu-2}(N\mathfrak{p}+1)}.
	\end{align*}
	
	Let $FM_{\nu}\cong W_{\nu}^{2}(c)$ with $c\in \mathcal{V}\backslash\{1,\Delta\}$. 		For $a\in \mathcal{V}$, note from \eqref{csharp} that $\ord(c^{\#}a)=\ord(a)$ and $(c,c^{\#}a)_{\mathfrak{p}}=-(c,a)_{\mathfrak{p}}$. Hence, by \cite[Lemma 4.4(ii)]{He25}, exactly half of the elements $a$ in $\mathcal{V}$ with $v_{a}=0$ (resp. $v_{a}=1$) are represented by $FM_{\nu}$, where $\nu=1,2$. So $\delta(M_{1})=\delta(M_{2})$ and $\delta(M_{1})+\delta(M_{2})=1$. Hence $\delta(M_{1})=\delta(M_{2})=1/2$.
	
	Let $FM\cong W_{2}^{3}(c)$ with $c\in \mathcal{V}$. Then $\ord(c)\in \{0,1\}$. By \cite[Lemma 4.4(iii)]{He25}, $FM$ represents all $a$ in $\mathcal{V}$ with $a\not=c$. So in $\mathcal{V}$, there are $|\mathcal{U}|-1+\ord(c)$ elements $a$ with $v_{a}=0$ and $|\mathcal{U}|-\ord(c)$ elements $a$ with $v_{a}=1$, that are represented by $FM$. Hence
	\begin{align*}
		\delta(M)=\dfrac{|\mathcal{U}|-1+\ord(c)}{|\mathcal{U}|(N\mathfrak{p})^{-1}(N\mathfrak{p}+1)}+\dfrac{|\mathcal{U}|-\ord(c)}{|\mathcal{U}|(N\mathfrak{p}+1)}.
	\end{align*}
  In all cases, the formula follows from the fact that $|\mathcal{V}|=2|\mathcal{U}|=4(N\mathfrak{p})^{e}$. 
\end{proof}

\section{ADC lattices over non-dyadic local fields}\label{sec:non-dyadic}
In this section, we assume that $F$ is non-dyadic and $M$ is an $\mathcal{O}_{F}$-lattice of rank $m $. We also denote by $ J_{k}(M) $ the Jordan component of $ M $, with possible zero rank and scale $  \mathfrak{p}^{k} $, and write $ J_{i,j}(M):=J_{i}(M)\perp J_{i+1}(M)\perp\cdots\perp J_{j}(M) $ for integers $ i\le j $. 
\begin{thm}\label{thm:nondyadic-ADC-binary}
		If $m=2$, then $ M $ is ADC if and only if $M$ is $ \mathcal{O}_{F} $-maximal.
\end{thm}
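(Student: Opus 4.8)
The plan is to prove the two implications separately, leaning on the representation dictionary assembled in Section~\ref{sec:maximal-lattices}. The forward direction needs no work: if $M$ is $\mathcal{O}_{F}$-maximal, then $M$ is ADC by Lemma~\ref{lem:ADC-sufficient-equiv}(i), with no constraint on the rank. All the content therefore lies in the converse, namely that an ADC binary lattice over a non-dyadic field is forced to coincide with the (unique) maximal lattice on its quadratic space.

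For the converse, suppose $M$ is ADC with $\rank\,M=2$. By Proposition~\ref{prop:space} I may write $FM\cong W_{\nu}^{2}(c)$ for some $\nu\in\{1,2\}$ and $c\in\mathcal{V}$, these exhausting the binary spaces; and since $F$ is non-dyadic, $M$ has an orthogonal basis, say $M\cong\langle a_{1},a_{2}\rangle$ normalized so that $0\le \ord(a_{1})\le \ord(a_{2})$. First I would dispose of the isotropic case: by Corollary~\ref{cor:ADC-universal}, $FM$ is isotropic if and only if $M$ is universal, and then $FM\cong W_{1}^{2}(1)=\mathbb{H}$ and $M\cong\mathbf{H}=N_{1}^{2}(1)$, which is maximal. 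By Proposition~\ref{prop:nu-isotropic} the remaining case is $c\neq1$, where $FM$ is anisotropic; here I will show that the representation data recorded in Lemma~\ref{lem:lattice-rep-binary}(ii)--(iv) pins $\langle a_{1},a_{2}\rangle$ down to the maximal lattice $N_{\nu}^{2}(c)$ of Proposition~\ref{prop:maximallattices-binary-ternary}(i).

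The crux is a single elementary observation, special to the non-dyadic setting and supplied by the local square theorem: for the diagonal form $a_{1}x^{2}+a_{2}y^{2}$, whenever the two summands have distinct orders the value lies in the square class of the summand of smaller order. Consequently $M$ can represent a square class of a given order $k$ only when $k\equiv\ord(a_{i})\pmod 2$ for some $i$, and it can represent \emph{two} distinct classes of the \emph{same} order $k$ only when $\ord(a_{1})=\ord(a_{2})=k$ (a rank-two modular component). I would now feed the four anisotropic cases of Lemma~\ref{lem:lattice-rep-binary} into this dichotomy. For $c=\Delta,\ \nu=1$, $M$ represents both unit classes, forcing $\ord(a_{1})=\ord(a_{2})=0$ and hence $M\cong\langle1,-\Delta\rangle=N_{1}^{2}(\Delta)$ by the discriminant of $W_{1}^{2}(\Delta)$. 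For $c=\Delta,\ \nu=2$, $M$ represents both uniformizer classes, forcing $\ord(a_{1})=\ord(a_{2})=1$ and $M\cong\langle\pi,-\Delta\pi\rangle=N_{2}^{2}(\Delta)$. For $c=\varepsilon\pi,\ \nu=1$, $M$ represents a unit and an order-one element, forcing $\ord(a_{1})=0$, $\ord(a_{2})=1$, while non-representation of $\langle\Delta\rangle$ fixes the unit class of $a_{1}$ to be $1$; matching the discriminant of $W_{1}^{2}(\varepsilon\pi)$ gives $M\cong\langle1,-\varepsilon\pi\rangle=N_{1}^{2}(\varepsilon\pi)$. The case $c=\varepsilon\pi,\ \nu=2$ is identical mutatis mutandis and yields $M\cong\langle\Delta,-\Delta\varepsilon\pi\rangle=N_{2}^{2}(\varepsilon\pi)$. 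In every case $M$ is the maximal lattice on $FM$, as required.

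The hard part will be the bookkeeping in this last step rather than any deep idea: one must check, square class by square class and separately for each pair $(\nu,c)$, that the \emph{non}-representation conditions in Lemma~\ref{lem:lattice-rep-binary} genuinely exclude every non-maximal diagonal lattice on the given space—in particular, that an order gap $\ord(a_{2})-\ord(a_{1})\ge 2$ always kills the representation of some order-one or opposite-unit class that $FM$ still represents. Since the controlling mechanism (the local square theorem governing $a_{1}x^{2}+a_{2}y^{2}$) is uniform, each case collapses to a short square-class verification once the bottom-level analysis is set up, and the conclusion $M\cong N_{\nu}^{2}(c)$ then follows from the uniqueness of the maximal lattice on an anisotropic binary space.
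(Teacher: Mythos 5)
Your overall strategy is sound and, in structure, it is essentially the paper's own proof: sufficiency via Lemma~\ref{lem:ADC-sufficient-equiv}(i), the case split over $FM\cong W_{\nu}^{2}(c)$, the isotropic case via Corollary~\ref{cor:ADC-universal}, the representation data of Lemma~\ref{lem:lattice-rep-binary}(ii)--(iv), and the final identification of the lattice by its discriminant via \cite[92:2]{omeara_quadratic_1963}. The one real difference is the middle layer: where you run a direct local-square-theorem analysis on a diagonalization $\langle a_{1},a_{2}\rangle$, the paper channels exactly the same information through Jordan-splitting rank lemmas (Lemmas~\ref{lem:maximal-rep-nondyadic} and~\ref{lem:non-dyadic-adc-rank}, the former quoted from \cite{He25}). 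Your inlined argument is more self-contained, while the paper's is shorter because the bookkeeping is delegated to the cited criterion; in each of the four anisotropic cases your conclusions ($\ord(a_{1})=\ord(a_{2})=0$, resp.\ $=1$, resp.\ $(\ord(a_{1}),\ord(a_{2}))=(0,1)$ with the unit class of $a_{1}$ pinned down) agree with what the paper extracts from its rank lemmas.

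One caveat: your ``single elementary observation'' is overstated as a general principle. The claim that $M$ can represent two distinct classes of the same order $k$ only when $\ord(a_{1})=\ord(a_{2})=k$ is false for $k>\ord(a_{1})$: already the maximal lattice $\langle 1,-\Delta\rangle$ represents $\pi^{2}$ and $\Delta\pi^{2}$, two distinct square classes both at order $k=2$, while $\ord(a_{1})=\ord(a_{2})=0$. (Similarly, the parity claim fails for isotropic lattices, e.g.\ $\langle 1,-1\rangle$ represents $\pi$; so anisotropy must be hypothesized explicitly.) What is true---and is all you actually use---is the restricted statement: for an \emph{anisotropic} $\langle a_{1},a_{2}\rangle$, anisotropy rules out cancellation when the two summands have equal order (since $-a_{1}a_{2}\notin F^{\times 2}$), so every represented value has order $\min\{\ord(a_{1}x^{2}),\ord(a_{2}y^{2})\}$; consequently every value of the minimal order $\ord(a_{1})$ lies in $a_{1}F^{\times 2}$ unless $\ord(a_{2})=\ord(a_{1})$, and a represented value of odd order forces $\ord(a_{2})$ to be at most that order when $\ord(a_{1})$ is even. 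Your three applications invoke the dichotomy exactly at $k=\ord(a_{1})$ or at the minimal odd order, where this corrected version applies, so the proof goes through once the observation is restated in this restricted form; as written, however, the auxiliary claim would not survive refereeing.
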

For $h\ge 0$ and $c\in \mathcal{V}$, we define the ternary $\mathcal{O}_{F}$-lattice $H_{1,h}^{3}(c):=\mathbf{H}\perp \langle c\pi^{2h} \rangle$. From Proposition \ref{prop:maximallattices-binary-ternary}(i),  $H_{1,h}^{3}(c)$ is $\mathcal{O}_{F}$-maximal if and only if $h=0$. Clearly, $FH_{1,h}^{3}(c)\cong W_{1}^{3}(c)$ is isotropic.
\begin{thm}\label{thm:nondyadic-ADC-ternary}
		If $m=3$, then $ M $ is ADC if and only if the following conditions hold:
		\begin{enumerate}[itemindent=-0.5em,label=\rm (\roman*)]
			\item  If $FM $ is isotropic, then one of the following cases happens:			
			\begin{enumerate}[itemindent=-0.5em,label=\rm (\roman*)]			
				\item[(a)] $M$ is $\mathcal{O}_{F}$-maximal;
 			
				\item[(b)]	$
				M\cong  H_{1,h}^{3}(c)$ with $h\ge 1$ and $c\in \mathcal{V}$.
			\end{enumerate}
  	\item If $FM$ is anisotropic, then $M$ is $\mathcal{O}_{F}$-maximal. 
		\end{enumerate} 
	\end{thm}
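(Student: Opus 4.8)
The plan is to split the argument according to whether $FM$ is isotropic or anisotropic, and in each case to convert the ADC condition into a representation statement via Lemma~\ref{lem:ADC-sufficient-equiv}, Corollary~\ref{cor:ADC-universal}, and Lemma~\ref{lem:lattice-rep-binary}, after which the precise shape of $M$ is pinned down by a Jordan-splitting analysis (recall that over the non-dyadic field $F$ every lattice diagonalizes). Throughout I will use the structural facts that over a finite residue field every nondegenerate binary form is universal, so that any binary unimodular $\mathcal{O}_F$-lattice represents all units; that the anisotropic binary $\langle 1,-\Delta\rangle$ is the norm form of the unramified quadratic extension, hence represents exactly the even-order elements (in particular all units but no element of odd order); and that $\mathbf H\cong\langle 1,-1\rangle$ is already universal over a non-dyadic field.

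For the \emph{isotropic} case, Corollary~\ref{cor:ADC-universal} shows that an ADC ternary $M$ with $FM$ isotropic is universal, while a universal lattice is trivially ADC; so part (i) reduces to classifying the universal lattices on $W_1^3(c)$. Writing $M=J_0\perp J_{\ge 1}$ with $J_0$ the unimodular Jordan component, I would argue in three steps: (a) universality forces $\mathfrak n(M)=\mathcal O_F$ and $\operatorname{rank}J_0\ge 2$, since a rank-$1$ unimodular part represents only one unit square class; (b) if $\operatorname{rank}J_0=2$, then $J_0$ cannot be the anisotropic block $\langle 1,-\Delta\rangle$, because a short mod-$\mathfrak p$ computation shows $\langle 1,-\Delta\rangle\perp\langle w\pi^{k}\rangle$ misses one of the order-$1$ classes $\pi,\Delta\pi$; hence $J_0\cong\mathbf H$; (c) if $\operatorname{rank}J_0=3$, then $M$ is unimodular on an isotropic space and splits off $\mathbf H$. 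In every case $M\cong \mathbf H\perp\langle w\pi^{k}\rangle$ for some unit $w$ and $k\ge 0$. Comparing with $H_{1,h}^3(c)=\mathbf H\perp\langle c\pi^{2h}\rangle$ and recalling that this lattice is $\mathcal O_F$-maximal exactly when $h=0$ (equivalently $k\le 1$), I recover precisely the dichotomy (i)(a)--(b), the ``if'' direction being immediate since $\mathbf H$ is universal.

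For the \emph{anisotropic} case, the implication ``maximal $\Rightarrow$ ADC'' is Lemma~\ref{lem:ADC-sufficient-equiv}(i). For the converse, Proposition~\ref{prop:nu-isotropic} gives $FM\cong W_2^3(c)$, and Lemma~\ref{lem:lattice-rep-binary}(viii) shows that an ADC such $M$ represents $\langle c'\rangle$ for every $c'\in\mathcal V$ with $c'\ne c$, i.e.\ three of the four square classes. Diagonalizing $M=\langle a_1,a_2,a_3\rangle$ with $\ord(a_1)\le\ord(a_2)\le\ord(a_3)$, the anisotropy of $FM$ is exactly what rules out the non-maximal order patterns: a $\mathfrak p$-modular block $\langle \pi u,-\pi\Delta u\rangle$ whose scaled form is hyperbolic would isotropize $FM$, and (when $c$ is a unit) a unit binary block $\langle 1,-\Delta\rangle$ would represent every unit, including $c$, contradicting that $W_2^3(c)$ does not represent $c$. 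The core computations are that $\langle \epsilon\pi,\epsilon''\pi^{r_3}\rangle$ with $r_3\ge 2$ represents only one order-$1$ class, and that $\langle 1,-\Delta\rangle$ represents no element of odd order; running these against the finitely many admissible patterns forces the orders to be $(0,1,1)$ or $(0,0,1)$, hence $\ord\mathfrak v(M)=\ord\mathfrak v(N_2^3(c))$ and $M=N_2^3(c)$ by Proposition~\ref{prop:maximallattices-binary-ternary}(i). I expect this order-bookkeeping, together with isolating exactly why anisotropy forbids the spread-out patterns, to be the main obstacle, though the two computational facts above make each case short.
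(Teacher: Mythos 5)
Your proposal is correct and follows essentially the same route as the paper's proof: both reduce the isotropic case to classifying universal ternary lattices (you re-derive the classification $M\cong\mathbf H\perp\langle w\pi^{k}\rangle$ by a Jordan analysis, where the paper simply cites \cite[Proposition 2.3(1)]{xu_indefinite_2020}), and both handle the anisotropic case via Lemma \ref{lem:lattice-rep-binary}(viii) together with a Jordan-splitting/order analysis (your mod-$\mathfrak{p}$ and mod-$\mathfrak{p}^{2}$ computations reproduce the content of the paper's Lemma \ref{lem:non-dyadic-adc-rank}), with anisotropy and the determinant pinning down the components. Two small points to tighten: the modular block that would isotropize $FM$ is the one with hyperbolic scaled form (determinant $\equiv -1$), not $\langle\pi u,-\pi\Delta u\rangle$ as written; and the final identification $M\cong N_{2}^{3}(c)$ should be justified either by matching the full Jordan invariants your block analysis has already forced (via \cite[92:2]{omeara_quadratic_1963}, as the paper does) or by the uniqueness of the maximal lattice on an anisotropic space, since equality of volumes alone does not imply isometry.
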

\begin{re}\label{re:nondyadicACD-3}
	\begin{enumerate}[itemindent=-0.5em,label=\rm (\roman*)]
	\item $M$ is not $\mathcal{O}_{F}$-maximal in the cases (i)(b) of Theorem \ref{thm:nondyadic-ADC-ternary}.
		
	\item From Theorem \ref{thm:nondyadic-ADC-ternary}, if $M$ is ternary ADC, then $M$ is either of the form $N_{\nu}^{3}(c)$ with $\nu\in \{1,2\}$ and $c\in \mathcal{V}$, or of the form $H_{1,h}^{3}(c)$ with $h\ge 1$ and $c\in \mathcal{V}$.
	\end{enumerate}
\end{re}
By Theorem \ref{thm:nondyadic-ADC-ternary} and Corollary \ref{cor:counting-maximal}, we have the following corollary immediately.
\begin{cor}\label{cor:count-nondyadic}
	Up to isometry, 
	\begin{enumerate}[itemindent=-0.5em,label=\rm (\roman*)]	
		\item  there are infinitely many ternary isotropic ADC lattices, of which $4(N\mathfrak{p})^{e}$ are $\mathcal{O}_{F}$-maximal. 
		
		\item there are $4(N\mathfrak{p})^{e}$ ternary anisotropic ADC lattices, and all of them are $\mathcal{O}_{F}$-maximal. 
	\end{enumerate}
\end{cor}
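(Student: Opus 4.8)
The plan is to read both parts of the corollary directly off the classification in Theorem \ref{thm:nondyadic-ADC-ternary}, combined with the maximal-lattice count of Corollary \ref{cor:counting-maximal}; essentially no new computation beyond a single isometry-invariant check is required. For part (i), Theorem \ref{thm:nondyadic-ADC-ternary}(i) tells me that a ternary ADC lattice $M$ with $FM$ isotropic is either $\mathcal{O}_F$-maximal or isometric to $H_{1,h}^{3}(c)=\mathbf{H}\perp\langle c\pi^{2h}\rangle$ with $h\ge 1$ and $c\in\mathcal{V}$. By Corollary \ref{cor:counting-maximal} there are exactly $4(N\mathfrak{p})^{e}$ of the former type up to isometry, which supplies the asserted count of maximal lattices; by Remark \ref{re:nondyadicACD-3}(i) the lattices $H_{1,h}^{3}(c)$ with $h\ge 1$ are genuinely non-maximal, so they contribute nothing to that count and there is no overlap to worry about.

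What remains for (i) is to exhibit infinitely many pairwise non-isometric lattices, and for this I would use the volume ideal as an isometry invariant. Since $2\in\mathcal{O}_F^{\times}$ in the non-dyadic setting, $\mathbf{H}$ is unimodular, so $\mathfrak{v}(H_{1,h}^{3}(c))=\mathfrak{v}(\mathbf{H})\cdot c\pi^{2h}\mathcal{O}_F=\mathfrak{p}^{2h}$. As $h$ ranges over the positive integers these volume ideals are pairwise distinct, forcing the $H_{1,h}^{3}(c)$ with different $h$ to be pairwise non-isometric and thereby producing infinitely many isotropic ternary ADC lattices. For part (ii), Theorem \ref{thm:nondyadic-ADC-ternary}(ii) states that every ternary ADC lattice with $FM$ anisotropic is $\mathcal{O}_F$-maximal, and Corollary \ref{cor:counting-maximal} gives exactly $4(N\mathfrak{p})^{e}$ such lattices up to isometry; combining the two yields the claim verbatim.

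I do not expect a serious obstacle: once Theorem \ref{thm:nondyadic-ADC-ternary} and Corollary \ref{cor:counting-maximal} are granted, the corollary is bookkeeping. The only step needing an explicit line of verification is the infinitude in (i), where I must separate the $H_{1,h}^{3}(c)$ for distinct $h$ by an isometry invariant; the volume computation above does this cleanly. I would also remark in passing that for a fixed $h$ different choices of $c\in\mathcal{V}$ may or may not give isometric lattices, but since infinitude is already forced along the single parameter $h$, this ambiguity is immaterial to the stated count.
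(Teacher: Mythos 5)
Your proposal is correct and takes essentially the same route as the paper, which derives the corollary ``immediately'' from Theorem \ref{thm:nondyadic-ADC-ternary} together with Corollary \ref{cor:counting-maximal}; your volume-invariant argument merely makes explicit the infinitude in (i) that the paper leaves implicit. One small slip that does not affect the logic: $\mathfrak{v}(H_{1,h}^{3}(c))=\mathfrak{p}^{2h+\ord(c)}$, not $\mathfrak{p}^{2h}$, when $c\in\pi\mathcal{U}$, but since you fix $c$ and vary $h$ the volumes are still pairwise distinct and the separation of isometry classes goes through.
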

 \begin{thm}\label{thm:primitive-nondyadic}
	Let $m\in \{2,3\}$.
	\begin{enumerate}[itemindent=-0.5em,label=\rm (\roman*)]
		\item  If $m=2$, then $M$ is non-primitive ADC if and only if $FM\cong [\pi,-\Delta\pi]$, $\mathfrak{n}(M)=\mathfrak{p}$ and $\mathfrak{v}(M)=\mathfrak{p}^{2}$.
		
		\item  If $m=3$, then $M$ is primitive ADC if and only if $M$ is ADC.
	\end{enumerate}
\end{thm}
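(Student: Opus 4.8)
The plan is to reduce everything to the classification of non-dyadic ADC lattices already in hand, and I would dispose of the ternary part (ii) first, since it is the lighter of the two. The implication ``primitive ADC $\Rightarrow$ ADC'' is trivial, so the content is the converse: every ternary ADC lattice is automatically primitive. By Theorem \ref{thm:nondyadic-ADC-ternary} together with Remark \ref{re:nondyadicACD-3}(ii), any ternary ADC lattice $M$ is either $\mathcal{O}_F$-maximal, hence one of the lattices $N_\nu^3(c)$ tabulated in Proposition \ref{prop:maximallattices-binary-ternary}(i), or else isometric to $H_{1,h}^3(c)=\mathbf{H}\perp\langle c\pi^{2h}\rangle$ with $h\ge 1$. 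I would then check primitivity by computing norms: the lattices $N_1^3(\delta)$, $N_1^3(\delta\pi)$ and every $H_{1,h}^3(c)$ contain the primitive summand $\mathbf{H}$, so $\mathfrak{n}=\mathcal{O}_F$; while $N_2^3(\delta)=\langle\pi,-\Delta\pi,\Delta\delta\rangle$ and $N_2^3(\delta\pi)=\langle 1,-\Delta,\Delta\delta\pi\rangle$ each carry a unit diagonal entry, so again $\mathfrak{n}=\mathcal{O}_F$. This exhausts the cases and proves (ii).

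For the binary part (i) I would invoke Theorem \ref{thm:nondyadic-ADC-binary}, which equates binary ADC with $\mathcal{O}_F$-maximal. The forward direction then amounts to scanning the binary maximal lattices in Proposition \ref{prop:maximallattices-binary-ternary}(i) for the non-primitive ones. A direct norm computation shows that $\mathbf{H}$, $\langle 1,-\Delta\rangle$, $\langle 1,-\delta\pi\rangle$ and $\langle\Delta,-\Delta\delta\pi\rangle$ are all primitive, so the only non-primitive maximal lattice is $N_2^2(\Delta)=\langle\pi,-\Delta\pi\rangle$, whose space is $W_2^2(\Delta)\cong[\pi,-\Delta\pi]$ and which satisfies $\mathfrak{n}=\mathfrak{p}$ and $\mathfrak{v}=\mathfrak{p}^2$. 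This produces exactly the three stated invariants.

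The converse in (i) carries the real work. Given $FM\cong[\pi,-\Delta\pi]$, $\mathfrak{n}(M)=\mathfrak{p}$ and $\mathfrak{v}(M)=\mathfrak{p}^2$, the non-primitivity is immediate from $\mathfrak{n}(M)=\mathfrak{p}\neq\mathcal{O}_F$, and I must recover $M\cong N_2^2(\Delta)$ to obtain ADC-ness via Theorem \ref{thm:nondyadic-ADC-binary}. Since $F$ is non-dyadic, $M$ is diagonalizable with $\mathfrak{s}(M)=\mathfrak{n}(M)=\mathfrak{p}$; writing a Jordan splitting $M\cong\langle a_1\pi^{r_1},a_2\pi^{r_2}\rangle$ with units $a_i$ and $r_1\le r_2$, the scale condition forces $r_1=1$ and the volume condition $r_1+r_2=2$ forces $r_2=1$. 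Hence $M\cong\pi M'$ for a unimodular lattice $M'$ on $\pi^{-1}[\pi,-\Delta\pi]=[1,-\Delta]=W_1^2(\Delta)$. As this space is anisotropic, its unimodular (equivalently, maximal) lattice is unique up to isometry, namely $N_1^2(\Delta)=\langle 1,-\Delta\rangle$; scaling back yields $M\cong N_2^2(\Delta)$, which is $\mathcal{O}_F$-maximal.

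The main obstacle is exactly this converse step: pinning the lattice down from its three invariants. The delicate point is the uniqueness of the unimodular lattice on the anisotropic binary space $[1,-\Delta]$, which I would settle by the standard non-dyadic lattice theory in \cite{omeara_quadratic_1963}, namely the classification of lattices by their Jordan invariants together with the uniqueness of maximal lattices on anisotropic spaces. Everything else reduces to routine norm, scale, and volume bookkeeping, so I expect no hidden difficulty beyond assembling these pieces carefully.
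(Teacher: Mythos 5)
Your proposal is correct and follows essentially the same route as the paper: part (ii) and the necessity in part (i) by scanning the classification of non-dyadic ADC lattices (Theorems \ref{thm:nondyadic-ADC-binary}, \ref{thm:nondyadic-ADC-ternary} and Proposition \ref{prop:maximallattices-binary-ternary}(i)) for primitivity, and the sufficiency in part (i) by pinning $M$ down as $N_2^2(\Delta)=\langle\pi,-\Delta\pi\rangle$ via O'Meara's non-dyadic classification \cite[92:2]{omeara_quadratic_1963} and then invoking maximality $\Rightarrow$ ADC. The only difference is that you spell out the Jordan-splitting and scaling bookkeeping that the paper compresses into a single citation of 92:2.
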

\begin{thm}\label{thm:coM-nondyadic}
	Let $\nu\in \{1,2\}$, $h\ge 1$ and $c\in \mathcal{V}$. Suppose that $M$ is ternary ADC.
	\begin{enumerate}[itemindent=-0.5em,label=\rm (\roman*)]
		\item  When $M=N_{\nu}^{3}(c)$, we have
		\begin{align*}
			\co(M)=
			\begin{cases}
				\{1,\Delta\}   &\text{if $(\nu,c)=(1,\mathcal{U})$}, \\
				\{1\}   &\text{if $(\nu,c)=(1,\pi\mathcal{U})$}, \\
				\emptyset   &\text{if $\nu=2$}.
			\end{cases}
		\end{align*}
				
		\item  When $M=H_{1,h}^{3}(c)$, we have
		\begin{align*}
			\co(M)=\begin{cases}
				\{1,\Delta\} &\text{if $c\in \mathcal{U}$}, \\
				\{1\}  &\text{if $c\in \pi\mathcal{U}$}.
			\end{cases}
		\end{align*} 
	\end{enumerate}
\end{thm}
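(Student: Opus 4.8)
The plan is to reduce the computation of $\co(M)$ to three separate questions---whether $1$, $\Delta$, and the elements of $\pi\mathcal{U}=\{\pi,\Delta\pi\}$ lie in $\co(M)$---and to settle each using the local analogues of Remark \ref{re:co-1-Delta} and Lemma \ref{lem:co-nondyadic} together with the \emph{exact} value of the integral spinor norm group $\theta(O^{+}(M))$. Two standing facts drive everything. First, since $M$ is ternary ADC, Theorem \ref{thm:thetaO+M} gives $\theta(O^{+}(M))\supseteq \mathcal{O}_{F}^{\times}F^{\times 2}$. Second, for any unary $N\subseteq M$ one has $O^{+}(M)\subseteq X(M/N)$, so membership $a\in\co(M)$ forces the necessary inclusion $\theta(O^{+}(M))\subseteq N(F(\sqrt{a})/F)$.

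First I would prove $\co(M)\subseteq\{1,\Delta\}$. For $a\in\pi\mathcal{U}$ the norm group $N(F(\sqrt{a})/F)=\{x\in F^{\times}\mid (x,a)_{\mathfrak{p}}=1\}$ omits $\Delta$: over a non-dyadic field $(\Delta,\pi)_{\mathfrak{p}}=-1$, and $(\Delta,\Delta\pi)_{\mathfrak{p}}=(\Delta,\Delta)_{\mathfrak{p}}(\Delta,\pi)_{\mathfrak{p}}=-1$ since the Hilbert symbol of two units is trivial. As $\Delta\in\mathcal{O}_{F}^{\times}F^{\times 2}\subseteq\theta(O^{+}(M))$, the necessary inclusion fails, whence $a\notin\co(M)$.

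Next I treat $a=1$ and $a=\Delta$. By Remark \ref{re:co-1-Delta}(i), $1\in\co(M)$ if and only if $FM$ is isotropic; by Proposition \ref{prop:nu-isotropic} together with $FH_{1,h}^{3}(c)\cong W_{1}^{3}(c)$, this holds exactly for $M=N_{1}^{3}(c)$ and $M=H_{1,h}^{3}(c)$, and fails for $M=N_{2}^{3}(c)$. For $a=\Delta$, combining Lemma \ref{lem:co-nondyadic}(i) with the necessary inclusion shows $\Delta\in\co(M)$ if and only if $\theta(O^{+}(M))\subseteq N(F(\sqrt{\Delta})/F)=\mathcal{O}_{F}^{\times}F^{\times 2}$; since $\theta(O^{+}(M))\supseteq\mathcal{O}_{F}^{\times}F^{\times 2}$ always, this is equivalent to the equality $\theta(O^{+}(M))=\mathcal{O}_{F}^{\times}F^{\times 2}$ (as opposed to $F^{\times}$). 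Thus the entire theorem reduces to deciding which of the two groups $\theta(O^{+}(M))$ equals for each explicit lattice.

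The crux, and the main obstacle, is therefore the exact spinor-norm computation. I would read it off the Jordan splittings: $N_{1}^{3}(c)=\mathbf{H}\perp\langle c\rangle$ has scales $0$ and $\ord(c)$; $H_{1,h}^{3}(c)=\mathbf{H}\perp\langle c\pi^{2h}\rangle$ has scales $0$ and $2h+\ord(c)$; and $N_{2}^{3}(c)$ splits into a rank-two modular block and a rank-one block whose scales differ in parity. In each case the integral symmetries supported on a component of scale $\mathfrak{p}^{r}$ have spinor norm of valuation $\equiv r \pmod 2$, so they generate $\mathcal{O}_{F}^{\times}F^{\times 2}$ when all Jordan scales share one parity and generate all of $F^{\times}$ as soon as two scales have opposite parity. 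The delicate part is making this rigorous: ruling out that some additional rotation of $O^{+}(M)$ produces an odd-valuation spinor norm in the same-parity cases, and confirming that the odd-valuation generators are genuinely integral in the mixed-parity cases. This is precisely the content of the auxiliary spinor-norm lemmas \ref{lem:thetaOM-NE-non-dyadic} and \ref{lem:R-theta-H-1h}, which I would invoke (one may alternatively appeal to the non-dyadic spinor-norm formulae of Kneser and Earnest--Hsia). The outcome is $\theta(O^{+}(M))=\mathcal{O}_{F}^{\times}F^{\times 2}$ exactly for $N_{1}^{3}(\delta)$ with $\delta\in\mathcal{U}$ and for $H_{1,h}^{3}(c)$ with $c\in\mathcal{U}$, and $\theta(O^{+}(M))=F^{\times}$ in all remaining cases. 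Assembling the three steps then yields the tabulated values of $\co(M)$.
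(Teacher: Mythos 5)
Your proposal is correct and takes essentially the same route as the paper's proof: constrain $\co(M)\subseteq\{1,\Delta\}$ (respectively $\subseteq\{1\}$ when $\theta(O^{+}(M))=F^{\times}$) via the spinor-norm necessity $\theta(O^{+}(M))\subseteq N(F(\sqrt{a})/F)$, decide $a=1$ by isotropy of $FM$ (Remark \ref{re:co-1-Delta}(i)), decide $a=\Delta$ by Lemma \ref{lem:co-nondyadic}(i) combined with the exact value of $\theta(O^{+}(M))$ from the Jordan-scale parity computation in Lemma \ref{lem:thetaOM-NE-non-dyadic}(i)--(ii), and finish with a case-by-case check. One minor citation slip: for $H_{1,h}^{3}(c)$ in the non-dyadic setting the relevant spinor-norm statement is Lemma \ref{lem:thetaOM-NE-non-dyadic}(ii), not Lemma \ref{lem:R-theta-H-1h}, which is its dyadic analogue.
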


We first treat Theorems \ref{thm:nondyadic-ADC-binary} and \ref{thm:nondyadic-ADC-ternary}, and then show Theorems \ref{thm:primitive-nondyadic} and \ref{thm:coM-nondyadic}.
For sufficiency of Theorems \ref{thm:nondyadic-ADC-binary} and \ref{thm:nondyadic-ADC-ternary}, if the case of Theorem \ref{thm:nondyadic-ADC-ternary}(i)(b) happens, then by \cite[Proposition 2.3(1)]{xu_indefinite_2020}, $M$ is universal and thus is ADC. For the remaining cases, the ADC-ness of $M$ follows by Lemma \ref{lem:ADC-sufficient-equiv}(i) immediately. To show necessity, we need some lemmas.
\begin{lem}\label{lem:maximal-rep-nondyadic}
	Let $N=\langle \varepsilon\pi^{k}\rangle$ with $\varepsilon\in \mathcal{U}$ and $k\in \{0,1\}$. Then $M$ represents $N$ if and only if $FJ_{0}(M)$ represents $FJ_{0}(N)$ and $FJ_{0,1}(M)$ represents $FN$.
\end{lem}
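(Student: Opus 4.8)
The plan is to observe first that, since $N=\langle\varepsilon\pi^{k}\rangle$ is unary, ``$M$ represents $N$'' means simply $\varepsilon\pi^{k}\in Q(M)$, and that $FJ_{0}(N)=0$ when $k=1$ while $FJ_{0}(N)=FN=[\varepsilon]$ when $k=0$. Thus the two stated conditions collapse to ``$FJ_{0}(M)$ represents $\varepsilon$'' when $k=0$ and to ``$FJ_{0,1}(M)$ represents $\varepsilon\pi$'' when $k=1$. For \emph{necessity}, suppose $w\in M$ with $Q(w)=\varepsilon\pi^{k}$, and write $w=w_{0}+w_{1}+w''$ where $w_{0}\in J_{0}(M)$, $w_{1}\in J_{1}(M)$, and $w''$ lies in the Jordan components of scale $\subseteq\mathfrak{p}^{2}$, so that $Q(w_{0})\in\mathcal{O}_{F}$, $Q(w_{1})\in\mathfrak{p}$ and $Q(w'')\in\mathfrak{p}^{2}$. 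When $k=0$ this gives $Q(w_{0})\equiv\varepsilon\pmod{\mathfrak{p}}$, and when $k=1$ it gives $Q(w_{0}+w_{1})\equiv\varepsilon\pi\pmod{\mathfrak{p}^{2}}$; in either case the local square theorem for non-dyadic fields shows the relevant quantity is $\varepsilon\pi^{k}$ times a square, so $FJ_{0}(M)$ (resp. $FJ_{0,1}(M)$) represents $\varepsilon$ (resp. $\varepsilon\pi$).

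For \emph{sufficiency} when $k=0$, I would use that a unimodular $\mathcal{O}_{F}$-lattice over a non-dyadic field is $\mathcal{O}_{F}$-maximal: an enlargement on the same space would decrease the volume in $\mathfrak{p}$-order, forcing $\mathfrak{v}$ to leave $\mathcal{O}_{F}$ and contradicting integrality. Hence $J_{0}(M)$ is ADC by Lemma \ref{lem:ADC-sufficient-equiv}(i), and since $FJ_{0}(M)$ represents the unit $\varepsilon$, the sublattice $J_{0}(M)\subseteq M$ already represents $\varepsilon$.

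The substance of the lemma is sufficiency when $k=1$, where I must upgrade a representation of $\varepsilon\pi$ by the \emph{space} $FJ_{0,1}(M)$ to one by the \emph{lattice} $L:=J_{0}(M)\perp J_{1}(M)$. Writing $\varepsilon\pi=Q(v_{0})+Q(v_{1})$ with $v_{0}\in FJ_{0}(M)$ and $v_{1}\in FJ_{1}(M)$, I would split into cases. If $FJ_{0}(M)$ is isotropic it splits off an $\mathbf{H}$, which is universal, so $L$ represents $\varepsilon\pi$; likewise if $FJ_{1}(M)$ is isotropic it splits off $\pi\mathbf{H}$, which represents all of $\mathfrak{p}$. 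In the remaining case both spaces are anisotropic, so $FJ_{0}(M)$ represents only elements of even order and $FJ_{1}(M)$ only elements of odd order; matching orders in $Q(v_{0})+Q(v_{1})=\varepsilon\pi$ then forces $\ord Q(v_{1})=1$ and $\ord Q(v_{0})\ge 2$, since the alternatives yield a unit or an element of order $\ne 1$. The key point is that in an anisotropic space over a non-dyadic local field the maximal lattice is exactly the set of vectors of integral norm (cf. \cite{omeara_quadratic_1963}); as $J_{0}(M)$ and $J_{1}(M)$ are such maximal lattices in their anisotropic spaces and $Q(v_{0}),Q(v_{1})\in\mathcal{O}_{F}$, both $v_{0}$ and $v_{1}$ automatically lie in the respective components, producing the desired integral representation by $L\subseteq M$.

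I expect this last (anisotropic, genuinely mixed) case to be the main obstacle: a priori the space representation of $\varepsilon\pi$ may use non-integral vectors $v_{0},v_{1}$, and one cannot in general clear denominators while preserving orthogonality of the two summands. The resolution rests on the order-parity constraint imposed by anisotropy together with the description of the maximal lattice as $\{x:Q(x)\in\mathcal{O}_{F}\}$, which forces any vector of integral norm into the relevant Jordan component. The other parts are comparatively routine: necessity is the local square theorem, and the $k=0$ case follows from the ADC-ness of unimodular (hence maximal) lattices.
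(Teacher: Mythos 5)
Your proposal takes a genuinely different route from the paper: the paper's entire proof is the one-line citation to \cite[Lemma 4.8]{He25} with $n=1$, whereas you give a self-contained argument. Your reduction of the two space conditions to ``$FJ_{0}(M)$ represents $\varepsilon$'' ($k=0$) and ``$FJ_{0,1}(M)$ represents $\varepsilon\pi$'' ($k=1$) is right; the necessity argument via orthogonality of the Jordan components, the norm estimates $Q(w_{1})\in\mathfrak{p}$, $Q(w'')\in\mathfrak{p}^{2}$, and the local square theorem is sound; and the $k=0$ sufficiency via ``unimodular $\Rightarrow$ $\mathcal{O}_{F}$-maximal $\Rightarrow$ ADC'' (Lemma \ref{lem:ADC-sufficient-equiv}(i)) is exactly in the spirit of the paper's toolkit. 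What your approach buys is independence from the external reference; what it costs is that you must settle the mixed case $k=1$ by hand, which the cited lemma handles in greater generality.

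The one step you assert without justification is that $J_{1}(M)$ is $\mathcal{O}_{F}$-maximal on its anisotropic space. This is where the argument needs shoring up, because the analogous claim is false for isotropic spaces: $\pi\mathbf{H}$ is $\mathfrak{p}$-modular but properly contained in the integral lattice $\mathbf{H}$, so modularity alone never gives $\mathcal{O}_{F}$-maximality and anisotropy must be used in an essential way. The gap is real but fillable in two lines. Either note that in your anisotropic case $J_{1}(M)\cong\langle\delta\pi\rangle$ or $J_{1}(M)\cong\langle\delta_{1}\pi,\delta_{2}\pi\rangle\cong\langle\pi,-\Delta\pi\rangle$ (by \cite[92:2]{omeara_quadratic_1963}, since anisotropy forces $-\delta_{1}\delta_{2}\in\Delta F^{\times 2}$), and these are precisely the lattices $N_{1}^{1}(\delta\pi)$ and $N_{2}^{2}(\Delta)$ listed as $\mathcal{O}_{F}$-maximal in Proposition \ref{prop:maximallattices-binary-ternary}(i); or argue intrinsically: every nonzero value of $Q$ on the anisotropic space $FJ_{1}(M)$ has odd order, so $\{x\in FJ_{1}(M):Q(x)\in\mathcal{O}_{F}\}=\{x:Q(x)\in\mathfrak{p}\}$, and the latter equals $J_{1}(M)$ because $J_{1}(M)^{(\pi^{-1})}$ is unimodular, hence $\mathcal{O}_{F}$-maximal, hence $J_{1}(M)$ is $\mathfrak{p}$-maximal and \cite[91:1]{omeara_quadratic_1963} applies with the ideal $\mathfrak{p}$. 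The same remark applies, more mildly, to your parity claims: $FJ_{0}(M)$ represents only even-order elements not merely because it is anisotropic, but because an anisotropic space spanned by a unimodular lattice is a unit multiple of the norm form of the unramified quadratic extension (and $FJ_{1}(M)$ is $\pi$ times such a form); with these justifications inserted, your proof is complete.
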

\begin{proof}
	This is  \cite[Lemma 4.8]{He25} with $n=1$.
\end{proof}
\begin{lem}\label{lem:non-dyadic-adc-rank}
	Let $m\in \{2,3\}$. 
	\begin{enumerate}[itemindent=-0.5em,label=\rm (\roman*)]
		\item If $M$ represents $\langle \varepsilon \rangle$ for some $\varepsilon\in \mathcal{U} $, then $\rank\,J_{0}(M)\ge 1$.
			
		\item  If $M$ represents $\langle \varepsilon\rangle$ for all $\varepsilon\in \mathcal{U}$, then $\rank\,J_{0}(M)\ge 2$.			
			
	    \item If $M$ represents $\langle \varepsilon \pi\rangle$ for all $\varepsilon\in \mathcal{U}$, then $\rank\,J_{0,1}(M)\ge 2$.
			
	    \item If $m=2$, $\rank\, J_{0}(M)=1$ and $M$ represents $\langle \varepsilon \pi\rangle$ for some $\varepsilon\in \mathcal{U}$, then $  \rank\,J_{1}(M)=1$.
			
	    \item  If $m=3$, $\rank\, J_{0}(M)=1$ and $M$ represents $\langle \varepsilon \pi\rangle$ for all $\varepsilon\in \mathcal{U}$, then $\rank\,J_{1}(M)=2$.
			
		\item  If $  J_{0}(M)\cong \langle 1,-\Delta \rangle $ and $M$ represents $\langle \varepsilon \pi\rangle$ for some $\varepsilon\in \mathcal{U}$, then $\rank\,J_{1}(M)\ge 1$. 
		\end{enumerate}	
\end{lem}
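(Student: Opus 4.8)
The plan is to reduce every assertion to a statement about which square classes certain subspaces of $FM$ represent, and then read off the rank bounds from the diagonal (hence ``one square class per dimension'') structure of Jordan components over a non-dyadic field. I would fix a Jordan splitting $M=\perp_{k\ge 0}J_{k}(M)$ (all scales are $\ge \mathcal{O}_{F}$ since $M$ is integral and $F$ is non-dyadic), so that $FJ_{0}(M)$ is a unit-diagonal space of dimension $\rank\,J_{0}(M)$ and $FJ_{1}(M)$ is a $\pi$-scaled diagonal space of dimension $\rank\,J_{1}(M)$. The engine is Lemma \ref{lem:maximal-rep-nondyadic}: for $k\in\{0,1\}$, $M$ represents $\langle \varepsilon\pi^{k}\rangle$ if and only if $FJ_{0}(M)$ represents $FJ_{0}(\langle\varepsilon\pi^{k}\rangle)$ (which is vacuous when $k=1$) and $FJ_{0,1}(M)$ represents $[\varepsilon\pi^{k}]$. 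Thus representation of a unit isolates $FJ_{0}(M)$, while representation of a $\pi$-valued element isolates $FJ_{0,1}(M)=FJ_{0}(M)\perp FJ_{1}(M)$.

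For the lower-bound statements (i)--(iii) I would argue purely by counting represented square classes. A non-zero unary space $[\eta]$ represents exactly the single class $\eta F^{\times 2}$; hence representing one non-zero unit (part (i)) forces $\dim FJ_{0}(M)\ge 1$, and representing both elements of $\mathcal{U}=\{1,\Delta\}$, which lie in distinct square classes, forces $\dim FJ_{0}(M)\ge 2$ (part (ii)). Part (iii) is identical after passing through Lemma \ref{lem:maximal-rep-nondyadic} to $FJ_{0,1}(M)$: representing both $\pi$ and $\Delta\pi$ requires two distinct square classes, so $\rank\,J_{0,1}(M)\ge 2$. No field-specific subtlety intervenes here beyond $1\not\equiv\Delta \pmod{F^{\times 2}}$.

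The substantive cases are (iv)--(vi), where the rank of $J_{1}(M)$ must be pinned down using the parity of orders together with the representation criterion of Proposition \ref{prop:ternary-space-rep}. For (iv) and (vi) the key observation is that an even-order supported space cannot represent an odd-order element: a unit-diagonal unary space $[\eta_{0}]$ represents only units, and the anisotropic form $[1,-\Delta]=W_{1}^{2}(\Delta)$ represents $c$ if and only if $(\Delta,c)_{\mathfrak{p}}=(-1)^{\ord(c)}=1$, i.e. only even-order $c$. So if $J_{1}(M)$ were empty then $FJ_{0,1}(M)=FJ_{0}(M)$ would fail to represent the odd-order element $\varepsilon\pi$, contradicting the hypothesis; this forces $\rank\,J_{1}(M)\ge 1$, and in (iv) the rank budget $m=2$ with $\rank\,J_{0}(M)=1$ upgrades this to $\rank\,J_{1}(M)=1$. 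For (v) the crux is sharper: after (iii) gives $\rank\,J_{1}(M)\ge 1$, I must exclude $\rank\,J_{1}(M)=1$. In that case $FJ_{0,1}(M)\cong[\eta_{0},\eta_{1}\pi]$, and representing both $\pi$ and $\Delta\pi$ would require $(-\eta_{0}\eta_{1}\pi,\eta_{0}\pi)_{\mathfrak{p}}=(-\eta_{0}\eta_{1}\pi,\eta_{0}\Delta\pi)_{\mathfrak{p}}=1$; dividing these forces $(\Delta,-\eta_{0}\eta_{1}\pi)_{\mathfrak{p}}=1$. But $-\eta_{0}\eta_{1}\pi$ has odd order, so $(\Delta,-\eta_{0}\eta_{1}\pi)_{\mathfrak{p}}=(-1)^{\ord(-\eta_{0}\eta_{1}\pi)}=-1$, a contradiction. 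With $m=3$ and $\rank\,J_{0}(M)=1$ this leaves $\rank\,J_{1}(M)=2$.

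The main obstacle is exactly this Hilbert-symbol bookkeeping in (v) (with its simpler analogues in (iv) and (vi)): one must translate ``represents both $\pi$ and $\Delta\pi$'' into the single requirement $(\Delta,-\det)_{\mathfrak{p}}=1$ on $FJ_{0,1}(M)$ and then invoke the identity $(\Delta,x)_{\mathfrak{p}}=(-1)^{\ord(x)}$ to see that an odd-order discriminant makes this impossible. Equivalently, this says $\Delta$ does not lie in the norm group $N(F(\sqrt{-\det})/F)$ when $F(\sqrt{-\det})$ is ramified. Once this single parity fact is in hand, everything else reduces to routine dimension counting against the rank budget imposed by $m\in\{2,3\}$.
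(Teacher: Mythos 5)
Your proposal is correct and follows essentially the same route as the paper: both reduce each part via Lemma \ref{lem:maximal-rep-nondyadic} to representation questions for $FJ_{0}(M)$ and $FJ_{0,1}(M)$, settle (i)--(iii) by counting distinct square classes, and settle (iv)--(vi) by the order-parity obstruction $(\Delta,x)_{\mathfrak{p}}=(-1)^{\ord(x)}$. The only cosmetic difference is in (v), where you compute the Hilbert symbols directly from Proposition \ref{prop:ternary-space-rep}, while the paper phrases the same computation through the spaces $W_{\nu}^{2}(\eta\pi)$ and a cited lemma of \cite{He25}.
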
  
\begin{proof}
		 (i)  By Lemma \ref{lem:maximal-rep-nondyadic}, $FJ_{0}(M)$ represents $[\varepsilon]$ for some $\varepsilon\in \mathcal{U}$, which implies that
		\begin{align*}
			\rank\,J_{0}(M)=\dim FJ_{0}(M)\ge  1.
		\end{align*}
		
		(ii) By Lemma \ref{lem:maximal-rep-nondyadic}, $FJ_{0}(M)$ represents $[\varepsilon]$  for $\varepsilon=1,\Delta$. Since $[1]\not\cong [\Delta]$, this implies that
		\begin{align*}
			\rank\, J_{0}(M)=\dim FJ_{0}(M)\ge 2.
		\end{align*}	
		
		(iii) By Lemma \ref{lem:maximal-rep-nondyadic}, $FJ_{0,1}(M)$ represents $[\varepsilon\pi]$ for $\varepsilon=1,\Delta$. Since $ [\pi]\not\cong [\Delta\pi]$ , this implies that
		\begin{align*}
			\rank\, J_{0,1}(M) =\dim FJ_{0,1}(M) \ge 2.
		\end{align*}		
		
		 (iv) By Lemma \ref{lem:maximal-rep-nondyadic}, $FJ_{0,1}(M)$ represents $[\varepsilon\pi]$ for some $\varepsilon \in \mathcal{U} $. If $\rank\, J_{1}(M)=0$, then $FJ_{0,1}(M)=FJ_{0}(M)=[\delta]$ for some $\delta\in \mathcal{U}$, which fails to represent $[\varepsilon\pi]$. Hence $
		\rank\, J_{1}(M)\ge 1$ and thus $\rank\, J_{1}(M)=1$.	
		
		 (v) Similar to (iv), we have $\rank\, J_{1}(M)\ge 1$. If $\rank\, J_{1}(M)=1$, then $FJ_{0,1}(M)\cong  W_{\nu}^{2}(\eta\pi)$ for some $\eta\in \mathcal{U}$. Since $(\eta\pi,-\Delta^{\nu}\eta\pi)_{\mathfrak{p}}=(\delta\pi,\Delta)_{\mathfrak{p}}^{\nu}=(-1)^{\nu}\not=(-1)^{\nu+1}$, by \cite[Lemma 4.4(ii)]{He25}, $FJ_{0,1}(M)$ fails to represent $[-\Delta^{\nu}\eta\pi]$. But by the hypothesis and Lemma \ref{lem:maximal-rep-nondyadic}, $FJ_{0,1}(M)$ represents $[\varepsilon\pi]$ for all $\varepsilon\in \mathcal{U}$, a contradiction. Hence $\rank\,J_{1}(M)=2$. 
		
		 (vi) By Lemma \ref{lem:maximal-rep-nondyadic}, $FJ_{0,1}(M)$ represents $[\varepsilon\pi]$ for some $\varepsilon \in \mathcal{U} $. If $\rank\, J_{1}(M)=0$, then $FJ_{0}(M)=FJ_{0,1}(M)\cong [1,-\Delta]$ represents $[\varepsilon\pi]$. By Proposition \ref{prop:ternary-space-rep}, $(\Delta,\varepsilon\pi)_{\mathfrak{p}}=1$, a contradiction. Hence $\rank\, J_{1}(M)\ge 1$.
\end{proof}
\begin{proof}[Proof of necessity of Theorem \ref{thm:nondyadic-ADC-binary}]
  	If $FM\cong W_{1}^{2}(1)$, by Proposition \ref{prop:nu-isotropic}(i) and Corollary \ref{cor:ADC-universal}, $M\cong\mathbf{H}=N_{1}^{2}(1)$.
  	
  	If $FM\cong W_{1}^{2}(\Delta)$, by Lemma \ref{lem:lattice-rep-binary}(ii), $M$ represents $\langle 1\rangle $ and $\langle \Delta \rangle$.  Hence, by Lemma \ref{lem:non-dyadic-adc-rank}(ii), $\rank\, J_{0}(M)\ge 2$. Thus $M=J_{0}(M)$ is unimodular. Since $dM=dN_{1}^{2}(\Delta)=-\Delta$, by \cite[92:2]{omeara_quadratic_1963}, $M\cong \langle 1,-\Delta\rangle=N_{1}^{2}(\Delta)$. 
  	
   If $FM\cong W_{2}^{2}(\Delta)$, by Lemma \ref{lem:lattice-rep-binary}(iii), $M$ represents $\langle \pi\rangle$ and $\langle \Delta\pi\rangle$. Hence, by Lemma \ref{lem:non-dyadic-adc-rank}(iii), $\rank\, J_{0,1}(M)\ge 2$. But $M$ does not represent $\langle 1\rangle$ and $\langle \Delta\rangle$, so $\rank\, J_{0}(M)=0$. Hence $M=J_{1}(M)$ is $\mathfrak{p}$-modular. Since $dM=dN_{2}^{2}(\Delta)=-\Delta\pi^{2}$, again by \cite[92:2]{omeara_quadratic_1963}, $M\cong  \langle \pi,-\Delta\pi\rangle= N_{2}^{2}(\Delta)$. 
   
   If $FM\cong W_{\nu}^{2}(\varepsilon\pi)$, by Lemma \ref{lem:lattice-rep-binary}(iv), $M$ represents $\langle \Delta^{\nu-1} \rangle$ and $\langle-\Delta^{\nu-1}\varepsilon\pi\rangle$. Hence, by Lemma \ref{lem:non-dyadic-adc-rank}(i), $\rank\, J_{0}(M)\ge 1$. Since $\det FM=-\varepsilon\pi$, $\rank\, J_{0}(M)=1$. So, by Lemma \ref{lem:non-dyadic-adc-rank}(iv),  $\rank\,J_{1}(M)=1$.  We may let $M\cong \langle \eta,-\eta \varepsilon\pi \rangle$ with $\eta \in \{1,\Delta\}$ and $(\eta,\varepsilon\pi)_{\mathfrak{p}}=(-1)^{\nu-1}$. By \cite[92:2]{omeara_quadratic_1963}, $M\cong \langle 1,-\varepsilon\pi\rangle=N_{1}^{2}(\varepsilon\pi)$ or  $\cong \langle \Delta,-\Delta\varepsilon\pi\rangle=N_{2}^{2}(\varepsilon\pi)$, according as $\nu=1$ or $2$
\end{proof}
\begin{proof}[Proof of necessity of Theorem \ref{thm:nondyadic-ADC-ternary}]
	If $FM$ is isotropic, by Proposition \ref{prop:nu-isotropic}, $FM\cong W_{1}^{3}(c)$ with $c\in \mathcal{V}$ and $M$ is universal. From \cite[Proposition 2.3(i)]{xu_indefinite_2020}, $M\cong \mathbf{H}\perp \langle c\pi^{2h}\rangle$ with $h\ge 0$. Also, from Proposition \ref{prop:maximallattices-binary-ternary}(i), $M\cong N_{1}^{3}(c)$ is $\mathcal{O}_{F}$-maximal when $h=0$.
		
	If $FM$ is anisotropic, by Proposition \ref{prop:nu-isotropic}, we may assume $FM\cong W_{2}^{3}(\varepsilon)$ or $W_{2}^{3}(\varepsilon\pi)$, with $\varepsilon\in \mathcal{U}$.
		
	(i) Let $FM\cong W_{2}^{3}(\varepsilon)$. By Lemma \ref{lem:lattice-rep-binary}(viii), $M$ represents  $\langle \eta\pi\rangle$ for all $\eta\in \mathcal{U}$. Hence, by Lemma \ref{lem:non-dyadic-adc-rank}(i), $\rank\,J_{0}(M)\ge 1$. If $\rank\,J_{0}(M)\ge 2$, then  $J_{0}(M)$ is split by $ \mathbf{H}$ or $\langle 1,-\Delta \rangle $. Hence $J_{0}(M)$ (and thus $M$) represents   $\langle \eta\rangle$ for all $\eta\in \mathcal{U}$. This contradicts Lemma \ref{lem:lattice-rep-binary}(viii). Hence $\rank\,J_{0}(M)=1$. So, by Lemma \ref{lem:non-dyadic-adc-rank}(v), $\rank\, J_{1}(M)=2$.  
		
	Let $J_{1}(M)\cong \langle \eta \pi, -\eta^{\prime}\Delta\pi\rangle$  with $\eta ,\eta^{\prime}\in \{1,\Delta\}$. As $FM$ is anisotropic, $\det FJ_{1}(M)\not=-1$. Hence $\det FJ_{1}(M)=-\Delta\eta\eta^{\prime}=-\Delta$ and thus $\eta=\eta^{\prime}$. So, by \cite[92:2]{omeara_quadratic_1963}, $J_{1}(M)\cong \langle\eta \pi, -\eta\Delta \pi\rangle\cong \langle \pi, -\Delta \pi\rangle$. Let $M\cong \langle \delta,\pi,-\Delta\pi \rangle$ with $\delta\in \{1,\Delta\}$. Since $\det FM=\det W_{2}^{3}(\varepsilon)=-\varepsilon$, $J_{0}(M)\cong \langle \Delta\varepsilon\rangle$. Hence, by \cite[92:2]{omeara_quadratic_1963}, $M\cong  N_{2}^{3}(\varepsilon)$.
		
	(ii) Let $FM\cong W_{2}^{3}(\varepsilon\pi)$. Since $1,\Delta\not=\varepsilon\pi$, by Lemma \ref{lem:lattice-rep-binary}(viii), $M$ represents $\langle 1\rangle$ and $\langle \Delta\rangle$. Hence, by Lemma  \ref{lem:non-dyadic-adc-rank}(ii), $\rank\, J_{0}(M)\ge 2$. But $M$ is not unimodular, so $\rank\,J_{0}(M)=2$. 
		
	By \cite[92:1]{omeara_quadratic_1963}, we may let $J_{0}(M)\cong \langle 1,-\eta\rangle$ with $\eta\in \{1,\Delta\}$. Since $FM$ is anisotropic, $J_{0}(M)\cong \langle 1, -\Delta \rangle$. Since $\Delta\varepsilon\pi\not=\varepsilon\pi$, by Lemma \ref{lem:lattice-rep-binary}(viii), $M$ represents $\langle \Delta \varepsilon\pi\rangle $. So, by Lemma \ref{lem:non-dyadic-adc-rank}(vi), $\rank\, J_{1}(M)\ge 1$ and thus $\rank\, J_{1}(M)=1$.
		
	 Let $M\cong \langle 1,-\Delta,\delta\pi  \rangle$ with $\delta\in \{1,\Delta\}$. Since $\det FM=\det W_{2}^{3}(\varepsilon\pi)=-\varepsilon\pi$, $J_{1}(M)\cong \langle \Delta\varepsilon\pi\rangle$.  Hence, by \cite[92:2]{omeara_quadratic_1963}, $M\cong N_{2}^{3}(\varepsilon\pi)$.
\end{proof}
\begin{proof}[Proof of Theorem \ref{thm:primitive-nondyadic}]
	(i) For necessity, since $M$ is non-primitive, $\mathfrak{n}(M)\subsetneq \mathcal{O}_{F}$. By Theorem \ref{thm:nondyadic-ADC-binary} and Proposition \ref{prop:maximallattices-binary-ternary}(i), $M\cong N_{2}^{2}(\Delta)=\langle \pi,-\Delta\pi \rangle$.   Clearly, $FM\cong W_{2}^{2}(\Delta)=[\pi,-\Delta\pi]$, $\mathfrak{n}(M)=\mathfrak{p}$ and $\mathfrak{v}(M)=\mathfrak{p}^{2}$.
	
	For sufficiency, by \cite[92:2]{omeara_quadratic_1963}, $M\cong N_{2}^{2}(\Delta)=\langle \pi,-\Delta\pi\rangle$ is $\mathcal{O}_{F}$-maximal, and so by Lemma \ref{lem:ADC-sufficient-equiv}(i), $M$ is ADC.
	
	(ii) Necessity is clear. For sufficiency, by Theorem \ref{thm:nondyadic-ADC-ternary} and Proposition \ref{prop:maximallattices-binary-ternary}(i), one can see that each ternary ADC $\mathcal{O}_{F}$-lattice $M$ has $\mathfrak{n}(M)=\mathcal{O}_{F}$, and thus is primitive.
\end{proof}
 \begin{lem}\label{lem:thetaOM-NE-non-dyadic}
 	\begin{enumerate}[itemindent=-0.5em,label=\rm (\roman*)]
 		\item Suppose that $M=N_{\nu}^{3}(c)$ with $\nu\in \{1,2\}$ and $c\in \mathcal{V}$. Then
 		\begin{align*}
 			\theta(O^{+}(M))=\begin{cases}
 				 \mathcal{O}_{F}^{\times}F^{\times 2} &\text{if $(\nu,c)=(1,\mathcal{U})$}, \\
 				 F^{\times}  &\text{if $(\nu,c)\not=(1,\mathcal{U})$}.
 			\end{cases}
 		\end{align*}
 		
 		 \item  Suppose that $M=H_{1,h}^{3}(c)$ with $h\ge 1$ and $c\in \mathcal{V}$. Then
 		 \begin{align*}
 		 	\theta(O^{+}(M))=\begin{cases}
 		 		\mathcal{O}_{F}^{\times}F^{\times 2} &\text{if $c\in \mathcal{U}$}, \\
 		 		F^{\times}   &\text{if $c\in \pi\mathcal{U}$}.
 		 	\end{cases}
 		 \end{align*}
 		
 		\item Suppose that $M$ is ternary ADC. Then
 		 \begin{enumerate} 
 			\item[(a)] $\theta(O^{+}(M))\supseteq \mathcal{O}_{F}^{\times}F^{\times 2}$.
 			
 			\item[(b)] If $\theta(O^{+}(M))\subseteq N(F(\sqrt{a})/F)$, then $a\in  \{1,\Delta\}$; if moreover, $\theta(O^{+}(M))=F^{\times}$, then $a=1$.
 		\end{enumerate}	
 		\end{enumerate}
 \end{lem}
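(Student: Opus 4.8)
The plan is to treat (i) and (ii) by directly computing $\theta(O^{+}(M))$ from the explicit Jordan splittings recorded in Proposition \ref{prop:maximallattices-binary-ternary}(i) and in the definition of $H_{1,h}^{3}(c)$, and then to read off (iii) from these computations. Throughout I would use the explicit description of local integral spinor norms over a non-dyadic field: $\theta(O^{+}(M))$ is the subgroup of $F^{\times}/F^{\times 2}$ generated by $F^{\times 2}$ and the products $Q(v)Q(w)$, where $\tau_{v},\tau_{w}\in O(M)$ are symmetries, $\theta(\tau_{v})=Q(v)\bmod F^{\times 2}$, and such a symmetry exists precisely when $v$ is primitive in a single modular Jordan component with $\ord Q(v)$ equal to that component's scale (cf. \cite[\S 92]{omeara_quadratic_1963}). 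The parity of $\ord Q(v)$ is therefore pinned to the parity of the relevant Jordan scale, which is what controls the whole computation.

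For the lower bound, note that each lattice in question contains a rank-$2$ modular Jordan component: $\mathbf{H}$ in the cases $N_{1}^{3}(c)$ and $H_{1,h}^{3}(c)$, the component $\langle 1,-\Delta\rangle$ in $N_{2}^{3}(\delta\pi)=\langle 1,-\Delta,\Delta\delta\pi\rangle$, and $\langle \pi,-\Delta\pi\rangle$ in $N_{2}^{3}(\delta)=\langle \pi,-\Delta\pi,\Delta\delta\rangle$. The first two represent every unit, and the third is $\pi\langle 1,-\Delta\rangle$, whose symmetries pair up to even valuation; in every case the rotations they generate give $\mathcal{O}_{F}^{\times}F^{\times 2}\subseteq\theta(O^{+}(M))$. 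Since any ternary ADC lattice is of one of these shapes (Theorem \ref{thm:nondyadic-ADC-ternary}, Remark \ref{re:nondyadicACD-3}), this already yields (iii)(a). To decide whether $\theta(O^{+}(M))=F^{\times}$, I would look for a symmetry of odd spinor norm: such a $\tau_{v}$ exists exactly when $M$ has a Jordan component of odd scale, and its product with any even-valuation symmetry (always available from the rank-$2$ component or a unit vector) is a rotation of odd-valuation spinor norm, forcing $\theta(O^{+}(M))=F^{\times}$. Running through the list, $N_{1}^{3}(\delta)$ ($\delta\in\mathcal{U}$) has scales $\{0\}$ and $H_{1,h}^{3}(\delta)$ ($\delta\in\mathcal{U}$) has scales $\{0,2h\}$, both purely even, giving $\mathcal{O}_{F}^{\times}F^{\times 2}$; while $N_{1}^{3}(\delta\pi)$, $N_{2}^{3}(c)$ and $H_{1,h}^{3}(\delta\pi)$ each carry a component of odd scale, giving $F^{\times}$. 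This matches the tables in (i) and (ii).

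The main obstacle is the upper bound in the two even-scale cases, i.e. showing $\theta(O^{+}(M))$ does not exceed $\mathcal{O}_{F}^{\times}F^{\times 2}$. For the unimodular lattice $N_{1}^{3}(\delta)$ this is immediate: a symmetry $\tau_{v}\in O(M)$ forces $\ord Q(v)\le \ord(2)=0$, so $Q(v)$ is a unit, and every rotation then has even-valuation spinor norm. For $H_{1,h}^{3}(\delta)$ I would invoke the explicit non-dyadic spinor-norm computation for Jordan-split lattices (the mechanism of the previous paragraph shows every symmetry contributing to $O^{+}(M)$ has spinor norm of valuation $0$ or $2h$, hence every rotation has even-valuation norm). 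The point requiring care is to rule out that an isometry mixing the two components produces an odd-valuation norm; this is handled by the fact that Eichler transformations supported on the isotropic plane $\mathbf{H}$ have trivial spinor norm and so contribute nothing beyond the symmetry generators.

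Finally, (iii)(b) follows formally from (iii)(a) and the structure of norm groups over a non-dyadic field: $N(F(\sqrt{a})/F)=F^{\times}$ when $a\in F^{\times 2}$, equals the even-valuation subgroup $\mathcal{O}_{F}^{\times}F^{\times 2}$ when $a=\Delta$, and equals the order-$2$ subgroup $\{1,a\}F^{\times 2}$, which excludes $\Delta$, when $a\in\{\pi,\Delta\pi\}$. By (iii)(a) we always have $\mathcal{O}_{F}^{\times}F^{\times 2}\subseteq\theta(O^{+}(M))\subseteq N(F(\sqrt{a})/F)$, so $\Delta\in N(F(\sqrt{a})/F)$, which forces $a\in\{1,\Delta\}$; and if moreover $\theta(O^{+}(M))=F^{\times}$ then $N(F(\sqrt{a})/F)=F^{\times}$, i.e. $a=1$.
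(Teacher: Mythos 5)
Your proposal is correct and takes essentially the same route as the paper: both arguments compute $\theta(O^{+}(M))$ from the explicit Jordan splittings using the standard non-dyadic symmetry/spinor-norm theory of \cite[\S 92]{omeara_quadratic_1963} (lower bound $\mathcal{O}_{F}^{\times}F^{\times 2}$ from a rank-$2$ modular component, $\theta(O^{+}(M))=F^{\times}$ exactly when a Jordan component of odd scale is present), and then deduce (iii) from the classification of ternary ADC lattices in Theorem \ref{thm:nondyadic-ADC-ternary} and Remark \ref{re:nondyadicACD-3} together with the structure of local norm groups; you are merely more explicit than the paper about the upper bounds and about (iii)(b). One harmless slip: for $a\in\{\pi,\Delta\pi\}$ the norm group is $\{1,-a\}F^{\times 2}$ rather than $\{1,a\}F^{\times 2}$ (these coincide only when $-1\in F^{\times 2}$), but the only fact you use is that it excludes $\Delta$, which is true either way.
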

\begin{proof}
	 (i) By Proposition \ref{prop:maximallattices-binary-ternary}(i), $M$ has a Jordan component of rank at least $2$. So $\theta(O^{+}(M))\supseteq \mathcal{O}_{F}^{\times}F^{\times 2}$. Also, $\theta(O^{+}(M))=F^{\times}$ if $M$ has a Jordan component with odd scale, i.e., $(\nu,c)\not=(1,\mathcal{U})$. 
	 
	 (ii) Since $\theta(O^{+}(M))\supseteq \theta(O^{+}(\mathbf{H}))=\mathcal{O}_{F}^{\times}F^{\times 2}$, $\theta(O^{+}(M))=\mathcal{O}_{F}^{\times}F^{\times 2}$ or $F^{\times}$. Similar to (i), the latter case happens if $M$ has a Jordan component with odd scale, i.e., $c\in \pi\mathcal{U}$.
	 
	 (iii) From Remark \ref{re:nondyadicACD-3}, $M$ is either of the form $N_{\nu}^{3}(c)$ with $\nu\in \{1,2\}$ and $c\in \mathcal{V}$, or of the form $H_{1,h}^{3}(c)=\mathbf{H}\perp \langle c\pi^{2h} \rangle$ with $h\ge 1$ and $c\in \mathcal{V}$. This combined with (i) and (ii) shows the assertion (iii).
\end{proof}
\begin{proof}[Proof of Theorem \ref{thm:coM-nondyadic}]
		By Lemma \ref{lem:thetaOM-NE-non-dyadic}(iii)(b), $\co(M)\subseteq  \{1,\Delta\}$, and if moreover $\theta(O^{+}(M))=F^{\times}$, then $\co(M)\subseteq \{1\}$.
		
	    Note from Remark \ref{re:co-1-Delta} that $1\in \co(M)$ if and only if $FM$ is isotropic, i.e., $\nu=1$. Also, if $\theta(O^{+}(M))=\mathcal{O}_{F}^{\times}F^{\times 2}$, then by Lemma \ref{lem:co-nondyadic}(i), $\Delta\in \co(M)$. The proof is completed by a case-by-case check.
\end{proof}
	
\section{ADC lattices over dyadic local fields}\label{sec:dyadic}
In this section, we assume that $F$ is dyadic. Let $M\cong \prec a_{1},\ldots,a_{m}\succ $ be an $\mathcal{O}_{F}$-lattice of rank $m $ relative to some good BONG. Put $R_{i}=R_{i}(M)$ and $\alpha_{i}=\alpha_{i}(M)$. When an $\mathcal{O}_{F}$-lattice $N$ is considered, we also suppose $N=\prec b_{1},\ldots,b_{n}\succ$ relative to some good BONG, and denote by $S_{i}=R_{i}(N)$ and $\beta_{i}=\alpha_{i}(N)$.

Define the binary $\mathcal{O}_{F}$-lattice $ M_{\nu}^{2}(\Delta):=\prec \pi^{\nu-1},-\Delta\pi^{\nu+1-2e}\succ  $ relative to some good BONG. Such lattice indeed exists from Lemma \ref{lem:goodBONGequivcon}. Clearly, $FM_{\nu}^{2}(\Delta)\cong W_{\nu}^{2}(\Delta)$. 

\begin{thm}\label{thm:dyadic-binary}
	If $m=2$, then $ M $ is ADC if and only if either $M$ is $ \mathcal{O}_{F} $-maximal, or
	\begin{align*}
		M&\cong M_{\nu}^{2}(\Delta)\cong  2^{-1}\pi^{\nu} A(2\pi^{-1},2\rho\pi),  
	\end{align*}
	with $\nu\in \{1,2\}$, which is not $\mathcal{O}_{F}$-maximal. 
\end{thm}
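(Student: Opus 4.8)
The plan is to exploit the reduction of ADC-ness to unary representations given by Lemma~\ref{lem:ADC-sufficient-equiv}(ii): $M$ is ADC precisely when it represents every unary $\mathcal{O}_F$-maximal lattice $\prec c'\succ$, $c'\in\mathcal{V}$, for which $FM\cong W_\nu^2(c)$ represents $[c']$. Since both $M$ and each target carry good BONGs, the representation $\prec c'\succ\rep M$ is governed by Beli's criterion (Theorem~\ref{thm:beligeneral-1}) in the case $m=2$, $n=1$, which amounts to the two conditions $R_1\le\ord(c')$ and $d[a_1c']\ge A_1$, expressed through the invariants $R_1,R_2,\alpha_1$ of $M$. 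Thus the whole problem is converted into an inequality analysis in these invariants. For sufficiency the maximal case is immediate from Lemma~\ref{lem:ADC-sufficient-equiv}(i); for $M_\nu^2(\Delta)=\prec\pi^{\nu-1},-\Delta\pi^{\nu+1-2e}\succ$ I would first confirm via Lemma~\ref{lem:goodBONGequivcon} that this is a legitimate good BONG, record $R_2-R_1=2-2e$ and hence $\alpha_1=1$ by Proposition~\ref{prop:Rproperty}(ii), and then check Theorem~\ref{thm:beligeneral-1} against each $c'$ with $[c']$ represented by $W_\nu^2(\Delta)$ (exactly the unit classes when $\nu=1$ and the $\pi$-classes when $\nu=2$, by Lemma~\ref{lem:lattice-rep-binary}(ii),(iii)); the point is that with $\alpha_1=1$ one obtains $A_1=1$ while $d[a_1c']\ge 1$ holds for all these targets.

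For necessity I would first dispose of the isotropic case: by Corollary~\ref{cor:ADC-universal} an ADC lattice on an isotropic binary space is universal and $\cong\mathbf{H}=N_1^2(1)$, which is $\mathcal{O}_F$-maximal. In the anisotropic case Proposition~\ref{prop:nu-isotropic}(ii) together with Proposition~\ref{prop:space} gives $FM\cong W_\nu^2(c)$ with $c\in\mathcal{V}\setminus\{1\}$, and I fix such a space. Writing $M\cong\prec a_1,a_2\succ$ relative to a good BONG, the square class of $a_1a_2$ (hence $d(-a_1a_2)$) and the parity of $R_1+R_2$ are pinned down by $\det FM$, so after normalizing by integrality the lattice is controlled entirely by $(R_1,R_2)$, and $\alpha_1$ is then determined via Proposition~\ref{prop:alphaproperty}(ii).

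The heart of the argument is the case analysis. Feeding the representation requirements of Lemma~\ref{lem:lattice-rep-binary}(ii)--(vi) into Theorem~\ref{thm:beligeneral-1} forces $R_1$ to equal the smallest order occurring among classes $[c']$ represented by $W_\nu^2(c)$, and then constrains $R_2$. For $c\in\mathcal{U}\setminus\{1,\Delta\}$ and $c\in\pi\mathcal{U}$ the resulting inequalities admit only the minimal admissible $R_2$, namely the maximal lattice $N_\nu^2(c)$ of Proposition~\ref{prop:maximallattices-binary-ternary}(ii); any larger $R_2$ raises $A_1\ge 2$ and thereby fails to represent some target $c'$ of small defect $d(a_1c')=1$ that nonetheless satisfies $[c']\rep W_\nu^2(c)$. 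For $c=\Delta$ the same analysis leaves exactly two admissible values: $R_2-R_1=-2e$ (so $\alpha_1=0$), the maximal $N_\nu^2(\Delta)$, and $R_2-R_1=2-2e$ (so $\alpha_1=1$), which is $M_\nu^2(\Delta)$; values $R_2-R_1\ge 4-2e$ again fail on a defect-one target. Finally I would record the isometry $M_\nu^2(\Delta)\cong 2^{-1}\pi^\nu A(2\pi^{-1},2\rho\pi)$ by a direct comparison of Gram matrices, and confirm non-maximality by noting that $R_1+R_2$ strictly exceeds the value of $N_\nu^2(\Delta)$ in Proposition~\ref{prop:maximallattices-binary-ternary}(ii).

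The main obstacle is precisely the threshold at $c=\Delta$: one must show that ADC-ness tolerates $\alpha_1=1$ but not $\alpha_1\ge 2$, which hinges on the maximal defect $d(\Delta)=2e$ as opposed to the odd, strictly smaller defects of the other unit classes. Concretely, the delicate steps are (a) verifying that for every $R_2-R_1\ge 4-2e$ there genuinely exists a target $c'\in\mathcal{V}$ with $[c']\rep W_\nu^2(c)$ and $d(a_1c')=1$, so that the required representation really fails, and (b) tracking the exact value of $A_1=\min\{(R_2-\ord(c'))/2+e,\,R_2-\ord(c')+d(-a_1a_2)\}$ against $d[a_1c']=\min\{d(a_1c'),\alpha_1\}$ through Propositions~\ref{prop:Rproperty} and~\ref{prop:alphaproperty}. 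The unramified choice of $\Delta$, which puts $d=2e$ on the relevant class, is exactly what lets $\alpha_1=1$ survive for $c=\Delta$, whereas for the other anisotropic classes the maximal lattice already sits at $\alpha_1=1$, leaving no room for an extra ADC lattice.
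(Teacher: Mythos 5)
Your scaffolding is the same as the paper's: reduce ADC-ness to representing unary maximal lattices via Lemma~\ref{lem:ADC-sufficient-equiv}(ii), run Beli's criterion (Theorem~\ref{thm:beligeneral-1}) in the invariants $R_1,R_2,\alpha_1$, and split according to $FM\cong W_\nu^2(c)$; your sufficiency check for $M_\nu^2(\Delta)$ (good BONG, $R_2-R_1=2-2e$, $\alpha_1=1$, $A_1=1\le d[a_1b_1]$) is exactly the paper's Lemma~\ref{lem:Mnu-ADC}, and your treatment of $c=\Delta$ works because $W_1^2(\Delta)$ represents \emph{every} unit class, so a target $\prec\delta a_1\succ$ with $d(\delta)=1$ is always available (this is the paper's Lemma~\ref{lem:dyadic-adc-R}(i)).

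However, your ``delicate step (a)'' --- that for $c\in\mathcal{U}\setminus\{1,\Delta\}$ (and $c\in\pi\mathcal{U}$), every $R_2-R_1\ge 4-2e$ is killed by a represented target $c'$ with $d(a_1c')=1$ --- is false, so the necessity argument for these cases has a genuine gap. Take $F=\mathbb{Q}_2$ and $c=3$, so $d(c)=1=2e-1$ and $\Delta=5$. The space $W_1^2(3)=[1,-3]$ represents exactly the unit classes $u$ with Hilbert symbol $(3,u)=1$, namely $\{1,5\}$, whose defects are $\infty$ and $2$; both defect-one classes $3,7$ pair to $-1$ with $3$. Symmetrically $W_2^2(3)$ represents the unit classes $\{3,7\}$, but then $a_1\in\{3,7\}$ and $d(a_1c')\in\{d(1),d(5)\}=\{\infty,2\}$ for every represented unit target. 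So in neither space does a defect-one target exist, and your mechanism yields no contradiction for large $R_2$. What actually forces $R_2$ down --- and what the paper uses (Lemma~\ref{lem:dyadic-adc-R}(ii), fed by Lemma~\ref{lem:lattice-rep-binary}(v),(vi)) --- is the \emph{odd-order} target $\prec\varepsilon_2\pi\succ$: since $\ord(a_1b_1)$ is then odd, $d[a_1b_1]=0$ automatically, so Theorem~\ref{thm:beligeneral-1}(ii) forces $A_1\le 0$, i.e.\ $R_2\le 1-2e$ or $R_2\le 1-d(-a_1a_2)$; ruling out $R_2=-2e$ (Proposition~\ref{prop:alphaproperty}(v) would give $d(\delta)\ge 2e$, contradiction) and invoking the parity of $R_2$ pins $R_2=1-d(\delta)$ (resp.\ $R_2=1$ for $c\in\pi\mathcal{U}$), i.e.\ the maximal lattice. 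Replacing your defect-one mechanism by this parity mechanism repairs the proof; the remainder of your outline, including the identification of the lattice from $(FM,R_1,R_2)$ via Beli's isometry classification, is sound.
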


 We keep the notation $H_{1,h}^{3}(c):=\mathbf{H}\perp \langle c\pi^{2h} \rangle$ with $c\in \mathcal{V} $ and  $h\ge 0$ as the previous section. Similarly, from Proposition \ref{prop:maximallattices-binary-ternary}(ii), $H_{1,h}^{3}(c)$ is $\mathcal{O}_{F}$-maximal if and only if $h=0$, and $FH_{1,h}^{3}(c)\cong W_{1}^{3}(c)$ is isotropic.
\begin{thm}\label{thm:dyadic-ternary}
 	If $m=3$, then $ M $ is ADC if and only if the following conditions hold:
    \begin{enumerate}[itemindent=-0.5em,label=\rm (\roman*)]
 	
 	\item If $FM $ is isotropic, then one of the following cases happens:
 	
 	\begin{enumerate}[itemindent=-0.5em,label=\rm (\roman*)]
 		
 		\item[(a)] $M$ is $\mathcal{O}_{F}$-maximal;
 		
 		\item[(b)]		$
 		M\cong  H_{1,h}^{3}(c)$ with $h\ge 1$ and $c\in \mathcal{V}$;
 		
 	   \item[(c)]  $
 	   M\cong N_{\nu}^{2}(\delta)\perp \langle \varepsilon \pi^{k}\rangle$,
 	   with $\nu\in\{1,2\}$,  $ \delta\in \mathcal{U}\backslash \{1,\Delta\}$, $ \varepsilon\in \mathcal{U} $ and $ k\in \{0,1\} $.
 \end{enumerate}
 	\item  If $FM $ is anisotropic, then one of the following cases happens:
 	\begin{enumerate}[itemindent=-0.5em,label=\rm (\roman*)]
 			
 	\item[(a)] $M$ is $\mathcal{O}_{F}$-maximal; 
 	
 	 \item[(b)] $
 	M\cong N_{\nu}^{2}(\delta\pi)\perp \langle \varepsilon \pi^{k}\rangle$,
 	with $\nu\in\{1,2\}$,  $ \delta\in \mathcal{U} $, $ \varepsilon\in \mathcal{U} $ and $ k\in \{1,2\} $;
 	
 	 \item[(c)] $
 	M\cong N_{\nu}^{2}(\delta)\perp \langle \varepsilon \pi^{k}\rangle$,
 	with $\nu\in\{1,2\}$,  $ \delta\in \mathcal{U}\backslash \{1,\Delta\}$, $ \varepsilon\in \mathcal{U} $ and $ k\in \{0,1\} $. Also, $M$ is $\mathcal{O}_{F}$-maximal if and only if $M\cong N_{2}^{3}(\varepsilon)$ with $\varepsilon\in \mathcal{U}$.
 	 \end{enumerate}
 \end{enumerate}
 \end{thm}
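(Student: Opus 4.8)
The plan is to prove both implications, splitting according to whether $FM$ is isotropic or anisotropic, which by Proposition \ref{prop:nu-isotropic} corresponds to $FM\cong W_{1}^{3}(c)$ or $FM\cong W_{2}^{3}(c)$. Part (i), the isotropic case, I would dispatch at once: by Corollary \ref{cor:ADC-universal} a ternary ADC lattice with $FM$ isotropic is exactly a universal lattice, and conversely a universal lattice is ADC with isotropic space. Thus part (i) is equivalent to the classification of universal ternary $\mathcal{O}_{F}$-lattices on isotropic spaces, and it remains only to check that this classification --- available from \cite[Theorem 2.1]{beli_universal_2020} --- reproduces precisely the normal forms (a) $N_{1}^{3}(c)$, (b) $H_{1,h}^{3}(c)$ with $h\ge 1$, and the isotropic members of (c) $N_{\nu}^{2}(\delta)\perp\langle\varepsilon\pi^{k}\rangle$. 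This mirrors the role of \cite[Proposition 2.3(i)]{xu_indefinite_2020} in the non-dyadic Theorem \ref{thm:nondyadic-ADC-ternary}.

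The substance lies in part (ii), where $FM\cong W_{2}^{3}(c)$ is anisotropic and $M$ is ADC but not universal. For necessity I would first record, from Lemma \ref{lem:lattice-rep-binary}(viii), that $M$ represents $\langle c'\rangle$ for every $c'\in\mathcal{V}$ with $c'\neq c$; in particular $M$ represents some unit, so $\mathfrak{n}(M)=\mathcal{O}_{F}$ and $R_{1}=\ord(a_{1})=0$. Writing $M\cong\prec a_{1},a_{2},a_{3}\succ$ relative to a good BONG, I would then translate each representation $\langle c'\rangle\rep M$ into the criterion of Theorem \ref{thm:beligeneral-1} with $n=1$ and $S_{1}=\ord(c')$: condition (i) is automatic from $R_{1}=0$, so the content is carried by condition (ii), $d[a_{1}c']\ge A_{1}$, and condition (iii), which forces $[c']\rep[a_{1},a_{2}]$ in a range of orders. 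Running these over all admissible $c'$ pins down the possible invariants $R_{2},R_{3}$ and $\alpha_{1},\alpha_{2}$ through the defect identities of Propositions \ref{prop:Rproperty} and \ref{prop:alphaproperty}; comparing the resulting $R$- and $\alpha$-data with the $R$-invariants of the binary maximal lattices $N_{\nu}^{2}(\delta)$ and $N_{\nu}^{2}(\delta\pi)$ read off from Proposition \ref{prop:maximallattices-binary-ternary}(ii) then identifies $M$ with one of the normal forms (ii)(a)--(c).

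For sufficiency in part (ii) I would verify each listed form directly. The maximal case (a) is immediate from Lemma \ref{lem:ADC-sufficient-equiv}(i). For (b) and (c) I would apply Lemma \ref{lem:ADC-sufficient-equiv}(ii): it suffices to show that $M\cong N_{\nu}^{2}(\delta\pi)\perp\langle\varepsilon\pi^{k}\rangle$ (resp. $N_{\nu}^{2}(\delta)\perp\langle\varepsilon\pi^{k}\rangle$) represents every unary maximal lattice whose space is represented by $FM\cong W_{2}^{3}(c)$, that is, every $\langle c'\rangle$ with $c'\neq c$. Since the binary summand is itself maximal, hence binary ADC by Theorem \ref{thm:dyadic-binary}, one checks via Theorem \ref{thm:beligeneral-1} that the unary complement supplies exactly the representations missed by the binary part, so that the sum represents all required $c'$. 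Finally, the supplementary assertion in (ii)(c) --- that $M$ is $\mathcal{O}_{F}$-maximal exactly when $M\cong N_{2}^{3}(\varepsilon)$ --- is a direct comparison of the normal form $N_{\nu}^{2}(\delta)\perp\langle\varepsilon\pi^{k}\rangle$ with the maximal ternary lattices tabulated in Proposition \ref{prop:maximallattices-binary-ternary}(ii), using the identity $d(\delta^{\#})=2e-d(\delta)$ from \eqref{csharp}.

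The main obstacle is the case analysis in the necessity of part (ii). Unlike the non-dyadic proof of Theorem \ref{thm:nondyadic-ADC-ternary}, where Jordan ranks are rigid and Lemma \ref{lem:non-dyadic-adc-rank} suffices, here the quadratic defect makes both the admissible BONG invariants and the truth of condition (iii) of Theorem \ref{thm:beligeneral-1} depend on the fine arithmetic of $d(-a_{i}a_{i+1})$, $d(a_{1}c')$ and the parameter $e$. Keeping track of which square classes $c'$ remain represented as $R_{2}$, $R_{3}$ and the $\alpha_{i}$ vary --- and in particular extracting the sharp ranges $k\in\{0,1\}$ versus $k\in\{1,2\}$ together with the constraint $\delta\in\mathcal{U}\backslash\{1,\Delta\}$ versus $\delta\in\mathcal{U}$ --- is where the genuine bookkeeping lies.
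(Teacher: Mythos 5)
Your route coincides with the paper's own: part (i) is handled exactly as you propose (Proposition \ref{prop:nu-isotropic}(i) and Corollary \ref{cor:ADC-universal} reduce the isotropic case to Beli's classification of universal lattices, which is the content of Lemma \ref{lem:W13k}), and in part (ii) the paper likewise starts from Lemma \ref{lem:lattice-rep-binary}(viii), feeds the represented unary lattices into Theorem \ref{thm:beligeneral-1}, and pins down $R_{1}=0$, $R_{2}\in[-2e,0]^{E}\cup\{1\}$, $\alpha_{1}\in\{0,1\}$ and $R_{3}$ (Proposition \ref{prop:W23k-R-alpha}, Lemmas \ref{lem:W23k-alpha0}, \ref{lem:W23k-alpha1}), with sufficiency checked through conditions (i)--(iii) of Theorem \ref{thm:beligeneral-1}. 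The genuine gap is your last step, where ``comparing the resulting $R$- and $\alpha$-data \dots identifies $M$ with one of the normal forms.'' Over a dyadic field, an isometry of ambient spaces together with matching $R_{i}$- and $\alpha_{i}$-invariants does \emph{not} determine the isometry class of a lattice: Beli's classification theorem \cite[Theorem 3.1]{beli_Anew_2010} additionally requires the defect conditions $d(a_{1,i}b_{1,i})\ge\alpha_{i}$ and, when $\alpha_{i-1}+\alpha_{i}>2e$, further representation conditions. The paper closes precisely this hole with Lemma \ref{lem:ADC-isometry-odd-R2}, which proves that for ternary ADC lattices the pair $(FM,R_{2}(M))$ already determines $M$ up to isometry, by a nontrivial verification of all four conditions of \cite[Theorem 3.1]{beli_Anew_2010}; it then still needs Lemmas \ref{lem:case-iii} and \ref{lem:alpha1-ternary} to exhibit, for each admissible invariant set, an explicit lattice of the shape $N_{\nu'}^{2}(\omega)\perp\langle\omega c\rangle$ (or $N_{\nu}^{3}(c)$, $H_{1,h}^{3}(c)$) realizing it, so that uniqueness can bite. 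Without these two ingredients your argument stalls at the level of BONG invariants and never reaches the structural normal forms in the statement.

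A smaller inaccuracy: in your sufficiency sketch for (ii)(b),(c), the classes missed by the binary summand are not ``supplied by the unary complement.'' A unary lattice $\langle\varepsilon\pi^{k}\rangle$ represents only a single square class (up to squares of integers), so most of the classes $c'\ne c$ not represented by $N_{\nu}^{2}(\delta)$ are represented only by vectors mixing both summands. The actual verification, as in the paper's proof of sufficiency of Lemma \ref{lem:W23k-d}, is to check conditions (i)--(iii) of Theorem \ref{thm:beligeneral-1} for the full ternary lattice against every $\prec c'\succ$ with $c'\ne c$, and the delicate subcase where condition (iii) is triggered requires a separate argument (Lemma \ref{lem:k=0}) showing that $[a_{1},a_{2}]$ does represent $[c']$ when the defect $d(-a_{1,3}c')$ is large.
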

\begin{re}\label{re:dyadicACD-3}
		\begin{enumerate}[itemindent=-0.5em,label=\rm (\roman*)]		
		\item  From Proposition \ref{prop:maximallattices-binary-ternary}(ii), $M$ is not $\mathcal{O}_{F}$-maximal in the cases (i)(b), and (i)(c) and (ii)(b) of Theorem \ref{thm:dyadic-ternary}.
			
		\item For $\nu\in \{1,2\}$, from Proposition \ref{prop:maximallattices-binary-ternary}(ii) and \cite[Corollary 3.4(ii)]{beli_integral_2003}, we have 
		\begin{align*}
		N_{\nu}^{2}(\delta\pi)\cong \prec \Delta^{\nu-1},-\Delta^{\nu-1}\delta\pi \succ\cong \langle \Delta^{\nu-1},-\Delta^{\nu-1}\delta\pi \rangle,
	\end{align*}
	with $\delta\in \mathcal{U}$. From Proposition \ref{prop:maximallattices-binary-ternary}(ii) and \cite[Remark 3.8 and Lemma 3.9]{HeHu2}, we also see that
	 	\begin{align*}
		 N_{\nu}^{2}(\delta)\cong \prec  (\delta^{\#})^{\nu-1} ,- (\delta^{\#})^{\nu-1}\delta\pi^{1-d(\delta)}\succ\cong  (\delta^{\#})^{\nu-1}\pi^{-l}A(\pi^{l},-(\delta-1)\pi^{-l}),
 	\end{align*}
	with $\delta\in \mathcal{U}\backslash \{1,\Delta\}$ and $2l=d(\delta)-1\le 2e-2$; and
	\begin{align*}
		N_{2}^{3}(\varepsilon)\cong \prec \varepsilon \kappa^{\#},-\varepsilon\kappa^{\#}\kappa \pi^{2-2e},\varepsilon\kappa  \succ\cong 2^{-1}\pi A(2,2\rho)\perp \langle \Delta \varepsilon\rangle,
	\end{align*}
	with $\varepsilon\in \mathcal{U}$.
	
	\item Combining Theorem \ref{thm:dyadic-ternary} with Definition \ref{defn:alpha1-ternary} and Lemma \ref{lem:alpha1-ternary} below, we see that if $M$ is ternary ADC, then $M$ is either of the form $M_{\nu,r}^{3}(c)$, with  $\nu\in \{1,2\}$, $r\in \{-1/2,0,\ldots,e\}$ and $c\in \mathcal{V}$ satisfying $(\nu,r,c)\notin \{(2,e,\mathcal{U}),(1,-1/2,\mathcal{V})\}$, or of the form $H_{1,h}^{3}(c)$, with $h\ge 1$ and $c\in \mathcal{V}$.
\end{enumerate}
\end{re}
 \begin{thm}\label{thm:primitive-dyadic}
	Let $m\in \{2,3\}$.
	\begin{enumerate}[itemindent=-0.5em,label=\rm (\roman*)]
		\item  If $m=2$, then $M$ is non-primitive  ADC if and only if $FM\cong [\pi,-\Delta\pi]$, $\mathfrak{n}(M)=\mathfrak{p}$ and $\mathfrak{v}(M)=\mathfrak{p}^{2-2e}$ or $\mathfrak{p}^{4-2e}$.
		
		\item  If $m=3$, then $M$ is primitive ADC if and only if $M$ is ADC.
	\end{enumerate}
\end{thm}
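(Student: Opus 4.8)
The plan is to read both parts off the explicit classifications already in hand, namely Theorem~\ref{thm:dyadic-binary} in the binary case and Theorem~\ref{thm:dyadic-ternary} in the ternary case, together with the maximal-lattice table of Proposition~\ref{prop:maximallattices-binary-ternary}(ii), and to translate ``primitive'' into the BONG condition $R_1(M)=0$: for $M\cong\prec a_1,\dots,a_m\succ$ one has $\mathfrak{n}(M)=a_1\mathcal{O}_{F}$, so $\mathfrak{n}(M)=\mathcal{O}_{F}$ iff $R_1=0$. Part~(ii) is then the easy half. Running through the lattices listed in Theorem~\ref{thm:dyadic-ternary}, each has norm $\mathcal{O}_{F}$: those containing an orthogonal copy of $\mathbf{H}$ do because $\mathfrak{n}(\mathbf{H})=\mathcal{O}_{F}$ and $\mathfrak{n}(L_1\perp L_2)=\mathfrak{n}(L_1)+\mathfrak{n}(L_2)$; the remaining maximal forms $N_2^3(c)$ and the sums $N_\nu^2(\delta)\perp\langle\varepsilon\pi^{k}\rangle$, $N_\nu^2(\delta\pi)\perp\langle\varepsilon\pi^{k}\rangle$ do because the leading good-BONG entry of the relevant summand, namely $\delta\kappa^{\#}$, $(\delta^{\#})^{\nu-1}$ or $\Delta^{\nu-1}$, is a unit. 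Hence every ternary ADC lattice is automatically primitive, which is exactly~(ii).

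For the necessity in part~(i), if $M$ is binary and ADC, Theorem~\ref{thm:dyadic-binary} says $M$ is $\mathcal{O}_{F}$-maximal or $M\cong M_\nu^2(\Delta)$. Comparing leading BONG entries in Proposition~\ref{prop:maximallattices-binary-ternary}(ii) with the definition $M_\nu^2(\Delta)=\prec\pi^{\nu-1},-\Delta\pi^{\nu+1-2e}\succ$, exactly two of these lattices have $R_1=1$ rather than $R_1=0$, namely $N_2^2(\Delta)=\prec\pi,-\Delta\pi^{1-2e}\succ$ and $M_2^2(\Delta)=\prec\pi,-\Delta\pi^{3-2e}\succ$; all others are primitive. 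For these two I read off the invariants directly: $FM\cong W_2^2(\Delta)\cong[\pi,-\Delta\pi]$, $\mathfrak{n}(M)=\mathfrak{p}$ (from $R_1=1$), and $\ord\mathfrak{v}(M)=R_1+R_2$ (since $dM\doteq a_1a_2$ in the BONG formalism), giving $\mathfrak{v}(M)=\mathfrak{p}^{2-2e}$ for $N_2^2(\Delta)$ and $\mathfrak{v}(M)=\mathfrak{p}^{4-2e}$ for $M_2^2(\Delta)$. This yields the stated necessary conditions.

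For the sufficiency in part~(i), assume $M$ is binary with $FM\cong[\pi,-\Delta\pi]$, $\mathfrak{n}(M)=\mathfrak{p}$ and $\mathfrak{v}(M)\in\{\mathfrak{p}^{2-2e},\mathfrak{p}^{4-2e}\}$. Then $M$ is non-primitive because $\mathfrak{n}(M)=\mathfrak{p}\subsetneq\mathcal{O}_{F}$, so it remains to prove $M$ is ADC, and for this I will pin down $M$ up to isometry. Writing $M\cong\prec a_1,a_2\succ$ in a good BONG, the invariants force $R_1=1$ and $R_1+R_2=\ord\mathfrak{v}(M)$, so $R_2\in\{1-2e,3-2e\}$, while the space forces $-a_1a_2\equiv\Delta$ modulo squares, hence $d(-a_1a_2)=d(\Delta)=2e$. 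Proposition~\ref{prop:alphaproperty}(ii),(iii) then gives $\alpha_1=0$ when $R_2=1-2e$ and $\alpha_1=1$ when $R_2=3-2e$. The key point is that in Beli's classification of lattices by good BONGs (cf.\ \cite{beli_integral_2003}), once the space and $R_1,R_2$ are fixed the only residual freedom is the class of $a_1$ subject to $d(a_1b_1)\ge\alpha_1$; but $a_1$ is a uniformizer times a unit, and in a dyadic field every unit has quadratic defect $\ge1\ge\alpha_1$, so this condition holds automatically. Thus $M$ is determined up to isometry by $(FM,R_1,R_2)$, forcing $M\cong N_2^2(\Delta)$ or $M\cong M_2^2(\Delta)$ according to the volume; both are ADC by Theorem~\ref{thm:dyadic-binary}. (For the volume $\mathfrak{p}^{2-2e}$ one may instead argue by maximality: this is the volume of the unique $\mathcal{O}_{F}$-maximal lattice on $[\pi,-\Delta\pi]$, and since $\ord\mathfrak{v}$ is strictly increased by a nontrivial index in a maximal overlattice, $M$ must itself be maximal, hence ADC by Lemma~\ref{lem:ADC-sufficient-equiv}(i).)

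The main obstacle is exactly the uniqueness claim in the previous paragraph: a priori the triple $(FM,\mathfrak{n},\mathfrak{v})$ is coarser than the full BONG data, so different leading entries $a_1$ could in principle produce non-isometric lattices with identical invariants. The decisive observation that rescues uniqueness is the dyadic phenomenon that every unit has positive quadratic defect, which collapses the classification condition $d(a_1b_1)\ge\alpha_1$ (with $\alpha_1\le1$) to a triviality; verifying that $\alpha_1\le1$ in both cases, via $d(-a_1a_2)=2e$ and Proposition~\ref{prop:alphaproperty}, is the technical heart of the argument, while everything else is bookkeeping through the lists of Theorems~\ref{thm:dyadic-binary} and~\ref{thm:dyadic-ternary}.
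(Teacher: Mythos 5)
Your proposal is correct and follows essentially the same route as the paper: necessity in (i) by scanning the classification of Theorem \ref{thm:dyadic-binary} against the BONG data (only $N_{2}^{2}(\Delta)$ and $M_{2}^{2}(\Delta)$ have $R_{1}=1$), sufficiency in (i) by recovering $R_{1},R_{2},\alpha_{1}$ from $(FM,\mathfrak{n},\mathfrak{v})$ and invoking Beli's isometry criterion, where the decisive point — $\ord(a_{1}b_{1})$ even forces $d(a_{1}b_{1})\ge 1\ge \alpha_{1}$, so the matching condition is automatic — is exactly the content of the paper's Lemma \ref{lem:binary-exceptions} and of \cite[Lemma 4.11(ii)]{He25}, which you reprove inline rather than cite. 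Part (ii) you verify lattice-by-lattice from Theorem \ref{thm:dyadic-ternary}, whereas the paper reads $R_{1}=0$ directly off Lemmas \ref{lem:W13k} and \ref{lem:W23k}; this is a cosmetic, not substantive, difference.
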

\begin{thm}\label{thm:coM-dyadic}
 	Let $\nu \in \{1,2\}$, $h\ge 1$, $r\in \{-1/2,0,\ldots,e\}$ and $c \in \mathcal{V}$. Suppose that $M$ is ternary ADC.
 	\begin{enumerate}[itemindent=-0.5em,label=\rm (\roman*)]
 		\item  When $M=M_{\nu,r}^{3}(c)$, we have
 		\begin{align*}
 			\co(M)=
 			\begin{cases}
 				\{1\}  &\text{if $(\nu,c)=(1,\mathcal{V})$ and $0\le r\le e$},\\
 				\emptyset  &\text{if $\nu=2$},
 			\end{cases}
 		\end{align*}
 		\item  When $M=H_{1,h}^{3}(c)$, we have
 		\begin{align*}
 			\co(M)=\begin{cases}
 				 \{1,\Delta\} &\text{if $c\in \mathcal{U}$}, \\
 				\{1\}  &\text{if $c\in \pi\mathcal{U}$}.
 			\end{cases}
 		\end{align*} 
 	\end{enumerate}
 \end{thm}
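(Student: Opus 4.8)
The plan is to compute $\co(M)$ by first cutting it down to the two candidates $\{1,\Delta\}$ and then deciding each membership separately, using Remark \ref{re:co-1-Delta} for $1$ and Lemma \ref{lem:co-dyadic} for $\Delta$.

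I would first prove the general containment $\co(M)\subseteq\{1,\Delta\}$, valid for every ternary ADC $M$. If $a\in\co(M)$ then some unary $N\subseteq M$ satisfies $\theta(X(M/N))=N(F(\sqrt{a})/F)$, and since $O^{+}(M)\subseteq X(M/N)$ this forces $\theta(O^{+}(M))\subseteq N(F(\sqrt{a})/F)$. By Theorem \ref{thm:thetaO+M}, $\theta(O^{+}(M))\supseteq\mathcal{O}_{F}^{\times}F^{\times 2}$, so $N(F(\sqrt{a})/F)\supseteq\mathcal{O}_{F}^{\times}F^{\times 2}$. As $\mathcal{O}_{F}^{\times}F^{\times 2}$ is precisely the norm group of the unramified quadratic extension, the only square classes whose norm group contains it are $a=1$ (norm group $F^{\times}$) and $a=\Delta$ (norm group $\mathcal{O}_{F}^{\times}F^{\times 2}$); any ramified $a\in\pi\mathcal{U}$ yields a different index-two subgroup and is excluded. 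Membership of $1$ is then immediate from Remark \ref{re:co-1-Delta}(i): since $FM_{\nu,r}^{3}(c)\cong W_{\nu}^{3}(c)$ is isotropic exactly when $\nu=1$, and $FH_{1,h}^{3}(c)\cong W_{1}^{3}(c)$ is always isotropic, we get $1\in\co(M)$ precisely when $\nu=1$ or $M=H_{1,h}^{3}(c)$.

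It remains to decide $\Delta\in\co(M)$. Membership forces $\theta(O^{+}(M))=\mathcal{O}_{F}^{\times}F^{\times 2}$ (otherwise $\theta(O^{+}(M))=F^{\times}\not\subseteq N(F(\sqrt{\Delta})/F)$), which I read off from Lemmas \ref{lem:R-theta-M-nur} and \ref{lem:R-theta-H-1h}; in particular $\Delta\notin\co$ whenever $\theta(O^{+}(M))=F^{\times}$, e.g.\ for $H_{1,h}^{3}(c)$ with $c\in\pi\mathcal{U}$. When $\theta(O^{+}(M))=\mathcal{O}_{F}^{\times}F^{\times 2}$ I apply Lemma \ref{lem:co-dyadic} to a good BONG $M\cong\prec a_{1},a_{2},a_{3}\succ$, whose invariants $R_{i}$ and entries $a_{i}$ are furnished by Definition \ref{defn:alpha1-ternary} and Lemma \ref{lem:alpha1-ternary}; using $d(\Delta)=2e$, part (i) ($R_{1}<R_{3}$) reduces to $d(-a_{1}a_{2}\Delta)+d(-a_{2}a_{3}\Delta)>2e$, and part (ii) ($R_{1}=R_{3}$) to $R_{2}-R_{1}=2e$. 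For $H_{1,h}^{3}(c)$ with $c\in\mathcal{U}$ one computes $\mathbf{H}\cong\prec 1,-1/4\succ$, whence the good BONG $(R_{1},R_{2},R_{3})=(0,-2e,2h)$ with $a_{1}=1$, $a_{2}=-1/4$, $a_{3}=c\pi^{2h}$ and $h\ge 1$, so $R_{1}<R_{3}$; the two defects are $d(-a_{1}a_{2}\Delta)=d(\Delta)=2e$ and $d(-a_{2}a_{3}\Delta)=d(c\Delta)\ge 1$ (a unit has positive defect order in residue characteristic $2$), so their sum exceeds $2e$, giving $\Delta\in\co$ and hence $\co=\{1,\Delta\}$. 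For the family $M_{\nu,r}^{3}(c)$ I expect the criterion to fail uniformly: in the balanced cases ($R_{1}=R_{3}$, e.g.\ the maximal lattices and the $k=0$ pieces $N_{\nu}^{2}(\delta)\perp\langle\varepsilon\rangle$) one finds $R_{2}-R_{1}\in\{-2e,\,1-d(\delta),\,2-2e\}\ne 2e$, while in the unbalanced cases ($R_{1}<R_{3}$) the cross term $-a_{2}a_{3}\Delta$ has odd order, hence defect $0$, and $d(-a_{1}a_{2}\Delta)\le 2e-1<2e$ using $d(\delta)\le 2e-1$ for $\delta\in\mathcal{U}\setminus\{1,\Delta\}$, so the sum stays $\le 2e$. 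Combined with the $1$-analysis this yields $\{1\}$ for $\nu=1$ and $\emptyset$ for $\nu=2$.

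The hard part is exactly this last uniform verification over $M_{\nu,r}^{3}(c)$. The difficulty is that one needs the precise good-BONG entries $a_{1},a_{2},a_{3}$, not merely their orders: when two order slots coincide ($R_{2}=R_{3}$, as for the anisotropic pieces $N_{\nu}^{2}(\delta\pi)\perp\langle\varepsilon\pi\rangle$), Beli's orthogonal-sum rule can mix the naive entries, and a careless concatenation would even render $-a_{2}a_{3}\Delta$ a square in some sub-cases and wrongly suggest $\Delta\in\co$. The correct entries from Lemma \ref{lem:alpha1-ternary}, together with the anisotropy constraint relating $c$ to $\delta,\varepsilon$ and the determination of $\theta(O^{+}(M))$ in Lemma \ref{lem:R-theta-M-nur}, are precisely what close these coincident-order cases and force $\co=\emptyset$ throughout the anisotropic range.
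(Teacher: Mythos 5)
Your skeleton is the same as the paper's: cut $\co(M)$ down to $\{1,\Delta\}$ via $\theta(O^{+}(M))\supseteq\mathcal{O}_{F}^{\times}F^{\times 2}$, decide membership of $1$ by isotropy (Remark \ref{re:co-1-Delta}(i) with Proposition \ref{prop:nu-isotropic}), and decide membership of $\Delta$ by first noting it forces $\theta(O^{+}(M))=\mathcal{O}_{F}^{\times}F^{\times 2}$, then applying Lemma \ref{lem:co-dyadic} to the two families that survive this filter (Lemmas \ref{lem:R-theta-M-nur}(i) and \ref{lem:R-theta-H-1h}): for $M_{1,e}^{3}(c)$ with $c\in\mathcal{U}$ the balanced case gives $R_{2}-R_{1}=-2e\ne 2e$, hence $\Delta\notin\co(M)$, and for $H_{1,h}^{3}(c)$ with $c\in\mathcal{U}$ the defect sum $2d(\Delta)+d(c\Delta)\ge 4e+1>4e$ gives $\Delta\in\co(M)$. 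These two computations agree with the paper's proof.

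The genuine error is in what you call the hard part, the ``uniform verification'' over $M_{\nu,r}^{3}(c)$: the claim that the numerical criterion of Lemma \ref{lem:co-dyadic}(i) fails throughout the anisotropic range is simply false, and no choice of correct good-BONG entries rescues it. Take $M=M_{2,e}^{3}(\delta\pi)=N_{2}^{3}(\delta\pi)\cong\prec 1,-\Delta\pi^{-2e},\Delta\delta\pi\succ$ (Proposition \ref{prop:maximallattices-binary-ternary}(ii)): here $R_{1}=0<R_{3}=1$, while $-a_{1}a_{2}\Delta\equiv\Delta^{2}\pi^{-2e}$ is a square, so $d(-a_{1}a_{2}\Delta)=\infty$ and the defect sum is infinite, i.e.\ the inequality in Lemma \ref{lem:co-dyadic}(i) \emph{holds}. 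The same happens with the entries of Lemmas \ref{lem:case-iii} and \ref{lem:alpha1-ternary}(iii) for $M_{2,-1/2}^{3}(1)\cong\prec\Delta,-\Delta\omega,\omega\succ$, where $-a_{2}a_{3}\Delta\equiv\Delta^{2}\omega^{2}$ is a square of even order, contradicting both your bound $d(-a_{1}a_{2}\Delta)\le 2e-1$ and your parity claim for the cross term. So these are not artifacts of careless concatenation that correct entries would remove. The reason $\Delta\notin\co(M)$ in all $\nu=2$ cases is not that the numerical criterion fails; it is that Lemma \ref{lem:co-dyadic} is inapplicable there, because its standing hypothesis $\theta(O^{+}(M))\subseteq N(F(\sqrt{\Delta})/F)$ fails: by Lemma \ref{lem:R-theta-M-nur}(ii)--(vi) one has $\theta(O^{+}(M))=F^{\times}$, and this alone excludes $\Delta$ — exactly the filter you state at the outset. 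Once you trust that filter as sufficient (as the paper does), the uniform verification should be deleted rather than fixed, and your argument then coincides with the paper's.
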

 \begin{re}\label{re:r-sM-relation}
    When $M=M_{\nu,r}^{3}(c)$, by Lemma \ref{lem:R2} and Definition \ref{defn:alpha1-ternary}, we have $R_{1}=0$ and $R_{2}=-2r$. Hence, if $r\not=-1/2$, then $-r\le 0$, and so by \cite[Corollary 4.4(iv)]{beli_integral_2003}, we have $\mathfrak{s}(M)=\mathfrak{p}^{-r}$. 
 \end{re}
 
As the previous section, we first handle Theorems \ref{thm:dyadic-binary} and \ref{thm:dyadic-ternary}, and then prove Theorems \ref{thm:primitive-dyadic} and \ref{thm:coM-dyadic}.
\begin{lem}\label{lem:dyadic-adc-R}
	Let $m\in \{2,3\}$.
	\begin{enumerate}[itemindent=-0.5em,label=\rm (\roman*)]
	 \item  If $M$ represents $\prec \delta a_{1}\succ$ for some $\delta\in \mathcal{U}$, with $d(\delta)=1$, then $\alpha_{1}\in \{0,1\}$ and $R_{2}-R_{1}\in [-2e,0]^{E}\cup \{1\}$.
	
 	\item  If $M$ represents $\prec \varepsilon_{1} \succ$ and $\prec \varepsilon_{2} \pi \succ$ for some $\varepsilon_{1},\varepsilon_{2}\in \mathcal{U}$, then $R_{1}=0$ and either $R_{2}=-2e$ or $ R_{2}\le 1-d[-a_{1,2}]$. Therefore, $R_{2}\in [-2e,0]^{E}\cup \{1\} $ and $\alpha_{1}\in \{0,1\}$.
	\end{enumerate}	
\end{lem}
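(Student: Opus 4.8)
The plan is to derive both statements as necessary conditions extracted from Beli's representation criterion (Theorem~\ref{thm:beligeneral-1}) with $n = 1$, reading the resulting inequalities through the description of $\alpha_1$ in Proposition~\ref{prop:alphaproperty}(ii). The governing observation is that the auxiliary quantity $A_1$ appearing in Theorem~\ref{thm:beligeneral-1} is essentially the invariant $\alpha_1$, so that the representation condition $d[a_1b_1] \ge A_1$ becomes a direct bound on $\alpha_1$, from which the constraints on $R_2 - R_1$ follow via Proposition~\ref{prop:alphaproperty}(iii), (iv).

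For (i) I would set $N = \prec \delta a_1 \succ$, so that $b_1 = \delta a_1$ and $S_1 = \ord(b_1) = R_1$. Since $S_1 = R_1$, the formula for $A_1$ collapses to $\min\{(R_2-R_1)/2 + e, R_2 - R_1 + d[-a_{1,2}]\}$, which is exactly $\alpha_1$ by Proposition~\ref{prop:alphaproperty}(ii). On the other side, $a_1 b_1 = \delta a_1^2 \equiv \delta$ modulo squares gives $d(a_1 b_1) = d(\delta) = 1$, hence $d[a_1 b_1] = \min\{1, \alpha_1\}$. Condition~(ii) of Theorem~\ref{thm:beligeneral-1} then reads $\min\{1, \alpha_1\} \ge \alpha_1$, which forces $\alpha_1 \le 1$; combined with $\alpha_1 \ge 0$ and the integrality of $\alpha_1$ in $[0,2e]$ from Proposition~\ref{prop:alphaproperty}(i), this yields $\alpha_1 \in \{0, 1\}$. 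Feeding $\alpha_1 \in \{0,1\}$ back into Proposition~\ref{prop:alphaproperty}(iii), (iv) then places $R_2 - R_1$ in $\{-2e\} \cup \{2-2e, 1\} \cup [4-2e, 0]^E \subseteq [-2e, 0]^E \cup \{1\}$.

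For (ii) I would apply the criterion twice. Representing $\prec \varepsilon_1 \succ$ has $S_1 = 0$, so condition~(i) gives $R_1 \le 0$; since $\mathfrak{n}(M) = a_1\mathcal{O}_F$ and $M$ is integral, $R_1 = \ord(a_1) \ge 0$, whence $R_1 = 0$. Representing $\prec \varepsilon_2 \pi \succ$ has $b_1 = \varepsilon_2\pi$ and $S_1 = 1$; because $R_1 = 0$ the product $a_1 b_1$ has odd order, so $d(a_1 b_1) = 0$ and $d[a_1 b_1] = 0$. Condition~(ii) then forces $A_1 \le 0$, that is $\min\{(R_2-1)/2 + e, R_2 - 1 + d[-a_{1,2}]\} \le 0$, so at least one term is nonpositive: either $R_2 \le 1 - 2e$ or $R_2 \le 1 - d[-a_{1,2}]$. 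In the first alternative the good-BONG inequality \eqref{eq:BONGs} gives $R_2 \ge -2e$, leaving $R_2 \in \{-2e, 1-2e\}$, and since $1-2e$ is odd and negative it is excluded by Proposition~\ref{prop:Rproperty}(iv); hence $R_2 = -2e$. This is the asserted dichotomy, and the ``therefore'' clause follows just as in (i): $R_2 = -2e$ gives $\alpha_1 = 0$, while $R_2 + d[-a_{1,2}] \le 1$ gives $\alpha_1 = \min\{R_2/2 + e, R_2 + d[-a_{1,2}]\} \le 1$, so $\alpha_1 \in \{0,1\}$ and $R_2 \in [-2e, 0]^E \cup \{1\}$.

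I expect the bookkeeping to be routine; the two points that need genuine care are the identification $A_1 = \alpha_1$ in (i), which relies on the equality $S_1 = R_1$ and the exact shape of Proposition~\ref{prop:alphaproperty}(ii), and the parity elimination of the odd value $R_2 = 1-2e$ in (ii), where the lower bound from \eqref{eq:BONGs} and the oddness obstruction of Proposition~\ref{prop:Rproperty}(iv) must be used in tandem. There is no deeper obstacle: once Beli's opaque quantity $A_1$ is recognized as the familiar invariant $\alpha_1$ (respectively, a half-integer shift of it coming from $S_1 = 1$), the representation conditions translate mechanically into the stated bounds.
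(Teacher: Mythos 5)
Your proof is correct and follows essentially the same route as the paper: both parts are extracted from Theorem \ref{thm:beligeneral-1}, identifying $A_{1}$ with $\alpha_{1}$ when $S_{1}=R_{1}$, computing the defects $d[a_{1}b_{1}]$ ($=1$ in (i), $=0$ in (ii) by parity), and then reading off the constraints on $R_{2}-R_{1}$ from \eqref{eq:BONGs} and Propositions \ref{prop:Rproperty}(iv) and \ref{prop:alphaproperty}(i)--(iv). If anything, your derivation of the final clause of (ii) (getting $\alpha_{1}\in\{0,1\}$ directly from the dichotomy via Proposition \ref{prop:alphaproperty}(ii),(iii)) is slightly more explicit than the paper's citation of Propositions \ref{prop:Rproperty}(i) and \ref{prop:alphaproperty}(i).
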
  
\begin{proof}
	Since $M$ is integral, by \cite[Lemma 2.2]{beli_universal_2020}, we have $R_{1}\ge 0$. By \eqref{eq:BONGs}, $R_{2}\ge R_{1}-2e\ge -2e$.
	
	
	 For (i), let $N=\prec\delta a_{1}\succ$ with $\delta\in \mathcal{U}$ and $d(\delta)=1 $. Then $b_{1}=\delta a_{1} $ and thus $S_{1}=R_{1} $. It follows that $d(a_{1}b_{1})=d(\delta)=1$. Since $M$ represents $\prec \delta a_{1}\succ$, by Proposition \ref{prop:alphaproperty}(ii) and Theorem \ref{thm:beligeneral-1}(ii), we see that
	\begin{align*}
		\alpha_{1}=\min\{\dfrac{R_{2}-S_{1}}{2}+e,R_{2}-S_{1}+d[-a_{1,2}]\}=A_{1}\le d[a_{1}b_{1}]\le d(a_{1}b_{1})=1.
	\end{align*}
	So, by Proposition \ref{prop:alphaproperty}(i), $\alpha_{1}\in \{0,1\}$. Hence, by Proposition \ref{prop:alphaproperty}(iii) and (iv),  $R_{2}-R_{1}\in [-2e,0]^{E}\cup \{1\} $.
	
   
   For (ii), since $M$ represents $\prec \varepsilon_{1}\succ$, by Theorem \ref{thm:beligeneral-1}(i), $R_{1}\le \ord(\varepsilon_{1})=0$ and thus $R_{1}=0$. Let $N=\prec \varepsilon_{2}\pi\succ$. Then $S_{1}=1$. It follows that $\ord(a_{1}b_{1})$ is odd and thus $d[a_{1}b_{1}]=0$. Since $M$ represents $N$, by Theorem \ref{thm:beligeneral-1}(ii), we see that
   \begin{align*}
   	 \min\{\dfrac{R_{2}-1}{2}+e,R_{2}-1+d[-a_{1,2}]\}=A_{1}\le d[a_{1}b_{1}]=0.
   \end{align*}
   Hence $R_{2}\le 1-2e$ or $R_{2}\le 1-d[-a_{1,2}]\le 1 $. So, by Proposition \ref{prop:Rproperty}(iv), $R_{2}=R_{2}-R_{1}$ must be even if $R_{2}<0$. Hence, in the former case, $R_{2}=-2e$. Also, $R_{2}\in [-2e,0]^{E}\cup \{1\} $. Since $R_{2}-R_{1}\le R_{2}\le  1$, Propositions \ref{prop:Rproperty}(i) and \ref{prop:alphaproperty}(i) implies $\alpha_{1}\in\{0,1\} $.
\end{proof}

\noindent\textbf{Binary cases.}

 We need two lemmas to characterize ADC lattices that is not $\mathcal{O}_{F}$-maximal.
\begin{lem}\label{lem:Mnu-ADC}
	Let $\nu\in \{1,2\}$. Then $M_{\nu}^{2}(\Delta)$ is ADC, but not $\mathcal{O}_{F}$-maximal.
\end{lem}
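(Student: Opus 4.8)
The plan is to treat the two claims separately, working throughout with the good BONG $M:=M_{\nu}^{2}(\Delta)\cong \prec \pi^{\nu-1},-\Delta\pi^{\nu+1-2e}\succ$ and its invariants, whose existence is guaranteed by Lemma \ref{lem:goodBONGequivcon}. Writing $a_{1}=\pi^{\nu-1}$ and $a_{2}=-\Delta\pi^{\nu+1-2e}$, I first record $R_{1}=\nu-1$ and $R_{2}=\nu+1-2e$, so that $R_{2}-R_{1}=2-2e$. Since $-a_{1}a_{2}=\Delta\pi^{2\nu-2e}$ differs from $\Delta$ by a square, we get $d(-a_{1,2})=d(\Delta)=2e$, and Proposition \ref{prop:alphaproperty}(ii) then yields $\alpha_{1}=\min\{(2-2e)/2+e,\,(2-2e)+2e\}=\min\{1,2\}=1$. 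Note also that $M$ is integral, since $\mathfrak{n}(M)=a_{1}\mathcal{O}_{F}=\pi^{\nu-1}\mathcal{O}_{F}\subseteq \mathcal{O}_{F}$.

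For the ADC property I would appeal to Lemma \ref{lem:ADC-sufficient-equiv}(ii), reducing the problem to showing that $M$ represents every unary $\mathcal{O}_{F}$-maximal lattice $N$ with $FN\rep FM\cong W_{\nu}^{2}(\Delta)$. A Hilbert-symbol computation via Proposition \ref{prop:ternary-space-rep} shows that $W_{1}^{2}(\Delta)=[1,-\Delta]$ represents $[c]$ exactly when $(\Delta,c)_{\mathfrak{p}}=1$, i.e.\ when $\ord(c)$ is even, while $W_{2}^{2}(\Delta)=[\pi,-\Delta\pi]$ represents $[c]$ exactly when $\ord(c)$ is odd. Thus the lattices to be represented are precisely $N=\prec\delta\pi^{\nu-1}\succ$ with $\delta\in\mathcal{U}$. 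Putting $b_{1}=\delta\pi^{\nu-1}$, so that $S_{1}=\nu-1=R_{1}$ and $a_{1}b_{1}=\delta\pi^{2\nu-2}$ is $\delta$ times a square, I obtain $d[a_{1}b_{1}]=\min\{d(\delta),\alpha_{1}\}=1$ (using $d(\delta)\ge 1$ for $\delta\in\mathcal{U}$, with $d(1)=\infty$), whereas $A_{1}=\min\{(R_{2}-S_{1})/2+e,\,R_{2}-S_{1}+d[-a_{1,2}]\}=\min\{1,2\}=1$. Hence conditions (i) $R_{1}\le S_{1}$ and (ii) $d[a_{1}b_{1}]\ge A_{1}$ of Theorem \ref{thm:beligeneral-1} both hold, giving $N\rep M$ and therefore that $M$ is ADC.

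For non-maximality I would compare $M$ with the maximal lattice $N_{\nu}^{2}(\Delta)\cong\prec \pi^{\nu-1},-\Delta\pi^{\nu-1-2e}\succ$ of Proposition \ref{prop:maximallattices-binary-ternary}(ii), which is the unique maximal lattice on $W_{\nu}^{2}(\Delta)$ up to isometry. Its second invariant satisfies $R_{2}(N_{\nu}^{2}(\Delta))=\nu-1-2e=R_{2}(M)-2$, so $M\not\cong N_{\nu}^{2}(\Delta)$ because the $R_{i}$ are isometry invariants. Since $M$ is integral, it embeds in some maximal lattice $L\cong N_{\nu}^{2}(\Delta)$; were $M$ itself maximal, we would have $M=L\cong N_{\nu}^{2}(\Delta)$, contradicting $R_{2}(M)\ne R_{2}(N_{\nu}^{2}(\Delta))$. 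Hence $M$ is not $\mathcal{O}_{F}$-maximal.

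The main obstacle is the ADC direction: one must pin down, via the Hilbert symbol, exactly which unary spaces $W_{\nu}^{2}(\Delta)$ represents and then verify the two numerical inequalities of Theorem \ref{thm:beligeneral-1} uniformly over all $\delta\in\mathcal{U}$, taking due care of the degenerate class $\delta=1$ where $d(\delta)=\infty$. Once the invariants $R_{1},R_{2},\alpha_{1}$ recorded above are in hand, the non-maximality is routine, amounting only to reading off $R_{2}$ of the maximal lattice and invoking invariance of the $R_{i}$.
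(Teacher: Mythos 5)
Your proposal is correct, and the ADC half is essentially the paper's own argument: both reduce via Lemma \ref{lem:ADC-sufficient-equiv}(ii) to representing the unary maximal lattices $\prec \delta\pi^{\nu-1}\succ$, $\delta\in\mathcal{U}$, compute $R_{1}=S_{1}=\nu-1$, $R_{2}-S_{1}=2-2e$, $d[-a_{1,2}]=d(\Delta)=2e$, $\alpha_{1}=1$, $d[a_{1}b_{1}]=1=A_{1}$, and invoke conditions (i)--(ii) of Theorem \ref{thm:beligeneral-1}. (You make explicit, via Proposition \ref{prop:ternary-space-rep}, the Hilbert-symbol determination of which unary spaces $W_{\nu}^{2}(\Delta)$ represents; the paper states this more tersely. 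You also get $\alpha_{1}=1$ from Proposition \ref{prop:alphaproperty}(ii) rather than (iv) — immaterial.) The genuine divergence is the non-maximality claim. The paper disposes of it by citing the characterization of maximal lattices in terms of BONG invariants, namely \cite[Lemma 4.11(i)]{He25} with $n=2$ applied to $(R_{1},R_{2})=(\nu-1,\nu+1-2e)$. You instead argue intrinsically: every integral lattice sits inside a maximal one, all maximal lattices on $W_{\nu}^{2}(\Delta)$ are isometric to $N_{\nu}^{2}(\Delta)\cong\prec\pi^{\nu-1},-\Delta\pi^{\nu-1-2e}\succ$ (Proposition \ref{prop:maximallattices-binary-ternary}(ii) together with the standard uniqueness of maximal lattices), and $R_{2}(M_{\nu}^{2}(\Delta))=\nu+1-2e\neq\nu-1-2e$ with the $R_{i}$ being isometry invariants. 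Both routes are sound; yours stays self-contained within the results quoted in this paper (plus O'Meara's uniqueness of maximal lattices and Beli's invariance of the $R_{i}$), at the cost of invoking those two background facts, whereas the paper's citation packages exactly the needed criterion in one step.
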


\begin{proof}
	Let $N_{\nu}=\prec \varepsilon\pi^{\nu-1}\succ$ with $\varepsilon\in \mathcal{U}$. Since $FM_{\nu}^{2}(\Delta)\cong W_{\nu}^{2}(\Delta)$ represents $FN_{\nu}=[\varepsilon\pi^{\nu-1}]$ for all $\varepsilon\in \mathcal{U}$, by Lemma \ref{lem:ADC-sufficient-equiv}(ii), it suffices to show that $M_{\nu}$ represents all the corresponding lattices $N_{\nu}$.   Then we will verify conditions (i) and (ii) in Theorem \ref{thm:beligeneral-1} for $M=M_{\nu}^{2}(\Delta)$ and $N=N_{\nu}$.
	
	Clearly, $R_{1}=S_{1}=\nu-1$ and $R_{2}=\nu+1-2e$; thus condition (i) holds. Also, we have  $R_{2}-R_{1}=R_{2}-S_{1}=2-2e$ and $d[-a_{1,2}]=d(-a_{1}a_{2})=d(\Delta)=2e$. So, Proposition \ref{prop:alphaproperty}(iv) implies $\alpha_{1}=1$.
	
	 Since $\ord(a_{1}b_{1})=R_{1}+S_{1}$ is even,  $  d(a_{1}b_{1})\ge 1$. Thus, $d[a_{1}b_{1}]=\min\{d(a_{1}b_{1}),\alpha_{1}\}=1$. So
	\begin{align*}
		A_{1}=\min\{\dfrac{R_{2}-S_{1}}{2}+e,R_{2}-S_{1}+d[-a_{1,2}]\}=\min\{1,2\}=1=d[a_{1}b_{1}];
	\end{align*} 
	thus condition (ii) holds. Therefore, $M_{\nu}^{2}(\Delta)$ is ADC.
	
	Since $(R_{1} ,R_{2})=(\nu-1,\nu+1-2e)=(0,2-2e)$ or $(1,3-2e)$, by \cite[Lemma 4.11(i)]{He25} with $n=2$, $M_{\nu}^{2}(\Delta)$ is not $\mathcal{O}_{F}$-maximal.
\end{proof}
\begin{lem}\label{lem:binary-exceptions}
	Let $\nu\in\{1,2\}$ and $M_{\nu}$ be an $\mathcal{O}_{F}$-lattice. If $FM_{\nu}\cong W_{\nu}^{2}(\Delta)$, $R_{1}(M_{\nu})=\nu-1$ and $R_{2}(M_{\nu})=\nu+1-2e$, then $M_{\nu}\cong M_{\nu}^{2}(\Delta)=\prec \pi^{\nu-1},-\Delta\pi^{\nu+1-2e}\succ$. 
\end{lem}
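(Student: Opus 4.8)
The plan is to pin down $M_{\nu}$ by matching the full set of good-BONG invariants and then invoking Beli's classification theorem for binary lattices \cite{beli_integral_2003}. First I would fix a good BONG $M_{\nu}\cong\prec a_{1},a_{2}\succ$ and write $a_{1}=\varepsilon_{1}\pi^{\nu-1}$, $a_{2}=\varepsilon_{2}\pi^{\nu+1-2e}$ with $\varepsilon_{1},\varepsilon_{2}\in\mathcal{O}_{F}^{\times}$, the orders being forced by the hypotheses $R_{1}=\nu-1$, $R_{2}=\nu+1-2e$. Comparing with the target $M_{\nu}^{2}(\Delta)=\prec b_{1},b_{2}\succ$, where $b_{1}=\pi^{\nu-1}$ and $b_{2}=-\Delta\pi^{\nu+1-2e}$, the two lattices already agree in their $R_{i}$-invariants and, by hypothesis, in their ambient spaces $FM_{\nu}\cong W_{\nu}^{2}(\Delta)\cong FM_{\nu}^{2}(\Delta)$; in particular their discriminants coincide, giving $\varepsilon_{1}\varepsilon_{2}\equiv-\Delta\pmod{(F^{\times})^{2}}$.

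Since $R_{2}-R_{1}=2-2e$, Proposition \ref{prop:alphaproperty}(iv) yields $\alpha_{1}(M_{\nu})=\alpha_{1}(M_{\nu}^{2}(\Delta))=1$. The next step is to compute the truncated defects $d[a_{1}]=\min\{d(a_{1}),\alpha_{1}\}$ and $d[b_{1}]$. When $\nu=2$ the norm generators $a_{1},b_{1}$ have odd order, so $d(a_{1})=d(b_{1})=0$ and hence $d[a_{1}]=d[b_{1}]=0$; when $\nu=1$ they are units, and since every non-square unit has quadratic defect of order at least $1$ while $\alpha_{1}=1$, we get $d[a_{1}]=d[b_{1}]=1$. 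Thus the truncated defects at level $1$ agree, and at level $2$ they agree because $d[a_{1,2}]=d(a_{1}a_{2})=d(\det FM_{\nu})$ is a space invariant.

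It then remains to verify the relative square-class conditions of the classification theorem, namely that $a_{1,i}/b_{1,i}$ lies in the correct square class modulo the defect $\mathfrak{p}^{d[a_{1,i}]}$ for each $i$, together with the isometry of the ambient spaces (which encodes the Hasse invariant). For $i=1$ one has $a_{1}b_{1}\equiv\varepsilon_{1}\pmod{(F^{\times})^{2}}$, a unit whose defect has order $\ge1\ge d[a_{1}]$, so $a_{1}\equiv b_{1}$ at the required level; for $i=2$ the product $a_{1,2}b_{1,2}\equiv(\det FM_{\nu})(\det FM_{\nu}^{2}(\Delta))\equiv1$ is a square because the discriminants coincide, and the remaining space condition is exactly the hypothesis $FM_{\nu}\cong W_{\nu}^{2}(\Delta)$. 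With all $R_{i}$, all truncated defects, all relative square classes, and the spaces matching, Beli's classification theorem forces $M_{\nu}\cong M_{\nu}^{2}(\Delta)$; equivalently, one may reach the same conclusion by modifying the BONG via \cite[Corollary 3.4]{beli_integral_2003}, replacing $a_{1}$ by $\pi^{\nu-1}$ and adjusting $a_{2}$ to preserve the product $\equiv-\Delta$.

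The crux — and the only place where the hypotheses are genuinely used — is the observation that the a priori freedom in the norm generator $a_{1}$ (it may be any unit multiple of $\pi^{\nu-1}$ compatible with the space $W_{\nu}^{2}(\Delta)$) is entirely absorbed by the defect tolerance $d[a_{1}]\le1$: since every unit has quadratic defect of order at least $1=\alpha_{1}$, all admissible norm generators fall into a single relative square class. The main obstacle is therefore essentially bookkeeping: to state Beli's binary classification criterion in exactly the form used above (agreement of $R_{i}$, of the truncated defects $d[a_{1,i}]$, of the relative square classes of the partial determinants $a_{1,i}$, and of the ambient space) and to confirm that no further invariant separates the two lattices. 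Once the criterion is fixed, the verification is the short computation sketched here.
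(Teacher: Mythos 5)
Your proposal is correct and is essentially the paper's own argument: the paper verifies conditions (i)--(iii) of Beli's classification theorem \cite[Theorem 3.2]{beli_Anew_2010} for $M=M_{\nu}$ and $N=M_{\nu}^{2}(\Delta)$, namely equality of the $R_{i}$-invariants, equality $\alpha_{1}=\beta_{1}=1$ via Proposition \ref{prop:alphaproperty}(iv) applied to $R_{2}-R_{1}=2-2e$, and $d(a_{1}b_{1})\ge 1=\alpha_{1}$ because $\ord(a_{1}b_{1})=2R_{1}$ is even --- exactly the computations you carry out. Two harmless inaccuracies worth fixing: the classification criterion lives in \cite{beli_Anew_2010} (not in \cite{beli_integral_2003}, which concerns spinor norms and change of BONGs), and its third condition is $d(a_{1}b_{1})\ge\alpha_{1}$ rather than a comparison against the truncated defects $d[a_{1,i}]$; your even-order defect argument establishes precisely this required inequality, so the extra bookkeeping with $d[a_{1}]$ is superfluous but not damaging.
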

\begin{proof}
 To show $M_{\nu}\cong M_{\nu}^{2}(\Delta)$, it suffices to verify that conditions (i)-(iii) in \cite[Theorem 3.2]{beli_Anew_2010} are fulfilled for $M=M_{\nu}$ and $N=M_{\nu}^{2}(\Delta)$.

Clearly, $ R_{1}=S_{1}=\nu-1$ and $ R_{2}=S_{2}=\nu+1-2e$; thus condition (i) holds. Since  $R_{2}-R_{1}=2-2e$, Proposition \ref{prop:alphaproperty}(iv) implies $\alpha_{1}=\beta_{1}=1$; thus condition (ii) holds. Since $\ord(a_{1}b_{1})=R_{1}+S_{1}=2R_{1}$ is even, $d(a_{1}b_{1})\ge 1=\alpha_{1}$; thus condition (iii) holds.
\end{proof}
\begin{proof}[Proof of Theorem \ref{thm:dyadic-binary}]	
	Sufficiency follows by Lemmas \ref{lem:ADC-sufficient-equiv}(i) and  \ref{lem:Mnu-ADC}. To show the necessity, we divide into five cases.
	
	(i) If $FM\cong W_{1}^{2}(1) $, by Proposition \ref{prop:nu-isotropic}(i) and Corollary \ref{cor:ADC-universal}, $M\cong\mathbf{H}=N_{1}^{2}(1)$.
	
	(ii)  If $FM\cong W_{\nu}^{2}(\Delta)$, by Lemma \ref{lem:lattice-rep-binary}(ii) and (iii), $M$ represents $\prec 
	\varepsilon \pi^{\nu-1}\succ$, but not $\prec \varepsilon\pi^{2-\nu}\succ$, for all $\varepsilon\in \mathcal{U}$. Hence, by Theorem \ref{thm:beligeneral-1}(i), $R_{1}\le \nu-1$ and thus $R_{1}\in \{0,1\}$. Since $FM $ represents $[a_{1}]$ and $M$ is ADC, $M$ represents $\prec a_{1}\succ$. So $R_{1}=\ord(a_{1})\equiv \nu-1\pmod{2}$ and thus $R_{1}=\nu-1$. Let $a_{1}=\eta\pi^{\nu-1}$ with $\eta\in\mathcal{U}$. Choose $\delta\in \mathcal{U}$ such that $d(\delta)=1$ (e.g. $\delta=1+\pi$). As $\delta\eta\in \mathcal{U}$, $M$ represents $\prec \delta\eta\pi^{\nu-1}\succ=\prec \delta a_{1}\succ$. Hence, by Lemma \ref{lem:dyadic-adc-R}(i), $\alpha_{1}=0$ or $1$.
	
	If $\alpha_{1}=0$, then $R_{2}=R_{1}-2e=\nu-1-2e$. Hence, by \cite[Lemma 4.11(i) and (ii)]{He25}, $M\cong N_{\nu}^{2}(\Delta)$. If $\alpha_{1}=1$, Proposition \ref{prop:alphaproperty}(ii) implies that 
	\begin{align*}
	&\dfrac{R_{2}-R_{1}}{2}+e=1 \quad\text{or} \quad R_{2}-R_{1}+d(-a_{1}a_{2})=1.
	\end{align*}
Note that $R_{2}-R_{1}=1-d(a_{1}a_{2})=1-2e<0$ is odd and hence the latter case is ruled out by Proposition \ref{prop:Rproperty}(iv). So the former case holds, which gives  $R_{2}=\nu+1-2e$. Hence, by Lemma \ref{lem:binary-exceptions}, $M\cong M_{\nu}^{2}(\Delta) $, which is isometric to $2^{-1}\pi^{\nu} A(2\pi^{-1},2\rho\pi)$ from \cite[Corollary 3.4(iii)]{beli_integral_2003} and \cite[93:17]{omeara_quadratic_1963}.
	
	 (iv) If $FM\cong W_{\nu}^{2}(\varepsilon\pi)$ with $\varepsilon\in \mathcal{U}$, by Lemma \ref{lem:lattice-rep-binary}(iv), $M$ represents $\prec\varepsilon_{1}\succ$ and $\prec\varepsilon_{2}\pi\succ$. So,  by Lemma \ref{lem:dyadic-adc-R}(ii), $R_{1}=0$ and $R_{2}\in [-2e,0]^{E}\cup\{1\}$. Since $\det FM=-\varepsilon\pi$, $\ord(a_{1}a_{2}) $ is odd, so $R_{2}=1$. Hence, by \cite[Lemma 4.11(iii)]{He25}, $M\cong N_{\nu}^{2}(\varepsilon\pi)$.
   
  	(v) If $FM\cong W_{\nu}^{2}(\delta)$ with $\delta\in \mathcal{U}\backslash \{1,\Delta\}$. Then $1\le d(-a_{1}a_{2})=d(\delta)<2e$ is odd. By Lemma \ref{lem:lattice-rep-binary}(v) and (vi), $M$ represents $\prec \varepsilon_{1}\succ$ and $\prec \varepsilon_{2}\pi \succ$ for some $\varepsilon_{1},\varepsilon_{2}\in \mathcal{U}$. Hence, by Lemma \ref{lem:dyadic-adc-R}(ii), $R_{1}=0$ and either $R_{2}=-2e$ or $R_{2}\le 1-d[-a_{1,2}]=1-d(-a_{1}a_{2})$. If $R_{2}=-2e$, then Proposition \ref{prop:alphaproperty}(v) implies that $d(\delta)=d(-a_{1}a_{2})\ge d[-a_{1,2}]\ge 2e $, a contradiction. So $R_{2}\not=-2e$ and thus $R_{2}\le 1-d(-a_{1}a_{2})$. Recall from \eqref{eq:BONGs} that $R_{2}+d(-a_{1}a_{2})=R_{2}-R_{1}+d(-a_{1}a_{2})\ge 0$. Combining these two inequalities, we have
  \begin{align*}
  	 -d(-a_{1}a_{2})\le R_{2}\le 1-d(-a_{1}a_{2}).
  \end{align*}
   However, $R_{2}=R_{1}+R_{2}=\ord(a_{1}a_{2})$ is even, so $R_{2}\not=-d(-a_{1}a_{2})$. Therefore, $R_{2}=1-d(-a_{1}a_{2})=1-d(\delta)$. Hence, by \cite[Lemma 4.11(iii)]{He25}, $M\cong N_{\nu}^{2}(\delta)$.
\end{proof}

\noindent\textbf{Ternary cases.}
 
Assume that $m=3$ and $M\cong\prec a_{1},a_{2},a_{3}\succ$ relative to some good BONG. To characterize ternary ADC lattices, we consider two cases: $FM\cong W_{1}^{3}(c)$  or $ W_{2}^{3}(c)$.
\begin{lem}\label{lem:W13k}
	Let $FM\cong W_{1}^{3}(\varepsilon\pi^{k})$ with  $\varepsilon\in \mathcal{U}$ and $k\in \{0,1\}$. Then $M$ is ADC if and only if $R_{1}=0$ and one of the following conditions holds:
	\begin{enumerate}[itemindent=-0.5em,label=\rm (\roman*)]
		\item $\alpha_{1}=0$, i.e., $R_{2}=-2e$, $R_{3}\ge 0$ and $-a_{1}a_{2}\in F^{\times 2}$; equivalently, $M\cong H_{1,h}^{3}(c)$ with $c\in \mathcal{V}$ and $h\ge 0$.
		
		\item $\alpha_{1}=1$, $R_{2}\in [2-2e,0]^{E}$ and $R_{3}=k$. 
	\end{enumerate}
\end{lem}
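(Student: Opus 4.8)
The plan is to convert the ADC property into a representation problem and analyze it with Beli's criterion. Since $FM\cong W_1^3(\varepsilon\pi^k)$ is isotropic it represents every $[c]$, so by Lemma~\ref{lem:ADC-sufficient-equiv}(ii) the lattice $M$ is ADC precisely when it represents the unary maximal lattice $\prec c\succ$ for every $c\in\mathcal{V}$ (equivalently, $M$ is universal, by Corollary~\ref{cor:ADC-universal}). First I would extract the coarse numerical constraints. Representing one unit $\prec\varepsilon\succ$ forces $R_1\le 0$ by Theorem~\ref{thm:beligeneral-1}(i), while integrality gives $R_1\ge 0$; hence $R_1=0$. Representing a unit and a uniformizer-unit simultaneously then lets me apply Lemma~\ref{lem:dyadic-adc-R}(ii) to get $\alpha_1\in\{0,1\}$ and $R_2\in[-2e,0]^{E}\cup\{1\}$. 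Throughout I will use the determinant parity $R_1+R_2+R_3\equiv\ord(\det FM)\equiv k\pmod 2$ to control $R_3$.

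Next I would treat the case $\alpha_1=0$, which by Proposition~\ref{prop:alphaproperty}(iii) means $R_2=-2e$. Proposition~\ref{prop:alphaproperty}(v) gives $d[-a_{1,2}]\ge 2e$, so $d(-a_1a_2)\ge 2e$; as $\ord(-a_1a_2)=-2e$ is even this forces $-a_1a_2\equiv 1$ or $\Delta$ modulo squares. If $-a_1a_2\equiv\Delta$ the leading block is the anisotropic plane $[a_1,a_2]\cong W_1^2(\Delta)$, which represents exactly the elements of even order; isotropy of $FM=[a_1,a_2]\perp[a_3]$ then forces $[a_1,a_2]$ to represent $-a_3$, so $R_3$ is even. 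Should $R_3\ge 2$, I would take a uniformizer-unit $c$ (so $S_1=1<R_3$): since $d[-a_{1,2}]\ge 2e$ the hypotheses of Theorem~\ref{thm:beligeneral-1}(iii) hold ($2e>2e+1-R_3$), yet $[c]\nrep[a_1,a_2]$ because $c$ has odd order, contradicting that $M$ represents $\prec c\succ$. Hence $R_3=0$, whence $\alpha_2=2e$ by Proposition~\ref{prop:Rproperty}(ii), and the bound $d(\Delta)=2e\ge\alpha_2$ permits renormalizing the good BONG via the classification criterion of \cite[Theorem~3.2]{beli_Anew_2010} (as in Lemma~\ref{lem:binary-exceptions}) to achieve $-a_1a_2\in F^{\times 2}$. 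In all cases $\prec a_1,a_2\succ\cong\mathbf{H}$, so $M\cong\mathbf{H}\perp\langle a_3\rangle\cong H_{1,h}^3(c)$ with $a_3=c\pi^{2h}$ and $h\ge 0$, which is condition~(i).

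For $\alpha_1=1$, Proposition~\ref{prop:alphaproperty}(iv) together with $R_2\in[-2e,0]^{E}\cup\{1\}$ leaves $R_2\in[2-2e,0]^{E}\cup\{1\}$. I would first exclude $R_2=1$: then $[a_1,a_2]$ has odd-order determinant and is anisotropic, and running the obstruction of Theorem~\ref{thm:beligeneral-1}(iii) against a suitable $c\in\mathcal{V}$ with $[c]\nrep[a_1,a_2]$ contradicts universality (note that a valid good BONG then forces $R_3\ge 1$ via \eqref{eq:BONGs}, so the trigger of (iii) is available), leaving $R_2\in[2-2e,0]^{E}$. With $R_2$ even the block is $[a_1,a_2]\cong W_\nu^2(\delta)$ with $\delta\in\mathcal{U}\backslash\{1,\Delta\}$, anisotropic and missing one unit class and one uniformizer-unit class (Lemma~\ref{lem:lattice-rep-binary}(v),(vi)). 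The remaining task is to pin $R_3=k$: parity gives $R_3\equiv k$ and $R_3\ge R_1=0$ from \eqref{eq:GoodBONGs}, so I only need to rule out $R_3\ge 2$, which I would do by choosing an unrepresented $c$ of order $\le 1<R_3$ and showing that the defect inequality in Theorem~\ref{thm:beligeneral-1}(iii) is then met while $[c]\nrep[a_1,a_2]$.

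Conversely, for sufficiency I would verify conditions (i)--(iii) of Theorem~\ref{thm:beligeneral-1} for every $c\in\mathcal{V}$. Case~(i) is immediate because $\mathbf{H}\perp\langle\,\cdot\,\rangle$ contains the universal plane $\mathbf{H}$ and is therefore universal (\cite[Proposition~2.3(1)]{xu_indefinite_2020}). In case~(ii), conditions (i) and (ii) of Theorem~\ref{thm:beligeneral-1} are routine from $R_1=0$, $\alpha_1=1$ and the list of allowed $R_2$; since $R_3=k$, the hypothesis $R_3>\ord(c)$ in (iii) can only arise when $k=1$ and $c$ is a unit, in which case I must show that either $[c]\rep[a_1,a_2]$ or the defect sum $d[-a_{1,2}]+d(-a_{1,3}c)\le 2e-1$ fails the trigger of (iii). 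I expect this last defect bookkeeping---computing $d(-a_{1,3}c)=d(c_0c)$ with $c_0\equiv-\det FM$ for the unrepresented units $c$ and comparing with $2e+\ord(c)-R_3$---to be the principal obstacle, since it is exactly what separates the genuine ADC lattices from the near-misses; the second delicate point is the BONG renormalization in the $\alpha_1=0$, $R_3=0$ subcase, where $-a_1a_2$ is only well defined modulo defect $\alpha_2=2e$.
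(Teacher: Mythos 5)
You take a genuinely different route from the paper: the paper disposes of this lemma by reducing ADC to universality (Proposition \ref{prop:nu-isotropic}(i), Corollary \ref{cor:ADC-universal}) and then quoting the classification of universal ternary lattices from \cite[Theorem 2.1]{beli_universal_2020}, proving by hand only the identification of case (i) with $H_{1,h}^{3}(c)$; you instead re-derive that classification from the representation criterion, Theorem \ref{thm:beligeneral-1}. Most of your outline does survive scrutiny: the reduction to universality, the extraction of $R_{1}=0$, $\alpha_{1}\in\{0,1\}$, $R_{2}\in[-2e,0]^{E}\cup\{1\}$, the case $\alpha_{1}=0$ including the BONG renormalization when $-a_{1}a_{2}\in\Delta F^{\times2}$ and $R_{3}=0$ (this indeed goes through, since there $\alpha_{2}=2e=d(\Delta)$), and the exclusion of $R_{2}=1$ (take $c\in\{\varepsilon\pi^{k},\Delta\varepsilon\pi^{k}\}$: exactly one of these is missed by the anisotropic block because $(\Delta,-a_{1}a_{2})_{\mathfrak{p}}=-1$, while $d(-a_{1,3}c)\ge 2e$ makes the trigger of condition (iii) fire).

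The genuine gap is in the step pinning $R_{3}=k$ when $\alpha_{1}=1$ and $R_{2}\in[2-2e,0]^{E}$. You assert that the block is $[a_{1},a_{2}]\cong W_{\nu}^{2}(\delta)$ with $\delta\in\mathcal{U}\backslash\{1,\Delta\}$, hence anisotropic and missing classes. That is false when $R_{2}=2-2e$: Proposition \ref{prop:alphaproperty}(iv) then yields $\alpha_{1}=1$ with no constraint on $d(-a_{1}a_{2})$, so $-a_{1}a_{2}$ may lie in $F^{\times2}$ (or in $\Delta F^{\times2}$). In the square case $[a_{1},a_{2}]$ is hyperbolic and represents every class, so no $c$ with $[c]\nrep[a_{1},a_{2}]$ exists and your condition-(iii) obstruction is vacuous; it cannot rule out $R_{3}\ge 2$. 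Concretely, over $F=\Q_{2}$ (so $e=1$) take $M=\langle 1,-1,4\rangle\cong\prec 1,-1,4\succ$: then $R_{1}=0$, $R_{2}=0=2-2e$, $\alpha_{1}=1$, $FM\cong W_{1}^{3}(1)$ so $k=0$, but $R_{3}=2$. This $M$ is not universal (it misses $2$, since $x^{2}-y^{2}+4z^{2}$ is either odd or divisible by $4$), in accordance with the lemma, yet your argument produces no contradiction. What fails for $N=\prec 2\succ$ is condition (ii) of Theorem \ref{thm:beligeneral-1}, not condition (iii): one has $d[a_{1}b_{1}]=0$, while
\begin{align*}
A_{1}=\min\{(R_{2}-1)/2+e,\;R_{2}-1+d[-a_{1,2}]\}>0,
\end{align*}
because $d[-a_{1,2}]=\min\{d(-a_{1}a_{2}),\alpha_{2}\}=\alpha_{2}\ge 2e$ whenever $R_{2}=2-2e$, $-a_{1}a_{2}\in F^{\times2}$ and $R_{3}\ge 2$. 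So your necessity argument must be supplemented by a condition-(ii) obstruction in this sub-case (the omitted $\Delta F^{\times 2}$ sub-case, by contrast, is still handled by condition (iii) with odd-order $c$). A smaller remark in the opposite direction: the ``defect bookkeeping'' you flag as the principal obstacle for sufficiency actually dissolves, since in case (ii) one has $R_{3}-R_{2}\le 2e-1$, hence $d[-a_{1,2}]\le\alpha_{2}\le 2e-1$, while $d(-a_{1,3}c)=0$ for the relevant unit $c$ when $k=1$; so the trigger of condition (iii) never fires there.
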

\begin{proof}
	From Proposition \ref{prop:nu-isotropic}(i), $FM$ is isotropic and $M$ is universal. Hence conditions (i) and (ii) of the lemma follows from \cite[Theorem 2.1]{beli_universal_2020} with $m=3$.
	
	If $\alpha_{1}=0$, then $R_{2}=R_{1}-2e=-2e$. Since $-a_{1}a_{2}\in F^{\times 2}$, $\prec a_{1},a_{2}\succ\cong \mathbf{H}$. Since $R_{2}<0\le R_{1}\le R_{3}$, by \cite[Corollary 4.4(i)]{beli_integral_2003}, 
	\begin{align*}
		M\cong \prec a_{1},a_{2}\succ \perp \prec a_{3}\succ\cong \mathbf{H}\perp \langle c\pi^{2h}\rangle=H_{1,h}^{3}(c),
	\end{align*}
	 with $h\ge 0$ and $c=\varepsilon\pi^{k}\in \mathcal{V}$, as required. 
\end{proof}
\begin{lem}\label{lem:W23k}
	Let $FM\cong W_{2}^{3}(\varepsilon\pi^{k})$ with $\varepsilon\in \mathcal{U}$ and $k\in \{0,1\}$. Then $M$ is ADC if and only if $R_{1}=0$ and one of the following conditions holds:
	\begin{enumerate}[itemindent=-0.5em,label=\rm (\roman*)]
		\item $\alpha_{1}=0$, i.e., $R_{2}=-2e$, and $R_{3}=k$;
		
		\item $\alpha_{1}=1$, $R_{2}\in [2-2e,0]^{E}$  and $R_{3}=k$;
		
		\item $ R_{2}=1$ and $R_{3}=k+1$.
	\end{enumerate}
\end{lem}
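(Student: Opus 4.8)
The plan is to reduce the ADC property to a family of unary representation conditions and then translate these, through the BONG representation criterion (Theorem \ref{thm:beligeneral-1}), into arithmetic constraints on $R_1,R_2,R_3$ and $\alpha_1$. By Proposition \ref{prop:nu-isotropic}(ii) the space $FM\cong W_2^3(c)$ with $c=\varepsilon\pi^k$ is anisotropic, and a Hilbert-symbol computation as in Lemma \ref{lem:lattice-rep-binary}(viii) shows it represents $W_1^1(c')=[c']$ exactly for $c'\in\mathcal{V}\setminus\{c\}$, the associated unary maximal lattices being the $N_1^1(c')=\prec c'\succ$. Hence by Lemma \ref{lem:ADC-sufficient-equiv}(ii), $M$ is ADC if and only if $M$ represents $\prec c'\succ$ for every $c'\in\mathcal{V}\setminus\{c\}$, and I would analyze each such representation by applying Theorem \ref{thm:beligeneral-1} with $N=\prec c'\succ$ and $S_1=\ord(c')\in\{0,1\}$.

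For necessity, I would first note that $\mathcal{V}\setminus\{c\}$ contains both units and uniformizer-units, so $M$ represents some $\prec\varepsilon_1\succ$ and some $\prec\varepsilon_2\pi\succ$; Lemma \ref{lem:dyadic-adc-R}(ii) then gives $R_1=0$, $R_2\in[-2e,0]^{E}\cup\{1\}$ and $\alpha_1\in\{0,1\}$. The determinant identity $\det FM=\det W_2^3(\varepsilon\pi^k)$ forces $R_2+R_3\equiv k\pmod 2$. Splitting on $R_2$ — so $\alpha_1=0$ when $R_2=-2e$, $\alpha_1=1$ when $R_2\in[2-2e,0]^{E}$, and $R_2=1$ as the remaining value (again $\alpha_1=1$ by Proposition \ref{prop:alphaproperty}) — I would pin down $R_3$: the good-BONG inequality $R_1\le R_3$ together with the parity fixes $R_3\bmod 2$ and a lower bound, while the exact value ($R_3=k$ in cases (i) and (ii), $R_3=k+1$ in case (iii)) is forced by contradiction. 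A strictly larger $R_3$ would activate condition (iii) of Theorem \ref{thm:beligeneral-1}, namely $R_3>S_1$ together with $d[-a_{1,2}]+d(-a_{1,3}c')>2e+S_1-R_3$, for a suitably chosen $c'\neq c$ while $[c']\nrep[a_1,a_2]$, contradicting that $M$ represents $\prec c'\succ$.

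For sufficiency, I would reverse this: assuming $R_1=0$ and one of (i)--(iii), I would verify conditions (i)--(iii) of Theorem \ref{thm:beligeneral-1} for $N=\prec c'\succ$ for every $c'\neq c$. Condition (i) is just $0\le S_1$; condition (ii), $d[a_1 c']\ge A_1$, follows from $\alpha_1\le 1$ and the parity of $\ord(a_1 c')$; and condition (iii) is the decisive one. In cases (i) and (ii) the small value $R_3=k$ makes either $R_3>S_1$ or the defect inequality fail, so (iii) is vacuous; in case (iii), where $R_2=1$ renders $[a_1,a_2]$ an anisotropic plane of odd determinant that omits certain classes, the value $R_3=k+1$ is exactly what forces $[c']\rep[a_1,a_2]$ (a binary check through Proposition \ref{prop:ternary-space-rep}) whenever (iii) is triggered. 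Reading the resulting $(R_1,R_2,R_3)$ back through Proposition \ref{prop:space} would then identify $M$ with the lattices in the statement.

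I expect the main obstacle to be condition (iii) of Theorem \ref{thm:beligeneral-1} in both directions: on the necessity side, producing for an over-large $R_3$ an explicit representable class $c'\neq c$ whose representation fails, and on the sufficiency side, confirming the binary representation $[c']\rep[a_1,a_2]$ precisely when $R_3=k+1$ in case (iii). The bookkeeping of the defect quantities $d[-a_{1,2}]$ and $d(-a_{1,3}c')$ and of the inequality $d[-a_{1,2}]+d(-a_{1,3}c')>2e+S_1-R_3$ across the subcases $\alpha_1=0$, $\alpha_1=1$ with $R_2$ even, and $R_2=1$ is where the real work lies.
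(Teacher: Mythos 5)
Your outline follows the paper's own route: Lemma \ref{lem:ADC-sufficient-equiv}(ii) reduces ADC-ness to representing $\prec c'\succ$ for all $c'\in\mathcal{V}\setminus\{c\}$, Lemma \ref{lem:dyadic-adc-R}(ii) gives $R_1=0$, $R_2\in[-2e,0]^{E}\cup\{1\}$, $\alpha_1\in\{0,1\}$, the exact value of $R_3$ is forced by exhibiting representable classes $c'$ for which Theorem \ref{thm:beligeneral-1}(iii) fails (the paper's Lemmas \ref{lem:W23k-alpha0} and \ref{lem:W23k-alpha1}, with $c'=-\Delta\varepsilon_{1,3}\pi^{k}$, resp.\ $-\kappa\varepsilon_{1,3}\pi^{k}$, $-\kappa\Delta\varepsilon_{1,3}\pi^{k}$ where $d(\kappa)=2e-1$), and sufficiency is a verification of Theorem \ref{thm:beligeneral-1}(i)--(iii). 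But two of your specific claims on the sufficiency side are wrong, and they sit exactly where the content of the lemma lies. First, condition (iii) is \emph{not} vacuous in your case (i): there $\alpha_1=0$, so $d[-a_{1,2}]\ge 2e$ by Proposition \ref{prop:alphaproperty}(v), and for $k=1$ and $c'$ a unit (so $S_1=0<1=R_3$) the inequality $d[-a_{1,2}]+d(-a_{1,3}b_1)\ge 2e>2e+S_1-R_3$ holds automatically. Condition (iii) is therefore triggered, and you must actually prove $[c']\rep[a_1,a_2]$; this is true, but needs an argument: anisotropy of $FM$ together with $d(-a_1a_2)\ge 2e$ forces $-a_1a_2\in\Delta F^{\times 2}$, so $[a_1,a_2]\cong[1,-\Delta]$ represents every unit (the paper invokes \cite[Corollary 2.3(ii)]{HeHu2} at this point).

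Second, your claim that Theorem \ref{thm:beligeneral-1}(ii) ``follows from $\alpha_1\le 1$ and the parity of $\ord(a_1c')$'' breaks down when $S_1=1$ and $R_2=2-2e$. There $d[a_1b_1]=0$, while $\alpha_1=1$ only yields $A_1\le (R_2-S_1)/2+e=1/2$; to get $A_1\le 0$ you need $R_2-1+d[-a_{1,2}]\le 0$, i.e.\ $d[-a_{1,2}]=1-R_2=2e-1$, and Proposition \ref{prop:alphaproperty}(vi) guarantees this equality only when $R_2\not=2-2e$. Ruling out $d[-a_{1,2}]\ge 2e$ in this boundary case requires the extra input $R_3=k\in\{0,1\}$, which gives $\alpha_2=2e-1$ (Proposition \ref{prop:Rproperty}(ii),(iii)) and hence $d[-a_{1,2}]\le\alpha_2=2e-1$; this is precisely the paper's Lemma \ref{lem:alpha1}, and it is the reason the paper reformulates the statement as Lemma \ref{lem:W23k-d}, carrying the hypothesis $d[-a_{1,2}]=1-R_2$ through both directions of the proof. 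With these two repairs, plus the explicit choices of $c'$ that you deferred on the necessity side, your argument coincides with the paper's proof.
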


First, by Proposition \ref{prop:alphaproperty}(iv), if $R_{2}-R_{1}=1$, then $\alpha_{1}=1$. This combined with Lemma \ref{lem:alpha1} shows that Lemma \ref{lem:W23k} is equivalent to Lemma \ref{lem:W23k-d}. We will show Lemma \ref{lem:W23k-d} instead.
\begin{lem}\label{lem:alpha1}
	If $R_{1}=0$, $\alpha_{1}=1$ and $R_{3}\in \{0,1\}$, then $d[-a_{1,2}]=1-R_{2}$.
\end{lem}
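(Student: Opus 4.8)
The plan is to read off the inequality $d[-a_{1,2}]\ge 1-R_2$ from the structural properties of $\alpha_1$ and then to pin down the reverse inequality. Since $R_1=0$ and $\alpha_1=1$, Proposition \ref{prop:alphaproperty}(vi) immediately yields $d[-a_{1,2}]\ge R_1-R_2+1=1-R_2$, together with the assertion that equality already holds whenever $R_2-R_1=R_2\ne 2-2e$. So in the generic case $R_2\ne 2-2e$ there is nothing more to prove, and the entire content of the lemma is concentrated in the single exceptional value $R_2=2-2e$ (which, since $e\ge 1$, forces $R_2\le 0$).

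For this exceptional case the idea is to compute $\alpha_2$ explicitly from the hypothesis $R_3\in\{0,1\}$ and to exploit that $d[-a_{1,2}]=\min\{d(-a_1a_2),\alpha_2\}\le\alpha_2$, recalling that $\alpha_0$ is dropped from the bracket. With $R_2=2-2e$ one has $R_3-R_2=R_3-2+2e$, so either $R_3=0$ and $R_3-R_2=2e-2$, or $R_3=1$ and $R_3-R_2=2e-1$. In the first subcase $R_3-R_2\in\{-2e,2-2e,2e-2\}$, so Proposition \ref{prop:Rproperty}(ii) gives $\alpha_2=(R_3-R_2)/2+e=2e-1$; in the second subcase $R_3-R_2=2e-1$ is odd and $\le 2e$, so Proposition \ref{prop:Rproperty}(iii) gives $\alpha_2=R_3-R_2=2e-1$. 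Either way $\alpha_2=2e-1=1-R_2$.

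Combining the two bounds finishes the argument: $1-R_2\le d[-a_{1,2}]\le \alpha_2=1-R_2$, hence $d[-a_{1,2}]=1-R_2$. The main obstacle I anticipate is exactly the exceptional value $R_2=2-2e$, where Proposition \ref{prop:alphaproperty}(vi) alone does not deliver equality; the hypothesis $R_3\in\{0,1\}$ is precisely the extra input needed there, since it is what allows me to evaluate $\alpha_2$ and show it equals $1-R_2$. I would also double-check the bookkeeping in the definition $d[-a_{1,2}]=\min\{d(-a_1a_2),\alpha_2\}$ (that the undefined $\alpha_0$ is indeed ignored) and confirm that the good-BONG inequalities of Lemma \ref{lem:goodBONGequivcon} are consistent with $R_2=2-2e$, so that no hidden constraint is being violated in the case that carries all the weight.
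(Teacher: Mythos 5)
Your proposal is correct and follows essentially the same route as the paper's own proof: the lower bound and generic equality come from Proposition \ref{prop:alphaproperty}(vi), and the exceptional case $R_{2}=2-2e$ is settled by computing $\alpha_{2}=2e-1$ from $R_{3}-R_{2}\in\{2e-2,2e-1\}$ via Proposition \ref{prop:Rproperty}(ii) and (iii), together with $d[-a_{1,2}]\le\alpha_{2}$. Your case split on $R_{3}=0$ versus $R_{3}=1$ just spells out the same two applications the paper invokes in one line.
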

\begin{proof}
	Since $R_{1}=0$, by Proposition \ref{prop:alphaproperty}(vi),  $d[-a_{1,2}]\ge 1-R_{2}$, and $d[-a_{1,2}]=1-R_{2}$ if $R_{2}\not=2-2e$. So it suffices to show that $d[-a_{1,2}]\le 1-R_{2}=2e-1$ under the assumption that $R_{2}=2-2e$. Since $R_{3}-R_{2}=2e-2$ or $2e-1$, Proposition \ref{prop:Rproperty}(ii) and (iii) implies that  $d[-a_{1,2}]\le \alpha_{2}=2e-1$, as required.
\end{proof}
\begin{lem}\label{lem:W23k-d}
	Let $FM\cong W_{2}^{3}(\varepsilon\pi^{k})$ with $\varepsilon\in \mathcal{U}$ and $k\in \{0,1\}$. Then $M$ is ADC if and only if $R_{1}=0$ and one of the following conditions holds:
	\begin{enumerate}[itemindent=-0.5em,label=\rm (\roman*)]
		\item $\alpha_{1}=0$, i.e., $R_{2}=-2e$, and $R_{3}=k$;
		
		\item $\alpha_{1}=1$, $d[-a_{1,2}]= 1-R_{2}$, $R_{2}\in [2-2e,0]^{E}$ and $R_{3}=k$;
		
		\item $\alpha_{1}=1$, $d[-a_{1,2}]= 1-R_{2}$, $R_{2}=1$ and $R_{3}=k+1$.
	\end{enumerate}
\end{lem}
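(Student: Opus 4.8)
The plan is to convert the ADC property into a family of unary representation problems and then read off the conditions from the representation criterion Theorem~\ref{thm:beligeneral-1}. By Lemma~\ref{lem:ADC-sufficient-equiv}(ii), $M$ is ADC precisely when it represents every unary $\mathcal{O}_{F}$-maximal lattice $N$ with $FN\rep FM$. The unary maximal lattices are exactly the $\prec c'\succ$ with $c'\in\mathcal{V}$, and by \cite[Lemma 4.4(iii)]{He25} the space $FM\cong W_{2}^{3}(\varepsilon\pi^{k})$ represents $[c']$ if and only if $c'\neq\varepsilon\pi^{k}$. Thus the assertion reduces to: $M$ represents $\prec c'\succ$ for every $c'\in\mathcal{V}\backslash\{\varepsilon\pi^{k}\}$. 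For each such $c'$ I set $N=\prec c'\succ$, $S_{1}=\ord(c')\in\{0,1\}$, and apply Theorem~\ref{thm:beligeneral-1} with $m=3$, $n=1$, recalling that there $d[-a_{1,3}b_{1}]=d(-a_{1,3}c')$.

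For necessity, suppose $M$ is ADC. Since $|\mathcal{U}|\ge 2$, both a unit and a uniformizer-multiple distinct from $\varepsilon\pi^{k}$ lie in $\mathcal{V}$ and are represented, so Lemma~\ref{lem:dyadic-adc-R}(ii) gives $R_{1}=0$, $R_{2}\in[-2e,0]^{E}\cup\{1\}$ and $\alpha_{1}\in\{0,1\}$. Because $\det FM$ has order $\equiv k\pmod 2$ while $\ord(a_{1,3})=R_{2}+R_{3}$, the parity of $R_{3}$ is determined; combined with $R_{1}\le R_{3}$ from \eqref{eq:GoodBONGs} and the inequalities \eqref{eq:BONGs} (which, when $R_{2}=1$, exclude $R_{3}=0$ since $\ord(-a_{2}a_{3})$ would then be odd, forcing $d(-a_{2}a_{3})=0$ against $R_{3}-R_{2}+d(-a_{2}a_{3})\ge 0$), this yields $R_{3}\ge k$ when $R_{2}$ is even and $R_{3}\ge k+1$ when $R_{2}=1$. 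The decisive step is the reverse bound. The subspace $[a_{1},a_{2}]$ is anisotropic, so by Proposition~\ref{prop:ternary-space-rep} the classes it fails to represent are exactly those with $(-a_{1}a_{2},a_{1}c')_{\mathfrak{p}}=-1$, a nonempty set; for any such $c'\neq\varepsilon\pi^{k}$ the conclusion $[c']\rep[a_{1},a_{2}]$ of Theorem~\ref{thm:beligeneral-1}(iii) is false, so its hypothesis must fail, forcing $R_{3}\le S_{1}$ or $d[-a_{1,2}]+d(-a_{1,3}c')\le 2e+S_{1}-R_{3}$. Testing this against a class $c'\neq\varepsilon\pi^{k}$ for which $d(-a_{1,3}c')$ is as large as possible (the square class $-a_{1,3}c'\in F^{\times 2}$ corresponding precisely to the excluded $c'=\varepsilon\pi^{k}$) pins $R_{3}$ down to its minimum $k$, resp.\ $k+1$. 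Finally $d[-a_{1,2}]=1-R_{2}$ follows from Lemma~\ref{lem:alpha1} when $R_{2}$ is even, using $R_{3}=k\in\{0,1\}$, and is automatic when $R_{2}=1$ since then $d(-a_{1}a_{2})=0$.

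For sufficiency I assume $R_{1}=0$ together with one of (i)--(iii) and verify the three conditions of Theorem~\ref{thm:beligeneral-1} for every $c'\neq\varepsilon\pi^{k}$. Condition (i) is immediate from $R_{1}=0\le S_{1}$. Condition (ii), $d[a_{1}c']\ge A_{1}$, becomes a short computation once $A_{1}=\min\{(R_{2}-S_{1})/2+e,\,1-S_{1}\}$ is evaluated from $d[-a_{1,2}]=1-R_{2}$ and compared with $d[a_{1}c']=\min\{d(a_{1}c'),\alpha_{1}\}$, splitting on the parity of $S_{1}$. Condition (iii) carries the real content: whenever its hypothesis holds I must deduce $[c']\rep[a_{1},a_{2}]$, i.e.\ the Hilbert-symbol identity $(-a_{1}a_{2},a_{1}c')_{\mathfrak{p}}=1$ of Proposition~\ref{prop:ternary-space-rep}, and the minimality $R_{3}=k$ (resp.\ $k+1$) is exactly what ensures the hypothesis can be triggered only by classes the anisotropic binary part already represents. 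I expect the main obstacle, in both directions, to be this analysis of condition (iii): translating the defect inequality $d[-a_{1,2}]+d(-a_{1,3}c')>2e+S_{1}-R_{3}$ into the Hilbert symbol governing $[c']\rep[a_{1},a_{2}]$, and verifying that the third vector $a_{3}$ (controlled by $R_{3}$) covers precisely the square classes missed by $[a_{1},a_{2}]$, namely all of $\mathcal{V}\backslash\{\varepsilon\pi^{k}\}$ and no more.
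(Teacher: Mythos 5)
Your overall route is the paper's: reduce ADC-ness to representing the unary maximal lattices $\prec c'\succ$, $c'\in\mathcal{V}\backslash\{\varepsilon\pi^{k}\}$, via Lemma \ref{lem:ADC-sufficient-equiv}(ii) and \cite[Lemma 4.4(iii)]{He25}; get $R_{1}=0$, $R_{2}\in[-2e,0]^{E}\cup\{1\}$, $\alpha_{1}\in\{0,1\}$ from Lemma \ref{lem:dyadic-adc-R}(ii); and bound $R_{3}$ above by producing witnesses for which condition (iii) of Theorem \ref{thm:beligeneral-1} fails (this is exactly Proposition \ref{prop:W23k-R-alpha} and Lemmas \ref{lem:W23k-alpha0}, \ref{lem:W23k-alpha1} of the paper). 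The genuine gap is in your ``decisive step.'' A usable witness $c'$ must satisfy \emph{two} constraints at once: $[c']$ must fail to be represented by $[a_{1},a_{2}]$, and $d(-a_{1,3}c')$ must be large; your parenthetical optimizes the defect only subject to $c'\neq\varepsilon\pi^{k}$. Which square classes $[a_{1},a_{2}]$ misses is governed by the parity of $\ord(a_{1}a_{2})$. When $R_{2}$ is even, $(-a_{1}a_{2},\Delta)_{\mathfrak{p}}=1$, so the maximal-defect class $c'=\Delta\varepsilon\pi^{k}$ (with $d(-a_{1,3}c')=2e$) is indeed missed by the anisotropic $[a_{1},a_{2}]$, and one gets $R_{3}\le k$. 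But when $R_{2}=1$, $(-a_{1}a_{2},\Delta)_{\mathfrak{p}}=-1$, so $[a_{1},a_{2}]$ \emph{does} represent $[\Delta\varepsilon\pi^{k}]$ and this witness yields no contradiction; if you ran your recipe with it anyway, you would deduce $R_{3}\le k$, which together with the parity constraint $R_{3}\equiv k+1\pmod{2}$ would wrongly eliminate case (iii) of the lemma altogether. The correct argument (Lemma \ref{lem:W23k-alpha1}(ii)) must descend to witnesses of defect $2e-1$, namely $-\kappa\varepsilon_{1,3}\pi^{k}$ and $-\kappa\Delta\varepsilon_{1,3}\pi^{k}$ with $d(\kappa)=2e-1$, at least one of which $[a_{1},a_{2}]$ misses by \cite[63:11]{omeara_quadratic_1963}; that loss of one in the defect is precisely what relaxes the bound to $R_{3}\le k+1$. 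Your sketch asserts ``resp.\ $k+1$'' but contains no mechanism producing it.

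The sufficiency direction has the same character: you correctly dispose of conditions (i) and (ii) of Theorem \ref{thm:beligeneral-1} (modulo the remark that your formula $A_{1}=\min\{(R_{2}-S_{1})/2+e,\,1-S_{1}\}$ presumes $d[-a_{1,2}]=1-R_{2}$ and so does not cover the case $\alpha_{1}=0$, where one instead uses $(R_{2}-S_{1})/2+e\le 0$), but you explicitly leave the verification of condition (iii) as an expectation. This is where the lemma's content lives: the paper reduces to the finite list $(k,R_{3},S_{1})\in\{(0,1,0),(1,1,0),(1,2,0),(1,2,1)\}$, shows in the middle two cases that the hypothesis of (iii) cannot hold (both $\ord(a_{1,3}b_{1})$ and $\ord(a_{2}a_{3})$ are odd, so $d(-a_{1,3}b_{1})=0$ and the defect inequality fails), and in the outer two cases needs a separate argument (Lemma \ref{lem:k=0}): if $d(-a_{1,3}b_{1})\ge 2e$ then $b_{1}\in\Delta\varepsilon\pi^{k}F^{\times 2}$ and $[a_{1},a_{2}]$ represents $[b_{1}]$ by \cite[63:11]{omeara_quadratic_1963}, whereas if $d(-a_{1,3}b_{1})\le 2e-1$ the hypothesis again fails; for $\alpha_{1}=0$ one also needs $[a_{1},a_{2}]\cong\mathbb{H}$ or $[1,-\Delta]$ via \cite[Corollary 2.3(ii)]{HeHu2}. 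Without these steps, and without the parity analysis above, the proposal is a correct plan whose two hard points are both left open.
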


In what follows, we assume that $FM\cong W_{2}^{3}(\varepsilon\pi^{k})$ with $\varepsilon\in \mathcal{U}$ and $k\in \{0,1\}$, and  $a_{i}=\varepsilon_{i}\pi^{R_{i}}$ with $\varepsilon_{i}\in \mathcal{U}$. Then $-\varepsilon\pi^{k}=a_{1,3}\in  \varepsilon_{1,3}\pi^{k}F^{\times 2}$. Also, from Proposition \ref{prop:nu-isotropic}(ii), $FM$ is anisotropic. 
\begin{prop}\label{prop:W23k-R-alpha}
	  Suppose that $M$ is $n$-ADC. Then  $R_{1}=0$ and either $\alpha_{1}=0$, or $\alpha_{1}=1$ and $d[-a_{1,2}]=1-R_{2}$.
\end{prop}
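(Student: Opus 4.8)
The plan is to read the invariants off the family of unary lattices that an ADC lattice is forced to represent, and then extract the two numerical conditions from Beli's representation criterion. Since $M$ is ADC and $FM\cong W_{2}^{3}(\varepsilon\pi^{k})$, Lemma~\ref{lem:lattice-rep-binary}(viii) shows that $M$ represents $\prec c'\succ$ for every $c'\in\mathcal{V}\setminus\{\varepsilon\pi^{k}\}$; as $|\mathcal{U}|\ge 2$, in particular $M$ represents some $\prec\varepsilon_{1}\succ$ and some $\prec\varepsilon_{2}\pi\succ$ with $\varepsilon_{1},\varepsilon_{2}\in\mathcal{U}$ (choosing $\varepsilon_{1}\ne\varepsilon$ if $k=0$, resp.\ $\varepsilon_{2}\ne\varepsilon$ if $k=1$). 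Feeding these two representations into Lemma~\ref{lem:dyadic-adc-R}(ii) gives at once $R_{1}=0$ and $\alpha_{1}\in\{0,1\}$. If $\alpha_{1}=0$ the conclusion holds, so I would assume $\alpha_{1}=1$ and aim to prove $d[-a_{1,2}]=1-R_{2}$.

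For $\alpha_{1}=1$, Proposition~\ref{prop:alphaproperty}(vi) already yields $d[-a_{1,2}]\ge 1-R_{2}$ (using $R_{1}=0$), together with equality whenever $R_{2}\ne 2-2e$. Thus the only case left is $R_{2}=2-2e$, where I must establish the matching upper bound $d[-a_{1,2}]\le 2e-1$. Since $FM$ is anisotropic, so is the plane $[a_{1},a_{2}]$, forcing $-a_{1}a_{2}\notin F^{\times 2}$; as $\ord(-a_{1}a_{2})=R_{2}$ is even and the lower bound already gives $d(-a_{1}a_{2})\ge 2e-1$, the square class of $-a_{1}a_{2}$ must be either one of defect $2e-1$ or the class of $\Delta$ (defect $2e$). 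In the first case $d[-a_{1,2}]=\min\{d(-a_{1}a_{2}),\alpha_{2}\}\le d(-a_{1}a_{2})=2e-1$, which combined with the lower bound finishes this case.

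The main obstacle is the remaining case $-a_{1}a_{2}\equiv\Delta$, in which $d(-a_{1}a_{2})=2e$ fails to bound $d[-a_{1,2}]$ directly and a genuinely arithmetic input is needed. Here $[a_{1},a_{2}]$ represents precisely the elements of even order, so the represented element $\varepsilon_{2}\pi$ (of odd order) is \emph{not} represented by $[a_{1},a_{2}]$. I would then apply Theorem~\ref{thm:beligeneral-1}(iii) to $N=\prec\varepsilon_{2}\pi\succ$ (so $S_{1}=1$): since $[\varepsilon_{2}\pi]\nrep[a_{1},a_{2}]$, its hypothesis must fail, giving $R_{3}\le 1$ or $d[-a_{1,2}]+d(-a_{1,3}\varepsilon_{2}\pi)\le 2e+1-R_{3}$. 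If $R_{3}\ge 2$, dropping the nonnegative defect term in the second inequality forces $d[-a_{1,2}]\le 2e+1-R_{3}\le 2e-1$, hence equality with the lower bound; if instead $R_{3}\le 1$, then $R_{3}\in\{0,1\}$ (as $R_{3}\ge R_{1}=0$) and Lemma~\ref{lem:alpha1} gives $d[-a_{1,2}]=1-R_{2}=2e-1$ outright. Either branch closes the case. The delicate points to watch are keeping the good-BONG inequalities \eqref{eq:BONGs} and the defect bookkeeping mutually consistent, and confirming that an odd-order representative $\varepsilon_{2}\pi\in\mathcal{V}\setminus\{\varepsilon\pi^{k}\}$ always survives whether $k=0$ or $k=1$.
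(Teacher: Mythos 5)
Your proof is correct, and its first half coincides with the paper's: both use Lemma \ref{lem:lattice-rep-binary}(viii) to produce the unary lattices $\prec\varepsilon_{1}\succ$, $\prec\varepsilon_{2}\pi\succ$, feed them into Lemma \ref{lem:dyadic-adc-R}(ii) to get $R_{1}=0$ and $\alpha_{1}\in\{0,1\}$, and invoke Proposition \ref{prop:alphaproperty}(vi) for the lower bound $d[-a_{1,2}]\ge 1-R_{2}$. The divergence is in the upper bound: you did not use the part of Lemma \ref{lem:dyadic-adc-R}(ii) asserting ``either $R_{2}=-2e$ or $R_{2}\le 1-d[-a_{1,2}]$''; since $\alpha_{1}=1$ rules out $R_{2}=-2e$ by Proposition \ref{prop:alphaproperty}(iii), the paper reads off $d[-a_{1,2}]\le 1-R_{2}$ in all cases at once and is finished in one more line. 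Lacking this, you must close the exceptional case $R_{2}=2-2e$ by hand, and your argument there is sound: anisotropy of $FM$ together with the lower bound pins $d(-a_{1}a_{2})\in\{2e-1,2e\}$; the defect-$(2e-1)$ case is immediate; and in the $\Delta$-class case the plane $[a_{1},a_{2}]\cong a_{1}[1,-\Delta]$ represents only even-order elements, so negating the hypothesis of Theorem \ref{thm:beligeneral-1}(iii) for $N=\prec\varepsilon_{2}\pi\succ$ yields either $R_{3}\le 1$, where Lemma \ref{lem:alpha1} concludes, or $R_{3}\ge 2$ and $d[-a_{1,2}]\le 2e+1-R_{3}\le 2e-1$. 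Every step checks out, including the existence of an odd-order $\varepsilon_{2}\pi\ne\varepsilon\pi^{k}$ and the inequality $R_{3}\ge R_{1}=0$ from the good-BONG conditions. What the paper's route buys is brevity and uniformity; what yours buys is an explicit view of the arithmetic obstruction (an anisotropic plane of determinant $-\Delta$ cannot represent odd orders), at the cost of re-deriving inside the proposition a bound that the cited lemma already supplies.
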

\begin{proof}
	By Lemma \ref{lem:lattice-rep-binary}(viii), $M$ represents $\prec \varepsilon_{1}\succ$ and $\prec\varepsilon_{2}\pi\succ$ for some $\varepsilon_{1},\varepsilon_{2}\in \mathcal{U}$ and $\varepsilon_{1},\varepsilon_{2}\not=\varepsilon$. Hence, by Lemma \ref{lem:dyadic-adc-R}(ii), $R_{1}=0$ and either $R_{2}=-2e$ or $R_{2}\le 1-d[-a_{1,2}]$; and $\alpha_{1}\in \{0,1\}$. If $\alpha_{1}\not=0$, then, by Proposition \ref{prop:alphaproperty}(iii), $R_{2}\not=-2e$ and thus $ d[-a_{1,2}]\le 1-R_{2}$. Since $\alpha_{1}=1$, by Proposition \ref{prop:alphaproperty}(vi), $d[-a_{1,2}]\ge 1-R_{2}$ and thus $d[-a_{1,2}]=1-R_{2}$. 
\end{proof}
\begin{lem}\label{lem:W23k-alpha0}
	Suppose that $R_{1}=0$ and $R_{2}=-2e$. If $R_{3}>1$, then Theorem \ref{thm:beligeneral-1}(iii) fails for $M$ and $N=\prec  \eta\pi \succ$ with $\eta\in \mathcal{U}$.
\end{lem}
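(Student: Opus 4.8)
The plan is to read condition (iii) of Theorem \ref{thm:beligeneral-1} as an implication and exhibit, for the pair $M$ and $N=\prec\eta\pi\succ$, a situation where its hypothesis is satisfied but its conclusion is false. Here $b_{1}=\eta\pi$ and $S_{1}=R_{1}(N)=1$, and recall from the running assumption that $FM\cong W_{2}^{3}(\varepsilon\pi^{k})$ is anisotropic by Proposition \ref{prop:nu-isotropic}(ii). Since $R_{2}-R_{1}=-2e-0=-2e$, Proposition \ref{prop:alphaproperty}(iii) gives $\alpha_{1}=0$, and then Proposition \ref{prop:alphaproperty}(v) yields $d[-a_{1,2}]\ge 2e$; this single inequality will drive both halves of the argument.

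First I would verify the hypothesis of (iii). The condition $R_{3}>S_{1}$ is immediate from $R_{3}>1$. For the second part, I bound the left-hand side below using $d[-a_{1,2}]\ge 2e$ together with $d(-a_{1,3}b_{1})\ge 0$, so that $d[-a_{1,2}]+d(-a_{1,3}b_{1})\ge 2e$. Since $R_{3}$ is an integer with $R_{3}>1$, we have $R_{3}\ge 2$, hence the right-hand side $2e+S_{1}-R_{3}=2e+1-R_{3}\le 2e-1<2e$. Therefore $d[-a_{1,2}]+d(-a_{1,3}b_{1})>2e+S_{1}-R_{3}$, and both parts of the hypothesis of (iii) hold.

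It then remains to show that the conclusion $[b_{1}]\rep[a_{1},a_{2}]$ fails. From $d[-a_{1,2}]=\min\{d(-a_{1}a_{2}),\alpha_{2}\}\ge 2e$ I first read off $d(-a_{1}a_{2})\ge 2e$. Because $\ord(-a_{1}a_{2})=R_{1}+R_{2}=-2e$ is even and $[a_{1},a_{2}]$ inherits anisotropy from $FM$, the element $-a_{1}a_{2}$ cannot be a square; hence $d(-a_{1}a_{2})=2e$, which forces $-a_{1}a_{2}\equiv\Delta\pmod{F^{\times 2}}$, the unique nonsquare unit class of maximal defect order $2e$. I would then apply Proposition \ref{prop:ternary-space-rep}: the space $[a_{1},a_{2}]$ represents $[b_{1}]$ if and only if $(-a_{1}a_{2},a_{1}b_{1})_{\mathfrak{p}}=1$. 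Since $\ord(a_{1})=R_{1}=0$, we have $\ord(a_{1}b_{1})=\ord(\eta\pi)=1$, and the standard value $(\Delta,x)_{\mathfrak{p}}=(-1)^{\ord(x)}$ for the unramified class gives $(-a_{1}a_{2},a_{1}b_{1})_{\mathfrak{p}}=(\Delta,a_{1}\eta\pi)_{\mathfrak{p}}=-1\neq 1$. Thus $[b_{1}]$ is not represented by $[a_{1},a_{2}]$, independently of the choice of $\eta\in\mathcal{U}$.

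Combining the two steps, the hypothesis of Theorem \ref{thm:beligeneral-1}(iii) holds while its conclusion fails, so (iii) fails for $M$ and $N=\prec\eta\pi\succ$, as claimed. The only genuinely delicate point will be the identification $-a_{1}a_{2}\equiv\Delta$: it requires ruling out the square case via anisotropy and invoking the fact that defect order $2e$ singles out the unramified unit class. Everything else is routine bookkeeping with the $R_{i}$- and $\alpha_{i}$-invariants and a single Hilbert-symbol evaluation.
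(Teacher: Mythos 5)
Your proof is correct and follows essentially the same route as the paper's: both use Proposition \ref{prop:alphaproperty}(v) to get $d[-a_{1,2}]\ge 2e$, deduce $-a_{1}a_{2}\in\Delta F^{\times 2}$ from anisotropy of $[a_{1},a_{2}]$, verify the hypothesis of Theorem \ref{thm:beligeneral-1}(iii) via $d[-a_{1,2}]+d(-a_{1,3}b_{1})\ge 2e>2e+S_{1}-R_{3}$, and then kill the conclusion by the Hilbert-symbol computation $(\Delta,\eta\pi)_{\mathfrak{p}}=-1$ (the paper phrases this as $[1,-\Delta]$ failing to represent $[\eta\pi]$ via Proposition \ref{prop:ternary-space-rep}). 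The only cosmetic differences are that you pass through $\alpha_{1}=0$ explicitly and use $R_{3}\ge 2$ where the paper uses $R_{3}>S_{1}$ directly; the substance is identical.
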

\begin{proof}
	Since $R_{2}-R_{1}=-2e$, by Proposition \ref{prop:alphaproperty}(v), $d(-a_{1}a_{2})\ge d[-a_{1,2}]\ge 2e$, so $-a_{1}a_{2}\in F^{\times 2}\cup \Delta F^{\times 2}$. However, $FM$ is anisotropic, so is $[a_{1},a_{2}]$ and thus $-a_{1}a_{2}\in \Delta F^{\times 2}$. As $\ord(a_{1})=R_{1}=0$, $[a_{1},a_{2}]\cong [1,-\Delta]$.
	
	Let $N=\prec \eta\pi\succ$. Then  $R_{3}>1=S_{1}$. Since $d[-a_{1,2}]\ge 2e$ and $d (-a_{1,3}b_{1}) \ge 0$, we see that
	\begin{align*}
		d[-a_{1,2}]+d (-a_{1,3}b_{1})\ge  2e+0=2e>2e+S_{1}-R_{3}.
	\end{align*}
	However, by Proposition \ref{prop:ternary-space-rep}, $[a_{1},a_{2}]\cong [1,-\Delta]$ fails to represent $[b_{1}]=[\eta\pi]$.
\end{proof}
 \begin{lem}\label{lem:W23k-alpha1}
	Suppose that $R_{1}=0$, $R_{2}\in [2-2e,0]^{E}\cup \{1\}$, $\alpha_{1}=1$ and $d[-a_{1,2}]= 1-R_{2}$. 
	\begin{enumerate}[itemindent=-0.5em,label=\rm (\roman*)]
	\item If $R_{2}\in [2-2e,0]^{E}$ and $R_{3}>k$, then Theorem \ref{thm:beligeneral-1}(iii) fails for $M$ and $N=\prec -\Delta \varepsilon_{1,3}\pi^{k}\succ$.
	
	\item	If $R_{2}=1$  and $R_{3}>k+1$, then Theorem \ref{thm:beligeneral-1}(iii) fails for $M$ and $N=\prec -\kappa\varepsilon_{1,3}\pi^{k} \succ$ or $\prec -\kappa\Delta\varepsilon_{1,3}\pi^{k} \succ$, where $\kappa$ is the unit defined as in  Proposition \ref{prop:maximallattices-binary-ternary}(ii).
 	\end{enumerate} 
\end{lem}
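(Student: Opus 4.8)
To say that Theorem~\ref{thm:beligeneral-1}(iii) \emph{fails} for $M$ and a unary $N=\prec b_1\succ$ means that the two hypotheses $R_3>S_1$ and $d[-a_{1,2}]+d(-a_{1,3}b_1)>2e+S_1-R_3$ hold, yet the conclusion $[b_1]\rep[a_1,a_2]$ does not; this is what I would establish. Since the BONG vectors are pairwise orthogonal, $FM\cong[a_1,a_2,a_3]$ and $[a_1,a_2]$ is an honest binary subspace, necessarily anisotropic because $FM\cong W_2^3(\varepsilon\pi^k)$ is anisotropic (Proposition~\ref{prop:nu-isotropic}(ii)). I will repeatedly use that $\det FM\equiv-\varepsilon\pi^k$ forces $R_1+R_2+R_3\equiv k\pmod 2$, and that the anisotropic ternary space $FM$ represents every square class except $-\det FM\equiv\varepsilon\pi^k$ (cf. Proposition~\ref{prop:space} and Lemma~\ref{lem:lattice-rep-binary}(viii)).

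First I would dispatch the two hypotheses of (iii), writing $S_1=\ord(b_1)=k$. Reducing modulo squares, $-a_{1,3}b_1\equiv\Delta$ in case (i) and $-a_{1,3}b_1\equiv\kappa$ (respectively $\kappa\Delta$) in case (ii), so $d(-a_{1,3}b_1)=2e$ in case (i) and $d(-a_{1,3}b_1)=2e-1$ in case (ii), the latter because $d(\kappa)=2e-1<2e=d(\Delta)$. Substituting $d[-a_{1,2}]=1-R_2$, the defect inequality becomes $R_3>R_2+k-1$ in case (i) (immediate from $R_3>k$ and $R_2\le 0$) and $R_3>k+1$ in case (ii) (the standing hypothesis); together with $R_3>S_1=k$ this gives both hypotheses. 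Moreover $b_1\not\equiv\varepsilon\pi^k$, since $\Delta$, $\kappa$, and $\kappa\Delta$ are non-squares, so $FM$ represents $b_1$ and $N$ is a legitimate test with $FN\rep FM$.

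The crux is to falsify $[b_1]\rep[a_1,a_2]$, i.e. to show $(-a_1a_2,a_1b_1)_\mathfrak{p}=-1$ by Proposition~\ref{prop:ternary-space-rep}. In case (i), $R_2$ even makes $w:=-a_1a_2$ a unit, and a square-class computation gives $b_1\equiv\Delta w a_3$; hence $(w,a_1b_1)_\mathfrak{p}=(w,a_1a_3)_\mathfrak{p}(w,\Delta)_\mathfrak{p}(w,w)_\mathfrak{p}=(w,-a_1a_3)_\mathfrak{p}$, using $(w,\Delta)_\mathfrak{p}=1$ (as $w$ is a unit) and $(w,w)_\mathfrak{p}=(w,-1)_\mathfrak{p}$. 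By Proposition~\ref{prop:ternary-space-rep} the right-hand side is $1$ exactly when $[a_1,a_2]$ represents $-a_3$, i.e. when $FM=[a_1,a_2]\perp[a_3]$ is isotropic; anisotropy of $FM$ then forces the value $-1$, as needed. In case (ii), $R_2=1$ makes $w$ of odd order, so $(w,\Delta)_\mathfrak{p}=-1$; the two candidates for $b_1$ differ by $\Delta$, whence $(-a_1a_2,a_1b_1)_\mathfrak{p}$ takes opposite signs on them, so $[a_1,a_2]$ represents exactly one of them and I take $N$ to be the other. The main obstacle is the Hilbert-symbol bookkeeping; the decisive simplification---valid precisely because $w$ is a unit in case (i)---is the reduction of ``$[a_1,a_2]$ represents $b_1$'' to ``$FM$ is isotropic,'' and the breakdown of that unit condition when $R_2=1$ is exactly what produces the alternative ``or'' in the statement of part (ii).
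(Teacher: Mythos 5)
Your proposal is correct and follows essentially the same route as the paper's proof: you verify the two hypotheses of Theorem \ref{thm:beligeneral-1}(iii) via the defect values $d(-a_{1,3}b_{1})=2e$ (resp. $2e-1$, using $d(\kappa)=d(\kappa\Delta)=2e-1$), and then falsify the conclusion by the same Hilbert-symbol computation—$(-a_{1}a_{2},\Delta)_{\mathfrak{p}}=1$ combined with anisotropy of $FM$ in case (i), and the sign flip coming from $(-a_{1}a_{2},\Delta)_{\mathfrak{p}}=-1$ between the two $\Delta$-related candidates in case (ii), where the paper instead cites \cite[63:11]{omeara_quadratic_1963}. One cosmetic point: in case (i) the element $w=-a_{1}a_{2}$ need not be a unit (its order $R_{2}\in[2-2e,0]^{E}$ can be negative), only a unit modulo squares; since Hilbert symbols depend only on square classes, your conclusion $(w,\Delta)_{\mathfrak{p}}=1$ remains valid, the correct justification being that $\ord(-a_{1}a_{2})$ is even.
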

\begin{proof}
	  Recall from Proposition \ref{prop:maximallattices-binary-ternary}(ii) that $d(\kappa)=2e-1$ and thus $d(\kappa\Delta)=2e-1$, from the domination principle. So
	\begin{align*}
	 d(-a_{1,3}b_{1})=
		\begin{cases}
			  d(\Delta)=2e     &\text{if $N=\prec -\Delta \varepsilon_{1,3}\pi^{k} \succ$},\\
			  d(\kappa)=2e-1  &\text{if $N=\prec -\kappa \varepsilon_{1,3}\pi^{k} \succ$},\\
			  d(\kappa\Delta)=2e-1 &\text{if $N=\prec -\kappa\Delta\varepsilon_{1,3}\pi^{k} \succ$}.
		\end{cases}
	\end{align*}
	 Hence $R_{3}>k=S_{1}$ and
	\begin{align*}
		\;&d[-a_{1,2}]+d (-a_{1,3}b_{1})\\
		=\;& \begin{cases}
		(1-R_{2})+2e\ge 2e+1  &\text{if $R_{2}\in [2-2e,0]^{E}$ and $N=\prec -\Delta \varepsilon_{1,3}\pi^{k} \succ$},\\
		(1-R_{2})+2e-1\ge 2e-1&\text{if $R_{2}=1$ and $N=\prec -\kappa \varepsilon_{1,3}\pi^{k} \succ$ or $\prec -\kappa\Delta\varepsilon_{1,3}\pi^{k} \succ$},
		\end{cases}\\
	 >\;&2e+S_{1}-R_{3}.
	\end{align*}
	
	 (i)  Since $\ord(a_{1}a_{2}) $ is even, $(-a_{1}a_{2}, \Delta)_{\mathfrak{p}}=1$. Also, since $FM$ is anisotropic, by Proposition \ref{prop:ternary-space-rep}, $(-a_{1}a_{2},-a_{1}a_{3})_{\mathfrak{p}}=-1$. So
	  	 	 $(-a_{1}a_{2},-\Delta a_{1}a_{3})_{\mathfrak{p}}=-1$. Hence, by Proposition \ref{prop:ternary-space-rep} again, $[a_{1},a_{2}]$ fails to represent $[b_{1}]\cong [-\Delta a_{1,3}]$.
	  	 
	  	  (ii) Since  $\ord(a_{1}a_{2}) $ is odd, by \cite[63:11]{omeara_quadratic_1963}, $[a_{1},a_{2}]$ fails to represent $[b_{1}]\cong [-\kappa a_{1,3}]$ or $ [-\kappa\Delta a_{1,3}]$.
\end{proof}
 \begin{proof}[Proof of the necessity Lemma \ref{lem:W23k-d}]
 	 By Proposition \ref{prop:W23k-R-alpha}, $R_{1}=0$ and either $\alpha_{1}=0$, or $\alpha_{1}=1$ and $d[-a_{1,2}]=1-R_{2}$. Also, $R_{3}\ge 0$.
 	 
 	 If $\alpha_{1}=0$, by Proposition \ref{prop:alphaproperty}(iii), $R_{2}=R_{2}-R_{1}=-2e$. Assume $R_{3}>1$. By Lemma \ref{lem:W23k-alpha0}, $M$ fails to represent $N=\prec\eta\pi\succ$ for all $\eta\in \mathcal{U}$. But by Lemma \ref{lem:lattice-rep-binary}(viii), $M$ represents $\prec \eta\pi\succ$ for all $\eta\in \mathcal{U}$ with $\eta\not=\varepsilon$, a contradiction. So $R_{3}\in \{0,1\}$. Note that $R_{3}\equiv \ord(a_{1,3}) \equiv k\pmod{2} $ and hence $R_{3}=k$.
 	 
 	 Assume that $\alpha_{1}=1$ and $d[-a_{1,2}]=1-R_{2}$. Then $R_{2}\in [2-2e,0]^{E}\cup \{1\}$.
 	 
 	 If $R_{2}\in [2-2e,0]^{E}$ and $R_{3}>k$, by Lemma \ref{lem:W23k-alpha1}(i), $M$ fails to represent $N=\prec -\Delta \varepsilon_{1,3}\pi^{k}\succ$. But since $-\Delta\varepsilon_{1,3}\pi^{k}\not=\varepsilon\pi^{k}$, by Lemma \ref{lem:lattice-rep-binary}(viii), $M$ represents $N$, a contradiction. Hence $R_{3}\le k $ and thus $R_{3}\in \{0,1\}$. Similar to the previous case, we deduce $R_{3}=k$.
 	 	 
 	 If $R_{2}=1$ and $R_{3}>k+1$, by Lemma \ref{lem:W23k-alpha1}(ii), $M$ fails to represent  either $ \prec -\kappa  \varepsilon_{1,3}\pi^{k}\succ$ or $ \prec -\kappa\Delta \varepsilon_{1,3}\pi^{k}\succ$. But $-\kappa \varepsilon_{1,3}\pi^{k}\not=\varepsilon\pi^{k}$ and $-\kappa\Delta\varepsilon_{1,3}\pi^{k}\not=\varepsilon\pi^{k}$, by Lemma \ref{lem:lattice-rep-binary}(viii), $M$ represents both $\prec-\kappa \varepsilon_{1,3}\pi^{k}\succ $ and $\prec-\kappa\Delta\varepsilon_{1,3}\pi^{k}\succ$, a contradiction. So $R_{3}\le k+1$. Note that $R_{3}\equiv \ord(a_{1,3})+1 \equiv k+1\pmod{2}  $ and hence $(k,R_{3})\in \{(0,1),(1,0),(1,2)\}$. If $(k,R_{3})=(1,0)$, then $R_{2}>R_{3}$, which contradicts Proposition \ref{prop:Rproperty}(iv). Therefore, $R_{3}=k+1$.
 \end{proof}
 \begin{lem}\label{lem:k=0}
 	Let $N=\prec b_{1}\succ$. Suppose that $R_{1}=0$,  $R_{2}=1$ and $FM $ represents $FN=[b_{1}]$. If $d(-a_{1,3}b_{1})\ge 2e$, then $[a_{1},a_{2}]$ represents $[b_{1}]$.
 \end{lem}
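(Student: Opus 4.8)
The plan is to reduce the statement to a single Hilbert-symbol identity through Proposition \ref{prop:ternary-space-rep} and then to evaluate that symbol using the anisotropy of $FM$ together with the defect hypothesis. Recall that throughout this part $FM\cong W_{2}^{3}(\varepsilon\pi^{k})$ is anisotropic by Proposition \ref{prop:nu-isotropic}(ii), with $\det FM=a_{1,3}\equiv-\varepsilon\pi^{k}$ modulo $F^{\times2}$. By Proposition \ref{prop:ternary-space-rep} the desired conclusion ``$[a_{1},a_{2}]$ represents $[b_{1}]$'' is equivalent to $(-a_{1}a_{2},a_{1}b_{1})_{\mathfrak{p}}=1$, while the anisotropy of $[a_{1},a_{2},a_{3}]$ is equivalent (same proposition, now saying $[a_{1},a_{2}]$ does not represent $-a_{3}$) to $(-a_{1}a_{2},-a_{1}a_{3})_{\mathfrak{p}}=-1$. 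These two symbols are the only inputs needed once the square class of $b_{1}$ has been pinned down.

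First I would determine the square class of $b_{1}$. An anisotropic ternary space represents every class of $F^{\times}/F^{\times2}$ except $-\det FM\equiv\varepsilon\pi^{k}$: if it represented $-\det FM$, the orthogonal complement of a corresponding vector would be a binary form of determinant $\equiv-1$, hence isotropic, contradicting anisotropy; conversely for any other class $b_{1}$ the form $[a_{1},a_{2},a_{3},-b_{1}]$ has non-square determinant and is therefore isotropic. Since $FM$ represents $b_{1}$, this forces $b_{1}\not\equiv\varepsilon\pi^{k}$. On the other hand $d(-a_{1,3}b_{1})\ge 2e$ forces $-a_{1,3}b_{1}\equiv1$ or $\Delta$: an element of odd order has defect order $0$, so $\ord(-a_{1,3}b_{1})$ is even, and a unit of defect order $\ge 2e$ is either a square or lies in $\Delta F^{\times2}$ by the maximality $d(\Delta)=2e$. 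Thus $b_{1}\equiv\varepsilon\pi^{k}$ or $b_{1}\equiv\Delta\varepsilon\pi^{k}$, and combining with the previous line gives $b_{1}\equiv\Delta\varepsilon\pi^{k}\equiv-\Delta a_{1,3}$.

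It then remains to evaluate $(-a_{1}a_{2},a_{1}b_{1})_{\mathfrak{p}}$. From $b_{1}\equiv-\Delta a_{1,3}$ we get $a_{1}b_{1}\equiv-\Delta a_{2}a_{3}$, and writing $-\Delta a_{2}a_{3}\equiv(-a_{1}a_{3})(\Delta a_{1}a_{2})$ modulo squares yields
\begin{align*}
(-a_{1}a_{2},a_{1}b_{1})_{\mathfrak{p}}=(-a_{1}a_{2},-a_{1}a_{3})_{\mathfrak{p}}\,(-a_{1}a_{2},\Delta)_{\mathfrak{p}}\,(-a_{1}a_{2},a_{1}a_{2})_{\mathfrak{p}}.
\end{align*}
Here $(-a_{1}a_{2},a_{1}a_{2})_{\mathfrak{p}}=1$ by the identity $(z,-z)_{\mathfrak{p}}=1$; the first factor equals $-1$ by anisotropy; and $(-a_{1}a_{2},\Delta)_{\mathfrak{p}}=(\Delta,-a_{1}a_{2})_{\mathfrak{p}}=(-1)^{\ord(-a_{1}a_{2})}=(-1)^{R_{1}+R_{2}}=-1$, since $\Delta$ generates the unramified quadratic extension and $R_{1}+R_{2}=1$ is odd. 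Hence the product is $(-1)(-1)=1$, so $(-a_{1}a_{2},a_{1}b_{1})_{\mathfrak{p}}=1$ and $[a_{1},a_{2}]$ represents $[b_{1}]$.

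The main obstacle is the middle step, namely localizing $b_{1}$ to a single square class. This is exactly where the two hypotheses must be played against one another: the defect bound $d(-a_{1,3}b_{1})\ge 2e$ by itself leaves the two candidates $\varepsilon\pi^{k}$ and $\Delta\varepsilon\pi^{k}$, and it is the anisotropy of $FM$, which excludes precisely the non-represented class $-\det FM\equiv\varepsilon\pi^{k}$, that removes the wrong candidate. After $b_{1}\equiv\Delta\varepsilon\pi^{k}$ is secured the remainder is a short and routine Hilbert-symbol computation, in which the oddness of $R_{1}+R_{2}$ coming from $R_{1}=0$, $R_{2}=1$ is what makes the unramified symbol $(\Delta,-a_{1}a_{2})_{\mathfrak{p}}$ nontrivial and cancels the $-1$ contributed by anisotropy.
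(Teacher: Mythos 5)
Your proposal is correct and follows essentially the same route as the paper's proof: both arguments use the defect bound $d(-a_{1,3}b_{1})\ge 2e$ together with the fact that the anisotropic space $FM\cong W_{2}^{3}(\varepsilon\pi^{k})$ does not represent $[\varepsilon\pi^{k}]$ to pin down $b_{1}\equiv\Delta\varepsilon\pi^{k}$, and then conclude by exploiting the oddness of $\ord(a_{1}a_{2})$. The only difference is presentational: where the paper cites Lemma \ref{lem:lattice-rep-binary}(viii) and \cite[63:11]{omeara_quadratic_1963}, you prove the needed facts inline, via the quaternary-isotropy argument and an explicit Hilbert-symbol evaluation, respectively.
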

 \begin{proof}
   If $d(-a_{1,3}b_{1})\ge 2e$, then $-a_{1,3}b_{1}\in F^{\times 2}\cup \Delta F^{\times 2}$. Note that $-a_{1,3}\in \varepsilon\pi^{k} F^{\times 2}$ and hence $b_{1}\in \varepsilon \pi^{k}F^{\times 2}\cup \Delta\varepsilon\pi^{k}F^{\times 2}$. By Lemma \ref{lem:lattice-rep-binary}(viii), $FM$ does not represent $[\varepsilon\pi^{k}]$, so $b_{1}\in \Delta\varepsilon\pi^{k} F^{\times 2}$ from the hypothesis. The subspace $[a_{1},a_{2}]$ of $FM$ does not represent $[\varepsilon\pi^{k}]$, neither. Since $\ord(a_{1}a_{2})$ is odd, by \cite[63:11]{omeara_quadratic_1963}, $[a_{1},a_{2}]$ represents $[b_{1}]=[\Delta\varepsilon\pi^{k}]$.
 \end{proof}
\begin{proof}[Proof of sufficiency of Lemma \ref{lem:W23k-d}]
	Since $FM \cong W_{\nu}^{2}(\varepsilon\pi^{k})$ represents $[c]$ for all $c\in \mathcal{V}$ with $c\not=\varepsilon\pi^{k}$. Hence, by Lemma \ref{lem:ADC-sufficient-equiv}(ii), it suffices to show that $M$ represents all the corresponding lattices $\prec c\succ$. Let $N=\prec c\succ$. Then $S_{1}=\ord(c)\in \{0,1\}$.
	
	Clearly, $R_{1}=0\le S_{1}$. Thus, Theorem \ref{thm:beligeneral-1}(i) is verified.  
	
	Since $\ord(a_{1}b_{1})\equiv S_{1}\pmod{2}$, if $S_{1}=0$, then $d(a_{1}b_{1})\ge 1\ge \alpha_{1}$; if  $S_{1}=1$, then $d(a_{1}b_{1})=0$. Hence
	\begin{align*}
		d[a_{1}b_{1}]=\min\{d(a_{1}b_{1}),\alpha_{1}\}=\begin{cases}
			\alpha_{1}  &\text{if $S_{1}=0$}, \\
			0  &\text{if $S_{1}=1$}. 
		\end{cases}
	\end{align*}
 If $S_{1}=0$, then $S_{1}=R_{1}$. By Proposition \ref{prop:alphaproperty}(ii),
	\begin{align*}
		A_{1}=\min\{\dfrac{R_{2}-S_{1}}{2}+e,R_{2}-S_{1}+d[-a_{1,2}]\}=\alpha_{1}= d[a_{1}b_{1}].
	\end{align*}
		If $\alpha_{1}=0$, then, by Proposition \ref{prop:alphaproperty}(iii), $R_{2}=R_{2}-R_{1}=-2e$. Hence
	\begin{align*}
		A_{1}\le (R_{2}-S_{1})/2+e=(-2e-S_{1})/2+e=-S_{1}/2\le 0\le d[a_{1}b_{1}].
	\end{align*}
	Assume that $S_{1}=\alpha_{1}=1$ and $d[-a_{1,2}]=1-R_{2}$. Then
		\begin{align*}
		A_{1}\le   R_{2}-S_{1}+d[-a_{1,2}]=R_{2}-1+(1-R_{2})=0=d[a_{1}b_{1}].
	\end{align*}
	Thus, Theorem \ref{thm:beligeneral-1}(ii) is verified. 
	
	Assume that $R_{3}>S_{1}$. Otherwise, Theorem \ref{thm:beligeneral-1}(iii) is trivial.
	
	If $\alpha_{1}=0$, then $R_{3}\in \{0,1\}$ and thus $S_{1}=0$. Since $R_{2}-R_{1}=-2e$ and $c\in \mathcal{U}$, by \cite[Corollary 2.3(ii)]{HeHu2}, $[a_{1},a_{2}]\cong \mathbb{H}$ or $[1,-\Delta]$, representing $[b_{1}]=[c]$ in either case.
	
	Suppose that $\alpha_{1}=1$ and $d[-a_{1,2}]=1-R_{2}$. Then  $(k,R_{3})\in \{(0,0),(0,1),(1,1),(1,2)\}$. From the assumption $R_{3}>S_{1}$, we only need to consider four cases:
	\begin{align*}
		(k,R_{3},S_{1})=\{(0,1,0),(1,1,0),(1,2,0),(1,2,1)\}.
	\end{align*}
	 
	 When $(k,R_{3},S_{1})= (1,1,0)$ or $(1,2,0)$, since $\ord(a_{1,3})$ is odd and $\ord(a_{1})=\ord(b_{1})=0$, both $\ord(a_{1,3}b_{1})$ and $\ord(a_{2}a_{3})$ are odd. So $d(-a_{1,3}b_{1})=0$ and 
	\begin{align*}
		R_{3}=\begin{cases}
			2  &\text{if $R_{2}=1$},\\
			1  &\text{if $R_{2}\in [2-2e,0]^{E}$}.
		\end{cases}
	\end{align*}
	Hence
	\begin{align*}
		d[-a_{1,2}]=1-R_{2}\begin{cases}
			=0\le 2e-2=2e+S_{1}-R_{3}&\text{if $R_{2}=1$},\\
			\le 2e-1=2e+S_{1}-R_{3}&\text{if $R_{2}\in [2-2e,0]^{E}$}.
		\end{cases}
	\end{align*}
	So, in both cases, $d[-a_{1,2}]+d (-a_{1,3}b_{1})\le 2e+S_{1}-R_{3}$.
	
	When $(k,R_{3},S_{1})= (0,1,0)$ or $(1,2,1)$, let $N=\prec \eta\pi^{k}\succ$ with $\eta\in \mathcal{U}$ and $\eta\not=\varepsilon$.  Then $FM\cong W_{2}^{3}(\varepsilon\pi^{k})$ represents $[b_{1}]=[\eta\pi^{k}]$. If $d(-a_{1,3}b_{1})\ge 2e$, by Lemma \ref{lem:k=0}, $[a_{1},a_{2}]$ represents $[b_{1}]$. Assume $d(-a_{1,3}b_{1})\le 2e-1$. Since $R_{2}=1$ and $(R_{3},S_{1})\in \{(1,0),(2,1)\}$, we have
	 \begin{align*}
	 	d[-a_{1,2}]+d (-a_{1,3}b_{1}) \le (1-R_{2})+(2e-1)=2e-1=2e+S_{1}-R_{3}.
	 \end{align*}
	 Thus, in all cases, Theorem \ref{thm:beligeneral-1}(iii) is verified.
\end{proof}
To show Theorem \ref{thm:dyadic-ternary}, we need to derive the explicit structure from the invariants obtained in Lemmas \ref{lem:W13k} and \ref{lem:W23k}. The treatment is similar to that in \cite[\S 7]{He25}, but there are additional cases needed to be addressed. 

Unless otherwise stated, we assume that  
\begin{align}\label{H1h}
	FM\cong W_{\nu}^{3}(c)\quad\text{and}\quad M\not\cong H_{1,h}^{3}(c),
\end{align}
with $\nu\in \{1,2\}$, $c\in \mathcal{V}$ and $h\ge 1$. Then Lemmas \ref{lem:W13k} and \ref{lem:W23k} can be rephrased as follows.
\begin{lem}\label{lem:R2}
	Let $\nu\in \{1,2\}$, $k\in \{0,1\}$ and $\varepsilon\in \mathcal{U}$. Suppose that $M$ is ADC. Then $R_{1}=0$, $R_{2}\in [-2e,0]^{E}\cup \{1\}$ and $\alpha_{1}\in \{0,1\}$. Also,
	\begin{enumerate}[itemindent=-0.5em,label=\rm (\roman*)]	
		\item  if $FM\cong W_{\nu}^{3}(\varepsilon\pi^{k})$ and $R_{2}\in [-2e,0]^{E}$, then $R_{3}=k$;
		
		\item  if $FM\cong W_{2}^{3}(\varepsilon\pi^{k})$ and $R_{2}=1$, then $R_{3}=k+1$.
	\end{enumerate}
\end{lem}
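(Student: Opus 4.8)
The plan is to obtain Lemma~\ref{lem:R2} as a direct repackaging of the two classification results Lemmas~\ref{lem:W13k} and~\ref{lem:W23k}, once the standing hypothesis~\eqref{H1h} is imposed. Since $FM\cong W_{\nu}^{3}(c)$, I would first split according to $\nu$, equivalently (by Proposition~\ref{prop:nu-isotropic}) according to whether $FM$ is isotropic ($\nu=1$) or anisotropic ($\nu=2$). In either case the hypothesis that $M$ is ADC, fed into Lemma~\ref{lem:W13k} or~\ref{lem:W23k}, immediately forces $R_{1}=0$ and confines $(\alpha_{1},R_{2})$ to a short list of alternatives; the only work is to check that~\eqref{H1h} discards exactly those alternatives falling outside the asserted ranges, and that each surviving alternative produces the claimed value of $R_{3}$.

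For $\nu=1$, Lemma~\ref{lem:W13k} offers two possibilities. Its case~(i) is there shown to be equivalent to $M\cong H_{1,h}^{3}(c)$ with $h\ge 0$, and then $R_{3}=\ord(c\pi^{2h})=k+2h$; thus the subcases $h\ge 1$ are precisely those with $R_{3}>k$, and these are exactly what~\eqref{H1h} removes, leaving only $h=0$ with $\alpha_{1}=0$, $R_{2}=-2e$ and $R_{3}=k$. Its case~(ii) gives $\alpha_{1}=1$, $R_{2}\in[2-2e,0]^{E}$ and $R_{3}=k$. Combining, under~\eqref{H1h} one gets $R_{2}\in\{-2e\}\cup[2-2e,0]^{E}=[-2e,0]^{E}$, $\alpha_{1}\in\{0,1\}$ and $R_{3}=k$; this proves the main assertion and part~(i) in the isotropic case, part~(ii) being vacuous since $R_{2}=1$ never arises for $\nu=1$.

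For $\nu=2$ the space $W_{2}^{3}(c)$ is anisotropic while $H_{1,h}^{3}(c)$ is isotropic, so~\eqref{H1h} holds automatically and all three alternatives of Lemma~\ref{lem:W23k} remain available. Cases~(i) and~(ii) again give $\alpha_{1}\in\{0,1\}$, $R_{2}\in[-2e,0]^{E}$ and $R_{3}=k$, whereas case~(iii) gives $R_{2}=1$ and $R_{3}=k+1$, with $\alpha_{1}=1$ following from $R_{2}-R_{1}=1$ via Proposition~\ref{prop:alphaproperty}(iv). This yields the remaining range $R_{2}\in[-2e,0]^{E}\cup\{1\}$ together with parts~(i) and~(ii). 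I do not expect a genuine obstacle here: the statement merely merges the isotropic and anisotropic classifications and extracts their $R_{i}$- and $\alpha_{1}$-data. The single point needing care is to make explicit that~\eqref{H1h} is exactly the condition killing the $h\ge 1$ tail, so that the open-ended bound $R_{3}\ge 0$ in Lemma~\ref{lem:W13k}(i) collapses to $R_{3}=k$.
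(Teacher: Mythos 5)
Your proposal is correct and matches the paper's treatment: the paper gives no separate proof of Lemma \ref{lem:R2}, presenting it as a direct rephrasing of Lemmas \ref{lem:W13k} and \ref{lem:W23k} under the standing assumption \eqref{H1h}, which is exactly the merge you carry out. Your only added content — noting that \eqref{H1h} kills the $h\ge 1$ tail of Lemma \ref{lem:W13k}(i) so $R_{3}=k$ there, that \eqref{H1h} is automatic for $\nu=2$ by anisotropy, and that $\alpha_{1}=1$ in the $R_{2}=1$ case via Proposition \ref{prop:alphaproperty}(iv) — is precisely the bookkeeping the paper leaves implicit, and it is all accurate.
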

\begin{lem}\label{lem:ADC-isometry-odd-R2}
	Let $M$ and $M^{\prime}$ be two ADC $\mathcal{O}_{F}$-lattices. Then  $M\cong M^{\prime}$ if and only if $FM\cong FM^{\prime} $ and $R_{2}(M)=R_{2}(M^{\prime})$.
\end{lem}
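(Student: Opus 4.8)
The forward implication is immediate: an isometry $M\cong M'$ carries a good BONG of $M$ onto one of $M'$, so in particular $FM\cong FM'$ and $R_{2}(M)=R_{2}(M')$. For the converse I keep the standing hypothesis \eqref{H1h}, so that neither $M$ nor $M'$ is of the form $H_{1,h}^{3}(c)$ and Lemmas \ref{lem:R2}, \ref{lem:W13k} and \ref{lem:W23k} all apply. (This exclusion is essential: the lattices $H_{1,h}^{3}(c)$ with $h\ge 1$ all share the space $W_{1}^{3}(c)$ and the value $R_{2}=-2e$, yet have pairwise distinct $R_{3}=\ord(c)+2h$.) To prove $M\cong M'$ I will verify the three conditions of Beli's classification \cite[Theorem 3.2]{beli_Anew_2010}, exactly as in the proof of Lemma \ref{lem:binary-exceptions}, for good BONGs $\prec a_{1},a_{2},a_{3}\succ$ of $M$ and $\prec b_{1},b_{2},b_{3}\succ$ of $M'$: namely (i) $R_{i}(M)=R_{i}(M')$, (ii) $\alpha_{i}(M)=\alpha_{i}(M')$, and (iii) the prefix conditions $d(a_{1,i}b_{1,i})\ge\alpha_{i}$ (for $i=1,2$) together with $\det FM=\det FM'$.

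For (i): by Lemma \ref{lem:R2} one has $R_{1}(M)=R_{1}(M')=0$, while $R_{2}(M)=R_{2}(M')$ is the hypothesis. Since $FM\cong FM'$ fixes both the type $\nu$ and $c=\varepsilon\pi^{k}$, hence $k\in\{0,1\}$, Lemma \ref{lem:R2}(i)--(ii) force $R_{3}(M)=R_{3}(M')$, equal to $k$ when $R_{2}\in[-2e,0]^{E}$ and to $k+1$ when $R_{2}=1$. Thus all three $R$-invariants agree.

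For (ii): by Proposition \ref{prop:alphaproperty}(iii) and Lemma \ref{lem:R2} we have $\alpha_{1}\in\{0,1\}$, with $\alpha_{1}=0$ exactly when $R_{2}=-2e$, so $\alpha_{1}$ is a function of $R_{2}$ and $\alpha_{1}(M)=\alpha_{1}(M')$. For $\alpha_{2}$ I use Proposition \ref{prop:alphaproperty}(ii): $\alpha_{2}=\min\{(R_{3}-R_{2})/2+e,\;R_{3}-R_{2}+d[-a_{2,3}]\}$, where $d[-a_{2,3}]=\min\{d(-a_{2}a_{3}),\alpha_{1}\}$. Since $\ord(a_{2}a_{3})=R_{2}+R_{3}\equiv k\pmod 2$, the defect $d(-a_{2}a_{3})$ equals $0$ when $k=1$ and is $\ge 1$ when $k=0$; in either case $d[-a_{2,3}]$ depends only on $k$ and $\alpha_{1}$, so $\alpha_{2}$ is determined by $R_{2},R_{3},k$ and $\alpha_{1}$, whence $\alpha_{2}(M)=\alpha_{2}(M')$.

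For (iii): the determinant condition is part of $FM\cong FM'$. The case $i=1$ is cheap, since $\ord(a_{1}b_{1})=2R_{1}=0$ is even, so $d(a_{1}b_{1})\ge 1\ge\alpha_{1}$. The remaining case $i=2$, requiring $d(a_{1,2}b_{1,2})\ge\alpha_{2}$, is the crux. Using equality of determinants one computes $a_{1,2}b_{1,2}\equiv(\det FM)(\det FM')a_{3}^{-1}b_{3}^{-1}\equiv a_{3}b_{3}\pmod{F^{\times2}}$, so the task reduces to $d(a_{3}b_{3})\ge\alpha_{2}$. When $\alpha_{2}\le 1$ this again follows from $\ord(a_{3}b_{3})=2R_{3}$ being even. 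The genuine difficulty is when $\alpha_{2}>1$, i.e. when $R_{2}$ is strongly negative and the binary prefix $[a_{1},a_{2}]$ is deep; here I would exploit the pinned relative defect $d[-a_{1,2}]=1-R_{2}$ furnished by Lemmas \ref{lem:W13k}, \ref{lem:W23k-d} and \ref{lem:alpha1}, together with $FM\cong FM'$ and (in the anisotropic $W_{2}^{3}$ case) Proposition \ref{prop:ternary-space-rep}, to force the binary prefixes to agree, $[a_{1},a_{2}]\cong[b_{1},b_{2}]$; Witt cancellation in $FM\cong FM'$ then gives $[a_{3}]\cong[b_{3}]$, so $d(a_{3}b_{3})=\infty\ge\alpha_{2}$. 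I expect this matching of the deep binary components (the $i=2$ prefix condition) to be the main obstacle, and the boundary values $R_{2}\in\{-2e,0,1\}$ — which separate the $\mathbf{H}$-type isotropic behaviour, the non-maximal lattices of Lemma \ref{lem:W23k-d}, and the $R_{2}=1$ family — to demand separate bookkeeping, precisely the ``additional cases'' beyond \cite[\S 7]{He25}.
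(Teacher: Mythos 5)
Your frame (pin $R_{1}=0$, $R_{3}$, $\alpha_{1}$, $\alpha_{2}$, then verify Beli's isometry conditions) is the right one, and your treatment of conditions (i), (ii), the case $i=1$ of (iii), and the whole case $R_{2}=1$ matches the paper in substance; the paper in fact only argues $R_{2}=1$ from scratch and defers $R_{2}\in[-2e,0]^{E}$ to the proof of \cite[Lemma 7.15]{He25}. The genuine gap is exactly at the step you flag as the crux, and the route you propose there cannot work. Forcing $[a_{1},a_{2}]\cong[b_{1},b_{2}]$ is impossible from your hypotheses: by Lemmas \ref{lem:case-iii} and \ref{lem:alpha1-ternary}(iii), for any $\omega\in\mathcal{U}$ with $d(\omega)=2r+1<2e$ the lattice $N_{\nu'}^{2}(\omega)\perp\langle\omega c\rangle$ (with $(-1)^{\nu'}=(-1)^{\nu}(\omega,c)_{\mathfrak{p}}$) is ADC with space $W_{\nu}^{3}(c)$ and $R_{2}=-2r$, and its natural good BONG has binary prefix of discriminant $-\omega$. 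When $e\ge 2$ there exist two \emph{distinct} square classes $\omega\not\equiv\omega'$ of the same odd defect $2r+1\ge 3$ (e.g.\ in $\mathbb{Q}_{2}(\sqrt{2})$ the two classes of defect $3$, whose product is $\Delta$); the two resulting lattices satisfy all your hypotheses (same space, same $R_{2}$, both ADC) and must be shown isometric, yet their prefixes have discriminants $-\omega\not\equiv-\omega'$, so $[a_{1},a_{2}]\not\cong[b_{1},b_{2}]$ and Witt cancellation is unavailable. Since every input you use ($FM\cong FM'$, the $R_{i}$, the pinned $d[-a_{1,2}]=1-R_{2}$) is blind to the difference between $\omega$ and $\omega'$, no argument from these inputs can force prefix agreement; anisotropy of $FM$ does not help either, as an anisotropic ternary space contains binary subspaces of many discriminants.

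What closes this case is weaker than prefix isometry. In the range $\alpha_{1}=1$, $R_{2}\in[2-2e,0]^{E}$, $R_{3}=k$, one computes $\alpha_{2}=1-R_{2}$ exactly (since $d[-a_{2,3}]$ equals $0$ for $k=1$ and equals $\alpha_{1}=1$ for $k=0$), while $d[-a_{1,2}]=1-R_{2}$ forces $d(-a_{1,2})\ge 1-R_{2}$ and likewise $d(-b_{1,2})\ge 1-R_{2}$; the domination principle then gives $d(a_{1,2}b_{1,2})\ge\min\{d(-a_{1,2}),d(-b_{1,2})\}\ge 1-R_{2}=\alpha_{2}$, which is all that condition (iii) asks (in the example above $d(a_{3}b_{3})=d(\Delta)=2e\ge\alpha_{2}$ even though $a_{3}b_{3}\notin F^{\times 2}$). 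Your cancellation idea is sound only in the subcase $\alpha_{1}=0$, $R_{2}=-2e$, where both prefix discriminants are forced into $\{1,\Delta\}$. Two further points: for ternary lattices you need \cite[Theorem 3.1]{beli_Anew_2010} with its four conditions, not the binary \cite[Theorem 3.2]{beli_Anew_2010} that you import from Lemma \ref{lem:binary-exceptions}; and the fourth condition is not always vacuous under your hypotheses --- for $R_{2}=-2e$ and $k=1$ one gets $\alpha_{1}+\alpha_{2}=2e+1/2>2e$ --- so it must be checked (it holds because there $[a_{1},a_{2}]\cong\mathbb{H}$ or $[1,-\Delta]$ represents the unit $b_{1}$), whereas the bound $\alpha_{1}+\alpha_{2}=2\le 2e$ the paper invokes pertains only to the $R_{2}=1$ case.
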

\begin{proof}
	Necessity is clear. To show the sufficiency, it suffices to verify conditions (i)-(iv) of \cite[Theorem 3.1]{beli_Anew_2010} for $M$ and $M^{\prime}$. Let $M^{\prime}\cong \prec b_{1},b_{2},b_{3}\succ$ relative to some good BONG. Write $S_{i}=R_{i}(M^{\prime})$ and $\beta_{i}=\alpha_{i}(M^{\prime})$.
	
	First, by Lemma \ref{lem:R2}, we have 
	\begin{align}\label{ADCcon}
		 R_{1}=0,\quad R_{2} \in [-2e,0]^{E}\cup \{1\},\quad\text{and}\quad \alpha_{1}\in \{0,1\}.
	\end{align}
	 If  $R_{2} \in [-2e,0]^{E}$, then by Lemma \ref{lem:R2}(i), $R_{3}\in \{0,1\}$. This combined with \eqref{ADCcon} agrees with the conditions \cite[(7.3), p.\hskip 0.1cm 1013]{He25} with $n=1$. So we are done by following the proof of \cite[Lemma 7.15]{He25} with $n=1$. 
	
 	 Let $FM\cong W_{2}^{3}(\varepsilon\pi^{k})$. Suppose that $R_{2}=1$. Then $S_{2}=1$ from the hypothesis. Hence, by Lemma \ref{lem:R2}(ii), $R_{1}=S_{1}$ and $R_{3}=S_{3}=k+1$. Thus condition (i) is fulfilled.
	
	Since $R_{2}-R_{1}=1$,  Proposition \ref{prop:alphaproperty}(iv) implies that $\alpha_{1}=1$. If $k=1$, then $R_{3}=R_{2}+1=2$. So, Proposition \ref{prop:alphaproperty}(iv) also implies that $ \alpha_{2}=1$. If $k=0$, then $R_{3}=R_{2}=1$. Since $\ord(a_{2}a_{3})$ is even, $d(-a_{2}a_{3})\ge 1=\alpha_{1}$. Hence $d[-a_{2}a_{3}]=\min\{d(-a_{2}a_{3}),\alpha_{1}\}=1$, so, by Proposition \ref{prop:alphaproperty}(ii), 
	\begin{align*}
		\alpha_{2}=\min\{\dfrac{R_{3}-R_{2}}{2}+e,R_{3}-R_{2}+d[-a_{2,3}]\}=\min\{e,1\}=1.
	\end{align*}
	 Similarly, $\beta_{1}=\beta_{2}=1$. Thus condition (ii) is fulfilled.
	
 	For $1\le i\le 2$, since $R_{i}=S_{i}$, $\ord(a_{1,i}b_{1,i})$ is even, and hence $d(a_{1,i}b_{1,i})\ge 1=\alpha_{i}$. Thus condition (iii) is fulfilled. For $k\in \{0,1\}$, we always have $\alpha_{1}+\alpha_{2}=2\le 2e$. Thus condition (iv) is trivially fulfilled.
\end{proof}

We extend \cite[Definition 7.16, Remark 7.17 and Lemma 7.20]{He25} to the case $n=1$, including $R_{2}(M)=1$.
\begin{defn}\label{defn:alpha1-ternary}
	Let $\nu\in \{1,2\}$, $r\in \{-1/2,0,\ldots,e\}$ and $c\in \mathcal{V}$. We denote by $M_{\nu,r}^{3}(c)$ the only ADC lattice $M$ with $FM\cong W_{\nu}^{3}(c)$ and $R_{2}(M)=-2r$.
\end{defn}
\begin{re}\label{re:alpha1-ternary}
	By Lemma \ref{lem:ADC-isometry-odd-R2}, such ternary lattice is unique up to isometry, if it exists.
	
	If a ternary $\mathcal{O}_{F}$-lattice $M$ is ADC, then we have  $R_{2}(M)\in [-2e,0]^{E}\cup \{1\}$, i.e., $R_{2}(M)=-2r$ for some $r\in \{-1/2,0,1,\ldots,e\}$. Hence $M\cong M_{\nu,r}^{3}(c)$, where $FM\cong W_{\nu}^{3}(c)$. So every ternary ADC lattice that is isometric to $M_{\nu,r}^{3}(c)$ for some $\nu\in \{1,2\}$, $r\in \{-1/2,0,1,\ldots,e\}$ and $c\in \mathcal{V}$, under the assumption \eqref{H1h}.
\end{re}
\begin{lem}\label{lem:undefined}
	Suppose that $M$ is ADC. 
	\begin{enumerate}[itemindent=-0.5em,label=\rm (\roman*)]	
		\item  If $FM\cong W_{2}^{3}(c)$ for some $c\in\mathcal{U}$, then $R_{2}\not=-2e$, equivalently, $M_{2,e}^{3}(c)$ is not defined.
		
		\item  If $FM\cong W_{1}^{3}(c)$ for some $c\in\mathcal{V}$, then $R_{2}\not=1$, equivalently, $M_{1,-1/2}^{3}(c)$ is not defined.
	\end{enumerate}
\end{lem}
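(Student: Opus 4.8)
The plan is to dispatch (ii) directly from the classification of ternary ADC lattices and to reserve the genuine argument for (i).

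For (ii), suppose $M$ is ADC with $FM\cong W_1^3(c)$, and write $c=\varepsilon\pi^k$ with $\varepsilon\in\mathcal U$, $k\in\{0,1\}$. Lemma \ref{lem:W13k} forces $R_1=0$ and either $R_2=-2e$ or $R_2\in[2-2e,0]^E$; in either case $R_2$ is a non-positive even integer, so $R_2\ne 1$. Since by Definition \ref{defn:alpha1-ternary} the symbol $M_{1,-1/2}^3(c)$ would denote the ADC lattice with $R_2=-2(-1/2)=1$, no such lattice exists, which is precisely (ii). No new input beyond Lemma \ref{lem:W13k} is needed here.

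For (i), assume toward a contradiction that $M$ is ADC with $FM\cong W_2^3(c)$, $c\in\mathcal U$, and $R_2=-2e$, and fix a good BONG $M\cong\prec a_1,a_2,a_3\succ$. Since $M$ is ADC we have $R_1=0$ (Lemma \ref{lem:R2}), so $R_2-R_1=-2e$, and the good-BONG inequality \eqref{eq:BONGs} gives $d(-a_1a_2)\ge 2e$. As $\ord(-a_1a_2)=R_1+R_2=-2e$ is even, this places $-a_1a_2$ in $F^{\times 2}\cup\Delta F^{\times 2}$. Consequently the binary subspace $[a_1,a_2]$ is isometric to $\mathbb H$ or to $a_1[1,-\Delta]$, and in both cases it represents every $x\in F^\times$ with $\ord(x)$ even: for $\mathbb H$ this is clear, while $[1,-\Delta]$ represents exactly the norms from the unramified quadratic extension $F(\sqrt\Delta)/F$, namely the even-order elements, and scaling by the unit $a_1$ preserves this set. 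Now $\det FM\equiv -c\pmod{F^{\times 2}}$ with $\ord(c)=0$, so $R_1+R_2+R_3$ is even and hence $R_3=\ord(a_3)$ is even; thus $-a_3$ has even order and is represented by $[a_1,a_2]$. Therefore $FM=[a_1,a_2,a_3]$ is isotropic, contradicting the fact that $W_2^3(c)$ is anisotropic (Proposition \ref{prop:nu-isotropic}(ii)). This proves $R_2\ne -2e$, i.e.\ $M_{2,e}^3(c)$ is undefined.

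I expect the binary computation in (i) to be the main obstacle: one must extract, solely from $R_2-R_1=-2e$ (equivalently $\alpha_1=0$), that $[a_1,a_2]$ is of hyperbolic or unramified type and hence represents all even-order elements, and then use the evenness of $\ord(c)$ to force $-a_3$ into that set of even-order elements. This is a small amount of quadratic-defect and norm-group bookkeeping; once it is in place, the isotropy contradiction with Proposition \ref{prop:nu-isotropic}(ii) closes the argument immediately. Part (ii), by contrast, is an essentially immediate read-off from the parameters in Lemma \ref{lem:W13k}.
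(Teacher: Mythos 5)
Your proof is correct, and for part (ii) it coincides with the paper's: the paper's entire proof of this lemma is the one-liner ``these are clear from Lemmas \ref{lem:W23k} and \ref{lem:W13k}'', and reading off $R_{2}\le 0$ from the two cases of Lemma \ref{lem:W13k} is exactly that. Where you genuinely diverge is part (i), and your instinct that this is the real content is right: the \emph{statement} of Lemma \ref{lem:W23k} does not by itself rule out $R_{2}=-2e$ when $c\in\mathcal{U}$, since its case (i) formally permits $\alpha_{1}=0$, $R_{2}=-2e$, $R_{3}=k=0$; what must be shown is that no lattice with these invariants can live on the anisotropic space $W_{2}^{3}(c)$, i.e.\ that this case is vacuous for $k=0$. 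Your isotropy contradiction supplies precisely this: $R_{2}-R_{1}=-2e$ together with \eqref{eq:BONGs} gives $d(-a_{1}a_{2})\ge 2e$, anisotropy forces $-a_{1}a_{2}\in\Delta F^{\times 2}$, so $[a_{1},a_{2}]\cong a_{1}[1,-\Delta]$ represents all even-order elements, while the parity of $\ord(\det FM)=\ord(-c)$ forces $R_{3}$ even, making $FM$ isotropic --- contradiction. This is the same computation the paper buries inside its proof of Lemma \ref{lem:W23k-alpha0} (where $R_{2}-R_{1}=-2e$ plus anisotropy yields $[a_{1},a_{2}]\cong[1,-\Delta]$), but the paper never adds your final parity step to conclude non-existence; it simply asserts the lemma is clear. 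So your write-up is not only correct but more complete than the paper's own justification: the paper's version buys brevity at the cost of leaving the vacuousness of case (i) of Lemma \ref{lem:W23k} (for $k=0$) to the reader, whereas yours makes the lemma self-contained. The only cosmetic remarks: $R_{1}=0$ already follows from integrality plus $R_{2}=-2e$ via \eqref{eq:BONGs}, so the appeal to Lemma \ref{lem:R2} (whose blanket hypothesis \eqref{H1h} you should technically check, though it holds automatically here since $H_{1,h}^{3}(c)$ is isotropic) could be avoided; and anisotropy of $W_{2}^{3}(c)$ is intrinsic to the space, so citing Proposition \ref{prop:nu-isotropic}(ii) is fine but not essential.
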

\begin{proof}
	 These are clear from Lemmas \ref{lem:W23k} and \ref{lem:W13k}.
\end{proof}
\begin{lem}\label{lem:case-iii}
	Let $\nu\in \{1,2\}$, $c\in \mathcal{V}$ and $\omega\in \mathcal{V}$ with $d(\omega)<2e$. Then $M=N_{\nu}^{2}(\omega)\perp \langle \omega c\rangle$ is ADC and $R_{2}(M)=1-d(\omega)\in [2-2e,0]^{E}\cup \{1\}$. Also,
	 \begin{align*}
		M \cong\begin{cases}
			\prec (\omega^{\#})^{\nu-1},-(\omega^{\#})^{\nu-1}\omega\pi^{1-d(\omega)},\omega c\succ  &\text{if $1\le d(\omega)<2e$},\\
			  \prec \Delta^{\nu-1},-\Delta^{\nu-1}\omega,\omega c\succ  &\text{if $ d(\omega)=0$}.
		\end{cases} 
	\end{align*}
\end{lem}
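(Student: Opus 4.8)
The plan is to produce the good BONG of $M$ by concatenating the known good BONG of the binary maximal lattice $N_\nu^2(\omega)$ with the norm generator $\omega c$ of the unary summand, and then to feed the resulting invariants into the classification criteria of Lemmas \ref{lem:W13k} and \ref{lem:W23k}. First I would split according to the defect: since $\omega\in\mathcal{V}$ with $d(\omega)<2e$, either $\omega\in\pi\mathcal{U}$ with $d(\omega)=0$, or $\omega\in\mathcal{U}\backslash\{1,\Delta\}$ with $1\le d(\omega)<2e$ odd. In the two cases Proposition \ref{prop:maximallattices-binary-ternary}(ii), as restated in Remark \ref{re:dyadicACD-3}, gives $N_\nu^2(\omega)\cong\prec\Delta^{\nu-1},-\Delta^{\nu-1}\omega\succ$ and $N_\nu^2(\omega)\cong\prec(\omega^\#)^{\nu-1},-(\omega^\#)^{\nu-1}\omega\pi^{1-d(\omega)}\succ$ respectively, which are exactly the first two entries of the claimed BONG.

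Next I would show that appending $\prec\omega c\succ$ yields a good BONG for the orthogonal sum. Writing $\prec a_1,a_2,a_3\succ$ for the concatenation, the order condition and defect inequalities \eqref{eq:GoodBONGs}, \eqref{eq:BONGs} for the pair $(a_1,a_2)$ are inherited from $N_\nu^2(\omega)$, so by Lemma \ref{lem:goodBONGequivcon} (equivalently Beli's orthogonal-sum result \cite[Corollary 4.4]{beli_integral_2003}) it remains only to verify $R_1\le R_3$ together with the $i=2$ inequalities $R_3-R_2+d(-a_2a_3)\ge0$ and $R_3-R_2+2e\ge0$. Here $R_1=0$, $R_2=\ord(a_2)=1-d(\omega)$ and $R_3=\ord(\omega c)=\ord(\omega)+\ord(c)$, and since $R_3-R_2\ge0$ in both cases while $R_1=0\le R_3$, all inequalities hold; Beli's corollary then identifies the spanned lattice with $N_\nu^2(\omega)\perp\langle\omega c\rangle=M$. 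This simultaneously establishes the displayed isometry and gives $R_2(M)=1-d(\omega)$, which is even and lies in $[2-2e,0]^{E}$ when $d(\omega)\ge1$, and equals $1$ when $d(\omega)=0$.

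For the ADC conclusion I would first compute $\alpha_1$. Since $-a_1a_2\equiv\omega$ modulo squares, one gets $R_2-R_1+d(-a_1a_2)=(1-d(\omega))+d(\omega)=1$, so $\alpha_1\le1$ by Proposition \ref{prop:alphaproperty}(ii); and $\alpha_1=0$ would force $R_2-R_1=-2e$, impossible because $R_2=1-d(\omega)>-2e$, whence $\alpha_1=1$ (for $d(\omega)=0$ this is immediate from $R_2-R_1=1$ via Proposition \ref{prop:alphaproperty}(iv)). A determinant computation gives $\det FM\equiv-c$, so $FM\cong W_{\nu'}^3(c)$ with $k:=\ord(c)$. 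When $d(\omega)\ge1$, so $R_2\in[2-2e,0]^{E}$, both Lemma \ref{lem:W13k}(ii) and Lemma \ref{lem:W23k}(ii) require exactly $\alpha_1=1$ and $R_3=k$; as $R_3=\ord(c)=k$, the lattice is ADC regardless of the Witt class of $FM$. When $d(\omega)=0$, so $R_2=1$ and $R_3=k+1=\ord(\omega c)$, the applicable criterion is Lemma \ref{lem:W23k}(iii) for the anisotropic space.

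The step I expect to be the main obstacle is the BONG merging of the second paragraph: justifying that the orthogonal sum of a binary good-BONG lattice with a unary lattice genuinely has the concatenation as a good BONG for $M$ itself, rather than for some proper sublattice, and that no reordering of norm generators is forced. The cleanest safeguard is to track the projections $\pr_{a_1^\perp}M$ and $\pr_{a_2^\perp}$ directly, using $\mathfrak{n}(N_\nu^2(\omega))=a_1\mathcal{O}_F$ together with $R_1\le R_3$ and $R_2\le R_3$ to ensure $a_1$ and then $a_2$ remain norm generators at each stage. A secondary delicate point is the $d(\omega)=0$ case, where one must confirm the Hasse invariant of $FM$ so that Lemma \ref{lem:W23k} rather than Lemma \ref{lem:W13k} is the relevant criterion.
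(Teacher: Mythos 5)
Your proposal follows the paper's proof essentially step by step: the good BONGs of $N_{\nu}^{2}(\omega)$ are read off from Proposition \ref{prop:maximallattices-binary-ternary}(ii), the concatenation with $\prec\omega c\succ$ is legitimized by checking $R_{1}\le R_{3}$ and $R_{2}\le R_{3}$ and citing \cite[Corollary 4.4(v)]{beli_integral_2003} (this is exactly how the paper dispatches what you call the "main obstacle"; no hand-tracking of projections is needed), and $\alpha_{1}=1$ is obtained from the same two-sided estimate, bounding above by $R_{2}-R_{1}+d(\omega)=1$ and below via Proposition \ref{prop:alphaproperty}(iii). For $1\le d(\omega)<2e$ your argument is complete and correct: $R_{2}\in[2-2e,0]^{E}$, $R_{3}=\ord(c)=k$, and Lemma \ref{lem:W13k}(ii) or \ref{lem:W23k}(ii) applies in either Witt class, as you say.

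However, the issue you flag as a "secondary delicate point" in the $d(\omega)=0$ case is a genuine gap, and it cannot be closed as stated: $FM=W_{\nu}^{2}(\omega)\perp[\omega c]$ need not be anisotropic. By Proposition \ref{prop:ternary-space-rep}, $FM$ is isotropic precisely when $(\omega,c)_{\mathfrak{p}}=(-1)^{\nu-1}$, and this does occur; for instance $\nu=1$, $\omega=\varepsilon\pi$, $c=1$ gives $M=\prec 1,-\varepsilon\pi\succ\perp\langle\varepsilon\pi\rangle\cong\prec 1,-\varepsilon\pi,\varepsilon\pi\succ$ with $FM\cong W_{1}^{3}(1)$ isotropic and $R_{2}(M)=1$. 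Such an $M$ is not ADC: by Lemma \ref{lem:W13k} (equivalently Lemma \ref{lem:undefined}(ii)) an ADC lattice on an isotropic ternary space cannot have $R_{2}=1$; concretely, condition (iii) of Theorem \ref{thm:beligeneral-1} with $N=\prec\Delta\succ$ forces $[\Delta]\rep[1,-\varepsilon\pi]$, which fails since $(\varepsilon\pi,\Delta)_{\mathfrak{p}}=-1$, so $M$ misses $\prec\Delta\succ$ although $FM$ represents it. Thus for $d(\omega)=0$ the conclusion holds only under the extra hypothesis $(\omega,c)_{\mathfrak{p}}=(-1)^{\nu}$, i.e.\ $FM$ anisotropic. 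You should be aware that the paper's own proof has the same lacuna -- it invokes Lemma \ref{lem:W23k}(iii) without verifying anisotropy -- and the lemma is only ever applied, in Lemma \ref{lem:alpha1-ternary}(iii), with $r=-1/2$ restricted to $\nu=2$ ambient spaces and $\nu^{\prime}$ chosen so that $(-1)^{\nu^{\prime}}=(-1)^{\nu}(\omega_{r},c)_{\mathfrak{p}}$, which is exactly this anisotropy condition. So your instinct that the Hasse-invariant check is the crux was right; the honest repair is to add that hypothesis to the statement (or restrict the $d(\omega)=0$ case accordingly), not to hope the check goes through in general.
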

\begin{proof}
 	Let $N_{\nu}^{2}(\omega)\cong \prec  a_{1},a_{2}\succ $ relative to a good BONG. Apply Proposition \ref{prop:maximallattices-binary-ternary}(ii). If $1\le d(\omega)<2e$, then $(a_{1},a_{2})=(1,-\omega\pi^{1-d(\omega)})$ or $(\omega^{\#},-\omega^{\#}\omega\pi^{1-d(\omega)})$, according as $\nu=1$ or $2$; if $d(\omega)=0$, let $\omega=\varepsilon\pi$ with $\varepsilon\in \mathcal{U}$. Then $(a_{1},a_{2})=(1,-\varepsilon\pi)$ or $(\Delta,-\Delta\varepsilon\pi)$, according as $\nu=1$ or $2$.

	Put $R_{i}:=R_{i}(N_{\nu}^{2}(\omega))$. Then $R_{1}=0$ and $R_{2}=1-d(\omega)$. Since $\omega,c\in \mathcal{V}$, both of $\ord(\omega)$ and $\ord(c)$ fall inside $\{0,1\}$. So if $a_{3}:=\omega c$ and $R_{3}:=\ord(a_{3})$, then $R_{3}=\ord(\omega)+\ord(c)\in \{0,1\}$ or $\{1,2\}$ according as $d(\omega)\ge 1$ or not. In the former case, $R_{3}\ge 0\ge 1-d(\omega)$; in the latter case, $R_{3}\ge 1=1-d(\omega)$. So, in both cases, $R_{3}\ge R_{2} $. Recall that $R_{3}\ge 0=R_{1}$. Combining these with \cite[Corollary 4.4(v)]{beli_integral_2003}, we see that
\begin{align*}
	M\cong \prec a_{1},a_{2}\succ\perp \prec a_{3}\succ \cong \prec a_{1},a_{2},a_{3}\succ
\end{align*}
relative to a good BONG and $R_{i}(M)=R_{i}$. Note that if $1\le d(\omega)<2e$, i.e., $\omega\in \mathcal{U}\backslash\{1,\Delta\} $, then $d(\omega)\in [1,2e-1]^{O}$. Hence $R_{2}(M)=R_{2}=1-d(\omega)\in [2-2e,0]^{E}\cup \{1\}$.

Write $\alpha_{1}=\alpha_{1}(M)$. Since $R_{2}-R_{1}=R_{2}>-2e$,  Proposition \ref{prop:alphaproperty}(iii) implies that $\alpha_{1}\ge 1$. In all cases, we have $-a_{1}a_{2}=\omega$ and
\begin{align*}
	\alpha_{1}\le R_{2}-R_{1}+d(-a_{1}a_{2})=(1-d(\omega))-0+d(\omega)=1.
\end{align*}
So $\alpha_{1}=1$.

With above discussion, $M$ is ADC by Lemmas \ref{lem:W13k}(ii) and \ref{lem:W23k}(ii)-(iii).
\end{proof}
\begin{lem}\label{lem:alpha1-ternary}
	Let $\nu\in \{1,2\}$, $r\in \{-1/2,0,\ldots,e\}$ and $c\in \mathcal{V}$. Then $M_{\nu,r}^{3}(c)$ is defined except for $(\nu,r,c)\in \{(2,e,\mathcal{U}),(1,-1/2,\mathcal{V})\}$.
	\begin{enumerate}[itemindent=-0.5em,label=\rm (\roman*)]	
		\item 	If $r=e$ and $(\nu,c)\not=(2,\mathcal{U})$, then  $M_{\nu,e}^{3}(c)\cong N_{\nu}^{3}(c)$.
		
		\item If $r=e-1$ and $(\nu,c)=(2,\mathcal{U})$, then $M_{2,e-1}^{3}(c)\cong N_{2}^{3}(c)$.
		
		\item If $0\le r\le e-1$, or $r=-1/2$ and $\nu=2$, then 
		$M_{\nu,r}^{3}(c)\cong N_{\nu^{\prime}}^{2}(\omega_{r})\perp \langle \omega_{r}c\rangle$, where $\omega_{r}\in \mathcal{V}$ is arbitrary such that $d(\omega_{r})=2r+1$ and $ \nu^{\prime}\in \{1,2\}$ satisfies $(-1)^{\nu^{\prime}}=(-1)^{\nu}(\omega_{r},c)_{\mathfrak{p}}$. 
	\end{enumerate}	 
\end{lem}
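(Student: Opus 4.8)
The plan is to separate the non-existence claim from the explicit identification. For the two exceptional triples non-existence is immediate: if $(\nu,r,c)=(2,e,\mathcal{U})$ or $(1,-1/2,\mathcal{V})$, then the requirement $FM\cong W_{\nu}^{3}(c)$ together with $R_{2}(M)=-2r\in\{-2e,1\}$ is ruled out by Lemma \ref{lem:undefined}(i) and (ii) respectively. For every remaining triple I will exhibit a concrete ADC lattice $M$ with $FM\cong W_{\nu}^{3}(c)$ and $R_{2}(M)=-2r$; by Definition \ref{defn:alpha1-ternary} and the uniqueness in Lemma \ref{lem:ADC-isometry-odd-R2}, this lattice is forced to be $M_{\nu,r}^{3}(c)$, which simultaneously proves that $M_{\nu,r}^{3}(c)$ is defined and yields the stated isometry class.

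For (i) and (ii) the witnesses are the maximal lattices $N_{\nu}^{3}(c)$, which are ADC by Lemma \ref{lem:ADC-sufficient-equiv}(i); only their $R_{2}$-values are needed. By Lemma \ref{lem:W13k}(i), every $H_{1,h}^{3}(c)$ satisfies $\alpha_{1}=0$, i.e. $R_{2}=-2e$; in particular $N_{1}^{3}(c)=H_{1,0}^{3}(c)$ has $R_{2}=-2e$. Reading off the good-BONG presentation $N_{2}^{3}(\delta\pi)=\prec 1,-\Delta\pi^{-2e},\Delta\delta\pi\succ$ from Proposition \ref{prop:maximallattices-binary-ternary}(ii) gives $R_{2}=-2e$ as well, so $r=e$, proving $M_{1,e}^{3}(c)=N_{1}^{3}(c)$ (any $c$) and $M_{2,e}^{3}(\delta\pi)=N_{2}^{3}(\delta\pi)$, which is (i). Likewise $N_{2}^{3}(\delta)=\prec\delta\kappa^{\#},-\delta\kappa^{\#}\kappa\pi^{2-2e},\delta\kappa\succ$ gives $R_{2}=2-2e$, so $r=e-1$ and $M_{2,e-1}^{3}(\delta)=N_{2}^{3}(\delta)$, which is (ii). In each instance uniqueness upgrades equality of $(FM,R_{2})$ to isometry; I would also confirm that the listed presentations satisfy \eqref{eq:GoodBONGs} and \eqref{eq:BONGs}, using $d(\kappa)=2e-1$, $d(\kappa^{\#})=1$ and $d(\Delta)=2e$.

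For (iii) I would first fix $\omega_{r}\in\mathcal{V}$ with $d(\omega_{r})=2r+1$; such a representative exists because every odd value in $[1,2e-1]$ is the defect of some non-square unit (e.g. $1+\pi^{2r+1}$ for $0\le r\le e-1$), while $d(\omega_{r})=0$ is realized by any $\omega_{r}\in\pi\mathcal{U}$ when $r=-1/2$. Since $\omega_{r}\neq 1$, the space $W_{\nu'}^{2}(\omega_{r})$ is defined for both $\nu'\in\{1,2\}$, and I set $M:=N_{\nu'}^{2}(\omega_{r})\perp\langle\omega_{r}c\rangle$ with $\nu'$ the unique index satisfying $(-1)^{\nu'}=(-1)^{\nu}(\omega_{r},c)_{\mathfrak{p}}$. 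By Lemma \ref{lem:case-iii}, $M$ is ADC with $R_{2}(M)=1-d(\omega_{r})=-2r$, so it only remains to check $FM\cong W_{\nu}^{3}(c)$. Both spaces are ternary with determinant $\equiv -\omega_{r}^{2}c\equiv -c$ modulo squares, hence are isometric iff their Hasse invariants agree. Writing $S_{\mathfrak{p}}[a_{1},\dots,a_{n}]=\prod_{i<j}(a_{i},a_{j})_{\mathfrak{p}}$ and using the presentations of Proposition \ref{prop:space}, I would compute $S_{\mathfrak{p}}(W_{\nu'}^{2}(\omega_{r}))=(-1)^{\nu'-1}$ (the case $\nu'=2$ using $(c^{\#},c)_{\mathfrak{p}}=-1$ from \eqref{csharp} and $(\Delta,\pi)_{\mathfrak{p}}=-1$), which together with $(-\omega_{r},\omega_{r})_{\mathfrak{p}}=1$ yields $S_{\mathfrak{p}}(FM)=(-1)^{\nu'-1}(-1,c)_{\mathfrak{p}}(\omega_{r},c)_{\mathfrak{p}}$; on the other hand $S_{\mathfrak{p}}(W_{1}^{3}(c))=(-1,c)_{\mathfrak{p}}$ and, by Definition \ref{defn:space-maximallattice}, $W_{2}^{3}(c)$ has the opposite Hasse invariant, so $S_{\mathfrak{p}}(W_{\nu}^{3}(c))=(-1)^{\nu-1}(-1,c)_{\mathfrak{p}}$. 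These agree precisely when $(\omega_{r},c)_{\mathfrak{p}}=(-1)^{\nu-\nu'}$, i.e. exactly under the stated choice of $\nu'$, giving $FM\cong W_{\nu}^{3}(c)$ and hence $M\cong M_{\nu,r}^{3}(c)$.

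The main obstacle will be the Hasse-invariant bookkeeping in (iii): one must compute $S_{\mathfrak{p}}(W_{\nu'}^{2}(\omega_{r}))$ uniformly across the cases $\omega_{r}\in\mathcal{U}\setminus\{1,\Delta\}$ and $\omega_{r}\in\pi\mathcal{U}$, and propagate it correctly through the orthogonal sum, so that the resulting parity condition lands on $(-1)^{\nu'}=(-1)^{\nu}(\omega_{r},c)_{\mathfrak{p}}$ rather than its negation. A secondary, more mechanical point is verifying that the presentations listed in Proposition \ref{prop:maximallattices-binary-ternary}(ii) are genuine good BONGs, which legitimizes extracting the $R_{2}$-values used in (i) and (ii).
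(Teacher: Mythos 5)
Your proposal is correct and follows essentially the same route as the paper: non-existence of the exceptional triples via Lemma \ref{lem:undefined}, the maximal lattices $N_{\nu}^{3}(c)$ as witnesses for (i)--(ii), the lattice $N_{\nu'}^{2}(\omega_{r})\perp\langle\omega_{r}c\rangle$ together with Lemma \ref{lem:case-iii} for (iii), and identification through the uniqueness built into Definition \ref{defn:alpha1-ternary} and Lemma \ref{lem:ADC-isometry-odd-R2}. The only differences are in sourcing, not substance: where the paper cites \cite[Lemma 4.12]{He25} for the $R_{2}$-values of $N_{\nu}^{3}(c)$ and \cite[Lemma 4.4(ii)]{He25} for the isometry $FM\cong W_{\nu}^{3}(c)$, you read the $R_{2}$-values off the good BONGs of Proposition \ref{prop:maximallattices-binary-ternary}(ii) and establish the space isometry by a direct (and correct) determinant-plus-Hasse-invariant computation.
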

\begin{proof}
	 By Lemma \ref{lem:undefined}, $M_{2,e}^{3}(c)$ is undefined for every $c\in \mathcal{U}$ and $M_{1,-1/2}^{3}(c)$ is undefined for every $c\in \mathcal{V}$. Then, we will show that the lattice $M_{\nu,r}^{3}(c)$ is defined for the remaining cases, thereby proving the lemma.
	 
	 For (i) and (ii), since $N_{\nu}^{3}(c)$ is $\mathcal{O}_{F}$-maximal, by Lemma \ref{lem:ADC-sufficient-equiv}(i), it is ADC. We also have $FN_{\nu}^{3}(c)\cong W_{\nu}^{3}(c)$. If $(\nu,c)\not=(2,\mathcal{U})$, by \cite[Lemma 4.12(i) and (iii)]{He25}, $R_{2}(N_{\nu}^{3}(c))=-2e$. So, from Definition \ref{defn:alpha1-ternary}, $N_{\nu}^{3}(c)\cong M_{\nu,e}^{3}(c)$; if $(\nu,c)=(2,\mathcal{U})$, then, by \cite[Lemma 4.12(ii)]{He25},  $R_{2}(N_{2}^{3}(c))=2-2e$. Hence, from Definition \ref{defn:alpha1-ternary} again, $N_{2}^{3}(c)\cong M_{2,e-1}^{3}(c)$.
	 
	 For (iii), let $M=N_{\nu^{\prime}}^{2}(\omega_{r})\perp \langle\omega_{r}c \rangle$, with $0\le r\le e-1$, or $r=-1/2$ and $(\nu,c)\not=(1,\mathcal{V})$. Since $(c,\omega_{r})=(-1)^{\nu+\nu^{\prime}}$, by \cite[Lemma 4.4(ii)]{He25}, $W_{\nu}^{3}(c)$ represents $W_{\nu^{\prime}}^{2}(\omega_{r})$. Since $\det W_{\nu^{\prime}}^{2}(\omega_{r})\det W_{\nu}^{3}(c)=\omega_{r}c$, we see that $FM\cong W_{\nu^{\prime}}^{2}(\omega_{r})\perp [\omega_{r}c]\cong W_{\nu}^{3}(c)$. Also, by Lemma \ref{lem:case-iii}, $M$ is ADC and $R_{2}(M)=1-d(\omega_{r})=-2r$. Hence, from Definition \ref{defn:alpha1-ternary}, $M\cong M_{\nu,r}^{n+2}(c)$.
\end{proof}
\begin{cor}\label{cor:count-dyadic}
	Up to isometry, 
	\begin{enumerate}[itemindent=-0.5em,label=\rm (\roman*)]	
		\item  there are infinitely many ternary isotropic ADC lattices, of which $4(N\mathfrak{p})^{e}$ are $\mathcal{O}_{F}$-maximal. 
	
	\item there are $(4e+6)(N\mathfrak{p})^{e}$ ternary anisotropic ADC lattices, of which $4(N\mathfrak{p})^{e}$ are $\mathcal{O}_{F}$-maximal. 
\end{enumerate}
\end{cor}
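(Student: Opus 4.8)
The plan is to read off both counts directly from the classification of ternary ADC lattices established in this section, combined with the isometry criterion of Lemma~\ref{lem:ADC-isometry-odd-R2} and the enumeration of maximal lattices in Corollary~\ref{cor:counting-maximal}. By Remark~\ref{re:dyadicACD-3}(iii), every ternary ADC lattice $M$ is isometric either to some $M_{\nu,r}^{3}(c)$ with $\nu\in\{1,2\}$, $r\in\{-1/2,0,\ldots,e\}$, $c\in\mathcal{V}$ and $(\nu,r,c)\notin\{(2,e,\mathcal{U}),(1,-1/2,\mathcal{V})\}$, or to some $H_{1,h}^{3}(c)$ with $h\ge 1$ and $c\in\mathcal{V}$. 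First I would split these by the isometry type of $FM$ via Proposition~\ref{prop:nu-isotropic}: the lattices with $\nu=1$ together with all $H_{1,h}^{3}(c)$ are isotropic, while those with $\nu=2$ are exactly the anisotropic ones (no $H$-lattice is anisotropic). This reduces (i) to the $\nu=1$ and $H$ cases, and (ii) to the $\nu=2$ case.

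For (i), the infinitude comes from the family $H_{1,h}^{3}(c)$ with $h\ge 1$. The key point is that these are pairwise non-isometric as $h$ varies: writing $H_{1,h}^{3}(c)=\mathbf{H}\perp\langle c\pi^{2h}\rangle$, one has $R_{1}=0$, $R_{2}=-2e$ and third good-BONG invariant $R_{3}=2h+\ord(c)$, which is strictly increasing in $h$; since the $R_{i}$ are isometry invariants, distinct $h$ give distinct isometry classes. This already yields infinitely many isotropic ADC lattices (note that here Lemma~\ref{lem:ADC-isometry-odd-R2} does \emph{not} apply, as these lattices violate \eqref{H1h} and all share $R_{2}=-2e$). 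The number of $\mathcal{O}_{F}$-maximal ones among the isotropic ADC lattices is $4(N\mathfrak{p})^{e}$ by Corollary~\ref{cor:counting-maximal}; concretely these are the $N_{1}^{3}(c)=H_{1,0}^{3}(c)$ with $c\in\mathcal{V}$, whereas every $H_{1,h}^{3}(c)$ with $h\ge 1$ and every $M_{1,r}^{3}(c)$ with $r<e$ is non-maximal by Remark~\ref{re:dyadicACD-3}(i).

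For (ii), the anisotropic ADC lattices are exactly the $M_{2,r}^{3}(c)$. Since an anisotropic lattice is never isometric to an isotropic $H_{1,h}^{3}(c)$, each $M_{2,r}^{3}(c)$ satisfies the standing hypothesis~\eqref{H1h}, so Lemma~\ref{lem:ADC-isometry-odd-R2} applies: $M_{2,r}^{3}(c)\cong M_{2,r'}^{3}(c')$ iff $W_{2}^{3}(c)\cong W_{2}^{3}(c')$ and $-2r=-2r'$, i.e. iff $c=c'$ in $\mathcal{V}$ (the spaces $W_{2}^{3}(c)$ have pairwise distinct determinants $-c$ by Proposition~\ref{prop:space}) and $r=r'$. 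Hence the isometry classes are in bijection with the admissible parameter pairs $(r,c)$, and by Lemma~\ref{lem:alpha1-ternary} the only inadmissible case for $\nu=2$ is $(r,c)=(e,\mathcal{U})$. Since $r$ ranges over the $e+2$ values $\{-1/2,0,1,\ldots,e\}$, the total is
\[
(e+2)\,|\mathcal{V}|-|\mathcal{U}|=(e+2)\cdot 4(N\mathfrak{p})^{e}-2(N\mathfrak{p})^{e}=(4e+6)(N\mathfrak{p})^{e},
\]
using $|\mathcal{V}|=4(N\mathfrak{p})^{e}$ and $|\mathcal{U}|=2(N\mathfrak{p})^{e}$. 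The $\mathcal{O}_{F}$-maximal ones number $4(N\mathfrak{p})^{e}$ by Corollary~\ref{cor:counting-maximal}; they are the $N_{2}^{3}(c)$, realized as $M_{2,e}^{3}(c)$ for $c\in\pi\mathcal{U}$ and as $M_{2,e-1}^{3}(c)$ for $c\in\mathcal{U}$ through Lemma~\ref{lem:alpha1-ternary}(i),(ii). The main point requiring care is the bookkeeping: one must invoke Lemma~\ref{lem:ADC-isometry-odd-R2} to guarantee that each isometry class is counted exactly once, and must account precisely for the single excluded family $(2,e,\mathcal{U})$ so that the arithmetic produces $4e+6$ rather than $4e+8$.
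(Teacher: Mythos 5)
Your proof is correct and follows essentially the same route as the paper: both reduce to the classification in Remark~\ref{re:dyadicACD-3}(iii) and Lemma~\ref{lem:alpha1-ternary}, split by isotropy via Proposition~\ref{prop:nu-isotropic}, count admissible parameters $(\nu,r,c)$, and invoke Corollary~\ref{cor:counting-maximal} for the maximal counts. The only differences are cosmetic: the paper organizes the anisotropic count as $|\mathcal{U}|+(e+1)|\mathcal{V}|$ while you compute $(e+2)|\mathcal{V}|-|\mathcal{U}|$, and you make explicit (via Lemma~\ref{lem:ADC-isometry-odd-R2} and the $R_{3}$-invariant) the injectivity of the parametrization that the paper leaves implicit in Definition~\ref{defn:alpha1-ternary} and Remark~\ref{re:alpha1-ternary}.
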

\begin{proof}
	(i) By Theorem \ref{thm:dyadic-ternary}(i)(b), there are infinitely many isotropic ADC lattices. 
	
  	(ii) Suppose that $FM$ is anisotropic. By Proposition \ref{prop:nu-isotropic}(ii), we have $\nu=2$. 
	
	If $M$ is of the form (i) in Lemma \ref{lem:alpha1-ternary}, then $c\in \pi\mathcal{U}$ and thus the number of these lattices is given by $|\mathcal{U}|=2(N\mathfrak{p})^{e}$.
	
	If $M$ is of the form (iii), then $r\in \{-1/2,0,\ldots,e-1\}$ and $c\in \mathcal{V}$. So the number of these lattices is given by
	\begin{align*}
		  (e+1)|\mathcal{V}|=4(e+1)(N\mathfrak{p})^{e}.
	\end{align*}
	So, there are $(4e+6)(N\mathfrak{p})^{e}$ anisotropic ADC lattices in total.
	
	In (i) and (ii), the remaining assertions follow by Corollary \ref{cor:counting-maximal}.
\end{proof}
\begin{proof}[Proof of Theorem \ref{thm:dyadic-ternary}]
	This follows from Definition \ref{defn:alpha1-ternary}, Remark \ref{re:alpha1-ternary} and Lemma \ref{lem:alpha1-ternary}.
\end{proof}
 \begin{proof}[Proof of Theorem \ref{thm:primitive-dyadic}]
	(i) For necessity, since $M$ is non-primitive, $\mathfrak{n}(M)\subsetneq \mathcal{O}_{F}$. By Theorem \ref{thm:dyadic-binary} and Proposition \ref{prop:maximallattices-binary-ternary}(ii), $M\cong N_{2}^{2}(\Delta)=\prec \pi,-\Delta\pi^{1-2e}\succ$ or $\cong M_{2}^{2}(\Delta)=\prec \pi,-\Delta\pi^{3-2e}\succ$. Then $R_{1}=1$ and $R_{2}=1-2e$ or $3-2e$. So, by \cite[Lemma 2.1]{beli_integral_2003}, $\ord(\mathfrak{n}(M))=R_{1}=1$ and $\ord(\mathfrak{v}(M))=R_{1}+R_{2}=2-2e$ or $4-2e$. Clearly, $FM\cong W_{2}^{2}(\Delta)=[\pi,-\Delta\pi]$.
	
	For sufficiency, we have $R_{1}=\ord(\mathfrak{n}(M))=1$ and $R_{2}=\ord(\mathfrak{v}(M))-R_{1}=1-2e$ or $3-2e$. Since $FM\cong [\pi,-\Delta\pi]=W_{2}^{2}(\Delta)$, the ADC-ness of $M$ follows by \cite[Lemma 4.11(ii)]{He25} with $n=2$ and Lemma \ref{lem:binary-exceptions} with $\nu=2$ and Theorem \ref{thm:dyadic-binary}.
	
	(ii) By Lemmas \ref{lem:W13k} and \ref{lem:W23k}, we have $R_{1}=0$ and thus $\mathfrak{n}(M)=\mathfrak{p}^{R_{1}}=\mathcal{O}_{F}$. Therefore, $M$ is primitive.
\end{proof}
In the remainder of this section, we will determine the codeterminant set of ternary ADC lattices. To do so, we first characterize their integral spinor groups.
\begin{lem}\label{lem:R-theta-M-nur}
		Let $\nu\in \{1,2\}$, $r\in \{-1/2,0,\ldots,e\}$ and $c\in \mathcal{V}$. Suppose that $M=M_{\nu,r}^{3}(c)$ with $(\nu,r,c)\not\in \{(2,e,\mathcal{U}),(1,-1/2,\mathcal{V})\}$.
	\begin{enumerate}[itemindent=-0.5em,label=\rm (\roman*)]
		\item  If $r=e$ and $(\nu,c)=(1,\mathcal{U})$, then $R_{1}=R_{3}=0$, $R_{2}=-2e$, and $\theta(O^{+}(M))=\mathcal{O}_{F}^{\times}F^{\times 2}$.
		
		\item  If $r=e$ and $c\in \pi\mathcal{U}$, then  $R_{1}=0<R_{3}=1$, and $\theta(O^{+}(M))=F^{\times}$.
		
 		\item  If $0\le r\le e-1$ and $c\in \mathcal{U}$, then $R_{1}=R_{3}=0$, $R_{2}=-2r\in [2-2e,0]^{E}$ and $\theta(O^{+}(M))=F^{\times}$. 
		
		\item If $0\le r\le e-1$ and $c\in \pi\mathcal{U}$, then $R_{1}=0<R_{3}=1$, and $\theta(O^{+}(M))=F^{\times}$.
	
	 	\item  If $r=-1/2$, $\nu=2$ and $c\in \mathcal{U}$, then $R_{1}=0<R_{3}=1$, $R_{2}=1$, and $\theta(O^{+}(M))=F^{\times}$.
	 	
	 	\item  If $r=-1/2$, $\nu=2$ and $c\in \pi\mathcal{U}$, then $R_{1}=0<R_{3}=2$, $R_{2}=1$, and $\theta(O^{+}(M))=F^{\times}$.
	\end{enumerate}
\end{lem}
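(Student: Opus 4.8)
The plan is to treat all six cases in two stages. First I would pin down the triple $(R_1,R_2,R_3)$, and from it the property A/B status, which by Proposition~\ref{prop:property-A} already confines $\theta(O^{+}(M))$ to the two candidates $\mathcal{O}_{F}^{\times}F^{\times 2}$ and $F^{\times}$; then I would decide between them case by case using the explicit models of $M$ furnished by Lemma~\ref{lem:alpha1-ternary} and Remark~\ref{re:dyadicACD-3}. For the invariants, Lemma~\ref{lem:R2} gives $R_1=0$ and Definition~\ref{defn:alpha1-ternary} gives $R_2=-2r$. Writing $c=\varepsilon\pi^{k}$ with $\varepsilon\in\mathcal{U}$ and $k\in\{0,1\}$ (so $k=0$ for $c\in\mathcal{U}$, $k=1$ for $c\in\pi\mathcal{U}$), Lemma~\ref{lem:R2}(i) yields $R_3=k$ whenever $R_2=-2r\in[-2e,0]^{E}$, covering the cases $r\ge 0$, while Lemma~\ref{lem:R2}(ii) yields $R_3=k+1$ when $R_2=1$, i.e. $r=-1/2$ (which forces $\nu=2$). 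Substituting the values of $r$ and $k$ reproduces each claimed triple: $(0,-2e,0)$, $(0,-2e,1)$, $(0,-2r,0)$, $(0,-2r,1)$, $(0,1,1)$, $(0,1,2)$ in (i)--(vi) respectively.

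Since $m=3$, property A reduces to the single inequality $R_1<R_3$ (Proposition~\ref{prop:property-A}(i)); this fails in (i),(iii) and holds in (ii),(iv),(v),(vi). In the latter group the necessary condition $R_1+2<R_3$ for property B (Proposition~\ref{prop:property-A}(ii)) never holds, because $R_3\le 2=R_1+2$, so $M$ has property A but not B. Thus Proposition~\ref{prop:property-A}(iii) applies in (i),(iii) and Proposition~\ref{prop:property-A}(iv) in the remaining cases, and in every case $\theta(O^{+}(M))$ is $\mathcal{O}_{F}^{\times}F^{\times 2}$ or $F^{\times}$.

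It remains to resolve this dichotomy. In cases (ii),(iv),(v),(vi) the model in Lemma~\ref{lem:alpha1-ternary} splits off a rank-one piece of odd order (from $c\in\pi\mathcal{U}$, or from $\omega_{-1/2}\in\pi\mathcal{U}$ when $r=-1/2$), and in case (iii) the binary block $(\omega_r^{\#})^{\nu'-1}\pi^{-r}A(\pi^{r},-(\omega_r-1)\pi^{-r})$ of Remark~\ref{re:dyadicACD-3} carries a basis vector $e_2$ with $\ord\,Q(e_2)=d(\omega_r)-2r=1$ odd, since $d(\omega_r)=2r+1$. In each such case I would exhibit an anisotropic $v\in M$ of odd order together with a unit $u\in M$ whose symmetries $\tau_u,\tau_v$ both lie in $O(M)$ (the defining ratios $2B(\cdot,\cdot)/Q(\cdot)$ have order $e-r-1\ge 0$ precisely because $r\le e-1$); then $\tau_u\tau_v\in O^{+}(M)$ has spinor norm $Q(u)Q(v)F^{\times 2}$ of odd order, forcing $\theta(O^{+}(M))=F^{\times}$. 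In case (i), by Lemma~\ref{lem:alpha1-ternary}(i) we have $M\cong N_1^{3}(c)=\mathbf{H}\perp\prec c\succ$ with $c\in\mathcal{U}$; the inclusion $\theta(O^{+}(M))\supseteq\theta(O^{+}(\mathbf{H}))=\mathcal{O}_{F}^{\times}F^{\times 2}$ is standard, and equality should follow because the first BONG pair is hyperbolic ($\alpha_1=0$ and $-a_1a_2\in F^{\times 2}$, so $\prec a_1,a_2\succ\cong\mathbb{H}$) while $a_3$ is a unit, so that no odd-order symmetry preserves $M$.

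The main obstacle is exactly this last step: rigorously ruling out $\pi\in\theta(O^{+}(M))$ for the maximal lattice $\mathbf{H}\perp\prec c\succ$, $c\in\mathcal{U}$, without inspecting every proper isometry by hand. I would settle it by invoking Beli's explicit computation of $\theta(O^{+}(M))$ from a good BONG in \cite{beli_integral_2003} (equivalently, the classical spinor-norm computation for $\mathbb{H}\perp\langle\text{unit}\rangle$ in \cite[\S93]{omeara_quadratic_1963}), which returns $\mathcal{O}_{F}^{\times}F^{\times 2}$ here since all $R_i$ are even and the only nontrivial binary block is hyperbolic. This is also the single point where cases (i) and (iii) diverge despite sharing the triple $(R_1,R_2,R_3)=(0,\cdot,0)$, the decisive difference being $\alpha_1=0$ versus $\alpha_1=1$.
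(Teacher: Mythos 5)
Your proposal is correct in outline and, for the $F^{\times}$ cases, takes a genuinely different route from the paper. The skeleton coincides: you read off $(R_{1},R_{2},R_{3})$ from Lemma~\ref{lem:R2} and Definition~\ref{defn:alpha1-ternary}, reduce to the dichotomy $\theta(O^{+}(M))\in\{\mathcal{O}_{F}^{\times}F^{\times 2},F^{\times}\}$ via Proposition~\ref{prop:property-A}, and for case (i) you invoke exactly the paper's tool, Beli's BONG-based computation (\cite[Theorem 3, Lemma 7.2(i)]{beli_integral_2003}). Where you differ: the paper settles (ii), (iv), (v) by quoting the property-A-but-not-B alternative outright, and (iii), (vi) by computing the groups $G(a_{i+1}/a_{i})$ (plus, for (vi), \cite[Theorem 1]{beli_integral_2003} for the lower bound), whereas you produce in each of (ii)--(vi) an explicit product of symmetries $\tau_{u}\tau_{v}\in O^{+}(M)$ with $Q(u)$ a unit and $Q(v)$ of odd order, so that $\theta(O^{+}(M))$ meets the nontrivial coset of $\mathcal{O}_{F}^{\times}F^{\times 2}$ and the dichotomy forces $F^{\times}$. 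This is more elementary and self-contained; in particular, your handling of (vi) through Proposition~\ref{prop:property-A}(iv) is arguably cleaner than the paper's detour through Beli's Theorem~1. The price is that you must verify by hand that $\tau_{u}$ and $\tau_{v}$ preserve $M$.

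Two points need repair, though neither breaks the approach. First, your blanket justification that the ratios $2B(\cdot,\cdot)/Q(\cdot)$ have order $e-r-1\ge 0$ ``because $r\le e-1$'' does not apply to case (ii), where $r=e$: there the models are $\mathbf{H}\perp\langle c\rangle$ and (after splitting the BONG $\prec 1,-\Delta\pi^{-2e},\Delta\delta\pi\succ$ by Beli's Corollary 4.4) $N_{1}^{2}(\Delta)\perp\langle\Delta\delta\pi\rangle$, and preservation of $M$ by $\tau_{u}$ follows instead from $\mathfrak{s}(M)=\mathfrak{p}^{-e}$ and $\ord(2)=e$, giving $2B(w,u)\in\mathcal{O}_{F}$; the conclusion survives, but with a different verification. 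Second, in case (i) your phrase ``all $R_{i}$ even and the only nontrivial binary block is hyperbolic'' is a heuristic for what must actually be checked when applying \cite[Theorem 3, Lemma 7.2(i)]{beli_integral_2003}: the defect inequalities $d(-a_{2}/a_{1})=\infty>e-(R_{2}-R_{1})/2$ and $d(-a_{3}/a_{2})=d(c)\ge 1>e-(R_{3}-R_{2})/2$, together with the parity condition $(R_{2}-R_{1})/2\equiv e\pmod{2}$; the paper carries these out explicitly, and they are precisely what separates (i) from (iii). Note also that your fallback citation to \cite[\S 93]{omeara_quadratic_1963} is not available here, since those spinor-norm computations assume $2$ is a prime element, while the delicate case is ramified dyadic. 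Since you flag case (i) as the crux and name the correct tool, I would classify these as fixable imprecisions rather than gaps.
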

\begin{proof}
	In all cases, the $R_{i}$-invariants of $M$ are clear from Definition \ref{defn:alpha1-ternary}, Lemmas \ref{lem:W13k} and \ref{lem:W23k}. It remains to determine the spinor norm group $\theta(O^{+}(M))$.
	
	For (ii), (iv) and (v), by Proposition \ref{prop:property-A}, $M$ has property A, but does not have property B. So $\theta(O^{+}(M))=F^{\times}$.
	
	For (i) and (iii), since $R_{1}=R_{3}=0$, Proposition \ref{prop:property-A} implies that $\theta(O^{+}(M))=\mathcal{O}_{F}^{\times}F^{\times 2}$ or $F^{\times}$. Also, from Remark \ref{re:mathscrA}, $a_{i+1}/a_{i}\in \mathscr{A}$ for $1\le i\le 2$.
	
	For (i), by Lemma \ref{lem:alpha1-ternary}(i), $M=N_{1}^{3}(c)\cong \prec 1,-\pi^{-2e},c\succ$ with $c\in \mathcal{U}$. Then $R_{2}-R_{1}=-2e $ and $R_{3}-R_{2}=2e$ are even. Also,
	\begin{align*}
		&d(-a_{2}/a_{1})=d(1)=\infty>2e=e-(R_{2}-R_{1})/2
		\intertext{and}
		&d(-a_{3}/a_{2})=d(c)\ge 1>0=e-(R_{3}-R_{2})/2.
	\end{align*} 
	So, by \cite[Lemma 7.2(i)]{beli_integral_2003},  $G(a_{i+1}/a_{i})\subseteq \mathcal{O}_{F}^{\times}F^{\times 2}$ for $1\le i\le 2$. Also, $(R_{2}-R_{1})/2=-e\equiv e\pmod{2}$. Applying \cite[Theorem 3]{beli_integral_2003}, we deduce  $\theta(O^{+}(M))\subseteq \mathcal{O}_{F}^{\times}F^{\times 2}$, as desired.
 
  	For (iii), by \cite[Theorem 3]{beli_integral_2003}, it suffices to show $G(a_{j+1}/a_{j})\not\subseteq \mathcal{O}_{F}^{\times}F^{\times 2}$ for some $j$. By Lemmas \ref{lem:case-iii} and \ref{lem:alpha1-ternary}(iii), 
   \begin{align*}
   	M=  M_{\nu,r}^{3}(c)\cong \prec (\omega_{r}^{\#})^{\nu^{\prime}-1},-(\omega_{r}^{\#})^{\nu^{\prime}-1}\omega_{r}\pi^{-2r},\omega_{r} c\succ,
   \end{align*}
    where $0\le r\le e-1$, and $\omega_{r}\in \mathcal{V}$ such that $d(\omega_{r})=2r+1$, and $ \nu^{\prime}\in \{1,2\}$ satisfies $(-1)^{\nu^{\prime}}=(-1)^{\nu}(\omega_{r},c)_{\mathfrak{p}}$. . Then 
   \begin{align*}
   	d(-a_{2}/a_{1})=d(\omega_{r})=2r+1\le e+r= e-(-2r-0)/2=e-(R_{2}-R_{1})/2,
   \end{align*}
    so \cite[Lemma 7.2(i)]{beli_integral_2003} implies that $G(a_{2}/a_{1})\not\subseteq \mathcal{O}_{F}^{\times}F^{\times 2}$, as desired.  
   
    For (vi), since $R_{1}=0<R_{3}=2$, $M$ has property A and $R_{3}-R_{1}=2$. Hence, by \cite[Theorem 1]{beli_integral_2003}, we deduce 
    \begin{align*}
    	\theta(O^{+}(M))\supseteq (1+\mathfrak{p})F^{\times 2}=\mathcal{O}_{F}^{\times}F^{\times 2},
    \end{align*}
    and thus $\theta(O^{+}(M))=\mathcal{O}_{F}^{\times}F^{\times 2}$ or $F^{\times}$. However, $R_{2}-R_{1}=R_{3}-R_{2}=1$, so \cite[Lemma 7.2(i)]{beli_integral_2003} implies that $G(a_{i+1}/a_{i})\not\subseteq \mathcal{O}_{F}^{\times}F^{\times 2}$ for $1\le i\le 2$. Hence  $\theta(O^{+}(M))=F^{\times}$.
\end{proof}   
\begin{lem}\label{lem:R-theta-H-1h}
Suppose that $M=H_{1,h}^{3}(c)$ with $h\ge 1$ and $c\in \mathcal{V}$. Then $R_{1}=0$, $R_{2}=-2e$, $R_{3}=2h+\ord(c)$, and 
\begin{align*}
	\theta(O^{+}(M))=\begin{cases}
		\mathcal{O}_{F}^{\times}F^{\times 2} &\text{if $c\in \mathcal{U}$}, \\
		F^{\times}   &\text{if $c\in \pi\mathcal{U}$}.
	\end{cases}
\end{align*}
\end{lem}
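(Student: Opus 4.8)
The plan is to first fix a good BONG for $M$ and read off the $R_i$-invariants, and then pin down $\theta(O^+(M))$ by computing the groups $G(a_{i+1}/a_i)$ and invoking Beli's spinor-norm formula, splitting according to the parity of $\ord(c)$. First I would recall, as in the proof of Lemma \ref{lem:R-theta-M-nur}(i) and Lemma \ref{lem:W13k}(i), that $\mathbf{H} \cong \prec 1, -\pi^{-2e}\succ$ relative to a good BONG, so $R_1 = 0$ and $R_2 = -2e$ and $-a_1a_2 \in F^{\times 2}$. Setting $a_3 := c\pi^{2h}$ we have $\ord(a_3) = 2h + \ord(c) \ge 2 > 0$, hence $R_2 = -2e < 0 = R_1 \le 2h + \ord(c)$, and by \cite[Corollary 4.4]{beli_integral_2003} the orthogonal sum of BONGs is again a good BONG, giving $M \cong \prec 1, -\pi^{-2e}, c\pi^{2h}\succ$ with $R_1 = 0$, $R_2 = -2e$ and $R_3 = 2h + \ord(c)$, as claimed.

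Since $\mathbf{H}$ is an orthogonal summand of $M$, we have $\theta(O^+(M)) \supseteq \theta(O^+(\mathbf{H})) = \mathcal{O}_F^{\times}F^{\times 2}$, so $\theta(O^+(M))$ equals either $\mathcal{O}_F^{\times}F^{\times 2}$ or $F^{\times}$, and it remains to decide which. The strategy for this dichotomy mirrors Lemma \ref{lem:R-theta-M-nur}: I would compute $d(-a_{i+1}/a_i)$ for $i = 1,2$, compare against the thresholds $e - (R_{i+1}-R_i)/2$, apply \cite[Lemma 7.2(i)]{beli_integral_2003} to locate each $G(a_{i+1}/a_i)$, and then feed this into \cite[Theorem 3]{beli_integral_2003}.

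When $c \in \mathcal{U}$, both $R_2 - R_1 = -2e$ and $R_3 - R_2 = 2h + 2e$ are even. Here $-a_2/a_1 = \pi^{-2e}$ is a square, so $d(-a_2/a_1) = \infty > 2e = e - (R_2-R_1)/2$, while $-a_3/a_2 = c\pi^{2h+2e}$ gives $d(-a_3/a_2) = d(c) \ge 1 > -h = e - (R_3-R_2)/2$. Thus \cite[Lemma 7.2(i)]{beli_integral_2003} yields $G(a_{i+1}/a_i) \subseteq \mathcal{O}_F^{\times}F^{\times 2}$ for $i = 1,2$, and since $(R_2-R_1)/2 = -e \equiv e \pmod 2$, \cite[Theorem 3]{beli_integral_2003} forces $\theta(O^+(M)) \subseteq \mathcal{O}_F^{\times}F^{\times 2}$; combined with the lower bound this gives equality.

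When $c \in \pi\mathcal{U}$, the difference $R_3 - R_2 = 2h + 1 + 2e$ is odd, so exactly as in the odd-difference step of Lemma \ref{lem:R-theta-M-nur}(vi), \cite[Lemma 7.2(i)]{beli_integral_2003} gives $G(a_3/a_2) \not\subseteq \mathcal{O}_F^{\times}F^{\times 2}$. By \cite[Theorem 3]{beli_integral_2003} this factor appears in $\theta(O^+(M))$, so $\theta(O^+(M))$ strictly contains $\mathcal{O}_F^{\times}F^{\times 2}$ and therefore equals $F^{\times}$. I expect the only delicate part to be the careful bookkeeping of the defects $d(-a_{i+1}/a_i)$ against the thresholds $e - (R_{i+1}-R_i)/2$ together with the parity condition $(R_2-R_1)/2 \equiv e \pmod 2$ when applying Beli's formula; the reduction to $\mathbf{H}$ for the lower bound and the BONG computation are routine by comparison.
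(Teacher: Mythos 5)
Your proposal is correct and follows essentially the same route as the paper's proof: fix the good BONG $M\cong \prec 1,-\pi^{-2e},c\pi^{2h}\succ$ via \cite[Corollary 4.4]{beli_integral_2003}, read off $R_{1}=0$, $R_{2}=-2e$, $R_{3}=2h+\ord(c)$, then locate $G(a_{2}/a_{1})$ and $G(a_{3}/a_{2})$ with \cite[Lemma 7.2(i)]{beli_integral_2003} (the same defect-versus-threshold computations, with the dichotomy governed by the parity of $R_{3}-R_{2}$, i.e.\ of $\ord(c)$) and conclude via \cite[Theorem 3]{beli_integral_2003}. The only differences are cosmetic refinements: you make explicit the lower bound $\theta(O^{+}(M))\supseteq \theta(O^{+}(\mathbf{H}))=\mathcal{O}_{F}^{\times}F^{\times 2}$ and the parity hypothesis $(R_{2}-R_{1})/2\equiv e\pmod{2}$ for Theorem 3, both of which the paper leaves implicit in this proof.
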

\begin{proof}
	Let $c=\varepsilon\pi^{k}$ with $k\in \{0,1\}$.  Since $2h+k>-2e$, from definition and \cite[Corollary 4.4(i)]{beli_integral_2003}, we have
	\begin{align}\label{H1hc-BONG}
		M=H_{1,h}^{3}(c)=\mathbf{H}\perp \langle \varepsilon\pi^{2h+k}\rangle\cong \prec 1,-\pi^{-2e}\succ \perp \prec \varepsilon\pi^{2h+k}\succ\cong  \prec 1,-\pi^{-2e}, c\pi^{2h+k} \succ.
	\end{align}
	Clearly, $R_{1}=0$, $R_{2}=-2e$ and $R_{3}=2h+k$. We have
	\begin{align*}
		R_{2}-R_{1}=-2e\quad\text{and}\quad d(-a_{2}/a_{1})=d(1)=\infty,
	\end{align*}
	so \cite[Lemma 7.2(i)]{beli_integral_2003} implies that $G(a_{3}/a_{2})\subseteq \mathcal{O}_{F}^{\times }F^{\times 2}$. Also,
	\begin{align*}
		d(-a_{3}/a_{2})=d(\varepsilon\pi^{2h+2e+k})\ge 0>-h-k/2=e-(2h+k-(-2e))=e-(R_{3}-R_{2})/2.
	\end{align*}
	Hence \cite[Lemma 7.2(i)]{beli_integral_2003} implies that $G(a_{3}/a_{2})\subseteq \mathcal{O}_{F}^{\times  }F^{\times 2}$ or not, according as $R_{3}-R_{2}=2h+k+2e$ is even or not, i.e., $k=0$ or $1$. 
\end{proof}
\begin{lem}\label{lem:thetaOM-NE-dyadic}
	 Suppose that $M$ is ternary ADC. Then
	 \begin{enumerate}[itemindent=-0.5em,label=\rm (\roman*)]
	 	\item $\theta(O^{+}(M))\supseteq \mathcal{O}_{F}^{\times}F^{\times 2}$.
	 	
	 	\item If $\theta(O^{+}(M))\subseteq N(F(\sqrt{a})/F)$, then $a\in  \{1,\Delta\}$; if moreover, $\theta(O^{+}(M))=F^{\times}$, then $a=1$.
	 \end{enumerate}	
\end{lem}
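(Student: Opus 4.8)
The plan is to assemble this directly from the structural classification of ternary ADC lattices together with the spinor-norm computations already in hand, exactly mirroring the non-dyadic counterpart Lemma \ref{lem:thetaOM-NE-non-dyadic}(iii). By Remark \ref{re:dyadicACD-3}(iii), any ternary ADC lattice $M$ is isometric either to $M_{\nu,r}^{3}(c)$ with $(\nu,r,c)\notin\{(2,e,\mathcal{U}),(1,-1/2,\mathcal{V})\}$, or to $H_{1,h}^{3}(c)$ with $h\ge 1$ and $c\in\mathcal{V}$. Reading off $\theta(O^{+}(M))$ from Lemma \ref{lem:R-theta-M-nur} in the first case and from Lemma \ref{lem:R-theta-H-1h} in the second, one checks case by case that $\theta(O^{+}(M))$ always equals either $\mathcal{O}_{F}^{\times}F^{\times 2}$ or $F^{\times}$. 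Since $\mathcal{O}_{F}^{\times}F^{\times 2}\subseteq F^{\times}$, this already settles assertion (i).

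For (ii), I would first record the two relevant norm groups. As $a=1$ means $a\in F^{\times 2}$, the extension is trivial and $N(F(\sqrt{a})/F)=F^{\times}$. On the other hand $\Delta$ is, by our standing convention, a unit defining the unramified quadratic extension, so $N(F(\sqrt{\Delta})/F)$ is the subgroup of elements of even order; since $\mathcal{O}_{F}^{\times}F^{\times 2}=\{x\in F^{\times}\mid \ord(x)\equiv 0\pmod 2\}$ as well, we obtain $N(F(\sqrt{\Delta})/F)=\mathcal{O}_{F}^{\times}F^{\times 2}$. In particular $[F^{\times}:\mathcal{O}_{F}^{\times}F^{\times 2}]=2$, which matches $[F^{\times}:N(F(\sqrt{a})/F)]=2$ for every $a\notin F^{\times 2}$.

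Now suppose $\theta(O^{+}(M))\subseteq N(F(\sqrt{a})/F)$. By (i) we always have $\mathcal{O}_{F}^{\times}F^{\times 2}\subseteq N(F(\sqrt{a})/F)$, and the index identity $[F^{\times}:N(F(\sqrt{a})/F)]\,[N(F(\sqrt{a})/F):\mathcal{O}_{F}^{\times}F^{\times 2}]=[F^{\times}:\mathcal{O}_{F}^{\times}F^{\times 2}]=2$ leaves only two possibilities: either $N(F(\sqrt{a})/F)=F^{\times}$, forcing $a\in F^{\times 2}$ and hence $a=1$, or $N(F(\sqrt{a})/F)=\mathcal{O}_{F}^{\times}F^{\times 2}=N(F(\sqrt{\Delta})/F)$, forcing $a=\Delta$ via the bijection between quadratic extensions and their norm groups. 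This gives $a\in\{1,\Delta\}$. Finally, for the ``moreover'' clause, if $\theta(O^{+}(M))=F^{\times}$ then $F^{\times}\subseteq N(F(\sqrt{a})/F)\subseteq F^{\times}$ forces $N(F(\sqrt{a})/F)=F^{\times}$, whence $a=1$.

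The proof is essentially bookkeeping: the substantive content, namely pinning down $\theta(O^{+}(M))$ for each admissible $(\nu,r,c)$ and each $(h,c)$, has already been carried out in Lemmas \ref{lem:R-theta-M-nur} and \ref{lem:R-theta-H-1h}. Consequently the only point requiring any care is the norm-group step, specifically the identity $N(F(\sqrt{\Delta})/F)=\mathcal{O}_{F}^{\times}F^{\times 2}$ and the elementary index observation that a subgroup of index $2$ lying above another subgroup of index $2$ must either coincide with it or exhaust $F^{\times}$; I do not anticipate any genuine obstacle beyond this.
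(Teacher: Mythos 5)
Your proposal is correct and follows essentially the same route as the paper: the paper's proof is a one-line appeal to Remark \ref{re:dyadicACD-3}(iii) together with Lemmas \ref{lem:R-theta-M-nur} and \ref{lem:R-theta-H-1h}, which is exactly the case analysis you carry out. The only material you add is the explicit norm-group bookkeeping for (ii) (the identity $N(F(\sqrt{\Delta})/F)=\mathcal{O}_{F}^{\times}F^{\times 2}$ and the index-$2$ argument), which the paper treats as implicit in ``clear from''; this detail is correct.
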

\begin{proof}
	These two statements are clear from Remark \ref{re:dyadicACD-3}(iii), Lemmas \ref{lem:R-theta-M-nur} and \ref{lem:R-theta-H-1h}.
\end{proof}
\begin{proof}[Proof of Theorem \ref{thm:coM-dyadic}]
	 Assume that $M=M_{\nu,r}^{3}(c)$ with $\nu\in \{1,2\}$ satisfies one of the conditions: 
	 \begin{enumerate}
	 	\item[(a)] $r=e$ and $c=\pi\mathcal{U}$;
	 	
	 	\item[(b)]  $0\le r\le e-1$ and $c=\mathcal{V}$;
	 	
	 	\item[(c)] $r=-1/2$ and $c=\mathcal{V}$,
	 \end{enumerate}
	 or $M=H_{1,h}^{3}(c)$ with $h\ge 1$ and $c\in \pi\mathcal{U}$, then by Lemmas \ref{lem:R-theta-M-nur}(ii)-(vi) and  \ref{lem:R-theta-H-1h}, $\theta(O^{+}(M))=F^{\times}$ and so by Lemma \ref{lem:thetaOM-NE-dyadic}(ii), $\co(M)\subseteq \{1\}$. From Remark \ref{re:co-1-Delta}(i), $1 \in \co(M)$ if and only if $FM$ is isotropic. So $\co(M)=\{1\}$ or $\emptyset$, according as $FM$ is isotropic or not, which, by Proposition \ref{prop:nu-isotropic}, is equivalent to $\nu=1$ or $2$.
	
	 If $M=M_{\nu,r}^{3}(c)$ with  $(\nu,r,c)=(1,e,\mathcal{U})$ or $M=H_{1,h}^{3}(c)$ with $h\ge 1$ and $c\in  \mathcal{U}$, then by Lemmas \ref{lem:R-theta-M-nur}(i), \ref{lem:R-theta-H-1h} and \ref{lem:thetaOM-NE-dyadic}(ii), $\theta(O^{+}(M))=\mathcal{O}_{F}^{\times}F^{\times 2}$ and so $\co(M)\subseteq \{1,\Delta\}$. Since $\nu=1$, $FM$ is isotropic, so from Remark \ref{re:co-1-Delta}(i), $1\in \co(M)$. In the former case, $R_{1}=R_{3}=0$ and $R_{2}-R_{1}=$, so it follows from Lemma \ref{lem:co-dyadic}(ii) that $\Delta\not\in \co(M)$. Thus $\co(M)=\{1\}$. In the latter case, $R_{1}=0<R_{3}=2h$ and
	 \begin{align*}
	 	d(-a_{1}a_{2}\Delta)+d(-a_{2}a_{3}\Delta)+d(\Delta)=2d(\Delta)+d(c\Delta)\ge 4e+1>4e,
	 \end{align*}
	 so it follows from Lemma \ref{lem:co-dyadic}(i) that $\Delta\in \co(M)$. Thus $\co(M)=\{1,\Delta\}$.
\end{proof}
 
\section{Proof of Main Results}\label{sec:proof-main-results}
\begin{proof}[Proof of Theorems \ref{thm:locallyn-ADC-binary} and \ref{thm:locallyn-ADC-ternary}]
	
	If $\rank\, M=1$, this is clear from the proof \cite[Proposition 4.15]{He25} with $n=1$.
	
	 If $ \rank\,M=2 $, then the theorem follows by Theorem  \ref{thm:nondyadic-ADC-binary} if $\mathfrak{p}$ is non-dyadic, and Theorem \ref{thm:dyadic-binary} if $\mathfrak{p}$ is dyadic.
	
     If $\rank\, M=3$, then the theorem follows by Theorem  \ref{thm:nondyadic-ADC-ternary} and Remark \ref{re:nondyadicACD-3} if $\mathfrak{p}$ is non-dyadic, and by Theorem \ref{thm:dyadic-ternary} and Remark \ref{re:dyadicACD-3} if $\mathfrak{p}$ is dyadic.
\end{proof} 
 \begin{proof}[Proof of Theorem \ref{thm:globally ADC-rank2}]
 	(i) If $M$ is unary, by \cite[65:15]{omeara_quadratic_1963}, $h(M)=1$ and thus it is regular. So the equivalent condition holds by Theorem \ref{thm:locallyn-ADC-binary}(i) and \cite[\S82K]{omeara_quadratic_1963}.
 	
 	(ii) Sufficiency is clear. For necessity, if $M$ is binary and $FM$ is isotropic, then for each $\mathfrak{p}\in \Omega_{F}$, $FM_{\mathfrak{p}}$ is isotropic and thus is universal. Hence $M $ is locally universal. Then by regularity, $M$ is globally universal, so by \cite[Corollary 3.6]{xu_indefinite_2020}, $M\cong \mathbf{H}$.
 	
 	(iii) Since $M$ is binary and $FM$ is anisotropic, by \cite[Theorem A3]{chan-icaza}, $M$ is regular if and only if $h(M)=1$. The assertion follows by \eqref{equiv:uni-adc-regular}.
 \end{proof}
  \begin{proof}[Proof of Corollary \ref{cor:primitive-finite}]
 	If a binary $\mathcal{O}_{F}$-lattice $M$ is primitive ADC, then by Theorem \ref{thm:globally ADC-rank2}(iii), $h(M)=1$. Note that $m(M)\le h(M)/2$ and thus the argument in \cite{peter_class-num-one_1980} still applies. So, the corollary follows from \cite[Theorem]{peter_class-num-one_1980}.
 \end{proof}
 \begin{proof}[Proof of Theorem \ref{thm:scale-ADC}]
 	For necessity, by \cite[Lemma 2.1(b)]{clark_ADC-II-2014}, $M$ is ADC. Observe from Theorem \ref{thm:locallyn-ADC-binary}(ii) and (iii) that $\mathfrak{n}(L)\subseteq \mathfrak{p}$ for any ADC $\mathcal{O}_{F_{\mathfrak{p}}}$-lattice $L$. Assume that $\ord_{\mathfrak{q}}(a)>1$ for some $\mathfrak{q}\mid a\mathcal{O}_{F}$. Then  $\mathfrak{q}^{2}\mathfrak{n}(M_{\mathfrak{q}})=\mathfrak{n}(M_{\mathfrak{q}}^{(a)})\subseteq \mathfrak{q}$. Hence $\mathfrak{n}(M_{\mathfrak{q}})\subseteq \mathfrak{q}^{-1}$, This contradicts that $M$ is integral. So $\ord_{\mathfrak{p}}(a)=1$ for all $\mathfrak{p}\mid a\mathcal{O}_{F}$. Clearly, $M_{\mathfrak{p}}^{(a)}$ is non-primitive. Applying Theorems \ref{thm:primitive-nondyadic}(i) and \ref{thm:primitive-dyadic}(i) to $M_{\mathfrak{p}}^{(a)}$ and the properties of scaling \cite[\S 82J]{omeara_quadratic_1963}, one can obtain the remaining conditions.
 	
 	For sufficiency, since $M$ is ADC, so is $M_{\mathfrak{p}}$ for all $\mathfrak{p}$. If $\mathfrak{p}\nmid a\mathcal{O}_{F}$, $M_{\mathfrak{p}}^{(a)}=M_{\mathfrak{p}}$ is ADC. Assume $\mathfrak{p}\mid a\mathcal{O}_{F}$. Since $\ord_{\mathfrak{p}}(a)=1$, if $M_{\mathfrak{p}}$ satisfies the hypothesis, then $M_{\mathfrak{p}}^{(a)}$ satisfies the conditions of Theorems \ref{thm:primitive-nondyadic}(i) and \ref{thm:primitive-dyadic}(i). So $M_{\mathfrak{p}}^{(a)}$ is ADC. Recall that the regularity is preserved by scaling the lattices. So, by \eqref{equiv:uni-adc-regular}, $M^{(a)}$ is ADC.
 \end{proof}
 \begin{proof}[Proof of Corollary \ref{cor:imprimitive-infinite}]
 	Assume that $M$ is a primitive binary ADC $\mathcal{O}_{F}$-lattice with $c=-\det FM\not=1$. By global square theorem,  there are infinitely many non-dyadic primes $\mathfrak{q}\nmid  c\mathcal{O}_{F}$ such that $c_{\mathfrak{q}}$ is a non-square, i.e., $c_{\mathfrak{q}}\in \Delta_{\mathfrak{q}} F_{\mathfrak{q}}^{\times 2}$. Let $\mathfrak{q}_{i}$ be such prime. Since $\mathcal{O}_{F}$ is PID, we may take $a_{i}\in \mathcal{O}_{F}^{+}$ such that $a_{i}\mathcal{O}_{F}=\mathfrak{q}_{1}\cdots \mathfrak{q}_{i}$, where $i=1,2,\ldots$. For each $i$, clearly, $\ord_{\mathfrak{q}_{i}}(a_{i})=1$. Since $M_{\mathfrak{q}_{i}}$ is primitive and $c_{\mathfrak{q}_{i}}\in \mathcal{O}_{F_{\mathfrak{q}_{i}}}^{\times}$,  $\mathfrak{n}(M_{\mathfrak{q}_{i}})=\mathfrak{v}(M_{\mathfrak{q}_{i}})=\mathcal{O}_{F_{\mathfrak{q}_{i}}} $. This combined with $c_{\mathfrak{q}_{i}}\in \Delta_{\mathfrak{q}_{i}}F_{\mathfrak{q}_{i}}^{\times 2}$ shows that $FM_{\mathfrak{q}_{i}}\cong [1,-\Delta_{\mathfrak{q}_{i}}]$. So, by Theorem \ref{thm:scale-ADC}, $M^{(a_{i})}$ is non-primitive ADC for $i=1,2,\ldots$, as required.
 \end{proof}
 \begin{thm}[{\cite{beli_talk_2025}}] \label{thm:coM-Beli}
 	 Let $M$ be a ternary $\mathcal{O}_{F}$-lattice over an algebraic number field $F$. Let $a\in F^{\times}/F^{\times 2}$ and $a\not=1$. Then $a\in \co(M)$ if and only if $a_{\mathfrak{p}}\in \co(M_{\mathfrak{p}})$ for all $\mathfrak{p}\in \Omega_{F}$.
 \end{thm}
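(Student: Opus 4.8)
The plan is to unwind the definition of $\co(M)$ into the three clauses that make $\exc(M,a)$ nonempty and to check that each clause obeys a local--global principle, the glue being the restricted-product decomposition of the adelic objects involved. Recall that $a\in \co(M)$ means there is a unary lattice $N=\cO_{F}x$ on a line in $V$, say with $Q(x)=b$, such that (1) $N\rep \gen(M)$, (2) $-\det FM\det FN=a$, and (3) $\theta(X_{\mathbb{A}}(M/N))=N(F(\sqrt{a})/F)$. Since $X_{\mathbb{A}}(M,b)=X_{\mathbb{A}}(M/N)$ and the group in (3) is independent of the chosen vector $x$, clause (3) is exactly the spinor-exception condition \eqref{spinor-ex}; thus the whole statement is the reduction of \eqref{spinor-ex} to local data, in the spirit of \cite[Satz 2]{schulze-pillot-darstellung-1980} and \cite{kneser_darstellungsmasse-1961}, repackaged through $\co$ and $\exc$. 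I would make this translation explicit at the outset.

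First I would record the two factorizations living inside the id\`ele group $J_{F}$. By definition $X_{\mathbb{A}}(M/N)=\prod'_{\mathfrak{p}}X(M_{\mathfrak{p}}/N_{\mathfrak{p}})$ is a restricted direct product, the restriction being $\sigma_{\mathfrak{p}}\in O^{+}(M_{\mathfrak{p}})$ (hence $N_{\mathfrak{p}}\subseteq\sigma_{\mathfrak{p}}(M_{\mathfrak{p}})$ automatically) at almost all $\mathfrak{p}$; applying $\theta$ componentwise gives
\[
\theta(X_{\mathbb{A}}(M/N))=\prod\nolimits'_{\mathfrak{p}}\theta(X(M_{\mathfrak{p}}/N_{\mathfrak{p}}))\quad\text{and}\quad N(F(\sqrt{a})/F)=\prod\nolimits_{\mathfrak{p}}N(F_{\mathfrak{p}}(\sqrt{a_{\mathfrak{p}}})/F_{\mathfrak{p}}),
\]
where $N(F_{\mathfrak{p}}(\sqrt{a_{\mathfrak{p}}})/F_{\mathfrak{p}})=\cO_{F_{\mathfrak{p}}}^{\times}F_{\mathfrak{p}}^{\times 2}$ whenever $a_{\mathfrak{p}}$ is a unit and the extension is unramified, matching the restriction in the first product. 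This consistency of the two restricted structures is what legitimizes the coordinatewise comparison.

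For the direction ($\Rightarrow$) these factorizations finish the argument: clauses (1) and (2) localize immediately (the genus is defined by $N_{\mathfrak{p}}\rep M_{\mathfrak{p}}$ for all $\mathfrak{p}$, and $-\det FM\det FN=a$ is a single square-class identity that localizes by the Hasse principle for squares), while projecting the equality (3) onto the $\mathfrak{p}$-coordinate yields $\theta(X(M_{\mathfrak{p}}/N_{\mathfrak{p}}))=N(F_{\mathfrak{p}}(\sqrt{a_{\mathfrak{p}}})/F_{\mathfrak{p}})$, so $N_{\mathfrak{p}}$ witnesses $a_{\mathfrak{p}}\in\co(M_{\mathfrak{p}})$ at every $\mathfrak{p}$, archimedean ones included (cf. Remark \ref{re:co-1-Delta}).

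The substance is in ($\Leftarrow$), where one must manufacture a single global witness $N$ from the local witnesses $N^{(\mathfrak{p})}$. Here I would fix the square class $b\equiv -a\det FM \pmod{F^{\times 2}}$ forced by (2) and then run a realization/approximation argument producing $b\in F^{\times}$ in this class that is represented by $M_{\mathfrak{p}}$ at every finite and every real $\mathfrak{p}$: at the finitely many ``bad'' primes one matches the value represented by $N^{(\mathfrak{p})}$ (possible because local representability and the local spinor-norm condition depend only on the square class of $b_{\mathfrak{p}}$, via the stated independence of the choice of $x$), while at the remaining primes $M_{\mathfrak{p}}$ already represents every unit, or is isotropic and represents everything, adding no constraint. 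Such a $b$ is then represented by $\gen(M)$, giving $N=\cO_{F}x$ with $Q(x)=b$; clauses (1) and (2) hold by construction, and (3) follows by feeding the local equalities back into the factorization above. I expect the \textbf{main obstacle} to be exactly this patching step: showing a global $b$ in the prescribed square class can be chosen locally represented everywhere while simultaneously realizing the prescribed local spinor-norm groups. This is where one needs the genuine representation/strong-approximation input of Kneser and Schulze-Pillot rather than a formal factorization, and the real places together with the finitely many ramified dyadic places will require the most care.
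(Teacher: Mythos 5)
First, a point of reference: the paper does not prove this theorem at all — it is quoted from Beli's conference talk \cite{beli_talk_2025} — so your attempt can only be judged against what a correct argument must contain, not against an in-paper proof.

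Your direction ($\Rightarrow$) is sound: $X_{\mathbb{A}}(M/N)$ and the adelic norm group are both restricted products of the corresponding local objects (the local factors contain $\theta(O^{+}(M_{\mathfrak{p}}))\supseteq \mathcal{O}_{F_{\mathfrak{p}}}^{\times}F_{\mathfrak{p}}^{\times 2}$ at almost all $\mathfrak{p}$), and equality of two such restricted products is equivalent to componentwise equality.

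The genuine gap is in ($\Leftarrow$), and it sits exactly in the parenthetical you lean on: the claim that ``local representability and the local spinor-norm condition depend only on the square class of $b_{\mathfrak{p}}$'' is false. Those conditions depend on the unary \emph{lattice} $\langle b_{\mathfrak{p}}\rangle$, i.e.\ on $b_{\mathfrak{p}}$ up to \emph{unit} squares; the independence statement in the paper is independence over vectors $x$ with the same value $Q(x)=b$, not over the square class of $b$. Concretely, let $\mathfrak{q}$ be non-dyadic with $M_{\mathfrak{q}}$ unimodular, so $M_{\mathfrak{q}}\cong \mathbf{H}\perp\langle \varepsilon\rangle$ with $Q(se+tf)=st$ on the hyperbolic pair, and suppose $a_{\mathfrak{q}}=\Delta_{\mathfrak{q}}$. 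For a unit $b$, a Witt-extension argument gives $\theta(X(M_{\mathfrak{q}}/\langle b\rangle))=\mathcal{O}_{F_{\mathfrak{q}}}^{\times}F_{\mathfrak{q}}^{\times 2}=N(F_{\mathfrak{q}}(\sqrt{\Delta_{\mathfrak{q}}})/F_{\mathfrak{q}})$, so valuation $0$ is a valid witness. But for $b\pi_{\mathfrak{q}}^{2}$, the rotation $\sigma$ with $\sigma(e)=\pi_{\mathfrak{q}}^{-1}e$, $\sigma(f)=\pi_{\mathfrak{q}}f$ has $\theta(\sigma)=\pi_{\mathfrak{q}}F_{\mathfrak{q}}^{\times 2}$ and $x=e+b\pi_{\mathfrak{q}}^{2}f=\pi_{\mathfrak{q}}(\pi_{\mathfrak{q}}^{-1}e)+b\pi_{\mathfrak{q}}(\pi_{\mathfrak{q}}f)\in\sigma(M_{\mathfrak{q}})$, so $\theta(X(M_{\mathfrak{q}}/\langle b\pi_{\mathfrak{q}}^{2}\rangle))=F_{\mathfrak{q}}^{\times}\neq N(F_{\mathfrak{q}}(\sqrt{\Delta_{\mathfrak{q}}})/F_{\mathfrak{q}})$. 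Thus the ``good'' primes do impose a constraint. This kills your patching as stated: since $b$ must lie in the fixed global square class $-a\det FM\cdot F^{\times 2}$, a weak-approximation choice controlled only at the bad set will in general pick up extra even valuation at primes outside it, and at any such prime inert in $F(\sqrt{a})$ the spinor equality fails; insisting instead that $b$ have unit valuation outside the bad set is a class-group obstruction (you would need a global element with prescribed valuations on a finite set and unit valuation elsewhere), which cannot be met for general $F$.

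The repair needs two ingredients absent from your sketch: (i) a proof that at almost all primes the unit-valuation witness \emph{does} satisfy the spinor equality (Witt extension in unimodular lattices together with $\theta(O^{+}(x^{\perp}))=N(F_{\mathfrak{p}}(\sqrt{a_{\mathfrak{p}}})/F_{\mathfrak{p}})$), so that only finitely many primes require special scales; and (ii) the observation that witnesses are lattices, not elements: take $N=\mathfrak{a}x$ with $Q(x)=-a\det FM$ and $\mathfrak{a}$ a fractional ideal whose local orders match the scales of the local witnesses at the finitely many exceptional primes and vanish elsewhere. A unary lattice need not be free, so this prescribes $N_{\mathfrak{p}}\cong N^{(\mathfrak{p})}$ everywhere with no global element required; your restricted-product formalism then closes the argument.
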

 \begin{proof}[Proof of Theorem \ref{thm:coM-global}]
 	By Theorem \ref{thm:coM-Beli}, $a\in \co(M)$ if and only if $a_{\mathfrak{p}}\in \co(M_{\mathfrak{p}}) $. 
 	
 	Suppose that $\mathfrak{p}$ is archimedean. If $\mathfrak{p}$ is complex, then $a_{\mathfrak{p}}\in \co(M_{\mathfrak{p}})$ holds trivially. If $\mathfrak{p}$ is real, then $a_{\mathfrak{p}}\in \co(M_{\mathfrak{p}})$ is equivalent to condition (i), from Remark \ref{re:co-1-Delta}(ii).
 	
 	For non-archimedean prime $\mathfrak{p}$, it suffices to determine the form of $M_{\mathfrak{p}}$ for applying Theorems \ref{thm:coM-nondyadic} and \ref{thm:coM-dyadic}. 
 	
 	Suppose that $\mathfrak{p}$ is non-dyadic. Assume that $FM_{\mathfrak{p}}$ is isotropic and $\mathfrak{v}(M_{\mathfrak{p}})=\mathfrak{p}^{k_{\mathfrak{p}}}\subseteq \mathcal{O}_{F_{\mathfrak{p}}}$. From Remark \ref{re:nondyadicACD-3}(ii) and Proposition \ref{prop:maximallattices-binary-ternary}(i), if $k_{\mathfrak{p}}$ is even, then $M_{\mathfrak{p}}\cong N_{1}^{3}(c_{\mathfrak{p}})$ or $H_{1,h_{\mathfrak{p}}}^{3}(c_{\mathfrak{p}})$ with $h_{\mathfrak{p}}\ge 1$ and $c_{\mathfrak{p}}\in \mathcal{U}_{\mathfrak{p}}$, and so $\co(M_{\mathfrak{p}})=\{1,\Delta_{\mathfrak{p}}\}$; if $k_{\mathfrak{p}}$ is odd, then $M_{\mathfrak{p}}\cong N_{1}^{3}(c_{\mathfrak{p}})$ or $H_{1,h_{\mathfrak{p}}}^{3}(c_{\mathfrak{p}})$ with $h_{\mathfrak{p}}\ge 1$ and $c_{\mathfrak{p}}\in \pi_{\mathfrak{p}} \mathcal{U}_{\mathfrak{p}}$. Hence $\co(M_{\mathfrak{p}})=\{1\}$, as desired.
 	
 	Suppose that $\mathfrak{p}$ is dyadic and $FM_{\mathfrak{p}}$ is isotropic. Assume that   $\mathfrak{v}(M_{\mathfrak{p}})=\mathfrak{p}^{k_{\mathfrak{p}}}\subseteq \mathfrak{p}^{2}$. From Remark \ref{re:dyadicACD-3}(ii), if   $k_{\mathfrak{p}}$ is even, then $M_{\mathfrak{p}}\cong H_{1,h_{\mathfrak{p}}}^{3}(c_{\mathfrak{p}})$ with $h_{\mathfrak{p}}\ge 1$ and $c_{\mathfrak{p}}\in \mathcal{U}_{\mathfrak{p}}$, and thus $\co(M_{\mathfrak{p}})=\{1,\Delta_{\mathfrak{p}}\}$; if $k_{\mathfrak{p}}$ is odd, then $M_{\mathfrak{p}}\cong H_{1,h_{\mathfrak{p}}}^{3}(c_{\mathfrak{p}})$ with $h_{\mathfrak{p}}\ge 1$ and $c_{\mathfrak{p}}\in \pi_{\mathfrak{p}} \mathcal{U}_{\mathfrak{p}}$, and thus $\co(M_{\mathfrak{p}})=\{1\}$.
 	
 	Assume that $  2\mathcal{O}_{F_{\mathfrak{p}}}\subseteq 2\mathfrak{s}(M_{\mathfrak{p}})\subseteq\mathcal{O}_{F_{\mathfrak{p}}} $. Then from Remark \ref{re:r-sM-relation}, $M_{\mathfrak{p}}\cong M_{1,r_{\mathfrak{p}}}^{3}(c_{\mathfrak{p}}) $ with $0\le r_{\mathfrak{p}}\le e_{\mathfrak{p}}$ and $c_{\mathfrak{p}}\in \mathcal{V}$. Hence $\co(M_{\mathfrak{p}})=\{1\}$.
 \end{proof}
 \begin{proof}[Proof of Theorem \ref{thm:coM-global-empty}]
 	Assume that there exists $\mathfrak{p}\in \Omega_{F}\backslash \infty_{F}$ such that $FM_{\mathfrak{p}}$ is anisotropic. If $\mathfrak{p}$ is non-dyadic, then from Remark \ref{re:nondyadicACD-3}(ii), $M_{\mathfrak{p}}=N_{2}^{3}(c)$ for some $c\in \mathcal{V}_{\mathfrak{p}}$; if $\mathfrak{p}$ is dyadic, then from Remark \ref{re:dyadicACD-3}(iii), $M_{\mathfrak{p}}=M_{2,r}^{3}(c)$ for some $r\in \{-1/2,0,\ldots,e\}$ and $c\in \mathcal{V}_{\mathfrak{p}}$. By Lemmas \ref{lem:co-nondyadic} and \ref{lem:co-dyadic}, we have $\co(M_{\mathfrak{p}})=\emptyset$ in both cases. Thus $ \co(M)=\emptyset$. 
 	
 	Now, $\spn(M)$ represents all $b\in \mathcal{O}_{F}$ for which $\gen(M)$ represents $b$. Since $M$ is indefinite, by  \cite[104:5]{omeara_quadratic_1963}, $M$ represents all these $b$. Therefore, $M$ is regular. So, by \eqref{equiv:uni-adc-regular}, if $M$ is locally ADC, then it is globally ADC.
 \end{proof}
  \begin{proof}[Proof of Theorem \ref{thm:thetaO+M}]
 	The theorem follows from  Lemma \ref{lem:thetaOM-NE-non-dyadic}(iii)(a) and \ref{lem:thetaOM-NE-dyadic}(i).
 \end{proof}
 \begin{lem}\label{lem:odd-class-number}
 	Let $M$ be a ternary $\mathcal{O}_{F}$-lattice over an algebraic number field $F$ with odd class number $h(F) $. If $\theta(O^{+}(M_{\mathfrak{p}}))\supseteq \mathcal{O}_{F_{\mathfrak{p}}}^{\times}F_{\mathfrak{p}}^{\times 2}$ for all $\mathfrak{p}\in \Omega_{F}\backslash \infty_{F}$, then $g(M)=1$.
 \end{lem}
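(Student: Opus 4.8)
The plan is to translate the assertion $g(M)=1$ into an idelic index and then kill that index with the oddness of $h(F)$. Write $\mathbb{J}_{F}$ for the idele group of $F$ and set
\[
\mathbb{J}_{M}:=\prod_{\mathfrak p\in\Omega_{F}}\theta(O^{+}(M_{\mathfrak p})),
\]
with the convention $\theta(O^{+}(M_{\mathfrak p}))=\theta(O^{+}(V_{\mathfrak p}))$ at $\mathfrak p\in\infty_{F}$. Since $\dim V=3$, the theory of spinor genera (O'Meara \cite[102:7]{omeara_quadratic_1963}) gives $g(M)=[\mathbb{J}_{F}:F^{\times}\mathbb{J}_{M}]$, so it suffices to prove $\mathbb{J}_{F}=F^{\times}\mathbb{J}_{M}$.

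First I would record what the hypothesis buys. A spinor norm group is always a union of square classes, so $\theta(O^{+}(M_{\mathfrak p}))\supseteq F_{\mathfrak p}^{\times 2}$ at every place, while by assumption $\theta(O^{+}(M_{\mathfrak p}))\supseteq \mathcal{O}_{F_{\mathfrak p}}^{\times}F_{\mathfrak p}^{\times 2}$ for every finite $\mathfrak p$. Hence the finite components of $\mathbb{J}_{M}$ contain every idele of even order at all finite primes, and in particular contain the finite unit ideles $\prod_{\mathfrak p\nmid\infty}\mathcal{O}_{F_{\mathfrak p}}^{\times}$.

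The core step is the divisor map $\operatorname{div}\colon\mathbb{J}_{F}\twoheadrightarrow I_{F}$ onto the group of fractional ideals. By the previous paragraph $\operatorname{div}(\mathbb{J}_{M})=I_{F}^{2}$, the subgroup of square ideals, and $\operatorname{div}(F^{\times})=P_{F}$, the principal ideals; so $\operatorname{div}$ induces a surjection
\[
\mathbb{J}_{F}/F^{\times}\mathbb{J}_{M}\twoheadrightarrow I_{F}/(P_{F}I_{F}^{2})=\mathrm{Cl}(F)/\mathrm{Cl}(F)^{2}.
\]
Since $h(F)$ is odd, $\mathrm{Cl}(F)$ has odd order, whence $\mathrm{Cl}(F)=\mathrm{Cl}(F)^{2}$ and the target is trivial. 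Thus $\mathbb{J}_{F}/F^{\times}\mathbb{J}_{M}$ is generated by the image of $\ker(\operatorname{div})=\prod_{\mathfrak p\nmid\infty}\mathcal{O}_{F_{\mathfrak p}}^{\times}\prod_{\mathfrak p\mid\infty}F_{\mathfrak p}^{\times}$; as the finite unit ideles already lie in $\mathbb{J}_{M}$, only the archimedean components remain, i.e. the quotient is generated by the sign ideles at the real places.

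The remaining, and I expect hardest, step is to absorb these archimedean signs into $F^{\times}\mathbb{J}_{M}$: for a prescribed sign pattern at the real primes one must produce a global $c\in F^{\times}$ realizing those signs whose divisor is a square ideal, so that its finite components lie in $\mathbb{J}_{M}$. Weak approximation supplies an element with the correct archimedean signs, and the oddness of $h(F)$ enters a second time — every ideal class being a square — to adjust its divisor to a square ideal. Reconciling the sign constraints at the real places with this square-divisor requirement is the delicate point of the argument, and it is precisely where one must exploit that $\theta(O^{+}(M_{\mathfrak p}))$ can exceed $\mathcal{O}_{F_{\mathfrak p}}^{\times}F_{\mathfrak p}^{\times 2}$ at the finite primes where $V_{\mathfrak p}$ is anisotropic, since this loosens the even-order condition and gives room to match the signs. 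Once this reconciliation is settled we obtain $\mathbb{J}_{F}=F^{\times}\mathbb{J}_{M}$, and therefore $g(M)=1$.
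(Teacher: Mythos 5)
Your setup is sound and is essentially the computation that underlies the reference the paper leans on: for rank $3$ the spinor genus coincides with the proper spinor genus, $g(M)=[\mathbb{J}_F:F^{\times}\mathbb{J}_M]$, the hypothesis puts all even-order ideles (in particular the finite unit ideles) into $\mathbb{J}_M$, and oddness of $h(F)$ kills $\mathrm{Cl}(F)/\mathrm{Cl}(F)^{2}$, so the quotient is generated by the sign ideles at the real places. (Two small slips on the way: $\mathrm{div}(\mathbb{J}_M)=I_F^{2}$ should be $\supseteq$, since some $\theta(O^{+}(M_{\mathfrak p}))$ may be all of $F_{\mathfrak p}^{\times}$, and then your displayed ``surjection'' is not well defined; what you actually need and do have is $\mathrm{div}(F^{\times}\mathbb{J}_M)\supseteq P_FI_F^{2}=I_F$, which yields the generation statement.)

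The genuine problem is the step you postpone: it is not a delicate finishing touch but the entire content of the lemma, and the tools you invoke cannot close it. A sign idele at a real place $\mathfrak q$ where $V_{\mathfrak q}$ is anisotropic (so $\theta(O^{+}(V_{\mathfrak q}))=\mathbb{R}_{>0}$ and the sign cannot be absorbed locally) lies in $F^{\times}\mathbb{J}_M$ iff there is $a\in F^{\times}$ negative at $\mathfrak q$, positive at the other such real places, with $a\in\theta(O^{+}(M_{\mathfrak p}))$ at every finite $\mathfrak p$. If the finite spinor norm groups equal $\mathcal{O}_{F_{\mathfrak p}}^{\times}F_{\mathfrak p}^{\times 2}$ exactly --- which the hypothesis permits --- this forces $(a)$ to be a square ideal, and with $h(F)$ odd that means $a=ub^{2}$ with $u\in\mathcal{O}_F^{\times}$; so you need units realizing prescribed sign patterns at the definite real places. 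For totally definite $M$ that holds iff $\mathrm{Cl}^{+}(F)=\mathrm{Cl}(F)$, which does \emph{not} follow from $h(F)$ odd (e.g.\ $h=1$ but $h^{+}=2$ for $F=\mathbb{Q}(\sqrt{3})$, whose fundamental unit is totally positive). In other words, the sign condition and the square-divisor condition are coupled through the $2$-torsion of a narrow ray class group that $h(F)$ does not control, so ``weak approximation plus every ideal class is a square'' cannot suffice. Your proposed escape --- that $\theta(O^{+}(M_{\mathfrak p}))$ exceeds $\mathcal{O}_{F_{\mathfrak p}}^{\times}F_{\mathfrak p}^{\times 2}$ at finite primes where $V_{\mathfrak p}$ is anisotropic --- is exactly the kind of extra input needed, but it is not among the hypotheses of the lemma (in this paper it is a fact proved for ADC lattices, not for arbitrary $M$), and you neither establish it nor show how such primes would absorb the signs. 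By contrast, the paper's own proof is a two-line reduction (rank $3$ gives proper $=$ improper) followed by an appeal to the argument of Xu--Zhang, Proposition 3.2, which is precisely the argument whose decisive archimedean step your reconstruction leaves open.
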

 \begin{proof}
 	As $\rank\,M=3$, the proper spinor genus of $M$ agrees with its spinor genus. The proof is straightforward from the argument in \cite[Proposition 3.2]{xu_indefinite_2020}.
 \end{proof}
 \begin{proof}[Proof of Theorem \ref{thm:gM-hM=1}]
 	 By Theorem \ref{thm:thetaO+M}, we have $\theta(O^{+}(M_{\mathfrak{p}}))\supseteq \mathcal{O}_{F_{\mathfrak{p}}}^{\times }F_{\mathfrak{p}}^{\times 2}$ for all $\mathfrak{p}\in \Omega_{F}\backslash \infty_{F}$. Then the theorem follows by Lemma \ref{lem:odd-class-number} and \cite[104:5]{omeara_quadratic_1963}.
 \end{proof}
 \begin{proof}[Proof of Theorem \ref{thm:count-sol-23}]
 	(i) This follows from Theorem \ref{thm:locallyn-ADC-binary} and \cite[(4.4) in Remark 4.3]{He25}.
 	
 	(ii) This follows from Corollaries \ref{cor:count-nondyadic} and \ref{cor:count-dyadic}.
 \end{proof}
 
\section{Reformulation of the formulas for local densities and masses}\label{sec:mass}
  Throughout this section, we assume that $F$ is a non-archimedean local field or an algebraic number field, and let $L$ be an $\mathcal{O}_{F}$-lattice and $V=FL$. 

As in \cite{korner_classnum_1981,pfeuffer_binarer_1978}, we unify the invariants in non-dyadic and dyadic cases for better discussion. First, recall that every binary lattice over non-dyadic fields $F$ always has a Jordan splitting $\mathcal{O}_{F}x_{1}\perp \mathcal{O}_{F}x_{2}\cong \langle a_{1},a_{2}\rangle$ with $a_{1}=\ord(Q(x_{1}))$, $a_{2}=\ord(Q(x_{2}))$ and $\ord(a_{1})\le \ord(a_{2})$.
\begin{defn}\label{defn:non-dyadic}
Let $F$ be a non-dyadic local field. We say that a binary $\mathcal{O}_{F}$-lattice $L$ has a good BONG if $L\cong \langle a_{1},a_{2}\rangle$ with $\ord(a_{1})\le \ord(a_{2})$, denoted by $L\cong \prec a_{1},a_{2}\succ$. 
\end{defn}
 As dyadic cases, we define the function $d$ from $F^{\times}/F^{\times 2}$ to $\mathbb{N}\cup \{\infty\}$ by $d(c):=\ord(c^{-1}\mathfrak{d}(c))$ for $c\in F^{\times}$. From \cite[\S 63A]{omeara_quadratic_1963}, $d(F^{\times})=\{0,\infty\}$. Also, $d(c)=\infty$ if and only if $c\in F^{\times 2}$.

Now, assuming that $F$ is a non-archimedean local field and $L\cong \prec a_{1},a_{2}\succ $ relative to a good BONG, we define the invariants $a(L)$, $R_{1}(L)$, $R_{2}(L)$, $R(L)$ and $\alpha(L)$ of $L$ as follows,
\begin{align}
	\begin{split}\label{a-Ri}
		 a(L)&:= c^{-2}dL\in F^{\times}/\mathcal{O}_{F}^{\times 2},\quad\text{with}\quad \mathfrak{n}(L)=c\mathcal{O}_{F},\\
		 R_{1}(L)&:=\ord(a_{1}),\quad R_{2}(L)=\ord(a_{2}), \\
      \end{split}\\
		 \intertext{and}
	 \begin{split} \label{R-alpha}
	 	 R(L)&:= \ord(a(L))=\ord(\mathfrak{v}(L))-2\ord(\mathfrak{n}(L)),\\
	 \alpha(L)&:= \min\{R(L)/2+e,R(L)+d(-a(L))\}. 
	\end{split}
\end{align} 
\begin{prop}\label{prop:a-R-alpha}
	Suppose that $F$ is non-dyadic or dyadic, and $L\cong \prec a_{1},a_{2}\succ$ relative to a good BONG. Put $a=a(L)$,  $R_{i}=R_{i}(L)=\ord(a_{i})$, $R=R(L)$ and $\alpha=\alpha(L) $. We have
 \begin{enumerate}[itemindent=-0.5em,label=\rm (\roman*)]
	 \item $a=a_{2}/a_{1}$, $R=R_{2}-R_{1}$, $\alpha=\min\{(R_{2}-R_{1})/2+e,R_{2}-R_{1}+d(-a)\}$.
	
	\item $dV\in aF^{\times 2}$,  $ \ord(dV)\equiv R\pmod{2}$,  $ \ord(\mathfrak{d}(-dV)/dV)=d(-a)$ and
	\begin{align*}
		\ord(\mathfrak{d}(-dL))= \ord(\mathfrak{v}(L))+d(-a).
	\end{align*}
	
	 \item   $\ord(\mathfrak{n}(L))=R_{1}$,  $\ord(\mathfrak{v}(L))=R_{1}+R_{2}$, $\ord(\mathfrak{s}(L))=\min\{(R_{1}+R_{2})/2,R_{1}\}$, and if $F$ is dyadic, then $\ord(\mathfrak{w}(L))=\min\{R_{2}-R_{1}+e,R_{2}-R_{1}+\alpha\}$.
\end{enumerate}
\end{prop}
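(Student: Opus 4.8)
The plan is to reduce every assertion to a direct computation with the good BONG, reading off the norm and discriminant of the binary lattice from $a_1,a_2$ and then feeding these into the definitions \eqref{a-Ri} and \eqref{R-alpha}. I would treat the non-dyadic and dyadic cases in parallel, the only structural difference being that in the non-dyadic case a good BONG is an orthogonal basis (Definition \ref{defn:non-dyadic}), so scale, norm and volume can be computed by hand, whereas in the dyadic case the BONG vectors need not span $L$ and the lattice invariants must be supplied by Beli's formulas. For (i), the defining property of a BONG gives $\mathfrak{n}(L)=Q(x_1)\mathcal{O}_F=a_1\mathcal{O}_F$, so the generator $c$ of $\mathfrak{n}(L)$ satisfies $c=a_1u$ with $u\in\mathcal{O}_F^{\times}$, hence $c^{-2}\equiv a_1^{-2}\pmod{\mathcal{O}_F^{\times2}}$. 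Since the discriminant relative to the BONG is $dL\equiv a_1a_2\pmod{\mathcal{O}_F^{\times2}}$, I obtain $a=a(L)=c^{-2}dL\equiv a_2/a_1$ in $F^{\times}/\mathcal{O}_F^{\times2}$. Taking orders yields $R=\ord(a)=R_2-R_1$, consistent with $\ord(\mathfrak{v}(L))-2\ord(\mathfrak{n}(L))=(R_1+R_2)-2R_1$; substituting $R=R_2-R_1$ into the definition of $\alpha$ in \eqref{R-alpha} gives the stated expression.

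For (ii), since $V=FL\cong[a_1,a_2]$ its discriminant is $dV\equiv a_1a_2\equiv a\pmod{F^{\times2}}$, which gives $dV\in aF^{\times2}$ and $\ord(dV)=R_1+R_2\equiv R_2-R_1=R\pmod 2$. The defect identities I would obtain by unwinding $d(x)=\ord(x^{-1}\mathfrak{d}(x))$: because $\ord(-1)=0$, one has $\ord(\mathfrak{d}(-dV)/dV)=\ord(\mathfrak{d}(-dV))-\ord(-dV)=d(-dV)$, and $-dV\equiv -a\pmod{F^{\times2}}$ forces $d(-dV)=d(-a)$. The same manipulation applied to $-dL\equiv -a$, together with $\ord(dL)=R_1+R_2=\ord(\mathfrak{v}(L))$, gives $\ord(\mathfrak{d}(-dL))=\ord(dL)+d(-dL)=\ord(\mathfrak{v}(L))+d(-a)$. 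Here I use throughout that $d$ is well defined on $F^{\times}/F^{\times2}$.

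For (iii), the identities $\ord(\mathfrak{n}(L))=R_1$ and $\ord(\mathfrak{v}(L))=R_1+R_2$ are immediate from the BONG. In the non-dyadic case $L\cong\langle a_1,a_2\rangle$ is orthogonal, so $\mathfrak{s}(L)=(a_1,a_2)\mathcal{O}_F=\mathfrak{p}^{R_1}$ by $R_1\le R_2$, which equals $\min\{(R_1+R_2)/2,R_1\}$, and no weight term is present. In the dyadic case the BONG vectors do not form a basis, so I would read off $\ord(\mathfrak{s}(L))$ and $\ord(\mathfrak{w}(L))$ from Beli's \cite[Corollary 4.4]{beli_integral_2003}, substituting $R_1,R_2$ and $\alpha$; the weight then comes out as $(R_2-R_1)+\min\{e,\alpha\}$, which is exactly the minimum displayed in the statement.

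The hard part will be the dyadic scale and, above all, the weight: these are genuine lattice invariants not visible on the pairwise-orthogonal vectors of the BONG, so obtaining them requires the full strength of Beli's structure theory rather than a direct calculation. A secondary point demanding care is the bookkeeping modulo squares, namely ensuring that $a(L)$, the discriminants $dL$ and $dV$, and the defect quantities are each interpreted in the correct quotient ($F^{\times}/\mathcal{O}_F^{\times2}$ versus $F^{\times}/F^{\times2}$) so that every displayed order is well defined.
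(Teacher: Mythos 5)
Your proposal is correct and takes essentially the same route as the paper's proof: parts (i) and (ii) by unwinding the BONG, discriminant, and quadratic-defect definitions (with the same care about working in $F^{\times}/\mathcal{O}_F^{\times 2}$ versus $F^{\times}/F^{\times 2}$), and part (iii) by a direct computation in the non-dyadic case together with Beli's structure theory in the dyadic case. The only adjustment needed is bibliographic: \cite[Corollary 4.4]{beli_integral_2003} supplies the dyadic scale (and, with \cite[Lemma 2.1]{beli_integral_2003}, the norm and volume), but the weight formula you invoke is taken in the paper from \cite[Lemma 2.14]{beli_Anew_2010} rather than from that corollary.
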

\begin{re}
	 When $F$ is dyadic and $L$ is binary, the invariants $R(L)$ and $\alpha(L)$ also agree with $R_{2}(L)-R_{1}(L)$ and $\alpha_{1}(L)$ introduced in \cite[\S2]{beli_Anew_2010}, respectively. Therefore, the properties stated in Section \ref{sec:BONGs} are applicable to $R(L)$ and $\alpha(L)$ in dyadic cases.
\end{re}
\begin{proof}
	(i) From definition and \cite[\S 3.1]{beli_integral_2003}, $\mathfrak{n}(L)=a_{1}\mathcal{O}_{F}$ and $dL=a_{1}a_{2}$. It follows that $a=a_{2}/a_{1}$ and thus the last two equalities are straightforward.
	
	(ii) Note that $dV=a_{1}a_{2}=a$ in $F^{\times 2}$, and thus $\ord(dV)$ and $R=\ord(a)$ have the same parity. The remaining equalities follow from the definition of the function $d$ and the property of quadratic defects \cite[\S 63A]{omeara_quadratic_1963}.
	
	(iii) When $F$ is non-dyadic, i.e., $e=0$, we have $R_{2}\ge R_{1}$ and thus $(R_{1}+R_{2})/2\ge R_{1}$. Hence $\mathfrak{n}(L)=\mathfrak{s}(L)=\mathfrak{p}^{R_{1}}$. Clearly, $\mathfrak{v}(L)=\mathfrak{p}^{R_{1}+R_{2}}$.
	
	When $F$ is dyadic, the equalities follow from the definition of BONG, \cite[Lemma 2.1 and Corollary 4.4(iv)]{beli_integral_2003} and \cite[Lemma 2.14]{beli_Anew_2010}.
\end{proof}

In the rest of this section, we assume that $F$ is an algebraic number field. For a finite extension $K/F$, we write $D_{K/F}$ for its discriminant and put $D_{F}:=D_{F/F}$ when $K=F$. Also, for any fractional ideal $\mathfrak{a}$ of $F$, we denote by $N(\mathfrak{a})$ the absolute norm of $\mathfrak{a}$. Define the mass $m(L)$ of $L$ by
\begin{align*}
	m(L):=\sum_{i=1}^{h(L)} \dfrac{1}{|O(L_{i})|},
\end{align*}
where $L_{i}$'s form a representative of the isometry classes in $\gen(L)$. Clearly, $h(L)=1$ if and only if $m(L)|O(L)|=1$. Hence one can refine the class number one condition by using the mass formula.

Following \cite{korner_classnum_1981}, we assume that $\dim V=2$ and let $F^{\prime}=F(\sqrt{-dV})$. Note that $V\not=F^{\prime}$ if and only if $-dV\in F^{\times 2}$. For $\mathfrak{p}\in \Omega_{F}\backslash \infty_{F}$,  we put $\chi(\mathfrak{p})=1$ if $-dV\not\in F^{\times 2}$, otherwise,
\begin{align*}
	\chi(\mathfrak{p})=
	\begin{cases}
		1 &\text{if  $\mathfrak{p}$ splits in $F^{\prime}$},  \\
		0 &\text{if  $\mathfrak{p}$ ramifies in $F^{\prime}$},  \\
		-1  &\text{if  $\mathfrak{p}$ is inert in $F^{\prime}$}.
	\end{cases}
\end{align*}
We also denote by $r(F)$ the number of the primes $\mathfrak{p}\in \Omega_{F}\backslash \infty_{F}$ with $\chi(\mathfrak{p})=0$.

As discussed in \cite[\S 1]{korner_classnum_1981}, the quadratic space $V$ can be viewed as an $F$-algebra that possesses an $F $-basis $1,v$, where  $1$ is the identity of $F$ and $v^{2}=-dV$, and $F$ is also a subalgebra of $V$ after embedding.
We denote the non-trivial automorphism $\sigma$ of $F$-algebra $V$ by $(a+bv)^{\sigma}=a-bv$. Then the norm map and the trace map from $V$ to $F$ are given by $Q(x)=xx^{\sigma}$ and $\tr(x)=x+x^{\sigma}$, $x\in V$. We define the \textit{maximal order} $\mathcal{O}_{V}$ of the algebra $V$ by the set $\{x\in V\mid Q(x)\in \mathcal{O}_{F}\;\text{and}\; \tr(x)\in \mathcal{O}_{F} \}$. Then
\begin{align*}
	V=\begin{cases}
		F^{\prime}   &\text{if $-dV\not\in F^{\times 2}$},\\
		F\oplus F    &\text{if $-dV\in F^{\times 2}$},
  	\end{cases}\quad \text{and}\quad	\mathcal{O}_{V}=\begin{cases}
  	\mathcal{O}_{F^{\prime}}   &\text{if $-dV\not\in F^{\times 2}$},\\
  	\mathcal{O}_{F}\oplus \mathcal{O}_{F}    &\text{if $-dV\in F^{\times 2}$}.
  	\end{cases}
\end{align*}
(cf. \cite[\S2]{korner_classnum_1981}). We say that an $\mathcal{O}_{F}$-lattice $M$ is an \textit{$\mathcal{O}_{F}$-order} if it is a subring of $\mathcal{O}_{V}$ with $1\in M$. For any $\mathcal{O}_{F}$-lattice $L$ on $V$, we define the set $M_{L}$ by
\begin{align*}
	 M_{L}:=\{x\in V\mid xL\subseteq L\}.
\end{align*}
In fact, it is an $\mathcal{O}_{F}$-order. And we say that $L$ is an \textit{ideal} of an $\mathcal{O}_{F}$-order $M$ if $M_{L}\supseteq M$, and a \textit{proper ideal} if $M_{L}=M$. Note that $D_{F^{\prime}/F}\mathcal{O}_{F}=\mathcal{O}_{F}$ when $-dV\in F^{\times 2}$. Hence $\mathfrak{v}(\mathcal{O}_{V})=4^{-1}D_{F^{\prime}/F}\mathcal{O}_{F}$ from \cite[(11)]{korner_classnum_1981}, regardless of whether $-dV\in F^{\times 2}$ or not. Applying \cite[Lemma 1]{korner_classnum_1981} with $M=M_{L}$, we have the equalities for the integral ideal $\mathfrak{c}(M_{L})$ as defined in \cite[(8)]{korner_classnum_1981}:
\begin{align*}
	\mathfrak{c}^{2}(M_{L})=4D_{F^{\prime}/F}^{-1}\mathfrak{v}(M)=4D_{F^{\prime}/F}^{-1}\mathfrak{v}(L) \mathfrak{n}(L)^{-2}.
\end{align*}
 Motivated by the equality, we define the quantity
 \begin{align}\label{cL}
 	\mathfrak{c}^{2}(L):= 4D_{F^{\prime}/F}^{-1}\mathfrak{v}(L) \mathfrak{n}(L)^{-2}.
 \end{align}
 
Based on \cite[Theorem 2]{korner_classnum_1981}, we present a mass formula in terms of invariants $a(L)$, $R(L)$ and $\alpha(L)$ (cf. \eqref{a-Ri} and \eqref{R-alpha}).
 \begin{thm}\label{thm:mass-1}
	 Suppose that $F$ is a totally real number field and $L$ is a positive definite binary $\mathcal{O}_{F}$-lattice. Then
	\begin{align*}
		m(L)=\dfrac{N(\mathfrak{c}(L))h(F^{\prime})}{2^{r(F)+1}[\mathcal{O}_{F^{\prime}}^{\times}:\mathcal{O}_{F}^{\times}]h(F)}\prod_{\mathfrak{p}\mid \mathfrak{c}(L)}	w_{\mathfrak{p}}(L),
	\end{align*}
	where   
	\begin{align}\label{uvw}
		w_{\mathfrak{p}}(L)=\dfrac{u_{\mathfrak{p}} (L)}{\upsilon_{\mathfrak{p}}(L)}\left( 1-\dfrac{\chi(\mathfrak{p})}{N(\mathfrak{p})} \right)
	\end{align}
	is given as follows: 
	\begin{align*}
		&u_{\mathfrak{p}}(L)=
		\begin{cases}
			2  &\text{if $\chi(\mathfrak{p})=0$, and either $2\mid R_{\mathfrak{p}}(L)$ and $\alpha_{\mathfrak{p}}(L)+d_{\mathfrak{p}}(-a_{\mathfrak{p}}(L))>2e_{\mathfrak{p}}$},  \\
			&\text{or $2\nmid R_{\mathfrak{p}}(L)$ and $R_{\mathfrak{p}}(L)>2e_{\mathfrak{p}}$},  \\
			1  &\text{otherwise},
		\end{cases}	\\
		&\psi_{\mathfrak{p}}(L)=\begin{cases}
			R_{\mathfrak{p}}(L)      &\text{if $R_{\mathfrak{p}}(L)$ is odd}, \\
			\alpha_{\mathfrak{p}}(L) &\text{if $R_{\mathfrak{p}}(L)$ is even},
		\end{cases}\quad\text{and}\quad
		\upsilon_{\mathfrak{p}}(L)=\begin{cases}
			N(\mathfrak{p})^{\lfloor \psi_{\mathfrak{p}}(L)/2 \rfloor} &\text{if $\psi_{\mathfrak{p}}(L)\le 2e_{\mathfrak{p}}$}, \\
			2N(\mathfrak{p})^{e_{\mathfrak{p}}}&\text{if $\psi_{\mathfrak{p}}(L)>2e_{\mathfrak{p}}$}.
		\end{cases} 
	\end{align*}
\end{thm}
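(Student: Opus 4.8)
The plan is to derive the formula directly from K\"orner's mass formula \cite[Theorem 2]{korner_classnum_1981} by translating his order-theoretic local data into the BONG invariants $a(L)$, $R(L)$, $\alpha(L)$ of \eqref{a-Ri}--\eqref{R-alpha}. K\"orner expresses $m(L)$ as a global factor built from $h(F^{\prime})$, $h(F)$, the unit index $[\mathcal{O}_{F^{\prime}}^{\times}:\mathcal{O}_{F}^{\times}]$, the number $r(F)$ of ramified primes, and the norm of the conductor $\mathfrak{c}(M_{L})$ of the order $M_{L}$, times local correction factors supported on the primes dividing $\mathfrak{c}(M_{L})$. The positive-definiteness of $L$ over the totally real field $F$ forces $-dV$ to be totally positive and non-square, so $V=F^{\prime}$ is a CM quadratic extension of $F$, placing us squarely in the setting of \cite[Theorem 2]{korner_classnum_1981}.

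First I would settle the global prefactor. By \cite[Lemma 1]{korner_classnum_1981} applied to $M=M_{L}$, together with the computation $\mathfrak{v}(\mathcal{O}_{V})=4^{-1}D_{F^{\prime}/F}\mathcal{O}_{F}$ recorded above, one has $\mathfrak{c}^{2}(M_{L})=4D_{F^{\prime}/F}^{-1}\mathfrak{v}(L)\mathfrak{n}(L)^{-2}=\mathfrak{c}^{2}(L)$, so $\mathfrak{c}(M_{L})=\mathfrak{c}(L)$ in the sense of \eqref{cL}. Hence $N(\mathfrak{c}(M_{L}))=N(\mathfrak{c}(L))$ and the set of primes $\mathfrak{p}\mid\mathfrak{c}(M_{L})$ coincides with $\{\mathfrak{p}\mid\mathfrak{c}(L)\}$, so both the prefactor and the range of the product match the stated formula verbatim.

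The substance of the proof is the local identity: for each $\mathfrak{p}\mid\mathfrak{c}(L)$, K\"orner's local factor equals $w_{\mathfrak{p}}(L)$ of \eqref{uvw}. The Euler part $1-\chi(\mathfrak{p})/N(\mathfrak{p})$ is common to K\"orner's formula and to \eqref{uvw}, so no re-derivation is needed there; the work lies in matching $u_{\mathfrak{p}}(L)$ and $\upsilon_{\mathfrak{p}}(L)$. Here I would pass to the completion $L_{\mathfrak{p}}\cong\prec a_{1},a_{2}\succ$ and use the dictionary of Proposition \ref{prop:a-R-alpha}: $R_{\mathfrak{p}}=\ord(a)\equiv\ord(dV)\pmod{2}$, $\ord(\mathfrak{d}(-dV)/dV)=d(-a)$, and the explicit formulas for $\ord(\mathfrak{n}(L))$, $\ord(\mathfrak{v}(L))$, $\ord(\mathfrak{s}(L))$, $\ord(\mathfrak{w}(L))$. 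Under this dictionary the parity of $R_{\mathfrak{p}}$ detects whether $\mathfrak{p}$ ramifies in $F^{\prime}$, which explains why $\psi_{\mathfrak{p}}$ is taken to be $R_{\mathfrak{p}}$ when $R_{\mathfrak{p}}$ is odd and $\alpha_{\mathfrak{p}}$ when $R_{\mathfrak{p}}$ is even; the local conductor exponent governing $\upsilon_{\mathfrak{p}}$ then reads off as $\lfloor\psi_{\mathfrak{p}}/2\rfloor$ in the range $\psi_{\mathfrak{p}}\le 2e_{\mathfrak{p}}$ and saturates at $2N(\mathfrak{p})^{e_{\mathfrak{p}}}$ once $\psi_{\mathfrak{p}}>2e_{\mathfrak{p}}$.

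I expect the dyadic case to be the main obstacle. There the quadratic defect no longer agrees with the naive valuation, the threshold $2e_{\mathfrak{p}}$ becomes active, and K\"orner's local factor must be reconciled with the combinatorics of $\alpha_{\mathfrak{p}}$ and $d_{\mathfrak{p}}(-a_{\mathfrak{p}})$. Controlling the dichotomy $u_{\mathfrak{p}}(L)\in\{1,2\}$—that is, the two special configurations ($2\mid R_{\mathfrak{p}}$ with $\alpha_{\mathfrak{p}}+d_{\mathfrak{p}}(-a_{\mathfrak{p}})>2e_{\mathfrak{p}}$, and $2\nmid R_{\mathfrak{p}}$ with $R_{\mathfrak{p}}>2e_{\mathfrak{p}}$)—is the delicate point, since these encode precisely when the local weight and unit behaviour degenerate. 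I would handle this by invoking Propositions \ref{prop:Rproperty} and \ref{prop:alphaproperty} to pin down exactly when $\alpha_{\mathfrak{p}}$ meets or exceeds $2e_{\mathfrak{p}}$ and how $d_{\mathfrak{p}}(-a_{\mathfrak{p}})$ is forced by $\alpha_{\mathfrak{p}}$ in the boundary cases, together with the weight formula $\ord(\mathfrak{w}(L))=\min\{R+e,R+\alpha\}$ from Proposition \ref{prop:a-R-alpha}(iii). The non-dyadic case $e_{\mathfrak{p}}=0$ then follows as the degenerate specialization, where $\alpha_{\mathfrak{p}}$ and $d_{\mathfrak{p}}(-a_{\mathfrak{p}})$ take only their extreme values and $\psi_{\mathfrak{p}}=R_{\mathfrak{p}}$, recovering the familiar non-dyadic local densities.
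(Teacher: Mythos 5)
Your overall route---K\"orner's \cite[Theorem 2]{korner_classnum_1981} plus the dictionary of Proposition \ref{prop:a-R-alpha} translating his local data into $a_{\mathfrak{p}}(L)$, $R_{\mathfrak{p}}(L)$, $\alpha_{\mathfrak{p}}(L)$---is exactly the paper's, and your identification $\mathfrak{c}(M_{L})=\mathfrak{c}(L)$ via \cite[Lemma 1]{korner_classnum_1981} together with the local matchings $\psi(M_{L},\mathfrak{p})=\psi_{\mathfrak{p}}(L)$, $a(M_{L},\mathfrak{p})=\upsilon_{\mathfrak{p}}(L)$, $e(M_{L},\mathfrak{p})=u_{\mathfrak{p}}(L)$ are the right targets. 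But there is a genuine gap at your first step: K\"orner's Theorem 2 is not a mass formula. It is a formula for the proper class number $h^{+}(L)$, whose global prefactor is $h^{+}(\mathcal{O}_{V})/[E(\mathcal{O}_{V}):E(M_{L})]$, where $E(M)=U(M)\cap V_{1}$ is the norm-one unit group of the order; neither $m(L)$ nor $h(F^{\prime})$, $h(F)$, $r(F)$, $[\mathcal{O}_{F^{\prime}}^{\times}:\mathcal{O}_{F}^{\times}]$ occurs in it. So your claim that the prefactor matches the stated formula ``verbatim'' fails as written, and in particular your plan cannot account for the exponent $r(F)+1$ (rather than $r(F)$) on the $2$ in the denominator.

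To close the gap you need two further inputs from K\"orner, which is precisely what the paper's proof uses: the relation $h^{+}(L)=2[E(M_{L}):1]\,m(L)$ (K\"orner's (36); this is where positive definiteness and the order structure enter, since all lattices in $\gen(L)$ have automorphism groups of the same finite order), and the identity from K\"orner's Remark (c),
\begin{align*}
\dfrac{h^{+}(\mathcal{O}_{V})}{[E(\mathcal{O}_{V}):E(M_{L})]}=\dfrac{[E(M_{L}):1]\,h(F^{\prime})}{2^{r(F)}[\mathcal{O}_{F^{\prime}}^{\times}:\mathcal{O}_{F}^{\times}]\,h(F)}.
\end{align*}
Substituting both into Theorem 2 and dividing by $2[E(M_{L}):1]$ is what produces $2^{r(F)+1}$ and eliminates all reference to unit groups of orders. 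The rest of your plan (the local dictionary: $\ord_{\mathfrak{p}}(\mathfrak{v}(M_{L}))=R_{\mathfrak{p}}(L)$ and $\min\{R_{\mathfrak{p}}(L)+d_{\mathfrak{p}}(-a_{\mathfrak{p}}(L)),\,R_{\mathfrak{p}}(L)/2+e_{\mathfrak{p}}\}=\alpha_{\mathfrak{p}}(L)$ from Proposition \ref{prop:a-R-alpha}) is sound and agrees with the paper. One small correction there: the parity of $R_{\mathfrak{p}}(L)$ does not detect ramification of $\mathfrak{p}$ in $F^{\prime}$ at dyadic primes (even $R_{\mathfrak{p}}(L)$ with small $d_{\mathfrak{p}}(-a_{\mathfrak{p}}(L))$ still gives a ramified extension); the case split defining $\psi_{\mathfrak{p}}$ is simply K\"orner's definition transcribed, not a ramification criterion.
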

\begin{proof}
Let $M=M_{L}$ be an $\mathcal{O}_{F}$-order associated with $L$. Put $E(M)=U(M)\cap V_{1} $ and $E(\mathcal{O}_{V})=U(\mathcal{O}_{V})\cap V_{1}$, where $U(M)$ (resp. $U(\mathcal{O}_{V})$) is the unit group of $M$ (resp. $\mathcal{O}_{V}$) and $V_{1}=\{x\in V\mid Q(x)=1\}$ (cf. \cite[\S 2]{korner_classnum_1981}).  

Since $F$ is totally real and $L$ is positive definite, by substituting the formulas
 \begin{align*}
 	\dfrac{h^{+}(\mathcal{O}_{V})}{[E(\mathcal{O}_{V}):E(M)]}=\dfrac{[E(M):1]h(F^{\prime})}{2^{r(F)}[\mathcal{O}_{F^{\prime}}^{\times}:\mathcal{O}_{F}^{\times}]h(F)}\quad\text{and}\quad h^{+}(L)=2[E(M):1]m(L)
 \end{align*}
  from \cite[Remarks (c) and (36)]{korner_classnum_1981} into \cite[Theorem 2]{korner_classnum_1981}, we obtain
  \begin{align*}
  	2[E(M):1]m(L)=\dfrac{[E(M):1]h(F^{\prime})N(\mathfrak{c}(M))}{2^{r(F)}[\mathcal{O}_{F^{\prime}}^{\times},\mathcal{O}_{F}^{\times}]h(F)}\prod_{\mathfrak{p}\mid \mathfrak{c}(M)}\dfrac{e(M,\mathfrak{p})}{a(M,\mathfrak{p})}\left(\ 1-\dfrac{\chi(\mathfrak{p})}{N(\mathfrak{p})}\right).
  \end{align*}
 Dividing both sides by $2[E(M):1]$ yields the desired mass formula under our notation, provided that  $e(M,\mathfrak{p})=u_{\mathfrak{p}}(L)$ and $a(M,\mathfrak{p})=v_{\mathfrak{p}}(L)$. 
   
   Consider the terms $\psi(M,\mathfrak{p})$, $a(M,\mathfrak{p})$ and $e(M,\mathfrak{p})$ defined in \cite[(32), (35) and Theorem 2]{korner_classnum_1981}. From \cite[Lemma 1]{korner_classnum_1981} and the definition of $R_{\mathfrak{p}}(L)$ and $a_{\mathfrak{p}}(L)$, we have
   \begin{align*}
   	\ord_{\mathfrak{p}}(\mathfrak{v}(M))=\ord_{\mathfrak{p}}(\mathfrak{v}(L)\mathfrak{n}(L)^{-2})=\ord_{\mathfrak{p}}(a_{\mathfrak{p}}(L))=R_{\mathfrak{p}}(L).
   \end{align*}
  Combining this with the third equality in Proposition \ref{prop:a-R-alpha}(ii), we have
   \begin{align*}
   	&\min\{\ord_{\mathfrak{p}}( \mathfrak{v}(M)\mathfrak{d}_{\mathfrak{p}}(-dV)/dV),\ord_{\mathfrak{p}}(4\mathfrak{v}(M))/2\}\\
   	=\;&\min\{R_{\mathfrak{p}}(L)+d_{\mathfrak{p}}(-a_{\mathfrak{p}}(L)), R_{\mathfrak{p}}(L)/2+e_{\mathfrak{p}}\}=\alpha_{\mathfrak{p}}(L).
   \end{align*}
   So $\psi(M,\mathfrak{p})=\psi_{\mathfrak{p}}(L)$ and thus $a(M,\mathfrak{p})=\upsilon_{\mathfrak{p}}(L)$. From the congruence in Proposition \ref{prop:a-R-alpha}(ii), one can easily rewrite the term $e(M,\mathfrak{p})$ as $u_{\mathfrak{p}}(L)$. 
\end{proof} 

We provide a local formula to compute the ideal $\mathfrak{c}(L)$ and build a bridge with the local density, GK invariant, and EGK invariant discussed in \cite[Appendix A]{cho_localdensity-2020}. 
\begin{prop}\label{prop:cL}
	Suppose that $F$ is an algebraic number field and $L$ is a binary $\mathcal{O}_{F}$-lattice. Let $\mathfrak{p}\in \Omega_{F}\backslash \infty_{F}$. Then 
	\begin{align}\label{cL}
	 \ord_{\mathfrak{p}}(\mathfrak{c}(L))=\dfrac{R_{\mathfrak{p}}(L)+S_{\mathfrak{p}}(L)}{2},
	\end{align}
    where 
    \begin{align*}
         S_{\mathfrak{p}}(L):=\begin{cases}
         	-1 &\text{if $R_{\mathfrak{p}}(L)$ is odd}, \\
          	2\min\{e_{\mathfrak{p}},\lfloor d_{\mathfrak{p}}(-a_{\mathfrak{p}}(L))/2\rfloor\}  &\text{if $R_{\mathfrak{p}}(L)$ is even}.
         \end{cases}
    \end{align*}
    \end{prop}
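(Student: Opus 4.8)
The plan is to reduce the statement to a purely local computation of the relative discriminant $D_{F'/F}$ in terms of the quadratic defect, and then to match the resulting expression against $S_{\mathfrak{p}}(L)$ according to the parity of $R_{\mathfrak{p}}(L)$. First I would apply $\ord_{\mathfrak{p}}$ to both sides of the defining relation $\mathfrak{c}^{2}(L)=4D_{F'/F}^{-1}\mathfrak{v}(L)\mathfrak{n}(L)^{-2}$. Since $\ord_{\mathfrak{p}}(4\mathcal{O}_{F})=2e_{\mathfrak{p}}$, and since by the definition in \eqref{R-alpha} (or Proposition \ref{prop:a-R-alpha}(iii)) one has $\ord_{\mathfrak{p}}(\mathfrak{v}(L))-2\ord_{\mathfrak{p}}(\mathfrak{n}(L))=R_{\mathfrak{p}}(L)$, this yields
\begin{align*}
	2\ord_{\mathfrak{p}}(\mathfrak{c}(L))=R_{\mathfrak{p}}(L)+2e_{\mathfrak{p}}-\ord_{\mathfrak{p}}(D_{F'/F}).
\end{align*}
Thus the proposition is equivalent to the single identity $S_{\mathfrak{p}}(L)=2e_{\mathfrak{p}}-\ord_{\mathfrak{p}}(D_{F'/F})$, and the entire problem is reduced to determining the local discriminant exponent $\ord_{\mathfrak{p}}(D_{F'/F})$.

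Next I would identify the local extension. By Proposition \ref{prop:a-R-alpha}(ii) we have $dV\in a_{\mathfrak{p}}(L)F_{\mathfrak{p}}^{\times 2}$, so that $F'_{\mathfrak{p}}=F_{\mathfrak{p}}(\sqrt{-dV})=F_{\mathfrak{p}}(\sqrt{-a_{\mathfrak{p}}(L)})$, and $\ord_{\mathfrak{p}}(D_{F'/F})$ is the exponent of the relative discriminant of this quadratic $F_{\mathfrak{p}}$-algebra. I would then invoke the standard description of the discriminant of a quadratic extension of a local field in terms of the quadratic defect (as in O'Meara \S63A together with the conductor--discriminant formula). Writing $d:=d_{\mathfrak{p}}(-a_{\mathfrak{p}}(L))$, the algebra is split or unramified (discriminant exponent $0$) exactly when $-a_{\mathfrak{p}}(L)\in F_{\mathfrak{p}}^{\times 2}\cup\Delta_{\mathfrak{p}}F_{\mathfrak{p}}^{\times 2}$, that is, when $\ord_{\mathfrak{p}}(-a_{\mathfrak{p}}(L))$ is even and $d\in\{2e_{\mathfrak{p}},\infty\}$; otherwise it is ramified with $\ord_{\mathfrak{p}}(D_{F'/F})=2e_{\mathfrak{p}}+1-d$. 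I would record here the two facts that make this clean: an element of odd order is never a square and has defect $d=0$, so the ramified formula returns $2e_{\mathfrak{p}}+1$ in that case; and for even-order non-square units the admissible defect values are the odd integers $1,3,\dots,2e_{\mathfrak{p}}-1$ (ramified) together with $2e_{\mathfrak{p}}$ (unramified).

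Finally I would split according to the parity of $R_{\mathfrak{p}}(L)=\ord_{\mathfrak{p}}(-a_{\mathfrak{p}}(L))$. If $R_{\mathfrak{p}}(L)$ is odd, then $-a_{\mathfrak{p}}(L)$ has odd order, the extension is ramified with $d=0$ and $\ord_{\mathfrak{p}}(D_{F'/F})=2e_{\mathfrak{p}}+1$, whence $2e_{\mathfrak{p}}-\ord_{\mathfrak{p}}(D_{F'/F})=-1=S_{\mathfrak{p}}(L)$. If $R_{\mathfrak{p}}(L)$ is even, then either $d\geq 2e_{\mathfrak{p}}$, so $\ord_{\mathfrak{p}}(D_{F'/F})=0$ and $2e_{\mathfrak{p}}-0=2e_{\mathfrak{p}}=2\min\{e_{\mathfrak{p}},\lfloor d/2\rfloor\}$ (using $\lfloor d/2\rfloor\geq e_{\mathfrak{p}}$); or $d<2e_{\mathfrak{p}}$ is odd, so $\ord_{\mathfrak{p}}(D_{F'/F})=2e_{\mathfrak{p}}+1-d$ and
\begin{align*}
	2e_{\mathfrak{p}}-\ord_{\mathfrak{p}}(D_{F'/F})=d-1=2\lfloor d/2\rfloor=2\min\{e_{\mathfrak{p}},\lfloor d/2\rfloor\}.
\end{align*}
In both subcases this equals $S_{\mathfrak{p}}(L)$, completing the matching. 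As a consistency check I would note that $\ord_{\mathfrak{p}}(\mathfrak{c}(L))$ comes out an integer in each case ($S_{\mathfrak{p}}(L)\equiv R_{\mathfrak{p}}(L)\pmod 2$), as it must since $\mathfrak{c}(L)$ is an honest ideal.

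The main obstacle is the local discriminant-versus-defect formula itself, and in particular pinning down the dyadic ramified exponent $2e_{\mathfrak{p}}+1-d$ and the unramified boundary $d=2e_{\mathfrak{p}}$ correctly; once that is in hand, the translation into the floor/min bookkeeping of $S_{\mathfrak{p}}(L)$ is routine. Care is needed to phrase the unramified criterion as ``$\ord$ even and $d=2e_{\mathfrak{p}}$'' rather than ``$d\geq 2e_{\mathfrak{p}}$'', so that the odd-order case (where $d=0$ but the extension is nonetheless ramified, including the non-dyadic $e_{\mathfrak{p}}=0$ situation with tame exponent $1$) is handled uniformly.
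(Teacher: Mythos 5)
Your proposal is correct and follows essentially the same route as the paper: both reduce \eqref{cL} to computing $\ord_{\mathfrak{p}}(D_{F'/F})$ via the quadratic defect of $-a_{\mathfrak{p}}(L)$ and then match the parity cases of $R_{\mathfrak{p}}(L)$ against $S_{\mathfrak{p}}(L)$. The only difference is bookkeeping and sourcing — the paper invokes the method of Boylan--Skoruppa to write $\ord_{\mathfrak{p}}(4^{-1}D_{F'/F}\mathcal{O}_F)=-S_{\mathfrak{p}}(L)$ directly, while you derive the equivalent identity $\ord_{\mathfrak{p}}(D_{F'/F})=2e_{\mathfrak{p}}-S_{\mathfrak{p}}(L)$ from the standard defect--discriminant relation for local quadratic extensions.
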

    \begin{proof}
    	From definition, $\ord_{\mathfrak{p}}(\mathfrak{v}(L)\mathfrak{n}(L)^{-2})=R_{\mathfrak{p}}(L) $. To determine $\ord_{\mathfrak{p}}(4D_{F^{\prime}/F}^{-1}\mathcal{O}_{F})$, we apply the method in the proof of \cite[Theorem 2(1)]{boylan-skoruppa_relative-quad-extension_2025} to compute the order of the discriminant ideal of $F^{\prime}/F$. So, if $\mathfrak{p}$ is non-dyadic, we have
    	\begin{align*}
    		\ord_{\mathfrak{p}}(4^{-1}D_{F^{\prime}/F}\mathcal{O}_{F})=\ord_{\mathfrak{p}}(dV)-2\lfloor \ord_{\mathfrak{p}}(dV)/2\rfloor=
    		\begin{cases}
    			1  &\text{if $\ord_{\mathfrak{p}}(dV)$ is odd}, \\
    			0   &\text{if $\ord_{\mathfrak{p}}(dV)$ is even};
    		\end{cases}
    	\end{align*}
    	if $\mathfrak{p}$ is dyadic, since $d_{\mathfrak{p}}(-dV)=d_{\mathfrak{p}}(-a_{\mathfrak{p}}(L))$ from Proposition \ref{prop:a-R-alpha}(ii), we have
    	\begin{align*}
    		\ord_{\mathfrak{p}}(4^{-1}D_{F^{\prime}/F}\mathcal{O}_{F})=
    		\begin{cases}
    			\ord_{\mathfrak{p}}(dV)-2\lfloor \ord_{\mathfrak{p}}(dV)/2\rfloor=1   &\text{if $\ord_{\mathfrak{p}}(dV)$ is odd}, \\
    			\ord_{\mathfrak{p}}(dV)-2\lfloor \ord_{\mathfrak{p}}(dV)/2\rfloor-2t=-2t  &\text{if $\ord_{\mathfrak{p}}(dV)$ is even}, \\
    		\end{cases}
    	\end{align*}
    	where $t=\min\{e_{\mathfrak{p}},\lfloor d_{\mathfrak{p}}(-a_{\mathfrak{p}}(L))/2\rfloor\} $. Also, when $\mathfrak{p}$ is non-dyadic and $\ord_{\mathfrak{p}}(dV)$ is even,  $t=e_{\mathfrak{p}}=0$. So unifying both cases and from $\ord_{\mathfrak{p}}(dV)\equiv R_{\mathfrak{p}}(L)\pmod{2}$, we have
    	\begin{equation}\label{4DEF}
    		\begin{aligned}
    			\ord_{\mathfrak{p}}(4^{-1}D_{F^{\prime}/F}\mathcal{O}_{F})&=\begin{cases}
    				1   &\text{if $R_{\mathfrak{p}}(L)$ is odd},\\
    				-2t  &\text{if $R_{\mathfrak{p}}(L)$ is even}.
    			\end{cases} \\
    			&=-S_{\mathfrak{p}}(L).
    		\end{aligned}        
    	\end{equation}
    	Therefore, $\ord_{\mathfrak{p}}(4D_{F^{\prime}/F}^{-1}\mathcal{O}_{F})=S_{\mathfrak{p}}(L)$. So
    	\begin{align*}
    			2\ord_{\mathfrak{p}}(\mathfrak{c}(L))=\ord_{\mathfrak{p}}(\mathfrak{v}(L)\mathfrak{n}(L)^{-2})+\ord_{\mathfrak{p}}(4D_{F^{\prime}/F}^{-1}\mathcal{O}_{F})= R_{\mathfrak{p}}(L)+S_{\mathfrak{p}}(L) ,
    	\end{align*}
    	as desired.
    \end{proof} 
 \begin{prop}\label{prop:fB-dB}
 	Suppose that $F$ is an algebraic number field and $L$ is a binary $\mathcal{O}_{F}$-lattice. Assume that $\mathfrak{p}$ is a dyadic prime and $L_{\mathfrak{p}}$ is primitive. 
 	
 	Let $B$ be the associated Gram matrix of $L_{\mathfrak{p}}$, $D_{B}=-4\det B$ and $K=F_{\mathfrak{p}}(\sqrt{D_{B}})$. Define $d_{B}=\ord_{\mathfrak{p}}(D_{K/F_{\mathfrak{p}}}\mathcal{O}_{F_{\mathfrak{p}}})$ and $f_{B}=(\ord_{\mathfrak{p}}(D_{B})-d_{B})/2$ (cf. \cite[p.\hskip 0.1cm 1259]{cho_localdensity-2020}). Then 
 	\begin{align*}
 		f_{B}=(R_{\mathfrak{p}}(L)+S_{\mathfrak{p}}(L))/2\quad\text{and}\quad
 		d_{B}=2e_{\mathfrak{p}}-S_{\mathfrak{p}}(L).
 	\end{align*}
 \end{prop}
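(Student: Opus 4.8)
The plan is to reduce both equalities to the order computation of the relative discriminant $D_{F^{\prime}/F}$ that was already carried out in the proof of Proposition~\ref{prop:cL}, after identifying $K$ with the completion of $F^{\prime}=F(\sqrt{-dV})$ at $\mathfrak{p}$. The point is that $D_B=-4\det B$ and $-dV$ lie in the same square class locally, so $K=F^{\prime}_{\mathfrak{p}}$, and then $d_B$ is literally $\ord_{\mathfrak{p}}(D_{F^{\prime}/F})$, which equation \eqref{4DEF} already evaluates.

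First I would exploit primitivity. Since $L_{\mathfrak{p}}$ is primitive, $\mathfrak{n}(L_{\mathfrak{p}})=\mathcal{O}_{F_{\mathfrak{p}}}$, so by \eqref{a-Ri} the scalar $c$ with $\mathfrak{n}(L)=c\mathcal{O}_{F}$ is a unit at $\mathfrak{p}$; hence $R_{1,\mathfrak{p}}(L)=0$ and $a_{\mathfrak{p}}(L)\equiv dL_{\mathfrak{p}}\equiv dV\pmod{F_{\mathfrak{p}}^{\times 2}}$ up to unit squares. As $\det B=dL_{\mathfrak{p}}$ in $F_{\mathfrak{p}}^{\times}/\mathcal{O}_{F_{\mathfrak{p}}}^{\times 2}$ and $\mathfrak{v}(L_{\mathfrak{p}})=dL_{\mathfrak{p}}\mathcal{O}_{F_{\mathfrak{p}}}$, Proposition~\ref{prop:a-R-alpha}(iii) gives $\ord_{\mathfrak{p}}(\det B)=\ord_{\mathfrak{p}}(\mathfrak{v}(L_{\mathfrak{p}}))=R_{\mathfrak{p}}(L)$, so that
\begin{align*}
	\ord_{\mathfrak{p}}(D_B)=\ord_{\mathfrak{p}}(-4\det B)=2e_{\mathfrak{p}}+R_{\mathfrak{p}}(L).
\end{align*}

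Next I would identify $K$. Since $D_B=-4\det B\equiv-\det B\equiv-dV\equiv-a_{\mathfrak{p}}(L)\pmod{F_{\mathfrak{p}}^{\times 2}}$, we get $K=F_{\mathfrak{p}}(\sqrt{D_B})=F_{\mathfrak{p}}(\sqrt{-dV})=F^{\prime}_{\mathfrak{p}}$ (both sides degenerate to $F_{\mathfrak{p}}$ when $-dV\in F_{\mathfrak{p}}^{\times 2}$, where $d_B=0$). Therefore $D_{K/F_{\mathfrak{p}}}\mathcal{O}_{F_{\mathfrak{p}}}=(D_{F^{\prime}/F})_{\mathfrak{p}}$, whence $d_B=\ord_{\mathfrak{p}}(D_{F^{\prime}/F}\mathcal{O}_{F_{\mathfrak{p}}})$. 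Using $d_{\mathfrak{p}}(-dV)=d_{\mathfrak{p}}(-a_{\mathfrak{p}}(L))$ from Proposition~\ref{prop:a-R-alpha}(ii), the computation \eqref{4DEF} in the proof of Proposition~\ref{prop:cL} reads $\ord_{\mathfrak{p}}(4^{-1}D_{F^{\prime}/F}\mathcal{O}_{F})=-S_{\mathfrak{p}}(L)$, so $d_B=2e_{\mathfrak{p}}-S_{\mathfrak{p}}(L)$, which is the second equality. The first follows at once:
\begin{align*}
	f_B=\frac{\ord_{\mathfrak{p}}(D_B)-d_B}{2}=\frac{(2e_{\mathfrak{p}}+R_{\mathfrak{p}}(L))-(2e_{\mathfrak{p}}-S_{\mathfrak{p}}(L))}{2}=\frac{R_{\mathfrak{p}}(L)+S_{\mathfrak{p}}(L)}{2}.
\end{align*}

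The only genuine subtlety, and the step I would treat most carefully, is the evaluation $d_B=\ord_{\mathfrak{p}}(D_{F^{\prime}/F}\mathcal{O}_{F_{\mathfrak{p}}})=2e_{\mathfrak{p}}-S_{\mathfrak{p}}(L)$: one must check that $F^{\prime}_{\mathfrak{p}}$ is the quadratic extension cut out by $D_B$ in every square class (including the split and unramified cases), and that the relative discriminant exponent of a dyadic quadratic extension $F_{\mathfrak{p}}(\sqrt{u})/F_{\mathfrak{p}}$ is governed by the defect order exactly as encoded in $S_{\mathfrak{p}}(L)$ --- namely $2e_{\mathfrak{p}}+1$ when the radicand has odd order, $0$ in the unramified and split cases, and $2e_{\mathfrak{p}}-(d_{\mathfrak{p}}(-a_{\mathfrak{p}}(L))-1)$ for odd defect below $2e_{\mathfrak{p}}$. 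Since that relation is precisely what \eqref{4DEF} records, the remaining work is purely the square-class bookkeeping above; for a self-contained argument one would instead recompute $\ord_{\mathfrak{p}}(D_{K/F_{\mathfrak{p}}})$ directly from the quadratic defect of $-a_{\mathfrak{p}}(L)$ via \cite[\S63A]{omeara_quadratic_1963}, which is the technical heart of the matter.
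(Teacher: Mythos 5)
Your proof is correct and takes essentially the same approach as the paper: both identify $D_{B}$ with $-dV_{\mathfrak{p}}$ in square classes so that $K=F^{\prime}_{\mathfrak{p}}$ and $d_{B}=\ord_{\mathfrak{p}}(D_{F^{\prime}/F}\mathcal{O}_{F})$ (the paper cites Fr\"{o}hlich for this localization step, which you correctly flag as the one point needing care), then evaluate $d_{B}$ via \eqref{4DEF} and use primitivity to handle $f_{B}$. The only cosmetic difference is that the paper deduces $f_{B}=\ord_{\mathfrak{p}}(\mathfrak{c}(L))$ and invokes Proposition \ref{prop:cL}, whereas you compute $\ord_{\mathfrak{p}}(D_{B})=2e_{\mathfrak{p}}+R_{\mathfrak{p}}(L)$ directly and subtract --- the same arithmetic, merely unfolded.
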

 \begin{re}\label{re:fB-dB}
 	With Proposition \ref{prop:fB-dB}, one can express or compute the local density $\beta_{\mathfrak{p}}(L)$, $\gk(L_{\mathfrak{p}}\perp -L_{\mathfrak{p}})$ and $\egk(L_{\mathfrak{p}}\cap \pi_{\mathfrak{p}}^{i}L_{\mathfrak{p}}^{\#})^{\le 1}$ in terms of invariants $a(L_{\mathfrak{p}})$ and $R(L_{\mathfrak{p}})$, by applying \cite[Propositions A6, A8 and A11]{cho_localdensity-2020}. See Theorem \ref{thm:localinfor-ADC} and Example \ref{ex:GK-EGK} for details.
 \end{re}
 \begin{proof}
 	Note that $D_{B}=-dV_{\mathfrak{p}}$ in $F_{\mathfrak{p}}^{\times}/F_{\mathfrak{p}}^{\times 2}$. It follows from \cite[\S 2]{frolich_discriminants_1960} $d_{B}=\ord_{\mathfrak{p}}(D_{F^{\prime}/F}\mathcal{O}_{F})$. Hence the formula for $d_{B}$ follows from \eqref{4DEF}. Since $L_{\mathfrak{p}}$ is primitive, $\ord_{\mathfrak{p}}(\mathfrak{n}(L))=0$. Hence
 	\begin{align*}
 		2\ord_{\mathfrak{p}}(\mathfrak{c}(L))=\ord_{\mathfrak{p}}(\mathfrak{v}(L))-\ord_{\mathfrak{p}}(4^{-1}D_{F^{\prime}/F}\mathcal{O}_{F}) &=\ord_{\mathfrak{p}}(4^{-1}D_{B})-\ord_{\mathfrak{p}}(4^{-1}D_{F^{\prime}/F}\mathcal{O}_{F})\\
 		&=\ord_{\mathfrak{p}}(D_{B})-d_{B}=2f_{B}.
 	\end{align*}
 	Thus $f_{B}=\ord_{\mathfrak{p}}(\mathfrak{c}(L))$. So the formula for $f_{B}$ follows by Proposition \ref{prop:cL}.
 \end{proof}  

Following \cite[\S 1]{pfeuffer_Einklassige_1971},
we denote by $A_{\mathfrak{p}^{k}}^{0}(L,L)$ the number of linear representations $\sigma$ from $L$ to $L$ modulo $\mathfrak{p}^{k}L$ such that $Q(\sigma(x))\equiv Q(x)\pmod{2\mathfrak{p}^{k}}$ for all $x\in L$, i.e.,
\begin{align*}
	A_{\mathfrak{p}^{k}}^{0}(L,L):=|\{\sigma:L\to L/\mathfrak{p}^{k}L\mid Q(\sigma(x))\equiv Q(x)\pmod{2\mathfrak{p}^{k}}\;\text{for all $x\in L$}\}|.
\end{align*}
Set $	\gamma_{\mathfrak{p}}^{0}(L,L):=2^{-1}\lim_{k\to \infty}A_{\mathfrak{p}^{k}}^{0}(L,L)N(\mathfrak{p})^{-k(\ell(\ell-1)/2)}$ and define
\begin{align*}
 \gamma_{\mathfrak{p}}(L,L):=&N(\mathfrak{p})^{e_{\mathfrak{p}}\ell}\gamma^{0}_{\mathfrak{p}}(L,L),
\end{align*}
as the \textit{local density} of $L$. According to \cite[Hilfssatz 3]{pfeuffer_Einklassige_1971},  $\gamma_{\mathfrak{p}}(L,L)$ coincides with $d_{\mathfrak{p}}(L,L)$ as defined in \cite[Definition]{pfeuffer_Einklassige_1971}, but not with $\beta_{\mathfrak{p}}(L)$ as defined in \cite[\S 2.4]{cho_localdensity-2020}. In fact, when $\ell=2$, we have $\beta_{\mathfrak{p}}(L)=N(\mathfrak{p})^{e_{\mathfrak{p}}}\gamma_{\mathfrak{p}}(L,L)$.

Note that the proof of \cite[Hilfssatz 7]{pfeuffer_Einklassige_1971} is also applicable to $L^{(c)}$ with $\ord_{\mathfrak{p}}(c)\le 0$. Hence we have
\begin{align}\label{scaleL}
\gamma_{\mathfrak{p}}(L^{(c)},L^{(c)})=N(\mathfrak{p})^{\ord_{\mathfrak{p}}(c)(\ell(\ell+1)/2)}\gamma_{\mathfrak{p}}(L,L)
\end{align}
for $c\in F^{\times}$.
 
When $L$ is binary, the formulas for $\gamma_{\mathfrak{p}}(L,L)$ were derived by Siegel in \cite{siegel_III_1937} for non-dyadic cases, and by Pfeuffer in \cite{pfeuffer_Einklassige_1971,pfeuffer_binarer_1978} for dyadic cases. Based on invariants $\mathfrak{s}(L_{\mathfrak{p}})$, $\mathfrak{v}(L_{\mathfrak{p}})$, $a(L_{\mathfrak{p}})$, $R(L_{\mathfrak{p}})$ and $\alpha(L_{\mathfrak{p}})$, we provide a unified formula of $\gamma_{\mathfrak{p}}(L,L)$ for all $\mathfrak{p}\in \Omega_{F}\backslash \infty_{F}$, and the corresponding formula for $1/m(L)$.
\begin{thm}\label{thm:localdensity}
	Suppose that $F$ is non-dyadic or dyadic, and $L\cong \prec a_{1},a_{2}\succ$ is a binary $\mathcal{O}_{F}$-lattice relative to a good BONG. Put $a=a(L)$, $R=R(L)$ and $\alpha=\alpha(L)$. Then the local density formula of $L$ is given by
	\begin{align}\label{gamma}
		\gamma(L,L)=N(\mathfrak{
			s}(L))N(\mathfrak{v}(L))N(\mathfrak{p})^{e+\lfloor t/2\rfloor}r,
	\end{align}
	where the pair $(t,r)$ is given as the following formulas:
	 \begin{enumerate}[itemindent=-0.5em,label=\rm (\roman*)]
	 \item If $R>0$, then 
	\begin{align*}
		(t,r)=\begin{cases}
			(\alpha,1)    &\text{if  $R \le 2e$ and $  \alpha <R +d(-a )$}, \\
			(\alpha,2)    &\text{if  $R \le 2e$ and $  \alpha =R +d(-a )$}, \\
			(2e,2)    &\text{if $R >2e$}.
		\end{cases}
	\end{align*}
	\item  If $R\le 0$, then 
	\begin{align*}
		(t,r)=\begin{cases}
			(e-R/2,1)    &\text{if  $R>-2e$ and $d(-a)>e-R/2$}, \\
			(d(-a),2)    &\text{if $d(-a)\le e-R/2$}, \\
			(2e,1-\chi(\mathfrak{p})/N(\mathfrak{p}))    &\text{if $R=-2e$},
		\end{cases}
	\end{align*} 
\end{enumerate}
where the first two cases in (i) (resp. (ii)) are ignored, when $F$ is non-dyadic, i.e., $e=0$.
\end{thm}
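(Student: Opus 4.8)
The plan is to derive \eqref{gamma} by translating the classical local density computations---Siegel's in the non-dyadic case \cite{siegel_III_1937} and Pfeuffer's in the dyadic case \cite{pfeuffer_Einklassige_1971,pfeuffer_binarer_1978}---into the BONGs invariants $a(L)$, $R(L)$ and $\alpha(L)$ by means of the dictionary in Proposition \ref{prop:a-R-alpha}. The first step I would carry out is a scaling reduction. For $\ell=2$ the transformation law \eqref{scaleL} reads $\gamma_{\mathfrak{p}}(L^{(c)},L^{(c)})=N(\mathfrak{p})^{3\ord_{\mathfrak{p}}(c)}\gamma_{\mathfrak{p}}(L,L)$, while under scaling by $c\in F^{\times}$ one has $\ord(\mathfrak{s}(L^{(c)}))=\ord(\mathfrak{s}(L))+\ord_{\mathfrak{p}}(c)$ and $\ord(\mathfrak{v}(L^{(c)}))=\ord(\mathfrak{v}(L))+2\ord_{\mathfrak{p}}(c)$, so the prefactor $N(\mathfrak{s}(L))N(\mathfrak{v}(L))$ also picks up exactly $N(\mathfrak{p})^{3\ord_{\mathfrak{p}}(c)}$. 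Since $a(L)$, and hence $R(L)$, $d(-a(L))$ and $\alpha(L)$, are all scaling-invariant, the remaining factor $N(\mathfrak{p})^{e+\lfloor t/2\rfloor}r$ is unchanged. Thus both sides of \eqref{gamma} transform identically, and it suffices to prove the formula for one lattice in each similarity class; I would normalize so that $\mathfrak{n}(L)=\mathcal{O}_{F}$, i.e. $R_{1}(L)=0$ and $R=R_{2}(L)$.

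In the non-dyadic case $e=0$ only the third subcase of (i) and the third of (ii) survive. When $R=0$ the lattice is unimodular and Siegel's formula yields the factor $1-\chi(\mathfrak{p})/N(\mathfrak{p})$ according as $F(\sqrt{-dV})/F$ is split or inert (the unit discriminant forces $\chi(\mathfrak{p})\in\{1,-1\}$), which is exactly the $R=-2e$ line specialized to $e=0$; when $R>0$ the lattice splits into two modular components of distinct scales and the formula returns $t=0$, $r=2$. In both subcases Proposition \ref{prop:a-R-alpha}(iii) rewrites $N(\mathfrak{s}(L))N(\mathfrak{v}(L))$ in the stated form, completing the non-dyadic verification.

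The bulk of the argument is the dyadic case. Here I would start from Pfeuffer's explicit formulas, phrased in terms of the scale, norm, weight $\mathfrak{w}(L)$ and the quadratic defect of the discriminant, and use Proposition \ref{prop:a-R-alpha}(ii),(iii) to express $\ord(\mathfrak{s}(L))$, $\ord(\mathfrak{v}(L))$, $\ord(\mathfrak{w}(L))$ and $d(-dL)$ purely through $R$, $\alpha$, $d(-a)$ and $e$. I would then match Pfeuffer's case distinctions to the five regimes of the theorem: the ``very unbalanced'' range $R>2e$ (giving $t=2e$, $r=2$); the intermediate range $-2e<R\le 2e$, in which $r\in\{1,2\}$ and the exponent $t\in\{\alpha,\,e-R/2,\,d(-a)\}$ are governed by the comparison of $\alpha$ with $R+d(-a)$ and of $d(-a)$ with $e-R/2$; and the extremal range $R=-2e$, which reproduces the split/ramified/inert factor $1-\chi(\mathfrak{p})/N(\mathfrak{p})$. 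Throughout, the floor $\lfloor t/2\rfloor$ absorbs the parity of $R$, where the constraint of Proposition \ref{prop:Rproperty}(iv) (odd $R$ forces $R>0$) guarantees $R/2\in\mathbb{Z}$ in case (ii).

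The main obstacle is precisely this dyadic bookkeeping: Pfeuffer's invariants rest on his own normal-form conventions and on the norm-group structure of $F(\sqrt{-dV})/F$, and I must confirm that his case boundaries coincide exactly with the BONGs boundaries $R\gtrless 2e$, $R\gtrless -2e$ and $\alpha\gtrless R+d(-a)$. The most delicate point is to show, in the intermediate range $-2e<R\le 2e$, that the symmetry factor $r$ equals $2$ exactly when $\alpha=R+d(-a)$ (equivalently $R+d(-a)\le R/2+e$, so that the defect term realizes the minimum in $\alpha$) and equals $1$ when $\alpha=R/2+e<R+d(-a)$, and separately that the unbalanced regime $R>2e$ forces $r=2$ unconditionally while $R=-2e$ yields the character factor. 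Verifying that these three analytically distinct behaviors stitch together into the single displayed formula, with $t$ and $\lfloor t/2\rfloor$ correctly distinguishing lattices with an orthogonal basis from those admitting only a proper BONG, is where the careful casework lies.
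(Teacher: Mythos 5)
Your strategy coincides with the paper's: a scaling reduction (both sides of \eqref{gamma} pick up $N(\mathfrak{p})^{3\ord(c)}$ under $L\mapsto L^{(c)}$, while $a$, $R$, $d(-a)$, $\alpha$ are unchanged), Siegel's Hilfssatz 56 for $e=0$, and Pfeuffer's formulas plus the dictionary of Proposition \ref{prop:a-R-alpha} for the dyadic case; the criteria you isolate (namely $r=2$ exactly when the defect term realizes the minimum defining $\alpha$, $r=2$ unconditionally for $R>2e$, and the character factor at $R=-2e$) are the correct ones. However, the proposal has a genuine gap: the dyadic ``bookkeeping'' you defer as an obstacle is not a routine confirmation but is essentially the entire proof. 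Concretely, for $R>0$ one must prove, for the representative $\prec 1,a\succ$, that Pfeuffer's volume exponent $s$ equals $R$, that $\ord(\mathfrak{d}(-dL'))=R+d(-a)$, and that his ideal $\mathfrak{p}^{t}=4\mathcal{O}_{F}+2\mathfrak{p}^{\lfloor s/2\rfloor}+\mathfrak{d}(-dL')$ satisfies $t=\alpha$ when $R\le 2e$ and $t=2e$ when $R>2e$ (this needs Proposition \ref{prop:Rproperty}(i),(iii)), and then that his dichotomy $\mathfrak{d}(-dL')\subsetneq\mathfrak{p}^{t}\subseteq\mathfrak{p}^{s}$ versus $\mathfrak{d}(-dL')=\mathfrak{p}^{t}$ or $\mathfrak{p}^{s}\subsetneq\mathfrak{p}^{t}$ translates exactly into $\alpha<R+d(-a)$ versus $\alpha=R+d(-a)$ or $R>2e$. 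For $R\le 0$ a different set of identities is needed, $\ord(\mathfrak{d}(-dL'))=d(-a)$ and $t=\ord(\mathfrak{n}(L')\mathfrak{w}(L'))=\min\{e-R/2,\,d(-a)\}$, together with a three-way case analysis against $4\mathcal{O}_{F}$ that produces the factor $1-\chi(\mathfrak{p})/N(\mathfrak{p})$. None of this is carried out in your text, so what you have is a correct plan rather than a proof.

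There is also a concrete misstep in the reduction: you commit to normalizing by the norm, $\mathfrak{n}(L)=\mathcal{O}_{F}$, but for $R<0$ that representative $\prec 1,a\succ$ has fractional scale $\mathfrak{p}^{R/2}$, and Pfeuffer's parameters and case conditions are formulated for lattices with unit scale. The paper instead normalizes by the scale, so that in the regime $R\le 0$ the representative is $A(\varepsilon_{1}\pi^{-R/2},\varepsilon_{1}^{-1}\pi^{R/2}(1+\varepsilon_{1}\varepsilon_{2}))\cong\prec\varepsilon_{1}\pi^{-R/2},\varepsilon_{2}\pi^{R/2}\succ$, an integral lattice of scale $\mathcal{O}_{F}$ to which the cited formulas apply. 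This is repairable inside your own framework, since your scaling-invariance observation lets you choose any representative, but as written your Case-II representative is not one for which Pfeuffer's results can be invoked.
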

\begin{re}\label{re:local-density}
	  By Proposition \ref{prop:a-R-alpha}(i) and (iii), we see that the values of $N(\mathfrak{s}(L))$, $N(\mathfrak{v}(L))$, $R(L)$, and $\alpha(L)$ depend only on $a(L)$, $R_{1}(L)$, and $R_{2}(L)$. Therefore, the local density of binary lattices over arbitrary non-archimedean local fields is completely determined by the invariants $a(L)$ and $R_{i}(L)$ for $i=1,2$.
\end{re}
\begin{proof}
 Let $  \mathfrak{s}(L)=c\mathcal{O}_{F}$. Write $L=L^{\prime (c)}$ with for some binary $\mathcal{O}_{F}$-lattice $L^{\prime}$ with $\mathfrak{s}(L^{\prime})=\mathcal{O}_{F}$. Then we have
 \begin{align*}
 \lhs=\gamma(L,L)=N(\mathfrak{p})^{3\ord(c)}\gamma(L^{\prime},L^{\prime})=N(\mathfrak{s}(L))^{3}\gamma(L^{\prime},L^{\prime}).
 \end{align*}
  Let $s=\ord(\mathfrak{v}(L^{\prime}))$. From \cite[\S 82J]{omeara_quadratic_1963}, we have $\mathfrak{p}^{s}=\mathfrak{v}(L)\mathfrak{s}(L)^{-2}$ and thus
  \begin{align}\label{Nps}
  	N(\mathfrak{p})^{s}=N(\mathfrak{v}(L))N(\mathfrak{s}(L))^{-2}.
  \end{align}
   Assume that $F$ is non-dyadic. From \cite[Hilfssatz 56]{siegel_III_1937}, we have
	\begin{align*}
		\gamma(L^{\prime},L^{\prime})=\begin{cases}
			1-\chi(\mathfrak{p})/N(\mathfrak{p}) &\text{if $L^{\prime}$ is unimodular},\\
			2N(\mathfrak{p})^{s} &\text{if $L^{\prime}$ is not unimodular}.\\
		\end{cases}
	\end{align*} 

	If $R=0=-2e$, then $(t,r)=(0,1-\chi(\mathfrak{p})/N(\mathfrak{p}))$. Since $L^{\prime} $ is unimodular, we have $s=0$. It follows from \eqref{Nps} that $N(\mathfrak{v}(L))=N(\mathfrak{s}(L))^{2}$. Hence  $\rhs= N(\mathfrak{s}(L))^{3}(1-\chi(\mathfrak{p})/N(\mathfrak{p}))=\lhs$. 
	 If $R>0=-2e$, then $(t,r)=(0,2)$. By \eqref{Nps}, we have $\rhs=2N(\mathfrak{v}(L))N(\mathfrak{s}(L))=N(\mathfrak{s}(L))^{3}(2N(\mathfrak{p})^{s})=\lhs$.
	
	Assume that $F$ is dyadic. From \cite[p.\hskip 0.1cm 105]{pfeuffer_binarer_1978}, we have
	 \begin{align*}
		\gamma(L^{\prime},L^{\prime})=N(\mathfrak{p})^{s+e+\lfloor t/2\rfloor}r,
	\end{align*}
	 with appropriate parameters $s$, $t$ and $r$. Note from \eqref{Nps} that $ N(\mathfrak{s}(L))^{3}N(\mathfrak{p})^{s}=N(\mathfrak{v}(L))N(\mathfrak{s}(L))$. So it remains to translate the parameters $t$ and $r$ to the invariants introduced above.
	
	\textbf{Case I: $R>0$}
	
	In this case,  $L=L^{\prime (c)}$ with $L^{\prime}=\prec 1,a\succ$ and $c=a_{1}$. From \cite[p.\hskip 0.1cm 104]{pfeuffer_binarer_1978}, we have $\mathfrak{p}^{t}=4\mathcal{O}_{F}+2\mathfrak{p}^{\lfloor s/2\rfloor}+\mathfrak{d}(-dL^{\prime})$, and $r=1$ or $2$, according as condition (a) or (b) holds:
	\begin{enumerate}
		\item[(a)] $\mathfrak{d}(-dL^{\prime})\subsetneq \mathfrak{p}^{t}\subseteq \mathfrak{p}^{s} $;
		
		\item[(b)] $\mathfrak{d}(-dL^{\prime})=\mathfrak{p}^{t}$ or $\mathfrak{p}^{s}\subsetneq \mathfrak{p}^{t}$.
	\end{enumerate}
We claim that
	\begin{align*}
		&s=R,\quad \ord(\mathfrak{d}(-dL^{\prime}))=R +d(-a),\quad\text{and}\quad t=\begin{cases}
			\alpha  &\text{if $R\le 2e$}, \\
			2e  &\text{if $R>2e$}.
		\end{cases}
	\end{align*}
		First, we have $R_{1}(L^{\prime})=0$ and $R_{2}(L^{\prime})=\ord(a)=R$. By Proposition \ref{prop:a-R-alpha}(iii) and (ii),$s=\ord(\mathfrak{v}(L^{\prime}))=R_{1}(L^{\prime})+R_{2}(L^{\prime})=R$, and so $\ord(\mathfrak{d}(-dL^{\prime}))=\ord(\mathfrak{v}(L^{\prime}))+d(-a(L^{\prime}))=R +d(-a)$. 
 	  Hence $  t =\min\{2e, \lfloor R/2\rfloor+e,R+d(-a)\}$.
 	  
 	   Furthermore, if $R\le 2e$ is even, then by Proposition \ref{prop:Rproperty}(i), $\alpha \le 2e$. Hence, by Proposition \ref{prop:a-R-alpha}(i),  $t=\min\{2e,R/2+e,R+d(-a)\}=\min\{2e,\alpha\}=\alpha$. If $R\le 2e$ is odd, then $R\le 2e-1$ and $d(-a)=0$. So $\lfloor R/2\rfloor +e\ge R+d(-a)=R$. Therefore, $t=\min\{2e,R\}=R$. So, by Proposition \ref{prop:Rproperty}(iii), $t=R=\alpha$. If $R>2e$, then $\lfloor R/2\rfloor+e\ge2e$ and $R+d(-a)\ge 2e$. So  $t=2e$.  The claim is proved.
	
	If $R \le 2e$, by the claim and Proposition \ref{prop:Rproperty}(iii), we have $s=R\le t=\alpha$. So $\mathfrak{p}^{t}\subseteq \mathfrak{p}^{s}$ holds trivially, and $\mathfrak{p}^{s}\subsetneq \mathfrak{p}^{t}$ fails. Therefore, (a) is equivalent to $R+d(-a)>\alpha $; (b) is equivalent to $R+d(-a)=\alpha$.
	
	If $R>2e$, by the claim and Proposition \ref{prop:Rproperty}(i), we have $t=2e<s=R$. Hence $\mathfrak{p}^{t} \subseteq \mathfrak{p}^{s} $ fails, so does (a). Clearly, (b) is equivalent to $R+d(-a)=2e$ or $R>2e$; however, the former condition would imply that $2e=R +d(-a)\ge R $, a contradiction. 
	
	\textbf{Case II: $R\le 0$}
	
	Write $a_{i}=\varepsilon_{i}\pi^{R_{i}}$ with $\varepsilon_{i}\in \mathcal{O}_{F}^{\times}$. By \cite[Corollary 3.4(iii)]{beli_integral_2003} and \cite[93:17]{omeara_quadratic_1963}, we have $L=L^{\prime (c)}$, where $c=\pi^{(R_{1}+R_{2})/2}$ and
	\begin{align*}
		L^{\prime}=A(\varepsilon_{1}\pi^{-R/2}, \varepsilon_{1}^{-1}\pi^{R/2}(1+\varepsilon_{1}\varepsilon_{2}))\cong \prec \varepsilon_{1}\pi^{-R/2}, \varepsilon_{2}\pi^{R/2}\succ ,
	\end{align*}
	with $R\in [-2e,0]^{E}$ (from \eqref{eq:BONGs} and Proposition \ref{prop:Rproperty}(iv)). As \cite[p.\hskip 0.1cm 104]{pfeuffer_binarer_1978}, put $\mathfrak{p}^{t}=\mathfrak{n}(L^{\prime})\mathfrak{w}(L^{\prime})$, and then $r=1$ or $2$ or $1-\chi(\mathfrak{p})/N(\mathfrak{p})$, according as condition (c) or (d) or (e) holds:
	\begin{enumerate}
		\item[(c)] $4\mathfrak{o}\subsetneq \mathfrak{p}^{t}$ and $\mathfrak{d}(-dL^{\prime})\subsetneq \mathfrak{p}^{t}$;  
		\item[(d)] $ 4\mathfrak{o}\subsetneq \mathfrak{p}^{t}= \mathfrak{d}(-dL^{\prime}) $;
		\item[(e)] $4\mathfrak{o}=\mathfrak{p}^{t}$.
	\end{enumerate} 
	We claim that
	\begin{align*}
  	\ord(\mathfrak{d}(-dL^{\prime}))=d(-a)\quad\text{and}\quad t=\min\{e-R/2,d(-a)\}.
	\end{align*}
	  Clearly, $ R_{1}(L^{\prime})=-R/2$ and $R_{2}(L^{\prime})=R/2$. By Proposition \ref{prop:a-R-alpha}(i), we see that  $R(L^{\prime})=R$,  $d(-a(L^{\prime}))=d(-\varepsilon_{1}\varepsilon_{2})=d(-a)$ and $\alpha(L^{\prime})=\alpha$. Since $R(L^{\prime})$ is even, by Proposition \ref{prop:a-R-alpha}(ii) and (iii), we further have 
	\begin{align*}
		\ord(\mathfrak{d}(-dL^{\prime}))=\ord(\mathfrak{v}(L^{\prime}))+d(-a(L^{\prime}))=R_{1}(L^{\prime})+R_{2}(L^{\prime})+d(-a)=d(-a).
	\end{align*}
	 By Proposition \ref{prop:a-R-alpha}(iii), $\ord(\mathfrak{n}(L^{\prime}))=R_{1}(L^{\prime})=-R/2$ and $\ord(\mathfrak{w}(L^{\prime}))=R(L^{\prime})+\alpha(L^{\prime})=-R/2+\alpha$. So
	 \begin{align*}
	 	t=\ord(\mathfrak{n}(L^{\prime})\mathfrak{w}(L^{\prime}))=-R/2+(-R/2+\alpha)=\min\{e-R/2,d(-a)\},
	 \end{align*} 
	 as claimed.
	 
	 Note that if $R=-2e$, then by \eqref{eq:BONGs}, $d(-a)\ge -R=2e$ and thus $t=2e$ from the claim.
	 
	If $d(-a)>e-R/2$, by the claim, we have $t=e-R/2<\ord(\mathfrak{d}(-dL^{\prime}))=d(-a)$. Thus (d) cannot happen. Clearly, (c) is equivalent to $2e>e-R/2$, i.e., $R>-2e$; and (e) is equivalent to $2e=e-R/2$, i.e., $R=-2e$ and  $d(-a)=\infty$.
	
	If $d(-a)\le e-R/2$, by the claim again, we have $t=d(-a)=\ord(\mathfrak{d}(-dL^{\prime}))$. Thus (c) cannot happen. Clearly, (d) is equivalent to $d(-a)<2e$; note that $e-R/2<2e$, otherwise, $R=-2e$, which implies $d(-a)\ge 2e$, a contradiction. So (d) is equivalent to $d(-a)\le e-R/2$. Clearly, (e) is equivalent to $2e=d(-a)\le e-R/2$, i.e., $R=-2e$ and $d(-a)=2e$.   
\end{proof}
 \begin{thm}\label{thm:mass-2}
	Suppose that $F$ is a totally real number field of degree $n$ and $L$ is a positive definite binary $\mathcal{O}_{F}$-lattice. Then
	\begin{align*}
		\dfrac{1}{m(L)}=\dfrac{\pi^{n}N(\mathfrak{s}(L)) }{\sqrt{D_{F}N(\mathfrak{v}(L))}} \prod_{\mathfrak{p}\in\Omega_{F}\backslash \infty_{F}} N(\mathfrak{p})^{\lfloor t_{\mathfrak{p}}(L)/2\rfloor}r_{\mathfrak{p}}(L),
	\end{align*}
	where the pair $(t_{\mathfrak{p}}(L),r_{\mathfrak{p}}(L))$ is given by the formulas (i) and (ii) in Theorem \ref{thm:localdensity}.
\end{thm}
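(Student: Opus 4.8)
The plan is to start from Pfeuffer's reciprocal mass formula for positive definite binary lattices in \cite{pfeuffer_binarer_1978}, which is the Minkowski--Siegel mass formula specialized to our setting. In the normalization matching the local density $\gamma_{\mathfrak{p}}(L,L)$ used throughout this section, it reads
\begin{align*}
	\frac{1}{m(L)} = \frac{\pi^{n}}{2^{n}\sqrt{D_{F}}\,N(\mathfrak{v}(L))^{3/2}}\prod_{\mathfrak{p}\in\Omega_{F}\backslash\infty_{F}}\gamma_{\mathfrak{p}}(L,L),
\end{align*}
where the archimedean factor collects the $n$ real places (each contributing a term proportional to $\pi$) together with the discriminant $D_{F}$ and the volume $N(\mathfrak{v}(L))$ entering the normalization of the underlying measure. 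The whole task then reduces to re-expressing the right-hand side by inserting the explicit local densities computed in Theorem \ref{thm:localdensity}.

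First I would substitute, at each finite prime $\mathfrak{p}$,
\begin{align*}
	\gamma_{\mathfrak{p}}(L,L) = N(\mathfrak{s}(L_{\mathfrak{p}}))\,N(\mathfrak{v}(L_{\mathfrak{p}}))\,N(\mathfrak{p})^{e_{\mathfrak{p}}+\lfloor t_{\mathfrak{p}}(L)/2\rfloor}\,r_{\mathfrak{p}}(L)
\end{align*}
from \eqref{gamma}, and split the product into four pieces: the scale part $\prod_{\mathfrak{p}}N(\mathfrak{s}(L_{\mathfrak{p}}))$, the volume part $\prod_{\mathfrak{p}}N(\mathfrak{v}(L_{\mathfrak{p}}))$, the dyadic part $\prod_{\mathfrak{p}}N(\mathfrak{p})^{e_{\mathfrak{p}}}$, and the residual part $\prod_{\mathfrak{p}}N(\mathfrak{p})^{\lfloor t_{\mathfrak{p}}(L)/2\rfloor}r_{\mathfrak{p}}(L)$. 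At all but finitely many primes $L_{\mathfrak{p}}$ is unimodular with $R_{\mathfrak{p}}(L)=0$, so $N(\mathfrak{s}(L_{\mathfrak{p}}))=N(\mathfrak{v}(L_{\mathfrak{p}}))=1$, $t_{\mathfrak{p}}(L)=0$ and $r_{\mathfrak{p}}(L)=1-\chi(\mathfrak{p})/N(\mathfrak{p})$; thus the residual part converges (conditionally, to the value $L(1,\chi)^{-1}$ of the associated Hecke $L$-function) while the other three products are genuinely finite.

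Next I would evaluate the three auxiliary products by multiplicativity of the absolute norm. Since $\mathfrak{s}(L)$ and $\mathfrak{v}(L)$ are fractional ideals of $F$ whose $\mathfrak{p}$-components are the local scales and volumes, one has $\prod_{\mathfrak{p}}N(\mathfrak{s}(L_{\mathfrak{p}}))=N(\mathfrak{s}(L))$ and $\prod_{\mathfrak{p}}N(\mathfrak{v}(L_{\mathfrak{p}}))=N(\mathfrak{v}(L))$. For the dyadic part, $e_{\mathfrak{p}}=\ord_{\mathfrak{p}}(2)$ vanishes away from $2$ and $2\mathcal{O}_{F}=\prod_{\mathfrak{p}\mid 2}\mathfrak{p}^{e_{\mathfrak{p}}}$, whence $\prod_{\mathfrak{p}}N(\mathfrak{p})^{e_{\mathfrak{p}}}=N(2\mathcal{O}_{F})=2^{n}$. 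Feeding these back, $\prod_{\mathfrak{p}}\gamma_{\mathfrak{p}}(L,L)$ produces the factor $N(\mathfrak{s}(L))N(\mathfrak{v}(L))2^{n}$ times the residual product; the $2^{n}$ cancels the $2^{n}$ in the archimedean denominator, and $N(\mathfrak{v}(L))^{3/2}$ in the denominator reduces against $N(\mathfrak{v}(L))$ in the numerator to leave $\sqrt{N(\mathfrak{v}(L))}$. This yields exactly $\frac{\pi^{n}N(\mathfrak{s}(L))}{\sqrt{D_{F}N(\mathfrak{v}(L))}}\prod_{\mathfrak{p}}N(\mathfrak{p})^{\lfloor t_{\mathfrak{p}}(L)/2\rfloor}r_{\mathfrak{p}}(L)$, as claimed, with the pair $(t_{\mathfrak{p}}(L),r_{\mathfrak{p}}(L))$ read off from Theorem \ref{thm:localdensity}.

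The hard part will be matching normalizations: one must confirm that the local densities in Pfeuffer's reciprocal mass formula are literally the $\gamma_{\mathfrak{p}}(L,L)$ of \eqref{gamma} (and not the variants $\beta_{\mathfrak{p}}(L)$ or $d_{\mathfrak{p}}(L,L)$), and that the archimedean factor carries precisely the powers of $\pi$, $2$, $D_{F}$ and $N(\mathfrak{v}(L))$ recorded above. The identifications already noted in the text---that $\gamma_{\mathfrak{p}}(L,L)$ agrees with $d_{\mathfrak{p}}(L,L)$ of \cite{pfeuffer_Einklassige_1971} and that $\beta_{\mathfrak{p}}(L)=N(\mathfrak{p})^{e_{\mathfrak{p}}}\gamma_{\mathfrak{p}}(L,L)$ when $\ell=2$---are exactly what make this bookkeeping go through, and the scaling relation \eqref{scaleL} is available should one wish to reduce to the unimodular-scale case before comparing with Pfeuffer's constant.
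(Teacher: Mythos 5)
Your proposal is correct and follows essentially the same route as the paper: both start from the Minkowski--Siegel mass formula with archimedean factor $\pi^{n}2^{-n}D_{F}^{-1/2}N(\mathfrak{v}(L))^{-3/2}$, substitute the local density formula \eqref{gamma} of Theorem \ref{thm:localdensity} at the finite primes, and cancel via $2^{n}=\prod_{\mathfrak{p}\in\Omega_{F}\backslash\infty_{F}}N(\mathfrak{p})^{e_{\mathfrak{p}}}$ together with multiplicativity of the norm. The only differences are cosmetic (you cite Pfeuffer's consolidated form where the paper cites Siegel plus a separate archimedean computation), and your added remarks on conditional convergence of the residual Euler product and on matching Pfeuffer's normalization of $\gamma_{\mathfrak{p}}(L,L)$ are sound.
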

\begin{proof}
	 When $L$ is binary and positive definite, from \cite{siegel_III_1937} and Theorem \ref{thm:localdensity}, we have
	\begin{align*}
	   \dfrac{1}{m(L)}=\dfrac{1}{\sqrt{D_{F}}}\prod_{\mathfrak{p}\in \Omega_{F}}\gamma_{\mathfrak{p}}(L,L),\quad  \prod_{\mathfrak{p}\in \infty_{F}}\gamma_{\mathfrak{p}}(L,L)=\dfrac{\pi^{n}}{2^{n}}\sqrt{N(\mathfrak{v}(L))^{-3}},\\
	\intertext{and}
		\prod_{\mathfrak{p}\in \Omega_{F}\backslash\infty_{F}}\gamma_{\mathfrak{p}}(L,L)=N(\mathfrak{s}(L))N(\mathfrak{v}(L))\prod_{\mathfrak{p}\in\Omega_{F}\backslash \infty_{F}} N(\mathfrak{p})^{e_{\mathfrak{p}}+\lfloor t_{\mathfrak{p}}(L)/2\rfloor}r_{\mathfrak{p}}(L).
	\end{align*}
	 Combining these and noting that $2^{n}=\prod_{\mathfrak{p}\in \Omega_{F}\backslash \infty_{F}}N(\mathfrak{p})^{e_{\mathfrak{p}}}$, we derive the desired mass formula. 
\end{proof}
\begin{thm}\label{thm:localinfor-ADC}
	Suppose that $F$ is an algebraic number field and $L$ is a binary ADC $\mathcal{O}_{F}$-lattice. Let $\mathfrak{p}\in \Omega_{F}\backslash \infty_{F}$, $\nu\in \{1,2\}$, $\varepsilon_{\mathfrak{p}}\in \mathcal{U}_{\mathfrak{p}}$ and $\delta_{\mathfrak{p}}\in \mathcal{U}_{\mathfrak{p}}\backslash\{1,\Delta_{\mathfrak{p}}\}$. Let $B$ be the Gram matrix of $L_{\mathfrak{p}}$. Then the above local quantities of $L$ are given by Tables \ref{tab:1}, \ref{tab:2}, \ref{tab:3}, \ref{tab:4} and \ref{tab:5}.
\end{thm}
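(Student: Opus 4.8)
The plan is to reduce every entry of Tables~\ref{tab:1}--\ref{tab:5} to a routine computation from the good BONG of $L_{\mathfrak{p}}$, using the formulas assembled in Section~\ref{sec:mass}. Since $L$ is globally ADC, it is locally ADC by~\eqref{equiv:uni-adc-regular}, so each $L_{\mathfrak{p}}$ is a binary ADC $\mathcal{O}_{F_{\mathfrak{p}}}$-lattice and the local classification applies. First I would invoke Theorem~\ref{thm:locallyn-ADC-binary}(ii) and~(iii) (equivalently Theorems~\ref{thm:nondyadic-ADC-binary} and~\ref{thm:dyadic-binary}) together with the explicit good-BONG descriptions in Proposition~\ref{prop:maximallattices-binary-ternary} and Remark~\ref{re:dyadicACD-3}(ii): every such $L_{\mathfrak{p}}$ is isometric to one of the finitely many maximal types $N_{\nu}^{2}(c)$ with $\nu\in\{1,2\}$ and $c\in\mathcal{V}_{\mathfrak{p}}$, or---when $\mathfrak{p}$ is dyadic---to the non-maximal lattice $M_{\nu}^{2}(\Delta_{\mathfrak{p}})\cong 2^{-1}\pi_{\mathfrak{p}}^{\nu}A(2\pi_{\mathfrak{p}}^{-1},2\rho_{\mathfrak{p}}\pi_{\mathfrak{p}})$. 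This enumeration is exactly the row structure of the tables, split by the isometry type and by the parity of $\ord(c)$.

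For each type I would read off the good BONG $\prec a_{1},a_{2}\succ$ from the tables, so that $R_{1}(L_{\mathfrak{p}})=\ord(a_{1})$ and $R_{2}(L_{\mathfrak{p}})=\ord(a_{2})$ are immediate. Then $a(L_{\mathfrak{p}})=a_{2}/a_{1}$, $R(L_{\mathfrak{p}})=R_{2}-R_{1}$, and the ideals $\mathfrak{n}(L_{\mathfrak{p}})$, $\mathfrak{s}(L_{\mathfrak{p}})$, $\mathfrak{v}(L_{\mathfrak{p}})$ follow from Proposition~\ref{prop:a-R-alpha}(iii). The invariant $\alpha(L_{\mathfrak{p}})$ is computed from~\eqref{R-alpha} via Proposition~\ref{prop:alphaproperty}(ii) and~(iv), using the known quadratic defect $d(-a(L_{\mathfrak{p}}))$ in each case (e.g. $d(\Delta_{\mathfrak{p}})=2e_{\mathfrak{p}}$, $d(\delta_{\mathfrak{p}})\in[1,2e_{\mathfrak{p}}-1]^{O}$, and $d(-a)=0$ whenever $\ord(a)$ is odd). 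The ideal $\mathfrak{c}(L_{\mathfrak{p}})$ is then obtained directly from Proposition~\ref{prop:cL}, and in the primitive dyadic rows the invariants $f_{B}$ and $d_{B}$ from Proposition~\ref{prop:fB-dB}.

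The local density entry $\gamma_{\mathfrak{p}}(L,L)$, and hence $\beta_{\mathfrak{p}}(L)=N(\mathfrak{p})^{e_{\mathfrak{p}}}\gamma_{\mathfrak{p}}(L,L)$, would be extracted from the unified formula~\eqref{gamma} of Theorem~\ref{thm:localdensity}: for each type one decides which branch of parts~(i) or~(ii) applies by comparing $R(L_{\mathfrak{p}})$ with $0$ and $2e_{\mathfrak{p}}$ and by testing the defect inequalities $\alpha\lessgtr R+d(-a)$ and $d(-a)\lessgtr e-R/2$, then substitutes the already-computed $N(\mathfrak{s}(L_{\mathfrak{p}}))$, $N(\mathfrak{v}(L_{\mathfrak{p}}))$ and $\alpha(L_{\mathfrak{p}})$. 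Finally the GK and EGK data are filled in through Remark~\ref{re:fB-dB}, that is, by feeding $f_{B}$ and $d_{B}$ into \cite[Propositions~A6, A8 and~A11]{cho_localdensity-2020}.

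The main obstacle is purely bookkeeping rather than conceptual: the dyadic cases force a careful split according to whether $\ord(c)$ is even or odd and whether $L_{\mathfrak{p}}$ is maximal or the exceptional $M_{\nu}^{2}(\Delta_{\mathfrak{p}})$, and in each one the correct value of $d(-a(L_{\mathfrak{p}}))$ must be tracked both to evaluate $\alpha(L_{\mathfrak{p}})$ and to land in the right branch of~\eqref{gamma}. I expect the delicate rows to be those with $R(L_{\mathfrak{p}})=-2e_{\mathfrak{p}}$, where the factor $1-\chi(\mathfrak{p})/N(\mathfrak{p})$ appears and $\chi(\mathfrak{p})$ must be read off from $a(L_{\mathfrak{p}})$, and the two $M_{\nu}^{2}(\Delta_{\mathfrak{p}})$ rows, where $R(L_{\mathfrak{p}})=2-2e_{\mathfrak{p}}$ and $\alpha(L_{\mathfrak{p}})=1$ sit exactly on the boundary conditions of the density formula and so must be checked against the case hypotheses with particular care.
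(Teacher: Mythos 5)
Your proposal is correct and takes essentially the same route as the paper: the paper likewise reduces to the local classification of binary ADC lattices (Theorems \ref{thm:nondyadic-ADC-binary} and \ref{thm:dyadic-binary}, giving the types $N_{\nu}^{2}(c)$ and $M_{\nu}^{2}(\Delta_{\mathfrak{p}})$), fills Table \ref{tab:1} by Proposition \ref{prop:a-R-alpha}, derives Tables \ref{tab:2} and \ref{tab:3} from the formulas \eqref{uvw} and \eqref{gamma}, and obtains Tables \ref{tab:4} and \ref{tab:5} from Propositions \ref{prop:cL} and \ref{prop:fB-dB} together with \cite[Propositions A6, A8 and A11]{cho_localdensity-2020}. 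The only omission is that you never explicitly mention the mass-formula quantities of Table \ref{tab:2}, but these follow by the same direct substitution of the Table \ref{tab:1} data into \eqref{uvw}, so this is cosmetic rather than a gap.
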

\begin{re}
	In Table \ref{tab:4}, let $e_{\mathfrak{p}}\ge 1$ and $L_{\mathfrak{p}}^{\nu}=N_{\nu}^{2}(\Delta_{\mathfrak{p}})$ or $M_{\nu}^{2}(\Delta_{\mathfrak{p}})$ with $\nu\in \{1,2\}$. When $\nu=2$, $L_{\mathfrak{p}}^{\nu}=L_{\mathfrak{p}}^{2}$ is not primitive, we ignore the quantities $f_{B}$ and $d_{B}$. However, $L_{\mathfrak{p}}^{2}=L_{\mathfrak{p}}^{1 (\pi_{\mathfrak{p}})}$ and $L_{\mathfrak{p}}^{1}$ (or the associated matrix) is primitive. So from \eqref{scaleL}, we have $\beta(L_{\mathfrak{p}}^{2})=N(\mathfrak{p})^{3}\beta(L_{\mathfrak{p}}^{1})$. Hence one can apply the formula in \cite[Proposition A6]{cho_localdensity-2020} to compute $\beta(L_{\mathfrak{p}}^{1})$, thereby obtaining $\beta(L_{\mathfrak{p}}^{2})$. Alternatively, one can use the relation $\beta(L_{\mathfrak{p}})=N(\mathfrak{p})^{e_{\mathfrak{p}}}\gamma(L_{\mathfrak{p}},L_{\mathfrak{p}})$ (from definition), and then apply our formula \eqref{gamma} directly.
\end{re}
 \begin{proof}
 	From Theorems \ref{thm:nondyadic-ADC-binary} and \ref{thm:dyadic-binary}, if  $L_{\mathfrak{p}}$ is binary ADC, then it is either of the form $N_{\nu,r}^{2}(c)$, with  $\nu\in \{1,2\}$ and $c\in \mathcal{V}_{\mathfrak{p}}$ satisfying $(\nu,c)\not=(2,1)$, or of the form $M_{\nu}^{2}(\Delta_{\mathfrak{p}})$ with $\nu\in \{1,2\}$, whose structures have been determined. Thus, by brute-force checking, one can obtain the corresponding local quantities discussed above for binary ADC lattices.
 	
 	First, using Proposition \ref{prop:a-R-alpha}, one can obtain Table \ref{tab:1}, which lists the quantities related to invariants \eqref{a-Ri} and \eqref{R-alpha}. Based on these data, we derive Tables \ref{tab:2} and \ref{tab:3} using the formulas \eqref{uvw} and \eqref{gamma}.
 	
 	To obtain \ref{tab:5}, we only need to know the quantities $f_{B}$ and $d_{B}$ in Table \ref{tab:4} from \cite[Propositions A6, A8, and A11]{cho_localdensity-2020}. These two quantities can be computed using Proposition \ref{prop:cL} and Table \ref{tab:1}.
 \end{proof}

With the information in the tables of Theorem \ref{thm:localinfor-ADC}, we provide a series of examples, which indicate that the invariants $\gk(L_{\mathfrak{p}}\perp -L_{\mathfrak{p}})$ and $\egk(L_{\mathfrak{p}}\cap \pi_{\mathfrak{p}}^{i}L_{\mathfrak{p}}^{\#})^{\le 1}$ fail to determine  $\gamma_{\mathfrak{p}}(L_{\mathfrak{p}},L_{\mathfrak{p}})$  and  $\beta(L_{\mathfrak{p}})$ for each $e_{\mathfrak{p}}\ge 2$.
\begin{ex}\label{ex:GK-EGK}
	Let $e_{\mathfrak{p}}\ge 2$, $L_{\mathfrak{p}}^{\nu}=N_{\nu}^{2}(\delta_{\mathfrak{p}})$, with $\nu\in \{1,2\}$, $\delta_{\mathfrak{p}}\in \mathcal{U}_{\mathfrak{p}}$ and $d_{\mathfrak{p}}(\delta_{\mathfrak{p}})=2e_{\mathfrak{p}}-1$, and $L_{\mathfrak{p}}^{\prime}=M_{1}^{2}(\Delta_{\mathfrak{p}})$. Then from Table \ref{tab:5}, we have
	\begin{align*}
		&\gk(L_{\mathfrak{p}}^{\nu}\perp -L_{\mathfrak{p}}^{\nu})=\gk(L_{\mathfrak{p}}^{\prime}\perp -L_{\mathfrak{p}}^{\prime})=(0,1,1,2), \\ &\egk(L_{\mathfrak{p}}^{\nu}\cap \pi_{\mathfrak{p}}^{i}L_{\mathfrak{p}}^{\nu\#})^{\le 1}=\egk(L_{\mathfrak{p}}^{\prime}\cap \pi_{\mathfrak{p}}^{i}L_{\mathfrak{p}}^{\prime\#})^{\le 1}=\begin{cases}
			   \emptyset  &\text{if $e_{\mathfrak{p}}>2$},   \\
			   (1;1;1)   &\text{if $e_{\mathfrak{p}}=2$},
		\end{cases}
	\end{align*}
	for $i\in \jor(L_{\mathfrak{p}}^{\nu})=\jor(L_{\mathfrak{p}}^{\prime})=\{1-e_{\mathfrak{p}}\}$, but $\beta(L_{\mathfrak{p}}^{\nu})=2N(\mathfrak{p})^{2}\not=\beta(L_{\mathfrak{p}}^{\prime})=N(\mathfrak{p})^{2}$. 
	 
	 In fact, the different local densities can be explained by invariants \eqref{a-Ri} and \eqref{R-alpha}. Although $R(L_{\mathfrak{p}}^{\nu})=R(L_{\mathfrak{p}}^{\prime})=2-2e_{\mathfrak{p}}$, we have
	 \begin{align*}
	 	d_{\mathfrak{p}}(-a(L_{\mathfrak{p}}^{\nu}))=d_{\mathfrak{p}}(\delta_{\mathfrak{p}})=2e_{\mathfrak{p}}-1\not= d_{\mathfrak{p}}(-a(L_{\mathfrak{p}}^{\prime}))=d_{\mathfrak{p}}(\Delta_{\mathfrak{p}})=2e_{\mathfrak{p}}.
	 \end{align*}
	   Therefore, the first four quantities of $L_{\mathfrak{p}}^{\nu}$ and $L_{\mathfrak{p}}^{\prime}$ in Table \ref{tab:3} are identical except that $r(L_{\mathfrak{p}}^{\nu})=2$ and $r(L_{\mathfrak{p}}^{\prime})=1$. So it follows from our formula \eqref{gamma} that $\gamma(L_{\mathfrak{p}}^{\nu},L_{\mathfrak{p}}^{\nu})=2\gamma(L_{\mathfrak{p}}^{\prime},L_{\mathfrak{p}}^{\prime}) $ and thus $\beta(L_{\mathfrak{p}}^{\nu})=2\beta(L_{\mathfrak{p}}^{\prime})$. 
\end{ex}
\newpage
 	\begin{table}[h]
 	\begin{center}
 		\captionof{table}{The quantities $d_{\mathfrak{p}}(-a(L_{\mathfrak{p}}))$, $R(L_{\mathfrak{p}})$ and $\alpha(L_{\mathfrak{p}})$ related to  \eqref{a-Ri} and \eqref{R-alpha} for binary ADC lattices $L$.}
 		\label{tab:1}
 		\renewcommand\arraystretch{1.5}
 		\hskip -0.8cm \begin{tabular}{c|c|c|c|c}
 			\toprule[1.2pt]
 			$e_{\mathfrak{p}}$  &  $L_{\mathfrak{p}}$   &    $d_{\mathfrak{p}}(-a(L_{\mathfrak{p}}))$   & $R(L_{\mathfrak{p}})$   &  $\alpha(L_{\mathfrak{p}})$  \\
 			\hline
 			\multirow{3}*{\text{$0$}} & $N_{1}^{2}(1)$   &$ \infty $  & $0$  & $0$   \\
 			\cline{2-5}
 			& $N_{\nu}^{2}(\Delta_{\mathfrak{p}})$  &$ 2e_{\mathfrak{p}}  $  &$ 0 $ & $0$   \\
 			\cline{2-5}
 			& $N_{\nu}^{2}(\varepsilon_{\mathfrak{p}}\pi_{\mathfrak{p}})$  &$ 0 $  &$1 $ & $1/2$    \\
 			\hline
 			\multirow{5}*{\text{$\ge 1$}} & $N_{1}^{2}(1)$  &$\infty$ &$ -2e_{\mathfrak{p}} $  & $0$     \\
 			\cline{2-5}
 			& $N_{\nu}^{2}(\Delta_{\mathfrak{p}})$ &$2e_{\mathfrak{p}}$ &$ -2e_{\mathfrak{p}} $  &$ 0$    \\
 			\cline{2-5}
 			& $N_{\nu}^{2}(\delta_{\mathfrak{p}})$  &$d_{\mathfrak{p}}(\delta_{\mathfrak{p}})$ &$ 1-d_{\mathfrak{p}}(\delta_{\mathfrak{p}}) $  &$1$   \\
 			\cline{2-5}
 			& $N_{\nu}^{2}(\varepsilon_{\mathfrak{p}}\pi_{\mathfrak{p}})$ &$ 0 $  &$ 1$ & $1$   \\ 
 			\cline{2-5}
 			& $M_{\nu}^{2}(\Delta_{\mathfrak{p}})$  &$2e_{\mathfrak{p}}$  &$2-2e_{\mathfrak{p}} $   & $1$  \\ 
 			\bottomrule[1.2pt]
 		\end{tabular}
 	\end{center} 
 \end{table}
 \begin{table}[h]
 	\begin{center}
 		\captionof{table}{The quantities $\psi(L_{\mathfrak{p}})$, $u(L_{\mathfrak{p}})$, $v(L_{\mathfrak{p}})$ and $w(L_{\mathfrak{p}})$ related to  \eqref{uvw} for binary ADC lattices $L$.}
 		\label{tab:2}
 		\renewcommand\arraystretch{2}
 		\hskip -0.8cm \begin{tabular}{c|c|c|c|c|c}
 			\toprule[1.2pt]
 			$e_{\mathfrak{p}}$   &  $L_{\mathfrak{p}}$    &  $\psi(L_{\mathfrak{p}})$  &  $u(L_{\mathfrak{p}})$  & $\upsilon(L_{\mathfrak{p}})$     &   $w(L_{\mathfrak{p}})$  \\
 			
 			\hline
 			\multirow{3}*{\text{$0$}} & $N_{1}^{2}(1)$   &$ 0 $     & $ 2-\chi^{2}(\mathfrak{p})$ &$1$ & $ (2-\chi^{2}(\mathfrak{p}))(1-\dfrac{\chi(\mathfrak{p})}{N(\mathfrak{p})})$ \\
 			\cline{2-6}
 			& $N_{\nu}^{2}(\Delta_{\mathfrak{p}})$    & $0$  & $1$ &$ 1$  &$1-\dfrac{\chi(\mathfrak{p})}{N(\mathfrak{p})}$  \\
 			\cline{2-6}
 			& $N_{\nu}^{2}(\varepsilon_{\mathfrak{p}}\pi_{\mathfrak{p}})$    & $1$  & $ 2-\chi^{2}(\mathfrak{p})$ &$2 $ & $\dfrac{2-\chi^{2}(\mathfrak{p})}{2}(1-\dfrac{\chi(\mathfrak{p})}{N(\mathfrak{p})})$\\
 			\hline
 			\multirow{5}*{\text{$\ge 1$}} & $N_{1}^{2}(1)$     & $0$  & $2-\chi^{2}(\mathfrak{p})$ &$ 1$ & $(2-\chi^{2}(\mathfrak{p}))(1-\dfrac{\chi(\mathfrak{p})}{N(\mathfrak{p})})$ \\
 			\cline{2-6}
 			& $N_{\nu}^{2}(\Delta_{\mathfrak{p}})$   & $0$  & $1$ &$ 1$ & $1-\dfrac{\chi(\mathfrak{p})}{N(\mathfrak{p})}$\\
 			\cline{2-6}
 			& $N_{\nu}^{2}(\delta_{\mathfrak{p}})$    & $1$  & $1$ &$ 1$  & $1-\dfrac{\chi(\mathfrak{p})}{N(\mathfrak{p})}$  \\
 			\cline{2-6}
 			& $N_{\nu}^{2}(\varepsilon_{\mathfrak{p}}\pi_{\mathfrak{p}})$   & $1$  & $1$ &$1 $   & $  1-\dfrac{\chi(\mathfrak{p})}{N(\mathfrak{p})} $ \\ 
 			\cline{2-6}
 			& $M_{\nu}^{2}(\Delta_{\mathfrak{p}})$  & $1$  & $1$ &$ 1$   & $   1-\dfrac{\chi(\mathfrak{p})}{N(\mathfrak{p})} $  \\ 
 			\bottomrule[1.2pt]
 		\end{tabular}
 	\end{center}
 \end{table}
 \begin{table}[h]\small
 	\begin{center}
 		\captionof{table}{The quantities $N(\mathfrak{s}(L_{\mathfrak{p}}))$, $N(\mathfrak{v}(L_{\mathfrak{p}}))$, $t(L_{\mathfrak{p}})$, $r(L_{\mathfrak{p}})$ and $\gamma(L_{\mathfrak{p}},L_{\mathfrak{p}})$ related to  \eqref{gamma} for binary ADC lattices $L$.}
 		\label{tab:3}
 		\renewcommand\arraystretch{1.8}
 		\hskip -0.8cm \begin{tabular}{c|c|c|c|c|c|c}  
 			\toprule[1.2pt]
 			$e_{\mathfrak{p}}$ &$L_{\mathfrak{p}}$& $N(\mathfrak{s}(L_{\mathfrak{p}}))$ &$N(\mathfrak{v}(L_{\mathfrak{p}}))$ &$t(L_{\mathfrak{p}})$ &$r(L_{\mathfrak{p}})$ &$\gamma(L_{\mathfrak{p}},L_{\mathfrak{p}})$\\
 			\hline
 			\multirow{3}*{\text{$0$}} & $N_{1}^{2}(1)$   &$ 1 $  & $1$  & $ 0$  & $1-\dfrac{\chi(\mathfrak{p})}{N(\mathfrak{p})} $ &$1-\dfrac{\chi(\mathfrak{p})}{N(\mathfrak{p})}$  \\
 			\cline{2-7}
 			& $N_{\nu}^{2}(\Delta_{\mathfrak{p}})$  &$ 1 $  &$ 1 $ & $ 0$  & $1-\dfrac{\chi(\mathfrak{p})}{N(\mathfrak{p})}$ &$ 1-\dfrac{\chi(\mathfrak{p})}{N(\mathfrak{p})}$    \\
 			\cline{2-7}
 			& $N_{\nu}^{2}(\varepsilon_{\mathfrak{p}}\pi_{\mathfrak{p}})$  &$ 1 $  &$ N(\mathfrak{p}) $ & $0$  & $2$ &$ 2N(\mathfrak{p})$  \\
 			\hline
 			\multirow{7}*{\text{$\ge 1$}} & $N_{1}^{2}(1)$   &$ N(\mathfrak{p})^{-e_{\mathfrak{p}}} $  & $N(\mathfrak{p})^{-2e_{\mathfrak{p}}}$  & $2e_{\mathfrak{p}}$  & $1-\dfrac{\chi(\mathfrak{p})}{N(\mathfrak{p})}$ &$ N(\mathfrak{p})^{-e_{\mathfrak{p}}}(1-\dfrac{\chi(\mathfrak{p})}{N(\mathfrak{p})})$  \\
 			\cline{2-7}
 			& $N_{1}^{2}(\Delta_{\mathfrak{p}})$  &$ N(\mathfrak{p})^{-e_{\mathfrak{p}}} $  &$ N(\mathfrak{p})^{-2e_{\mathfrak{p}}} $ & $2e_{\mathfrak{p}}$  & $1-\dfrac{\chi(\mathfrak{p})}{N(\mathfrak{p})}$ &$N(\mathfrak{p})^{-e_{\mathfrak{p}}} (1-\dfrac{\chi(\mathfrak{p})}{N(\mathfrak{p})})$  \\
 			\cline{2-7}
 			& $N_{2}^{2}(\Delta_{\mathfrak{p}})$  &$ N(\mathfrak{p})^{1-e_{\mathfrak{p}}} $  &$ N(\mathfrak{p})^{2-2e_{\mathfrak{p}}}$ & $2e_{\mathfrak{p}}$  & $1-\dfrac{\chi(\mathfrak{p})}{N(\mathfrak{p})}$ &$ N(\mathfrak{p})^{3-e_{\mathfrak{p}}}(1-\dfrac{\chi(\mathfrak{p})}{N(\mathfrak{p})})$  \\
 			\cline{2-7}
 			& $N_{\nu}^{2}(\delta_{\mathfrak{p}})$   &$ N(\mathfrak{p})^{\frac{1-d_{\mathfrak{p}}(\delta_{\mathfrak{p}})}{2}}$  &$N(\mathfrak{p})^{1-d_{\mathfrak{p}}(\delta_{\mathfrak{p}})}$ & $d_{\mathfrak{p}}(\delta_{\mathfrak{p}})$  & $2$ &$ 2N(\mathfrak{p})^{e_{\mathfrak{p}}+1-d_{\mathfrak{p}}(\delta_{\mathfrak{p}})}$   \\
 			\cline{2-7}
 			& $N_{\nu}^{2}(\varepsilon_{\mathfrak{p}}\pi_{\mathfrak{p}})$ &$ 1 $  &$N(\mathfrak{p})$ & $1$  & $2$ &$ 2N(\mathfrak{p})^{e_{\mathfrak{p}}+1} $    \\ 
 			\cline{2-7}
 			& $M_{1}^{2}(\Delta_{\mathfrak{p}})$ &$  N(\mathfrak{p})^{1-e_{\mathfrak{p}}} $  &$ N(\mathfrak{p})^{2-2e_{\mathfrak{p}}} $ & $2e_{\mathfrak{p}}-1$  & $1$ &$ N(\mathfrak{p})^{2-e_{\mathfrak{p}}}$   \\ 
 			\cline{2-7}
 			& $M_{2}^{2}(\Delta_{\mathfrak{p}})$ &$ N(\mathfrak{p})^{2-e_{\mathfrak{p}}} $  &$ N(\mathfrak{p})^{4-2e_{\mathfrak{p}}} $ & $2e_{\mathfrak{p}}-1$  & $1$ &$ N(\mathfrak{p})^{5-e_{\mathfrak{p}}}$   \\ 
 			\bottomrule[1.2pt]
 		\end{tabular}
 	\end{center}
 \end{table}
 \begin{table}[h]
 	\begin{center}
 		\captionof{table}{The quantities $f_{B}$, $d_{B}$ and $\beta(L_{\mathfrak{p}})$ defined in \cite[Appendix A]{cho_localdensity-2020} for binary ADC lattices $L$.}
 		\label{tab:4}
 		\renewcommand\arraystretch{1.8}
 		\hskip -0.8cm \begin{tabular}{c|c|c|c|c}
 			\toprule[1.2pt]
 			$e_{\mathfrak{p}}$ &$L_{\mathfrak{p}}$& $f_{B}$ &$d_{B}$   &$\beta(L_{\mathfrak{p}})$\\
 			\hline
 			\multirow{7}*{\text{$\ge 1$}} & $N_{1}^{2}(1)$   &$ 0 $  & $0$    &$  1-\dfrac{\chi(\mathfrak{p})}{N(\mathfrak{p})}$  \\
 			\cline{2-5}
 			& $N_{1}^{2}(\Delta_{\mathfrak{p}})$  &$ 0 $  &$ 0 $  &$ 1-\dfrac{\chi(\mathfrak{p})}{N(\mathfrak{p})} $  \\
 			\cline{2-5}
 			& $N_{2}^{2}(\Delta_{\mathfrak{p}})$  &$   $  &$  $   &$ N(\mathfrak{p})^{3}(1-\dfrac{\chi(\mathfrak{p})}{N(\mathfrak{p})})$  \\
 			\cline{2-5}
 			& $N_{\nu}^{2}(\delta_{\mathfrak{p}})$   &$  0$  &$ 2e_{\mathfrak{p}}+1-d_{\mathfrak{p}}(\delta_{\mathfrak{p}})$  &$ 2N(\mathfrak{p})^{2e_{\mathfrak{p}}+1-d_{\mathfrak{p}}(\delta_{\mathfrak{p}})}$   \\
 			\cline{2-5}
 			& $N_{\nu}^{2}(\varepsilon_{\mathfrak{p}}\pi_{\mathfrak{p}})$ &$ 0 $  &$2e_{\mathfrak{p}}+1$   &$ 2N(\mathfrak{p})^{2e_{\mathfrak{p}}+1} $    \\ 
 			\cline{2-5}
 			& $M_{1}^{2}(\Delta_{\mathfrak{p}})$ &$  1 $  &$ 0 $   &$ N(\mathfrak{p})^{2}$   \\ 
 			\cline{2-5}
 			& $M_{2}^{2}(\Delta_{\mathfrak{p}})$ &$   $  &$   $   &$ N(\mathfrak{p})^{5}$   \\ 
 			\bottomrule[1.2pt]
 		\end{tabular}
 	\end{center}
 \end{table}
 \begin{table}[h]\small
 	\begin{center}
 		\captionof{table}{The quantities $\gk(L_{\mathfrak{p}}\perp -L_{\mathfrak{p}})$ and $\egk(L_{\mathfrak{p}}\cap \pi_{\mathfrak{p}}^{i}L_{\mathfrak{p}}^{\#})^{\le 1}$ with $i\in \jor(L_{\mathfrak{p}})$ defined in \cite[Appendix A]{cho_localdensity-2020} for binary ADC lattices $L$.}
 		\label{tab:5}
 		\renewcommand\arraystretch{1.8}
 		\hskip -0.8cm \begin{tabular}{c|c|c|c|c}
 			\toprule[1.2pt]
 			$e_{\mathfrak{p}}$ &$L_{\mathfrak{p}}$  &$\gk(L_{\mathfrak{p}}\perp -L_{\mathfrak{p}})$ &$\egk(L_{\mathfrak{p}}\cap\pi_{\mathfrak{p}}^{i} L_{\mathfrak{p}}^{\#})^{\le 1}$ &$\jor(L_{\mathfrak{p}})$\\
 			\hline
 			\multirow{5}*{\text{$\ge 1$}} & $N_{1}^{2}(1)$      & $(0,0,0,0)$  & $\begin{cases}
 				\emptyset &\text{if $e_{\mathfrak{p}}>1$}, \\
 				(2;1;-1)    &\text{if $e_{\mathfrak{p}}=1$}.
 			\end{cases}$ &$ \{-e_{\mathfrak{p}}\}$  \\
 			\cline{2-5}
 			& $N_{1}^{2}(\Delta_{\mathfrak{p}})$    & $(0,0,0,0)$  & $\begin{cases}
 				\emptyset &\text{if $e_{\mathfrak{p}}>1$}, \\
 				(2;1;-1)    &\text{if $e_{\mathfrak{p}}=1$}.
 			\end{cases}$ &$\{-e_{\mathfrak{p}}\} $  \\
 			\cline{2-5}
 			& $N_{\nu}^{2}(\delta_{\mathfrak{p}})$     & $(0,1,2e_{\mathfrak{p}}-d_{\mathfrak{p}}(\delta_{\mathfrak{p}}),2e_{\mathfrak{p}}+1-d_{\mathfrak{p}}(\delta_{\mathfrak{p}}))$  & $\begin{cases}
 				\emptyset  &\text{if $d_{\mathfrak{p}}(\delta_{\mathfrak{p}}) >3$}, \\
 				(1;1;1)    &\text{if $d_{\mathfrak{p}}(\delta_{\mathfrak{p}}) =3$}, \\
 				(1,1;0,1;1,0)    &\text{if $d_{\mathfrak{p}}(\delta_{\mathfrak{p}}) =1$}.
 			\end{cases}$ &$ \{\frac{1-d_{\mathfrak{p}}(\delta_{\mathfrak{p}})}{2}\}$   \\
 			\cline{2-5}
 			& $N_{\nu}^{2}(\varepsilon_{\mathfrak{p}}\pi_{\mathfrak{p}})$   & $(0,1,2e_{\mathfrak{p}},2e_{\mathfrak{p}}+1)$  & $(1,1;0,1;1,0)$ &$\{0,1\} $    \\ 
 			\cline{2-5}
 			& $M_{1}^{2}(\Delta_{\mathfrak{p}})$   & $(0,1,1,2)$  & $\begin{cases}
 				\emptyset &\text{if $e_{\mathfrak{p}}>2$}, \\
 				(1;1;1)    &\text{if $e_{\mathfrak{p}}=2$},\\
 				(1;0;1)    &\text{if $e_{\mathfrak{p}}=1$}.
 			\end{cases}$ &$\{1-e_{\mathfrak{p}}\}$   \\ 
 			\bottomrule[1.2pt]
 		\end{tabular}
 	\end{center}
 \end{table}
 
\section*{Acknowledgments} 
I would like to thank Prof. Constantin-Nicolae Beli for sharing his slides on the spinor exception of ternary quadratic forms, and thank Prof. Sungmun Cho for indicating recent progress on local density formulas for binary quadratic forms. This work was supported by a grant from the National Natural Science Foundation of China (Project No. 12301013).

\end{document}